\journalname{}
\begin{document}
	
	\title{Association schemes arising from non-weakly regular bent  functions
	}
	
	
	\author{Yadi Wei  \and
		Jiaxin Wang \and
		Fang-Wei Fu
	}
	
	
	\institute{Yadi Wei\at
		Chern Institute of Mathematics and LPMC, Nankai University, Tianjin, 300071, China\\
		\email{wydecho@mail.nankai.edu.cn}           
		\and 
		Jiaxin Wang\at
		Chern Institute of Mathematics and LPMC, Nankai University, Tianjin, 300071, China\\
		\email{wjiaxin@mail.nankai.edu.cn}
		\and
		Fang-Wei Fu\at
		Chern Institute of Mathematics and LPMC, Nankai University, Tianjin, 300071, China\\
		\email{fwfu@nankai.edu.cn}}
	\date{Received: date / Accepted: date}

	\maketitle
	\begin{abstract}
		Association schemes play an important role in algebraic combinatorics and have important applications in coding theory, graph theory and design theory. The methods to construct association schemes by using bent functions have been extensively studied. Recently, in \cite{Ozbudak2}, {\"O}zbudak and Pelen constructed infinite families of symmetric association schemes of classes $5$ and $6$ by using ternary non-weakly regular bent functions. They also stated that “constructing $2p$-class association schemes from $p$-ary non-weakly regular bent functions is an interesting problem", where $p>3$ is an odd prime. In this paper, using non-weakly regular bent functions, we construct infinite families of symmetric association schemes of classes $2p$, $(2p+1)$ and $\frac{3p+1}{2}$ for any odd prime $p$. Fusing those association schemes, we obtain $t$-class symmetric association schemes, where $t=4,5,6,7$. In addition, we give the sufficient and necessary conditions for the partitions $P$, $D$, $T$, $U$ and $V$ (defined in this paper) to induce symmetric association schemes. 
		
		\keywords{Association schemes\and Fusion of association schemes \and Bent functions \and Dual-bent functions\and Non-weakly regular bent functions  \and Exponential sums}
	\end{abstract}

	\section{Introduction}
	
	\quad Bent functions, first introduced by Rothaus  in \cite{Rothaus}, are maximally nonlinear Boolean functions and were extended to any prime $p$ by Kumar, Scholtz and Welch \cite{Kumar}. They have been intensively studied as they have numerous applications in cryptography, coding theory, design theory and sequence theory. All Boolean bent functions are regular bent functions, which are also dual-bent functions, i.e., the functions and their dual functions are both bent. In odd characteristics,  bent functions are classified into regular bent functions, weakly regular but not regular bent functions, and non-weakly regular bent functions. All weakly regular bent functions are dual-bent. However, the dual functions of non-weakly regular bent functions may be bent or not. In \cite{Cesmelioglu1,Cesmelioglu2,Cesmelioglu3,Ozbudak}, infinitely many dual-bent and non-dual-bent functions were constructed.
	
	Association schemes, introduced by Bose \emph{et  al.} in \cite{Bose,Bose1}, are an important part of algebraic combinatorics and have applications in many areas of research such as coding theory, graph theory, design theory and so on. It is a good way to construct association schemes from bent functions. In \cite{Tan}, Tan \emph{et al.} constructed $2$-class association schemes using ternary bent functions. Later, Pott \emph {et al.} in \cite{Pott} obtained $p$-class association schemes from weakly regular $p$-ary bent functions for any odd prime $p$. In \cite{Chee}, Chee \emph {et al.}  constructed $2$-class and $3$-class association schemes from $p$-ary weakly regular bent functions. In \cite{Wu}, Wu \emph{et al.} proved that a $p$-ary bent function induces a $p$-class association scheme if and only if the function is weakly regular. Recently, {\"O}zbudak and Pelen in  \cite{Ozbudak2} constructed infinite families of symmetric association schemes of classes $3$, $4$, $5$ and $6$ by using ternary non-weakly regular bent functions. They also proposed a problem of constructing $2p$-class association schemes from $p$-ary non-weakly regular bent functions, where $p>3$ is an odd prime. In \cite{Anbar}, some amorphic association schemes were constructed from bent partitions. Later, in \cite{Wang}, Wang \emph{et al.} constructed some association schemes from vectorial dual-bent functions.
	
	Let $p$ be an odd prime and $n$ be a positive integer. In this paper, using  non-weakly regular bent function $f(x):\mathbb{F}_p^n\rightarrow \mathbb{F}_p$ belonging to  $\mathcal{DBF}$ (defined in Section 2) and satisfying the Condition 1 (defined in Section 4),	we construct $2p$-class, $(2p+1)$-class and $\frac{3p+1}{2}$-class symmetric association schemes. Furthermore, fusing those association schemes, we obtain $4$-class, $5$-class, $6$-class and $7$-class symmetric association schemes. In particular, the association schemes constructed in \cite{Ozbudak2} can also be obtained by our constructions. In addition, we give the sufficient and necessary conditions for the partitions $P$, $D$, $T$, $U$ and $V$ (defined in Section 4) to induce symmetric association schemes.
	
	The rest of this paper is arranged as follows. In Section 2, we introduce the main notation and give some necessary preliminaries. In Section 3, we present some auxiliary results. In Section 4, we construct some infinite families of symmetric association schemes and consider the fusions of them, by which we obtain more symmetric association schemes. In addition, we give the sufficient and necessary conditions for the partitions  $P$, $D$, $T$, $U$ and $V$ to induce symmetric association schemes.  In Section 5, we make a conclusion.
	
	\section{Preliminaries}\label{Section 2}
	\quad
	In this section, we introduce some basic results on cyclotomic field, non-weakly regular bent functions and association schemes. We begin this section by setting some basic notation which will be used throughout this paper.
	\subsection{Notation}\label{Section2.1}
	\quad Let $p$ be an odd prime, $n$ be a positive integer and $\mathbb{F}_{p^n}$ be the finite field of order $p^n$. Since $\mathbb{F}_{p^n}$ is an $n$-dimensional vector space over $\mathbb{F}_p$, we also use the notation $\mathbb{F}_p^n$ consisting of $n$-tuples over the prime field $\mathbb{F}_p$. Throughout this paper, we set the following notation.
	\begin{itemize}
		\item[$\bullet$] $|D|$: The cardinality of a set $D$;
		\item[$\bullet$] $\mathbb{F}_p^*$: The multiplicative group of the finite field $\mathbb{F}_p$;
		\item[$\bullet$] $SQ$ and $NSQ$: The set of squares and nonsquares in $\mathbb{F}_p^*$, respectively;
		\item[$\bullet$] $\eta$: The quadratic character of $\mathbb{F}_p^*$, that is, $\eta(a)=1$ for $a \in SQ$ and $\eta(a)=-1$ for $a \in NSQ$;
		\item[$\bullet$] $p^*$: $p^*=\eta(-1)p=(-1)^{(p-1)/2}p$;
		\item[$\bullet$] $\xi_p=e^{\frac{2\pi \sqrt{-1}}{p}} $: The $p$-th complex primitive root of unity.
	\end{itemize}
	\subsection{Cyclotomic field\ $\mathbb{Q}(\xi_p)$}\label{Section2.2}
	\quad The cyclotomic field $\mathbb{Q}(\xi_p)$ is obtained from the rational field $\mathbb{Q}$ by adjoining $\xi_p$. The ring of algebraic integers in $\mathbb{Q}(\xi_p)$ is $\mathcal{O}_{ {\mathbb{Q}_{(\xi_p)}} }=\mathbb{Z}[\xi_p]$ and  the set $\{\xi_p^j\ :\ 1\le j\le p-1\}$ is an integral basis of it. The field extension $\mathbb{Q}(\xi_p)/\mathbb{Q}$ is a Galois extension of degree $p-1$, and $Gal(\mathbb{Q}(\xi_p)/\mathbb{Q})=\{\sigma_a:\ a\in \mathbb{F}_p^*\}$ is the Galois group, where the automorphism $\sigma_a$ of $\mathbb{Q}(\xi_p)$ is defined by $\sigma_a(\xi_p)=\xi_p^a.$ The cyclotomic field $\mathbb{Q}(\xi_p)$ has a unique quadratic subfield $\mathbb{Q}(\sqrt{p^{*}})$, and so $Gal(\mathbb{Q}(\sqrt{p^{*}})/\mathbb{Q})=\{1,\ \sigma_\gamma\}$, where $\gamma$ is a nonsquare in $\mathbb{F}_p^{*}.$ Obviously, for $a\in \mathbb{F}_p^{*}$ and $b \in \mathbb{F}_p$, we have that $\sigma_a(\xi_p^b)=\xi_p^{ab}$ and $\sigma_a(\sqrt{p^*})=\eta(a)\sqrt{p^*}$. For more information on cyclotomic field, the reader is referred to \cite{Ireland}.
	\subsection{Non-weakly regular bent functions}\label{Section 2.3}
	\quad In this subsection, we give the definition of non-weakly regular bent functions and introduce some properties of them. Let $f(x):\mathbb{F}_p^n\rightarrow \mathbb{F}_p$ be a $p$-ary function. The \emph{Walsh transform} $\mathcal{W}_f$ of $f(x)$ is a complex-valued function of $\mathbb{F}_p^n$ defined by 
	\[\mathcal{W}_f (\alpha)=\sum\limits_{x \in \mathbb{F}_p^n} \xi_p^{f(x)-\alpha\cdot x},\ \alpha\in\mathbb{F}_p^n,\]
	where $\cdot$ is the ordinary inner product in $\mathbb{F}_p^n$. The \emph{inverse Walsh transform} of $f(x)$ is given by
	\[
	\xi^{f(\alpha)}_p=p^{-n}\sum\limits_{x\in \mathbb{F}_{p}^n}\mathcal{W}_f(x) \xi^{\alpha\cdot x}_p,\ \alpha\in \mathbb{F}_p^n.
	\]
	The function $f(x)$ is called \emph{bent} if $|\mathcal{W}_f(\alpha)|=p^{\frac{n}{2}}$ for any $\alpha\in \mathbb{F}_p^n$. The Walsh transform of a bent function $f(x)$ at $\alpha\in \mathbb{F}_p^n$ is given as follows \cite{Kumar}.
	\begin{align*}
	\mathcal{W}_f(\alpha)=
	\begin{cases}
	\pm p^{\frac{n}{2}} \xi^{f^*(\alpha)}_p, & \mathrm{if}\ p^n\equiv 1 \ (\mathrm{mod} \ 4),\\
	\pm \sqrt{-1}p^{\frac{n}{2}} \xi^{f^*(\alpha)}_p, & \mathrm{if}\ p^n\equiv 3 \ (\mathrm{mod}\ 4),
	\end{cases}
	\end{align*}
	where $f^*(x)$ is a function from $\mathbb{F}_p^n$ to $\mathbb{F}_p$ called the \emph{dual} of $f(x)$.
	A bent function $f(x): \mathbb{F}_p^n \rightarrow \mathbb{F}_p$ is called \emph{regular} if, for all $\alpha\in \mathbb{F}_p^n$, 
	$\mathcal{W}_f(\alpha)=p^{\frac{n}{2}} \xi_p^{f^*(\alpha)},$
	and is called \emph{weakly regular} if, for all $\alpha\in \mathbb{F}_p^n$,
	$\mathcal{W}_f(\alpha)=\lambda p^{\frac{n}{2}} \xi_p^{f^*(\alpha)},$
	where $\lambda\in \{\pm 1,\ \pm \sqrt{-1}\}$ is independent of $\alpha$;
	otherwise it is called \emph{non-weakly regular}. It is known that weakly regular bent functions appear in pairs, i.e., the dual of a weakly regular bent function is also weakly regular bent. Given a weakly regular bent function $f(x)$, we have $f^{**}(x)=f(-x)$, where $f^{**}(x)$ is the dual of $f^*(x)$. However, for a non-weakly regular bent function, its dual may not be bent \cite{Cesmelioglu1,Cesmelioglu2}. In \cite{Ozbudak1}, {\"Ozbudak} \emph {et al.} proved that for a non-weakly regular bent function $f(x)$, if its dual $f^*(x)$ is bent, then $f^*(x)$ is non-weakly regular bent and satisfies $f^{**}(x)=f(-x)$, where $f^{**}(x)$ is the dual of $f^*(x)$. For a bent function $f(x)$, if its dual function $f^*(x)$ is bent, then $f(x)$ is called \emph{dual-bent}, otherwise it is called \emph{non-dual-bent}.
	
	Let $\mu=1$ if $p^{n}\equiv1$ (mod $4$) and  $\mu=\sqrt{-1}$ if $p^{n}\equiv3$ (mod $4$). For a bent function  $f(x):\mathbb{F}_{p}^n\longrightarrow\mathbb{F}_p$, we define $B_+(f)\ \text{and}\ B_{-}(f)$ as follows.
	\begin{align*}
	B_+(f)&:=\{\alpha\in \mathbb{F}_p^n:\ \mathcal{W}_f(\alpha)=\mu p^{\frac{n}{2}}\xi_p^{f^*(\alpha)}\},\\
	B_-(f)&:=\{\alpha\in \mathbb{F}_p^n:\ \mathcal{W}_f(\alpha)=-\mu p^{\frac{n}{2}}\xi_p^{f^*(\alpha)}\}.
	\end{align*}
	Define the type of $f(x)$ as 
	$f(x)\  \text{is of}\  Type\  (+) \ \text{if\  }\  \mathcal{W}_f(0)=\mu p^{\frac{n}{2}}\xi_p^{f^*(0)}\ \text{and of}\  Type\\ (-)\ \text{if\ }\  \mathcal{W}_f(0)=-\mu p^{\frac{n}{2}}\xi_p^{f^*(0)}.$ 
	
	Let $f(x)$ be a dual-bent function and $f^*(x)$ be its dual function, define $\epsilon_\alpha=1$ (respectively $-1$) if $\alpha\in B_+(f)$ (respectively $ B_-(f))$ and $\epsilon_\alpha^*=1$ (respectively $-1$) if $\alpha\in B_+(f^*)$ (respectively $B_-(f^*))$ for any $\alpha\in \mathbb{F}_p^n$. For any $j\in\mathbb{F}_p$, define \begin{align*}
	N_j(f)&:=\{x\in \mathbb{F}_p^n:\ f(x)=j\},\\
	c_j(f)&:=\{x\in B_+(f^*):\ f(x)=j\},\\
	d_j(f)&:=\{x\in B_-(f^*):\ f(x)=j\},\\
	c_j(f^*)&:=\{x\in B_+(f):\ f^*(x)=j\},\\
	d_j(f^*)&:=\{x\in B_-(f):\ f^*(x)=j\}.
	\end{align*}
	
	Let $\mathcal{DBF}$ be the set of $p$-ary dual-bent functions satisfying the following two conditions:\\
	(1) $f(0)$=0;\\
	(2) There exists a positive integer $h$ with $\mathrm{gcd}(h-1,\ p-1)=1$ such that for any $a\in\mathbb{F}_p^*$, $ x\in\mathbb{F}_{p}^n,\  f(ax)=a^hf(x).$
	
	We give the definition of non-degenerate subspace of $\mathbb{F}_p^n$ in the following.
	\begin{definition}
		Let $V$ be a subspace of $\mathbb{F}_p^n$. $V$ is said to be a non-degenerate subspace with respect to the ordinary inner product in $\mathbb{F}_p^n$ if $\{x\in V: x\cdot y =0\ \text{for\ all}\ y \in V\}=\{0\}$.
	\end{definition}
	
	By the results in \cite{Pelen1,Tang1,Wang1,Wei}, we give some properties of the functions belonging to  $\mathcal{DBF}$ in the following lemmas.
	\begin{lemma}\label{le 1}
		Let $f(x):\mathbb{F}_{p}^n\longrightarrow\mathbb{F}_p$ be a dual-bent function belonging to $\mathcal{DBF}$. Then the following statements hold.
		\begin{itemize}
			\item[$\mathrm{(1)}$]
			For any $ a\in\mathbb{F}_p^*,\ x\in\mathbb{F}_p^n$, there exists a positive integer $t$ with $\mathrm{gcd}(t-1,\ p-1)=1$ such that $f^*(ax)=a^tf^*(x)$.
			\item[$\mathrm{(2)}$] $f^*(0)=0$.
			\item[$\mathrm{(3)}$] For any $ x\in\mathbb{F}_p^n$,  $x\in B_+(f)$ (respectively $B_-(f)$) if and only if $ax\in B_+(f)$ (respectively $B_-(f)$) for any $a\in\mathbb{F}_p^*.$
			\item[$\mathrm{(4)}$] The types of $f(x)$ and $f^*(x)$ are the same if $p^n\equiv 1\ (\mathrm{mod}\ 4)$ and different if $p^n\equiv 3\ (\mathrm{mod}\ 4)$.
			\item[$\mathrm{(5)}$] If $f(x)$ is a non-weakly regular bent function, then we have the following.
			\begin{itemize} 
				\item [$\bullet$] If $B_+(f)$ is a non-degenerate subspace of dimension $r$ over $\mathbb{F}_p$, then 
				for $p^n\equiv 1\ (\mathrm{mod}\ 4)$, we have that $|B_+(f^*)|=p^r$ and $B_+(f)^{\bot}\subseteq c_0(f)$; for $p^n\equiv 3\ (\mathrm{mod}\ 4)$, we have that $|B_-(f^*)|=p^r$ and $B_+(f)^{\bot}\subseteq d_0(f)$.
				\item [$\bullet$] If $B_-(f)$ is a non-degenerate subspace of dimension $r$ over $\mathbb{F}_p$, then 
				for $p^n\equiv 1\ (\mathrm{mod}\ 4)$, we have that $|B_-(f^*)|=p^r$ and $B_-(f)^{\bot}\subseteq d_0(f)$; for $p^n\equiv 3\ (\mathrm{mod}\ 4)$, we have that $|B_+(f^*)|=p^r$ and $B_-(f)^{\bot}\subseteq c_0(f)$.
			\end{itemize}
		\end{itemize}
	\end{lemma}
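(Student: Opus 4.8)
The plan is to derive (1)--(3) from a single computation: substituting $x\mapsto ax$ ($a\in\mathbb{F}_p^*$) in the Walsh transform and then applying the Galois automorphism $\sigma_{a^{-h}}$. Using $f(ax)=a^hf(x)$ this yields, for every $\alpha\in\mathbb{F}_p^n$, the identity $\mathcal{W}_f(a^{1-h}\alpha)=\sigma_{a^{-h}}(\mathcal{W}_f(\alpha))$. I would then expand both sides via $\mathcal{W}_f(\alpha)=\epsilon_\alpha\mu p^{n/2}\xi_p^{f^*(\alpha)}$, together with $\sigma_c(\mu p^{n/2})=\eta(c)^n\mu p^{n/2}$ and $\sigma_c(\xi_p^{j})=\xi_p^{cj}$. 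The key observation is that $\gcd(h-1,p-1)=1$ forces $h-1$ to be odd (since $p-1$ is even), hence $h$ is even, so $\eta(a)^{hn}=1$. Comparing the root-of-unity parts and then the signs on the two sides of the identity (a $p$-th root of unity equal to $\pm1$ must be $1$, as $p$ is odd) gives $f^*(a^{1-h}\alpha)=a^{-h}f^*(\alpha)$ and $\epsilon_{a^{1-h}\alpha}=\epsilon_\alpha$. Since $a\mapsto a^{1-h}$ is a bijection of $\mathbb{F}_p^*$, reparametrising by $b=a^{1-h}$ gives $f^*(bx)=b^{t}f^*(x)$ with $t\equiv-h(1-h)^{-1}\pmod{p-1}$, and $t-1\equiv-(1-h)^{-1}$ is a unit modulo $p-1$; this is (1). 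The sign relation reads $\epsilon_{bx}=\epsilon_x$ for all $b\in\mathbb{F}_p^*$, which is (3). For (2) I would set $x=0$ in (1): $f^*(0)=b^{t}f^*(0)$ for all $b$, which forces $f^*(0)=0$ once $t\not\equiv0\pmod{p-1}$; the remaining case $t\equiv h\equiv0\pmod{p-1}$ (so $f$ and $f^*$ are constant on every punctured line) I would settle by a short congruence in $\mathbb{Z}[\xi_p]$: there $\mathcal{W}_f(0)\equiv1\pmod{(p-1)}$, whereas $\pm\mu p^{n/2}\xi_p^{c}$ with $c\neq0$ is not (using the Gauss-sum expansion of $\mu p^{n/2}$ when $n$ is odd).

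For (4), recall that $f$ has $Type\ (+)$ or $(-)$ according to the sign $\epsilon_0$ (using $f^*(0)=0$), and likewise $f^*$ according to $\epsilon_0^*$ (using $f^{**}(x)=f(-x)$, so $f^{**}(0)=f(0)=0$); since $\eta(-1)^n=1$ when $p^n\equiv1\pmod4$ and $-1$ when $p^n\equiv3\pmod4$, the assertion is equivalent to $\epsilon_0^*=\eta(-1)^n\epsilon_0$. I would try to obtain this from the two Fourier inversions $\sum_\alpha\mathcal{W}_f(\alpha)=p^n=\sum_\alpha\mathcal{W}_{f^*}(\alpha)$ combined with the dual relation and parts (2)--(3), which reduces the claim to an identity for the partial character sums $\sum_{\alpha\in B_\pm(f)}\xi_p^{f^*(\alpha)}$; evaluating these using their non-negativity in $\mathbb{Z}[\xi_p]$ and the value of the quadratic Gauss sum should close the argument (alternatively one can invoke the type-duality for functions in $\mathcal{DBF}$ from \cite{Pelen1,Tang1,Wang1,Wei}). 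I expect this to be the main obstacle: the clean ``factor out $\lambda$'' argument available for weakly regular functions breaks down here, and one has to track the signs $\epsilon_\alpha$ through the character sums.

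For (5), suppose $B_+(f)=V$ is a non-degenerate subspace of dimension $r$; being a subspace it contains $0$, so $\epsilon_0=1$ and, by (4), $\epsilon_0^*=\eta(-1)^n$. I would introduce $W^{\pm}(\alpha)=\sum_{\beta\in B_{\pm}(f)}\xi_p^{f^*(\beta)-\alpha\cdot\beta}$. From the inverse Walsh transform $\sum_\beta\mathcal{W}_f(\beta)\xi_p^{-\alpha\cdot\beta}=p^n\xi_p^{f(-\alpha)}$ and $\mathcal{W}_f(\beta)=\epsilon_\beta\mu p^{n/2}\xi_p^{f^*(\beta)}$ one gets $W^{+}(\alpha)-W^{-}(\alpha)=\mu^{-1}p^{n/2}\xi_p^{f(-\alpha)}$, while $W^{+}(\alpha)+W^{-}(\alpha)=\mathcal{W}_{f^*}(\alpha)=\epsilon_\alpha^*\mu p^{n/2}\xi_p^{f(-\alpha)}$; hence $2W^{+}(\alpha)=(\mu^{-1}+\epsilon_\alpha^*\mu)p^{n/2}\xi_p^{f(-\alpha)}$. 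Summing $|W^{+}(\alpha)|^2$ over $\alpha$ and using orthogonality gives $\sum_\alpha|W^{+}(\alpha)|^2=p^n|V|=p^{n+r}$; on the other hand $\tfrac{1}{4}|\mu^{-1}+\epsilon_\alpha^*\mu|^2$ is the indicator of $B_+(f^*)$ when $p^n\equiv1\pmod4$ and of $B_-(f^*)$ when $p^n\equiv3\pmod4$, so the same sum equals $p^n|B_+(f^*)|$, respectively $p^n|B_-(f^*)|$; comparison yields $|B_+(f^*)|=p^r$, respectively $|B_-(f^*)|=p^r$. For the inclusion, take $\alpha\in V^{\perp}$; then $\alpha\cdot\beta=0$ for all $\beta\in V$, so $W^{+}(\alpha)=W^{+}(0)$, whence $(\mu^{-1}+\epsilon_\alpha^*\mu)\xi_p^{f(-\alpha)}=\mu^{-1}+\epsilon_0^*\mu$, and since $h$ is even ($f(-\alpha)=f(\alpha)$) and $\epsilon_0^*=\eta(-1)^n$, the right-hand side is $2$ if $p^n\equiv1$ and $-2\sqrt{-1}$ if $p^n\equiv3$; either way the equation forces $\epsilon_\alpha^*=\epsilon_0^*$ and $f(\alpha)=0$. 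Thus $V^{\perp}\subseteq c_0(f)$ when $p^n\equiv1$ and $V^{\perp}\subseteq d_0(f)$ when $p^n\equiv3$ (non-degeneracy is what makes $V^{\perp}$ a genuine complement of $V$). The case where $B_-(f)$ is the non-degenerate subspace is treated the same way, exchanging the roles of $B_+$ and $B_-$ and of $c_0$ and $d_0$.
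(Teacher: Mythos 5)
Your proposal is essentially correct, and for most of the lemma it is more self-contained than the paper, whose own proof is almost entirely by citation: parts (1)--(4) are quoted from \cite{Tang1}, \cite{Wei} and \cite{Wang1}, and part (5) is reduced to \cite[Proposition 2.4, Lemma 2.3]{Pelen1} after observing that $0$ lies in the appropriate sets $B_{\pm}$. Your Galois-twist computation $\mathcal{W}_f(a^{1-h}\alpha)=\sigma_{a^{-h}}(\mathcal{W}_f(\alpha))$, together with the parity observation that $\gcd(h-1,p-1)=1$ forces $h$ even, does give (1) and (3) correctly, and (2) follows as you say (the exceptional case $t\equiv h\equiv 0\pmod{p-1}$ is only sketched, but the congruence modulo $(p-1)\mathbb{Z}[\xi_p]$ you indicate does close it, since the basis coefficients of $\pm\mu p^{n/2}\xi_p^c-1$ cannot all be divisible by $p-1$ when $c\neq 0$). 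Your treatment of (5) is a genuinely different and attractive route: the Parseval computation $\sum_\alpha|W^+(\alpha)|^2=p^{n+r}$ combined with the pointwise identity $2W^+(\alpha)=(\mu^{-1}+\epsilon^*_\alpha\mu)p^{n/2}\xi_p^{f(-\alpha)}$ yields the cardinalities directly, and the evaluation of $W^{+}$ on $V^{\bot}$ gives the inclusions, replacing the appeal to \cite{Pelen1}; note that your argument never actually uses non-degeneracy of $V$, so the parenthetical remark about $V^{\perp}$ being a complement is idle rather than wrong.

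The one genuine weak point is (4), and you correctly identify it as the main obstacle, but your proposed direct argument would not work as sketched. The Fourier inversions you invoke give, at $\alpha=0$, only $T_+ - T_-=\mu^{-1}p^{n/2}$ and $T_+ + T_-=\epsilon_0^*\mu p^{n/2}$ with $T_\pm=\sum_{\alpha\in B_\pm(f)}\xi_p^{f^*(\alpha)}$; these determine which of $T_\pm$ vanishes in terms of $\epsilon_0^*$, but they contain no reference to $\epsilon_0$ at all, so by themselves they cannot link the types of $f$ and $f^*$. Moreover the partial sums $T_\pm$ have no usable ``non-negativity'' in $\mathbb{Z}[\xi_p]$, and the homogeneity from (1)--(3) only organizes them into Gauss-period contributions of unknown sign. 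So (4) really does require the external type-duality result, i.e.\ your fallback citation to \cite{Wang1} (this is exactly what the paper does). Since your proof of (5) uses (4) to know $\epsilon_0^*$ on $V^{\perp}$, you should state clearly that (4) is taken from the literature rather than proved; with that adjustment the whole argument is sound.
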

	\begin{proof}
		$\mathrm{(1)}$ The result follows from \cite[Proposition 5]{Tang1} when $f(x)$ is weakly regular, and follows from 
		\cite[Lemma 9]{Wei} when $f(x)$ is non-weakly regular.\\
		$\mathrm{(2)}$ The result follows from \cite[Proposition 4]{Tang1} when $f(x)$ is weakly regular, and follows from 
		\cite[Lemma 10]{Wei} when $f(x)$ is non-weakly regular.\\
		$\mathrm{(3)}$ The result obviously holds when $f(x)$ is weakly regular, and follows from 
		\cite[Remark 6]{Wei} when $f(x)$ is non-weakly regular.\\
		$\mathrm{(4)}$ The result follows from \cite[Proposition 2]{Wang1}.\\
		$\mathrm{(5)}$ If $B_+(f)$ is a non-degenerate subspace of dimension $r$ over $\mathbb{F}_p$, then by part (4), we know that for $p^n\equiv 1\ (\mathrm{mod}\ 4)$, $0\in B_+(f)\cap B_+(f^*)$ and for $p^n\equiv 3\ (\mathrm{mod}\ 4)$, $0\in B_+(f)\cap B_-(f^*)$. If $B_-(f)$ is a non-degenerate subspace of dimension $r$ over $\mathbb{F}_p$, then by part (4), we know that for $p^n\equiv 1\ (\mathrm{mod}\ 4)$, $0\in B_-(f)\cap B_-(f^*)$ and for $p^n\equiv 3\ (\mathrm{mod}\ 4)$, $0\in B_-(f)\cap B_+(f^*)$. Since $f(x)\in\mathcal{DBF}$, then $f(x)=f(-x)$  for any $x\in\mathbb{F}_p^n$ and $f(0)=0$. Thus, the result follows from \cite[Proposition 2.4, Lemma 2.3]{Pelen1}.\qed
	\end{proof}

	Let $f(x):\mathbb{F}_{p}^n\longrightarrow\mathbb{F}_p$ be a dual-bent function belonging to $\mathcal{DBF}$ and $f^*(x)$ be its dual. In the sequel, denote $t$ as a positive integer satisfying $\rm{gcd}(t-1,\ p-1)=1$ and $f^*(ax)=a^tf^*(x)$ for any $a\in\mathbb{F}_p^*$, $x\in\mathbb{F}_p^n$. Define $S_0(f^*,y)=\sum\limits_{\alpha\in B_+(f^*)}\xi_p^{f(\alpha)+\alpha\cdot y}$ and $ S_1(f^*,y)=\sum\limits_{\alpha\in B_-(f^*)}\xi_p^{f(\alpha)+\alpha\cdot y} $ for any $y\in \mathbb{F}_p^n$. Let $S_0(f^*,y)=0$ (respectively $S_1(f^*,y)=0$) if $B_+(f^*)=\emptyset$ (respectively $B_-(f^*)=\emptyset)$. In the following lemma, we give the values of $S_0(f^*,y)$ and $S_1(f^*,y)$.
	
	\begin{lemma}\label{le 2}
		Let $f(x):\mathbb{F}_{p}^n\longrightarrow\mathbb{F}_p$ be a dual-bent function belonging to $\mathcal{DBF}$. Then we have the following.
		\begin{itemize}
			\item [$\bullet$] If $n$ is even, then 
			$S_0(f^*,y)=\frac{\epsilon_{y}+1}{2}p^{\frac{n}{2}}\xi_p^{f^*(y)}$ and  $S_1(f^*,y)=\frac{\epsilon_{y}-1}{2}p^{\frac{n}{2}}\xi_p^{f^*(y)}$.
			\item [$\bullet$] If $n$ is odd, then $S_0(f^*,y)=\frac{\epsilon_{y}+\eta(-1)}{2}\sqrt{p^*}p^{\frac{n-1}{2}}\xi_p^{f^*(y)}$ and  $S_1(f^*,y)=\frac{\epsilon_{y}-\eta(-1)}{2}\sqrt{p^*}p^{\frac{n-1}{2}}\xi_p^{f^*(y)}$.
		\end{itemize}
	\end{lemma}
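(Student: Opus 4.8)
The plan is to compute the two combinations $S_0(f^*,y)+S_1(f^*,y)$ and $S_0(f^*,y)-S_1(f^*,y)$ in closed form and then recover $S_0(f^*,y)$ and $S_1(f^*,y)$ by solving the resulting $2\times 2$ linear system. Two elementary facts are used repeatedly. First, $B_+(f^*)$ and $B_-(f^*)$ partition $\mathbb{F}_p^n$. Second, $f(-x)=f(x)$ and $f^*(-y)=f^*(y)$: indeed $f\in\mathcal{DBF}$ and Lemma \ref{le 1}$\mathrm{(1)}$ give $f(ax)=a^hf(x)$ and $f^*(ax)=a^tf^*(x)$ with $\gcd(h-1,p-1)=\gcd(t-1,p-1)=1$, and since $p-1$ is even both $h$ and $t$ must be even; moreover $f^{**}(x)=f(-x)=f(x)$, using the dual-bent identity $f^{**}(x)=f(-x)$ recalled above.

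For the sum, partitioning the index set and recognizing a Walsh transform gives
\[
S_0(f^*,y)+S_1(f^*,y)=\sum_{\alpha\in\mathbb{F}_p^n}\xi_p^{f(\alpha)+\alpha\cdot y}=\mathcal{W}_f(-y)=\epsilon_{-y}\mu p^{\frac{n}{2}}\xi_p^{f^*(-y)}.
\]
By Lemma \ref{le 1}$\mathrm{(3)}$ we have $\epsilon_{-y}=\epsilon_y$, and $f^*(-y)=f^*(y)$, so $S_0(f^*,y)+S_1(f^*,y)=\epsilon_y\mu p^{\frac{n}{2}}\xi_p^{f^*(y)}$.

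For the difference, the key step is to pass through the Walsh transform of $f^*$. Since $\epsilon^*_\alpha=1$ on $B_+(f^*)$ and $-1$ on $B_-(f^*)$, we have $S_0(f^*,y)-S_1(f^*,y)=\sum_{\alpha\in\mathbb{F}_p^n}\epsilon^*_\alpha\,\xi_p^{f(\alpha)+\alpha\cdot y}$. Because $f^{**}=f$, the Walsh formula for the bent function $f^*$ reads $\mathcal{W}_{f^*}(\alpha)=\epsilon^*_\alpha\mu p^{\frac{n}{2}}\xi_p^{f(\alpha)}$, so $\epsilon^*_\alpha\xi_p^{f(\alpha)}=(\mu p^{\frac{n}{2}})^{-1}\mathcal{W}_{f^*}(\alpha)$; substituting this and applying the inverse Walsh transform to $f^*$ yields
\[
S_0(f^*,y)-S_1(f^*,y)=\frac{1}{\mu p^{\frac{n}{2}}}\sum_{\alpha\in\mathbb{F}_p^n}\mathcal{W}_{f^*}(\alpha)\xi_p^{\alpha\cdot y}=\frac{p^n}{\mu p^{\frac{n}{2}}}\xi_p^{f^*(y)}=\frac{p^{\frac{n}{2}}}{\mu}\xi_p^{f^*(y)}.
\]

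Adding and subtracting the two identities gives $2S_0(f^*,y)=(\epsilon_y\mu+\mu^{-1})p^{\frac{n}{2}}\xi_p^{f^*(y)}$ and $2S_1(f^*,y)=(\epsilon_y\mu-\mu^{-1})p^{\frac{n}{2}}\xi_p^{f^*(y)}$. If $n$ is even then $p^n\equiv 1\pmod{4}$, so $\mu=1$ and the first pair of claimed formulas is immediate. If $n$ is odd, I would write $p^{\frac{n}{2}}=\sqrt{p}\,p^{\frac{n-1}{2}}$ and verify the identities $\mu\sqrt{p}=\sqrt{p^*}$ and $\mu^{-1}\sqrt{p}=\eta(-1)\sqrt{p^*}$ by treating the cases $\eta(-1)=1$ (then $\mu=1$ and $p^*=p$) and $\eta(-1)=-1$ (then $\mu=\sqrt{-1}$ and $\sqrt{p^*}=\sqrt{-1}\,\sqrt{p}$); feeding these back in converts $(\epsilon_y\mu\pm\mu^{-1})p^{\frac{n}{2}}$ into $(\epsilon_y\pm\eta(-1))\sqrt{p^*}\,p^{\frac{n-1}{2}}$, which is exactly the second pair of formulas. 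I expect the only genuine difficulty to be this final bookkeeping with $\mu$ and the branch of $\sqrt{p^*}$ in the odd-$n$ case, together with making sure the auxiliary identities $f(-x)=f(x)$, $f^*(-y)=f^*(y)$ and $f^{**}=f$ are pinned down correctly from the $\mathcal{DBF}$ hypotheses and Lemma \ref{le 1}.
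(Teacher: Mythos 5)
Your proposal is correct and follows essentially the same route as the paper: you compute $S_0(f^*,y)+S_1(f^*,y)=\mathcal{W}_f(-y)$ and $S_0(f^*,y)-S_1(f^*,y)$ via the Walsh transform of $f^*$ (using $f^{**}(x)=f(-x)=f(x)$) together with the inverse Walsh transform, and then solve the $2\times 2$ system, exactly as in the paper's Equations (1)--(4); your $\mu$-versus-$\sqrt{p^*}$ bookkeeping at the end is also consistent. The only cosmetic difference is that the paper treats the weakly regular case separately (where one of $B_\pm(f^*)$ is empty), whereas your uniform argument covers it as well since $f^{**}(x)=f(-x)$ holds for all dual-bent functions in $\mathcal{DBF}$.
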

	\begin{proof} According to Lemma \ref{le 1}, we know that for any $y\in\mathbb{F}_p^n$, $f^*(-y)=f^*(y)$, $f(-y)=f(y)$ and  $\epsilon_y=\epsilon_{-y}$ .
		When $f(x)$ is weakly regular, again by Lemma \ref{le 1}, we know that if $\epsilon_y=1$ for any $y\in\mathbb{F}_p^n$, then $B_+(f^*)=\mathbb{F}_p^n$, $B_-(f^*)=\emptyset$ for $p^n\equiv 1\ (\mathrm{mod}\ 4)$, and $B_+(f^*)=\emptyset$, $B_-(f^*)=\mathbb{F}_p^n$ for $p^n\equiv 3\ (\mathrm{mod}\ 4)$; if  $\epsilon_y=-1$ for any $y\in\mathbb{F}_p^n$, then $B_+(f^*)=\emptyset$, $B_-(f^*)=\mathbb{F}_p^n$ for $p^n\equiv 1\ (\mathrm{mod}\ 4)$, and $B_+(f^*)=\mathbb{F}_p^n$, $B_-(f^*)=\emptyset$ for $p^n\equiv 3\ (\mathrm{mod}\ 4)$. Thus, the result directly follows from the definition of Walsh transform of $f(x)$. When $f(x)$ is non-weakly regular, by the definition of Walsh transform, we have that 
		\begin{equation}\begin{split}S_0(f^*,y)+S_1(f^*,y)&=\mathcal{W}_{f}(-y)=\mathcal{W}_{f}(y)\\
		&=\begin{cases} \epsilon_y p^{\frac{n}{2}}\xi_p^{f^*(y)}, &\text{if\ } n\  \text{is\ even},\\
		\epsilon_y\sqrt{p^*}p^{\frac{n-1}{2}}\xi_p^{f^*(y)}, &\text{if\ } n\ \text{is\ odd.}
		\end{cases}\end{split}\end{equation}\\
		Since $f^{**}(\alpha)=f(-\alpha)=f(\alpha)$ for any $\alpha\in \mathbb{F}_p^n$, then\\ \begin{equation}\mathcal{W}_{f^*}(\alpha)=\begin{cases}
		\epsilon_{\alpha}^*p^{\frac{n}{2}}\xi_p^{f(\alpha)},&\text{if}\ n\ \text{is\ even},\\
		\epsilon_{\alpha}^*\sqrt{p^*} p^{\frac{n-1}{2}}\xi_p^{f(\alpha)},&\text{if}\ n\ \text{is\ odd}.
		\end{cases}\end{equation}\\
		By the inverse Walsh transform, we have that \begin{equation}\sum\limits_{\alpha\in \mathbb{F}_p^n}\mathcal{W}_{f^*}(\alpha)\xi_p^{\alpha\cdot y}=p^n\xi_p^{f^*(y)}.\end{equation}
		Thus, by Equations (2) and (3), we have that 
		\begin{equation}S_0(f^*,y)-S_1(f^*,y)=\begin{cases}
		p^{\frac{n}{2}}\xi_p^{f^*(y)},&\text{if}\ n\ \text{is\ even},\\
		\eta(-1)\sqrt{p^*}p^{\frac{n-1}{2}}\xi_p^{f^*(y)},&\text{if}\ n\ \text{is\ odd}.\\
		\end{cases}\end{equation}
		Combining Equations (1) and (4), we easily obtain the values of $S_0(f^*,y)$ and $S_1(f^*,y)$.\qed
	\end{proof}
	
	The following lemma gives the value distributions of $p$-ary bent functions.
	\begin{lemma}\label{le 3} \cite{Ozbudak1} Let $f(x):\mathbb{F}_{p}^n\longrightarrow\mathbb{F}_p$ be a $p$-ary bent function with $f^{*}(0)=j_0$, then we have the following.                                           	\begin{itemize}
			\item[$\bullet$]If $n$ is even, then \\$|N_{j_0}(f)|=p^{n-1}\pm p^{\frac{n}{2}}\mp p^{\frac{n}{2}-1},$ $|N_j(f)|=p^{n-1}\mp p^{\frac{n}{2}-1},$ for $j\ne j_0\in \mathbb{F}_p$.
			\item[$\bullet$]If $n$ is odd, then\\
			$|N_{j_0}(f)|=p^{n-1},\ |N_{j_0+j}(f)|=p^{n-1}\pm\eta(j)p^{\frac{n-1}{2}},$ for $ j\in\mathbb{F}_p^*$.
		\end{itemize}
		Here the sign is $+$ (respectively $-$) if and only if the type of $f(x)$ is $(+)$ (respectively $(-)$).
	\end{lemma}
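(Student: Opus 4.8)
\noindent\textit{Proof outline.}\quad The plan is to extract $|N_j(f)|$ directly from the additive characters of $\mathbb{F}_p$. By orthogonality, $\sum_{a\in\mathbb{F}_p}\xi_p^{a(f(x)-j)}$ equals $p$ when $f(x)=j$ and $0$ otherwise, so summing over $x$ and interchanging the order of summation gives
\[
p\,|N_j(f)|=\sum_{a\in\mathbb{F}_p}\xi_p^{-aj}\sum_{x\in\mathbb{F}_p^n}\xi_p^{af(x)}=p^n+\sum_{a\in\mathbb{F}_p^*}\xi_p^{-aj}\,T_a,\qquad T_a:=\sum_{x\in\mathbb{F}_p^n}\xi_p^{af(x)}.
\]
Thus the whole computation reduces to determining $T_a$ for $a\in\mathbb{F}_p^*$.

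The key step is the identity $T_a=\sigma_a(\mathcal{W}_f(0))$: since $\sigma_a$ is a ring automorphism of $\mathbb{Q}(\xi_p)$ fixing $\mathbb{Z}$ with $\sigma_a(\xi_p)=\xi_p^a$, applying it to $\mathcal{W}_f(0)=\sum_x\xi_p^{f(x)}$ term by term yields exactly $T_a$. Now feed in the bent evaluation $\mathcal{W}_f(0)=\epsilon\,\mu\,p^{n/2}\xi_p^{j_0}$, where $j_0=f^*(0)$ and $\epsilon=+1$ (resp. $-1$) according as $f$ is of Type $(+)$ (resp. $(-)$). When $n$ is even we have $\mu=1$, so $T_a=\epsilon\,p^{n/2}\xi_p^{aj_0}$. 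When $n$ is odd I would first verify that $\mu p^{n/2}=p^{(n-1)/2}\sqrt{p^*}$ in both cases $p\equiv 1$ and $p\equiv 3\pmod 4$ — this is precisely where the extra factor $\sqrt{-1}$ in $\mu$ matches $\sqrt{p^*}=\sqrt{-p}$ — so that $\mathcal{W}_f(0)=\epsilon\,p^{(n-1)/2}\sqrt{p^*}\,\xi_p^{j_0}$ and hence, using $\sigma_a(\sqrt{p^*})=\eta(a)\sqrt{p^*}$, we get $T_a=\epsilon\,p^{(n-1)/2}\eta(a)\sqrt{p^*}\,\xi_p^{aj_0}$.

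It remains to substitute these expressions into the displayed formula and evaluate the resulting character sums. For $n$ even, $\sum_{a\in\mathbb{F}_p^*}\xi_p^{a(j_0-j)}$ equals $p-1$ if $j=j_0$ and $-1$ otherwise, which immediately produces $|N_{j_0}(f)|=p^{n-1}\pm p^{n/2}\mp p^{n/2-1}$ and $|N_j(f)|=p^{n-1}\mp p^{n/2-1}$ for $j\neq j_0$, with the signs controlled by $\epsilon$. For $n$ odd, the relevant sum is $\sum_{a\in\mathbb{F}_p^*}\eta(a)\xi_p^{a(j_0-j)}$: it vanishes when $j=j_0$ (the quadratic character sums to zero over $\mathbb{F}_p^*$), giving $|N_{j_0}(f)|=p^{n-1}$; and for $j\neq j_0$ it equals $\eta(j_0-j)\sqrt{p^*}$ by the standard quadratic Gauss sum identity $\sum_{a\in\mathbb{F}_p}\eta(a)\xi_p^{ab}=\eta(b)\sqrt{p^*}$. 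Combining this with $\big(\sqrt{p^*}\big)^2=p^*=\eta(-1)p$ and the simplification $\eta(-1)\eta(j_0-j)=\eta(j-j_0)$ yields $|N_{j_0+j'}(f)|=p^{n-1}\pm\eta(j')\,p^{(n-1)/2}$ for $j'\in\mathbb{F}_p^*$, again with sign $\epsilon$.

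The only genuinely delicate part is the odd-$n$ bookkeeping: identifying $\mu p^{n/2}$ with $p^{(n-1)/2}\sqrt{p^*}$ according to $p\bmod 4$, carrying the Type-sign $\epsilon$ correctly through $\sigma_a$, and the final sign collapse $\eta(-1)\eta(j_0-j)=\eta(j-j_0)$ coming from $\big(\sqrt{p^*}\big)^2=\eta(-1)p$. The even-$n$ case and all the orthogonality manipulations are routine.
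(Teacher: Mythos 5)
Your proof is correct: the orthogonality reduction to $T_a=\sum_x\xi_p^{af(x)}$, the identification $T_a=\sigma_a(\mathcal{W}_f(0))$ with $\sigma_a(\sqrt{p^*})=\eta(a)\sqrt{p^*}$, and the final Gauss-sum evaluation with the sign collapse $\eta(-1)\eta(j_0-j)=\eta(j-j_0)$ all check out, and they reproduce exactly the stated counts with the sign governed by the type of $f$. The paper itself gives no proof of Lemma \ref{le 3} (it is quoted from \cite{Ozbudak1}), and your argument is essentially the standard one used there, so there is nothing further to reconcile.
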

	
	For $p$-ary bent functions belonging to $\mathcal{DBF}$, we give the values of $|c_i(f)|$ and $|d_i(f)|$ for any $i\in \mathbb{F}_p$ in the following lemma.
	\begin{lemma} \label{le 4}
		Let $f(x):\mathbb{F}_{p}^n\longrightarrow\mathbb{F}_p$ be a dual-bent function belonging to $\mathcal{DBF}$ and $|B_{+}(f^*)|=k\ (0\le k\le p^n)$. Then we have the following.
		\begin{itemize}
			\item[$\bullet$]
			If $n$ is even, then  \\
			$|c_{0}(f)|=\begin{cases}\frac{k}{p} +(p-1)p^{\frac{n}{2}-1},&\hspace{1.2cm}         \text{if}\ 0\in B_+(f),\\
			\frac{k}{p},&\hspace{1.2cm}     \text{if}\ 0\in B_-(f);
			\end{cases}$\\ 
			$|d_{0}(f)|=\begin{cases}p^{n-1}-\frac{k}{p},\hspace{2.8cm}   &\text{if}\ 0\in B_+(f),\\
			p^{n-1}-(p-1)p^{\frac{n}{2}-1}-\frac{k}{p},\ &\text{if}\ 0\in B_-(f);\end{cases}$\\
			and for $j\in \mathbb{F}_p^*$\\
			$|c_j(f)|\ =\begin{cases}
			\frac{k}{p}-p^{\frac{n}{2}-1},&\hspace{2.2cm} \text{if}\ 0\in B_+(f),\\
			\frac{k}{p},\ &\hspace{2.2cm}\text{if}\ 0\in B_-(f);\end{cases}$\\
			$|d_j(f)|\ =\begin{cases}
			p^{n-1}-\frac{k}{p},&\hspace{1cm} \text{if}\ 0\in B_+(f),\\
			p^{n-1}+p^{\frac{n}{2}-1}-\frac{k}{p},\ &\hspace{1cm}\text{if}\ 0\in B_-(f).\end{cases}$\\
			\item[$\bullet$] If $p\equiv 1$ $\mathrm{(mod\  4)}$ and $n$ is odd, then \\
			$|c_{0}(f)|=\frac{k}{p};$\\
			$|d_{0}(f)|=p^{n-1}-\frac{k}{p};$\\
			and for $j\in\mathbb{F}_p^*$\\
			$|c_{j}(f)|\ =\begin{cases}
			\frac{k}{p}+\eta(j)p^{\frac{n-1}{2}},&\hspace{1.5cm} \text{if}\ 0\in B_+(f),\\
			\frac{k}{p},\ &\hspace{1.5cm} \text{if}\ 0\in B_-(f);\end{cases}$\\
			$|d_{j}(f)|\ =\begin{cases} p^{n-1}-\frac{k}{p},&\hspace{0.3cm} \text{if}\ 0\in B_+(f),\\
			p^{n-1}-\eta(j)p^{\frac{n-1}{2}}-\frac{k}{p},\ &\hspace{0.3cm}\text{if}\ 0\in B_-(f).\end{cases}$\\
			\item[$\bullet$] If $p\equiv 3$ $\mathrm{(mod\  4)}$ and $n$ is odd, then\\
			$|c_{0}(f)|=\frac{k}{p};$\\
			$|d_{0}(f)|=p^{n-1}-\frac{k}{p};$\\
			and for $j\in\mathbb{F}_p^*$\\
			$|c_{j}(f)|\ =\begin{cases}
			\frac{k}{p},\ &\hspace{1.5cm}\text{if}\ 0\in B_+(f),\\
			\frac{k}{p}-\eta(j)p^{\frac{n-1}{2}},&\hspace{1.5cm} \text{if}\ 0\in B_-(f);\end{cases}$\\
			$|d_{j}(f)|\ =\begin{cases} p^{n-1}+\eta(j)p^{\frac{n-1}{2}}-\frac{k}{p},&\hspace{0.3cm} \text{if}\ 0\in B_+(f),\\
			p^{n-1}-\frac{k}{p},\ &\hspace{0.3cm}\text{if}\ 0\in B_-(f).\end{cases}$\\	
		\end{itemize}
	\end{lemma}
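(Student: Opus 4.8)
The plan is to determine the $2p$ quantities $|c_j(f)|,|d_j(f)|$ ($j\in\mathbb{F}_p$) from just two inputs: a rational normalisation coming from the partition structure, and one identity inside the cyclotomic field $\mathbb{Q}(\xi_p)$ coming from Lemma~\ref{le 2}. For the normalisation, observe that $B_+(f^*)$ and $B_-(f^*)$ partition $\mathbb{F}_p^n$, and that the level sets $N_j(f)$ partition $\mathbb{F}_p^n$ as $j$ runs over $\mathbb{F}_p$; hence $c_j(f),d_j(f)$ partition $N_j(f)$, while $\{c_j(f)\}_{j\in\mathbb{F}_p}$ and $\{d_j(f)\}_{j\in\mathbb{F}_p}$ partition $B_+(f^*)$ and $B_-(f^*)$ respectively. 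In particular $\sum_{j\in\mathbb{F}_p}|c_j(f)|=k$ and $\sum_{j\in\mathbb{F}_p}|d_j(f)|=p^n-k$.

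For the cyclotomic identity, I would specialise the sums of Lemma~\ref{le 2} at $y=0$. Unwinding the definitions, $S_0(f^*,0)=\sum_{\alpha\in B_+(f^*)}\xi_p^{f(\alpha)}=\sum_{j\in\mathbb{F}_p}|c_j(f)|\,\xi_p^{j}$, and likewise $S_1(f^*,0)=\sum_{j\in\mathbb{F}_p}|d_j(f)|\,\xi_p^{j}$. On the other hand, Lemma~\ref{le 2} evaluates $S_0(f^*,0)$ and $S_1(f^*,0)$ once one substitutes $f^*(0)=0$ (Lemma~\ref{le 1}(2)) and uses that $\epsilon_0=1$ exactly when $0\in B_+(f)$ and $\epsilon_0=-1$ exactly when $0\in B_-(f)$: for $n$ even the right-hand sides are $\pm p^{n/2}$ or $0$, and for $n$ odd they are $0$ or $\pm\sqrt{p^*}\,p^{(n-1)/2}$. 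In the odd case I would expand $\sqrt{p^*}$ as the quadratic Gauss sum $\sum_{a\in\mathbb{F}_p^*}\eta(a)\xi_p^{a}$ (the square root normalised so that $\sigma_a(\sqrt{p^*})=\eta(a)\sqrt{p^*}$, as in Section~\ref{Section2.2}), so that in every case $\sum_j|c_j(f)|\xi_p^j$ and $\sum_j|d_j(f)|\xi_p^j$ become explicit integer combinations of $1,\xi_p,\dots,\xi_p^{p-1}$.

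It then remains to compare coefficients, using that the only $\mathbb{Q}$-linear relation among $1,\xi_p,\dots,\xi_p^{p-1}$ is $\sum_{j=0}^{p-1}\xi_p^j=0$ up to scaling: if $a_j,b_j\in\mathbb{Z}$ and $\sum_{j=0}^{p-1}a_j\xi_p^j=\sum_{j=0}^{p-1}b_j\xi_p^j$, then $a_j-b_j$ is independent of $j$. Applying this to the two identities above determines each $|c_j(f)|$ and each $|d_j(f)|$ up to a single additive constant, which is pinned down by the normalisations $\sum_j|c_j(f)|=k$, $\sum_j|d_j(f)|=p^n-k$ (together with $\sum_{a\in\mathbb{F}_p^*}\eta(a)=0$ in the odd case). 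Running this through the three regimes ($n$ even; $n$ odd with $p\equiv1\ (\mathrm{mod}\ 4)$; $n$ odd with $p\equiv3\ (\mathrm{mod}\ 4)$), each split according to whether $0\in B_+(f)$ or $0\in B_-(f)$, reproduces all the displayed values; as a consistency check one can recover $|c_j(f)|+|d_j(f)|=|N_j(f)|$ from Lemma~\ref{le 3}, with the type of $f$ being $(+)$ exactly when $0\in B_+(f)$ by definition. The only real difficulty I anticipate is bookkeeping: carrying the signs correctly through the roughly eight cases and making sure the square root occurring in Lemma~\ref{le 2} is the Gauss-sum value rather than its negative. Conceptually nothing is delicate once the rational normalisation and the single $\mathbb{Q}(\xi_p)$-identity are written down.
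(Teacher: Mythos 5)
Your proposal is correct, and it is a genuinely different argument from the one in the paper. The paper disposes of Lemma~\ref{le 4} by citation: in the weakly regular case it reads the counts off the known value distribution of bent functions (Lemma~\ref{le 3}, using $f^*(0)=0$ and the fact that the type of $f$ is $(+)$ precisely when $0\in B_+(f)$), and in the non-weakly regular case it observes $f^{**}(x)=f(-x)=f(x)$ and invokes Lemma~4 of the external reference \cite{Wei}. You instead derive everything in one stroke from Lemma~\ref{le 2} specialised at $y=0$: the identities $S_0(f^*,0)=\sum_j|c_j(f)|\xi_p^j$ and $S_1(f^*,0)=\sum_j|d_j(f)|\xi_p^j$, the evaluations of the right-hand sides via $f^*(0)=0$ and $\epsilon_0=\pm1$ (expanding $\sqrt{p^*}$ as the Gauss sum $\sum_{a\in\mathbb{F}_p^*}\eta(a)\xi_p^a$ of Lemma~\ref{le 7}(3), consistent with the paper's normalisation $\sigma_a(\sqrt{p^*})=\eta(a)\sqrt{p^*}$), coefficient comparison in $\mathbb{Q}(\xi_p)$ where the only relation is $\sum_{j}\xi_p^j=0$, and the normalisations $\sum_j|c_j(f)|=k$, $\sum_j|d_j(f)|=p^n-k$ (with $\sum_{a\in\mathbb{F}_p^*}\eta(a)=0$ in the odd case). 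I checked the six regimes ($n$ even, and $n$ odd with $p\equiv1,3\pmod 4$, each split by $0\in B_\pm(f)$) and your scheme reproduces exactly the displayed values; there is also no circularity, since Lemma~\ref{le 2} is proved independently of Lemma~\ref{le 4}. What your route buys is a uniform, self-contained proof that does not distinguish weakly regular from non-weakly regular functions and does not rely on the unpublished reference; what the paper's route buys is brevity, since it leans on the standard distribution result and an already-proved counting lemma. Your only real exposure is the bookkeeping you yourself flag (signs through the cases and the Gauss-sum branch), and those all come out right.
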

	\begin{proof}
		When $f(x)$ is weakly regular, the results follow from Lemma \ref{le 3}. When $f(x)$ is non-weakly regular, since $f(x)\in \mathcal{DBF}$, we know that $f^{**}(x)=f(-x)=f(x)$ for any $x\in \mathbb{F}_p^n$, then the results follow from \cite[Lemma 4]{Wei}. \qed
	\end{proof}
	\begin{remark}\label{re 1}
		
		$\mathrm{(1)}$ If $f(x)$ is a dual-bent function belonging to $\mathcal{DBF}$, then by the results of \cite[Lemma 4]{Wei}, Lemmas \ref{le 1}, \ref{le 3} and \ref{le 4}, we easily get that for $p^n\equiv 1\ (\mathrm{mod}\ 4)$, if $|B_+(f)|=|B_+(f^*)|=k\ (0\le k\le p^n)$, then $|c_j(f)|=|c_j(f^*)|, |d_j(f)|=|d_j(f^*)|$ for any  $j\in\mathbb{F}_p$. For $p^n\equiv 3\ (\mathrm{mod}\ 4)$, if $0\in B_+(f)$ and $|B_-(f)|=|B_+(f^*)|=k\ (0\le k< p^n)$, then $|d_0(f)|=|c_0(f^*)|,$ $|c_i(f)|=|d_i(f^*)|$ for any $i\in \mathbb{F}_p$, and $|d_j(f)|=|c_s(f^*)|$, where $j,s\in\mathbb{F}_p^*$ and $\eta(js)=-1$; if $0\in B_-(f)$ and $|B_-(f)|=|B_+(f^*)|=k\ (0< k\le p^n)$, then $|c_0(f)|=|d_0(f^*)|$,  $|d_i(f)|=|c_i(f^*)|$ for any $i\in \mathbb{F}_p$, and $|c_j(f)|=|d_s(f^*)|$, where $j,s\in\mathbb{F}_p^*$ and $\eta(js)=-1$.\\
		$\mathrm{(2)}$ If $f(x)$ is non-weakly regular, then by Lemma \ref{le 4}, we have that when $n$ is even, if $B_+(f^*)$ is an $r$-dimensional subspace over $\mathbb{F}_p$, then $\frac{n}{2}\le r\le n-1$ and if $B_-(f^*)$ is an $r$-dimensional subspace over $\mathbb{F}_p$, then $\frac{n}{2}+1\le r\le n-1$; when $n$ is odd, if $B_+(f^*)$ or $B_-(f^*)$ is an $r$-dimensional subspace over $\mathbb{F}_p$, then $\frac {n+1}{2}\le r\le n-1$.

	\end{remark}
	\subsection{Association schemes}
	\quad 
	\begin{definition}
		Let $X$ be a nonempty finite set. A \emph {$d$-class association scheme} $(X, \{R_i\}_{i=0}^d)$ is a partition of $X\times X$ into binary relations $R_0,\ R_1,\ \dots,\ R_d$ satisfying the following properties:
		\begin{itemize}
			\item [$\bullet$] $R_0=\{(x,\ x): x\in X\}$;
			\item [$\bullet$] For any $j\in \{0,\ 1,\ \dots, d\}$, $R_j^{T}=R_{j'}$ for some $j'\in \{0,\ 1,\ \dots, d\}$, where $R_j^{T}=\{(y,\ x): (x,\ y)\in R_j\}$. If $j=j'$, we call $R_j$ symmetric;
			\item [$\bullet$] For $u,\ v,\ w\in\{0,\ 1,\ \dots, d\}$  and $x,\ y \in X$ with $(x,\ y)\in R_w$, the number $p_{u,v}^w=|\{z\in X: (x,\ z)\in R_u,\ (z,\ y)\in R_v\}|$ depends only on $u,\ v,\ w$. These numbers $p_{u,v}^w$ are called \emph{intersection numbers}.
		\end{itemize}
	\end{definition}

	If $p_{u,v}^w=p_{v,u}^w$ for any $u,\ v,\ w\in\{0,\ 1,\ \dots, d\}$, then the association scheme is called \emph{commutative}. If $R_u$ is symmetric for any $u\in \{0,\ 1,\ \dots, d\}$, then the association scheme is called \emph{symmetric}. Clearly, a symmetric association scheme is commutative. A \emph{fusion} of an association scheme $(X, \{R_i\}_{i=0}^d)$ is a partition $\{S_0,\ S_1,\ \dots,S_t\}$ of $X\times X$ such that $S_0=R_0$ and $S_j$, $1\le j \le t$, is the union of some of the relations $R_s$, $1\le s \le d$.
	
	Let $(X, \{R_i\}_{i=0}^d)$ be a $d$-class association scheme, and denote $A_i$ as the adjacency matrix of $R_i$ whose $(x,y)$ entry is 1 if $(x,y)\in R_i$ and $0$ otherwise for any $i\in\{0,\ 1,\ \dots, d\}$. The $\mathbb{C}$-linear span of $A_0,A_1,\dots,A_d$ forms an algebra $\mathcal{A}$ of dimension of $d+1$, called the \emph{Bose-Mesner algebra} of the association scheme.
	When $(X, \{R_i\}_{i=0}^d)$ is commutative, the algebra $\mathcal{A}$ has a basis of idempotents $E_0, E_1,\dots,E_d$ such that $E_uE_v=\delta_{uv}E_u$ for any $u,v\in \{0,\ 1,\ \dots, d\}$, where $\delta_{uv}=1$ if $u=v$ and $\delta_{uv}=0$ if $u\ne v$. The $(d+1)\times(d+1)$ matrix $P=[P_{ij}]$, where $P_{ij}$ is the element in the $i$-th row and $j$-th column of $P$, such that 
	\begin{equation}
	(A_0,A_1,\dots,A_d)=(E_0,E_1,\dots,E_d)P,
	\end{equation}
	is called the \emph{first eigenmatrix} of the association scheme. Dually, the $(d+1)\times(d+1)$ matrix $Q=[Q_{ij}]$, where $Q_{ij}$ is the element in the $i$-th row and $j$-th column of $Q$, such that 
	\begin{equation}
	(E_0,E_1,\dots,E_d)=\frac{1}{|X|}(A_0,A_1,\dots,A_d)Q,
	\end{equation}
	is called the \emph{second eigenmatrix} of the association scheme. There exist numbers $q_{u,v}^w$, called the \emph{Krein parameters}, such that 
	\begin{equation}
	E_u\circ E_v=\frac{1}{|X|}\sum\limits_{w=0}^dq_{u,v}^wE_w,\ \text{for}\ u,v\in\{0,\ 1,\ \dots, d\},
	\end{equation}
	where $\circ$ is the Hadamard product, defined by $(A\circ B)_{ij}=A_{ij}B_{ij}$  for two matrices $A,B$ of order $m$.
	
	\begin{remark}\label{re 2}
		For a $d$-class symmetric association scheme $(X, \{R_i\}_{i=0}^d)$, by the definitions of adjacency matrix and association scheme, we easily know that $A_uA_v=\sum\limits_{w=0}^dp_{u,v}^wA_w$  and $A_u\circ A_v=\delta_{uv}A_u$ for $u,v\in\{0,\ 1,\ \dots, d\}$. Since $E_uE_v=\delta_{uv}E_u$ for $u,v\in\{0,1,\dots,d\}$,  by Equations (5)-(7), for any $u,\ v,\ w\in\{0,\ 1,\ \dots, d\}$, we have that 
		\begin{align}
		p_{u,v}^w&=\frac{1}{|X|}\sum\limits_{i=0}^dQ_{wi}P_{iu}P_{iv},\\
		q_{u,v}^w&=\frac{1}{|X|}\sum\limits_{i=0}^dP_{wi}Q_{iu}Q_{iv}.
		\end{align} 
		
	\end{remark}
	
	A character $\chi$ of a finite abelian group $G$ is a homomorphism from $G$ into the multiplicative group of complex numbers of absolute value $1$. All characters form a group denoted by $\widehat{G}$ and $\widehat{G}$ is isomorphic to $G$. Every additive character of $\mathbb{F}_p^n$ can be expressed as 
	$\chi_\alpha(x)=\xi_p^{\alpha\cdot x},\ x\in \mathbb{F}_p^n$ for $\alpha\in \mathbb{F}_p^n$. For any $\alpha\in \mathbb{F}_p^n$, define $\chi_\alpha(\emptyset)=0$ and  $\chi_\alpha(D)=\sum_{x\in D}\xi_p^{\alpha\cdot x}$ for any nonempty subset $D$ of $\mathbb{F}_p^n$. Using the additive characters of $\mathbb{F}_p^n$, one can determine whether a partition of $\mathbb{F}_p^n\times\mathbb{F}_p^n$ is an association scheme. Firstly, we give the definition of dual partition in the following.
	\begin{definition}
		Let  $U=\{U_i\}_{i=0}^d$ be a partition of $\mathbb{F}_p^n$. The dual partition, denoted by $\widehat{U}$, is the partition of the character group $\widehat{\mathbb{F}_p^n}$ defined by the equivalence relation
		$$ \chi\sim_{\widehat{U}} \chi' \Longleftrightarrow \chi (U_i)=\chi'(U_i) \mbox{ for all } i=0, \ldots, d,$$ where $\chi$ and $\chi'$ are  additive characters of ${\mathbb{F}_p^n}$. By identifying the group $\mathbb{F}_p^n$ with its character group $\widehat{\mathbb{F}_p^n}$ via $\alpha\mapsto\chi_{\alpha}$, the equivalence relation is restated as follows:
		$$ \alpha\sim_{\widehat{U}} \alpha' \Longleftrightarrow \chi_{\alpha} (U_i)=\chi_{\alpha'}(U_i) \mbox{ for all } i=0, \ldots, d,$$
		where $\chi_{\alpha}$ and $\chi_{\alpha'}$ are additive characters of $\mathbb{F}_p^n$. The partition $U$ is called \emph{Fourier-reflexive} if and only if  $\widehat{\widehat{U}}=U$.
	\end{definition}
	
	\begin{remark}\label {re 3}
		By the discussion in \cite {Wu}, we know that for any partition $U=\{U_i\}_{i=0}^d$ of $\mathbb{F}_p^n$, $\alpha$ is not equivalent to $0$ for any $\alpha\in\mathbb{F}_p^n\setminus\{0\}$. 
	\end{remark}
	
	For Fourier-reflexive partitions, we have the following lemma.
	\begin{lemma}\cite{Glu}\label{le 5}
		A partition $U=\{U_i\}_{i=0}^{d}$ of $\mathbb{F}_p^n$ is Fourier-reflexive if and only if $|U|=|\widehat{U}|$.
	\end{lemma}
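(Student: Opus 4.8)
The plan is to encode the partition in a single matrix of character values and to reduce everything to comparing its rank with its number of distinct rows. Given a partition $U=\{U_i\}_{i=0}^{d}$ of $\mathbb{F}_p^n$, let $\mathbf{M}$ be the matrix with rows indexed by $\alpha\in\mathbb{F}_p^n$, columns indexed by $i\in\{0,\dots,d\}$, and $(\alpha,i)$-entry $\chi_\alpha(U_i)$. By the definition of the dual partition, $\widehat{U}$ is exactly the partition of $\mathbb{F}_p^n$ obtained by declaring $\alpha\sim\alpha'$ when the $\alpha$-th and $\alpha'$-th rows of $\mathbf{M}$ coincide, so $|\widehat{U}|$ equals the number of distinct rows of $\mathbf{M}$. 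On the other hand, the $i$-th column of $\mathbf{M}$, viewed as a function $\alpha\mapsto\chi_\alpha(U_i)$, is the Fourier transform of the indicator function $\mathbf{1}_{U_i}$; since the $\mathbf{1}_{U_i}$ are linearly independent and the Fourier transform on $\mathbb{F}_p^n$ is a linear bijection (cf.\ the inverse Walsh transform), the columns of $\mathbf{M}$ are linearly independent, hence $\operatorname{rank}\mathbf{M}=|U|$. As the set of distinct rows spans the row space of $\mathbf{M}$, it has at least $\operatorname{rank}\mathbf{M}$ elements, which gives the \emph{basic inequality} $|\widehat{U}|\ge|U|$ for every partition $U$. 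The forward implication is then immediate: if $U$ is Fourier-reflexive, applying this inequality to $\widehat{U}$ and to $U$ yields $|U|=|\widehat{\widehat{U}}|\ge|\widehat{U}|\ge|U|$, so $|\widehat{U}|=|U|$.

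For the converse, assume $|\widehat{U}|=|U|=:m$. Then $\mathbf{M}$ has exactly $\operatorname{rank}\mathbf{M}$ distinct rows, so these $m$ distinct rows are linearly independent; being $m$ vectors in the $m$-dimensional space $\mathbb{C}^{m}$ (recall $\mathbf{M}$ has $m$ columns), they form a basis, so the $m\times m$ matrix $\mathbf{R}$ having them as its rows is invertible. If $\mathbf{B}$ denotes the $p^n\times m$ $0$--$1$ indicator matrix of the partition $\widehat{U}$, then selecting the appropriate distinct row for each $\alpha$ gives $\mathbf{M}=\mathbf{B}\mathbf{R}$, hence $\mathbf{B}=\mathbf{M}\mathbf{R}^{-1}$. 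Let $\Phi=(\xi_p^{x\cdot\alpha})_{x,\alpha}$ be the character matrix of $\mathbb{F}_p^n$ and record the orthogonality identity $\Phi\mathbf{M}=p^{n}\mathbf{N}$, where $\mathbf{N}$ is the indicator matrix of the partition $\{-U_i\}_{i=0}^{d}$ (this is just $\sum_{\alpha\in\mathbb{F}_p^n}\xi_p^{\alpha\cdot(x+y)}=p^{n}$ if $x=-y$ and $0$ otherwise). The character matrix attached to $\widehat{U}$, whose distinct rows define $\widehat{\widehat{U}}$ in the same way as above, is exactly $\Phi\mathbf{B}$, and
$$\Phi\mathbf{B}=\Phi\mathbf{M}\mathbf{R}^{-1}=p^{n}\mathbf{N}\mathbf{R}^{-1}.$$
Thus the $x$-th row of $\Phi\mathbf{B}$ equals $p^{n}$ times the row of $\mathbf{R}^{-1}$ indexed by the block of $\{-U_i\}$ containing $x$; since $\mathbf{R}^{-1}$ is invertible its rows are pairwise distinct, so two points give equal rows of $\Phi\mathbf{B}$ exactly when they lie in the same block of $\{-U_i\}$. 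Therefore $\widehat{\widehat{U}}=\{-U_i\}_{i=0}^{d}$, and since the partitions arising in this paper come from functions $f\in\mathcal{DBF}$ and are consequently invariant under $x\mapsto-x$, we conclude $\widehat{\widehat{U}}=U$.

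I expect the main obstacle to be precisely this last passage: getting from the dimension count $|\widehat{\widehat{U}}|=|U|$, which drops out easily from the basic inequality, to the \emph{set-theoretic} equality $\widehat{\widehat{U}}=U$. The factorization $\mathbf{M}=\mathbf{B}\mathbf{R}$ together with the orthogonality identity $\Phi\mathbf{M}=p^{n}\mathbf{N}$ is what makes this work, and it also exposes the one bookkeeping subtlety, namely the canonical identification of $\widehat{\widehat{\mathbb{F}_p^n}}$ with $\mathbb{F}_p^n$: the double Fourier transform intrinsically introduces the reflection $x\mapsto-x$, so literally $\widehat{\widehat{U}}=\{-U_i\}$, and the clean statement $\widehat{\widehat{U}}=U$ needs either the negation-invariance available here (from $f(-x)=f(x)$ for $f\in\mathcal{DBF}$) or the convention for the double dual used in \cite{Glu}. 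Apart from this, the argument is elementary linear algebra over $\mathbb{C}$ combined with the character orthogonality relations of Section~\ref{Section 2}.
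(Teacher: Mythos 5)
The paper does not prove this lemma at all (it is quoted from \cite{Glu}), so there is no internal proof to compare with; judging your argument on its own terms, it is almost complete but has one genuine gap at the very end of the converse. Your basic inequality $|\widehat{U}|\ge|U|$, the forward implication, and the matrix computation showing that $|U|=|\widehat{U}|$ forces $\widehat{\widehat{U}}=\{-U_i\}_{i=0}^{d}$ are all correct. The gap is the final passage from $\widehat{\widehat{U}}=\{-U_i\}$ to $\widehat{\widehat{U}}=U$: you justify it by saying that the partitions occurring in this paper come from $f\in\mathcal{DBF}$ and are therefore invariant under $x\mapsto -x$. That imports a hypothesis which is not in the statement of the lemma; as stated (and as used via Lemma \ref{le 6}), the lemma concerns an arbitrary partition $U$ of $\mathbb{F}_p^n$, so an appeal to properties of the specific partitions constructed later does not prove it. Nor can the issue be deferred to ``the convention for the double dual used in \cite{Glu}'': with the identification $\alpha\mapsto\chi_\alpha$ used in this paper the form $\alpha\cdot x$ is symmetric, so the bidual is computed exactly as you computed it, and the reflection really does appear.

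The good news is that the gap closes with one further observation that needs no extra hypotheses: for any partition $Q$ of $\mathbb{F}_p^n$ and any subset $S\subseteq\mathbb{F}_p^n$ one has $\chi_{-\alpha}(S)=\overline{\chi_\alpha(S)}$, so $\alpha\sim_{\widehat{Q}}\alpha'$ implies $-\alpha\sim_{\widehat{Q}}-\alpha'$; hence every dual partition is stable under negation. Applying this to $Q=\widehat{U}$ shows that $\widehat{\widehat{U}}$ is negation-stable, and combining this with your identity $\widehat{\widehat{U}}=\{-U_i\}$ gives $\{U_i\}=\{-U_i\}$, i.e.\ $U=-U$ as partitions, whence $\widehat{\widehat{U}}=U$ for every partition with $|U|=|\widehat{U}|$. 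With that sentence inserted, your linear-algebraic argument (rank of the character-value matrix, the factorization $\mathbf{M}=\mathbf{B}\mathbf{R}$, and the orthogonality identity $\Phi\mathbf{M}=p^{n}\mathbf{N}$) is a complete and self-contained proof of the lemma.
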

	
	The following lemma presents a sufficient and necessary condition for a partition of $\mathbb{F}_p^n\times\mathbb{F}_p^n$ to be a symmetric association scheme.
	\begin{lemma}\cite{Glu,Zino}\label{le 6}
		Let  $U=\{U_i\}_{i=0}^d$ be a partition of $\mathbb{F}_p^n$,
		where $U_0=\{0\}$, and let $\widehat{U}=\{V_j\}_{j=0}^{d'}$ be the dual partition of $U$, where $V_0=\{0\}$.
		Let $\{R_i\}_{i=0}^d$ be a partition of $\mathbb{F}_p^n \times\mathbb{F}_p^n$ defined by
		\begin{equation*}
		(\alpha,\beta)\in R_i \Longleftrightarrow \alpha-\beta\in U_i, \; \; i=0,1,\ldots,d.
		\end{equation*}
		Then,  $(\mathbb{F}_p^n, \{R_i\}_{i=0}^d)$ is a symmetric association scheme if and  only if $d=d'$. 
	\end{lemma}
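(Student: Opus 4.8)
The approach I would take is to pass to adjacency matrices and diagonalise everything simultaneously by the additive characters, reducing the statement to a count of common eigenspaces. For $0\le i\le d$ let $A_i$ be the $\{0,1\}$-matrix indexed by $\mathbb{F}_p^n$ with $(A_i)_{\alpha,\beta}=1$ exactly when $\alpha-\beta\in U_i$; this is the adjacency matrix of $R_i$. Each $A_i$ is invariant under simultaneous translation of rows and columns, hence lies in the commutative matrix algebra $\mathcal T=\{[\varphi(\alpha-\beta)]_{\alpha,\beta}:\varphi\colon\mathbb{F}_p^n\to\mathbb{C}\}$, which is diagonalised by the characters: writing $\psi_\gamma=(\xi_p^{\gamma\cdot\alpha})_{\alpha\in\mathbb{F}_p^n}$, the matrices $E_\gamma=\frac1{p^n}[\xi_p^{\gamma\cdot(\alpha-\beta)}]_{\alpha,\beta}$ form a complete orthogonal system of idempotents with $E_\gamma$ the orthogonal projection onto $\mathbb{C}\psi_\gamma$, and $A_i\psi_\gamma=\overline{\chi_\gamma(U_i)}\,\psi_\gamma$. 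Two vectors $\psi_\gamma,\psi_{\gamma'}$ give the same tuple of eigenvalues for $A_0,\dots,A_d$ iff $\overline{\chi_\gamma(U_i)}=\overline{\chi_{\gamma'}(U_i)}$ for all $i$, i.e. iff $\gamma\sim_{\widehat U}\gamma'$; hence the common eigenspaces of $A_0,\dots,A_d$ are exactly the subspaces indexed by the parts of $\widehat U$, namely $W_j=\bigoplus_{\gamma\in V_j}\mathbb{C}\psi_\gamma$ for $0\le j\le d'$, with orthogonal projections $F_j=\sum_{\gamma\in V_j}E_\gamma$. Thus, for an arbitrary partition $U$ with $U_0=\{0\}$, the matrices $A_0,\dots,A_d$ always have exactly $d'+1$ common eigenspaces, and each $A_i$ lies in $\mathcal F:=\mathrm{span}_{\mathbb C}\{F_0,\dots,F_{d'}\}$.

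The forward implication is then immediate. If $(\mathbb{F}_p^n,\{R_i\}_{i=0}^d)$ is a symmetric association scheme, then $\mathcal A:=\mathrm{span}_{\mathbb C}\{A_0,\dots,A_d\}$ is its Bose--Mesner algebra; the $A_i$ have pairwise disjoint supports covering $\mathbb{F}_p^n\times\mathbb{F}_p^n$, so $\dim\mathcal A=d+1$, and, being a commutative semisimple $\mathbb{C}$-algebra acting faithfully on $\mathbb{C}^{\mathbb{F}_p^n}$, $\mathcal A$ has exactly $d+1$ primitive idempotents, whose images are precisely the common eigenspaces of $A_0,\dots,A_d$. By the first paragraph those number $d'+1$, whence $d=d'$. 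For the converse, assume $d=d'$. Since $\mathcal A\subseteq\mathcal F$ and $\dim\mathcal A=d+1=d'+1=\dim\mathcal F$, we get $\mathcal A=\mathcal F$; in particular $\mathcal A$ is an algebra, it contains $A_0=I$ (as $U_0=\{0\}$), and it is closed under transposition, since $E_\gamma^{\,T}=E_{-\gamma}$ together with $\gamma\sim_{\widehat U}\gamma'\Leftrightarrow-\gamma\sim_{\widehat U}-\gamma'$ (conjugate the defining equalities) gives $F_j^{\,T}=\sum_{\gamma\in -V_j}E_\gamma=F_k$ for some $k$. Then $A_uA_v\in\mathcal A$ is a non-negative integer matrix constant on each relation $R_w$, so the coefficients in $A_uA_v=\sum_w p^w_{u,v}A_w$ are well-defined non-negative integers — the intersection numbers; and $A_i^{\,T}\in\mathcal A$ is a disjointly supported $\{0,1\}$-matrix, so $A_i^{\,T}=\sum_{w\in S_i}A_w$, where summing the identities $-U_i=\bigsqcup_{w\in S_i}U_w$ over $i$ and comparing with the partition $\{U_w\}_w$ forces every $S_i$ to be a singleton, i.e. $R_i^{T}=R_{i'}$ for a unique $i'$. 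Hence $\{R_i\}_{i=0}^d$ is an association scheme, which is moreover symmetric exactly when $-U_i=U_i$ for all $i$.

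The step I expect to be the real obstacle is the first paragraph — identifying the common eigenspaces of $A_0,\dots,A_d$ with the parts of $\widehat U$, so that the number of primitive idempotents of the Bose--Mesner algebra is forced to be exactly $|\widehat U|$; this rests on the negation-invariance of the relation defining $\widehat U$, and on Remark~\ref{re 3} to ensure $\{0\}$ really is a part of $\widehat U$. After that, the perhaps surprising point — that the pure dimension condition $d=d'$ already forces $\mathcal A$ to be a transpose-closed algebra, hence $\{R_i\}$ an association scheme — drops out of $\mathcal A\subseteq\mathcal F$ together with $\dim\mathcal A=\dim\mathcal F$; all remaining verifications are routine, and the equality $\mathcal A=\mathcal F$ used in the converse can alternatively be read off from Lemma~\ref{le 5}, since $d=d'$ is precisely the Fourier-reflexivity of $U$.
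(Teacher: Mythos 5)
You are comparing against a statement the paper itself does not prove: Lemma~\ref{le 6} is imported from \cite{Glu,Zino}, and your route --- diagonalise the translation algebra by the characters $\psi_\gamma$, identify the maximal joint eigenspaces of $A_0,\dots,A_d$ with the parts of $\widehat U$, and play the dimension $d+1$ of $\mathcal A=\mathrm{span}\{A_i\}$ against the dimension $d'+1$ of $\mathcal F=\mathrm{span}\{F_j\}$ --- is exactly the standard argument of those sources. The forward direction is sound (modulo one line you should add: that the images of the primitive idempotents of $\mathcal A$ are the maximal joint eigenspaces rests on linear independence of distinct characters of $\mathcal A$; also linear independence of $A_0,\dots,A_d$ uses that every $U_i$ is nonempty), and in the converse your derivation of the intersection numbers from $\mathcal A=\mathcal F$ and of $R_i^{T}=R_{i'}$ from transpose-closure is correct.

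The genuine gap is the word \emph{symmetric}. Your converse ends with ``\dots which is moreover symmetric exactly when $-U_i=U_i$ for all $i$'', i.e.\ you establish only that $d=d'$ yields a commutative association scheme in which each $R_i^{T}$ equals some $R_{i'}$, and you leave the asserted symmetry conditional on a hypothesis that is neither assumed nor derivable: the partition of $\mathbb{F}_p^n$ into singletons (already $p=3$, $n=1$) satisfies $d=d'$, yet the induced scheme is the cyclic group scheme with $R_1^{T}=R_2\ne R_1$, so $d=d'$ alone does not force symmetry. Hence, as a proof of the statement as written, the argument is incomplete, and no argument from these hypotheses alone can complete it. To make it serve the paper you must supply the missing ingredient explicitly: every partition to which Lemma~\ref{le 6} is applied here is built from the sets $c_i(f)$, $d_i(f)$, $N_i(f)$, $B_{\pm}(f)^{\bot}$ of a function $f\in\mathcal{DBF}$, and these are negation-invariant because $f(-x)=f(x)$, $f^*(-x)=f^*(x)$ and $\epsilon_{-\alpha}=\epsilon_\alpha$ (Lemma~\ref{le 1}); with $-U_i=U_i$ for all $i$ your last sentence does give symmetry (and then $R_i^{T}=R_i$ makes the transpose-closure step trivial). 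Alternatively, state the general lemma with ``commutative'' in place of ``symmetric'', which is what your proof actually delivers.
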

	\begin{remark}\label{re 4}
		(1)
		Let $U=\{U_i\}_{i=0}^d$ be a partition of $\mathbb{F}_p^n$,
		where $U_0=\{0\}$, and $\{R_i\}_{i=0}^d$ be a partition of $\mathbb{F}_p^n\times\mathbb{F}_p^n$ defined in Lemma \ref{le 6}. If $(\mathbb{F}_p^n, \{R_i\}_{i=0}^d)$ is a symmetric association scheme, then we say that the partition $U$ induces a symmetric association scheme.\\
		(2) By Lemmas \ref{le 5} and \ref{le 6}, if $(\mathbb{F}_p^n, \{R_i\}_{i=0}^d)$ defined in Lemma \ref{le 6} is a symmetric association scheme, then the dual partition of $U$ can also induce a symmetric association scheme, called the \emph{dual} of $(\mathbb{F}_p^n, \{R_i\}_{i=0}^d)$.\\
		(3) By the results in \cite{Bannai,Bridges,Ozbudak2}, if $(\mathbb{F}_p^n, \{R_i\}_{i=0}^d)$ defined in Lemma \ref{le 6} is a symmetric association scheme, then the first eigenmatrix of the dual of $(\mathbb{F}_p^n, \{R_i\}_{i=0}^d)$ is equal to the second eigenmatrix of $(\mathbb{F}_p^n, \{R_i\}_{i=0}^d)$. For $\alpha\in V_i,\ 1\le i\le d$, let $u_{ij}=\chi_\alpha(U_j)$, $\ 1\le j\le d$, then the first eigenmatrix of $(\mathbb{F}_p^n, \{R_i\}_{i=0}^d)$ is given by 
		\[\begin{bmatrix}
		1&|U_1|&|U_2|&\dots&|U_d|\\
		1&u_{11}&u_{12}&\dots&u_{1d}\\
		\vdots&\vdots&\vdots&\vdots&\vdots\\
		1&u_{d1}&u_{d2}&\dots&u_{dd}
		\end{bmatrix}.\]
		
	\end{remark}
	\section{Auxiliary results}
	\quad In this section, we give some exponential sums related to dual-bent functions, which will be used to construct association schemes.  We begin this section by recalling the following two lemmas.
	\begin{lemma}\cite{Lidl}\label{le 7}
		With the defined notation, we have \begin{itemize} 
			\item[$\mathrm{(1)}$] $\sum\limits_{y\in \mathbb{F}_p^*}\eta(y)=0.$
			\item[$\mathrm{(2)}$] $\sum\limits_{y \in \mathbb{F}_p^*}\xi_p^{zy}=-1\ \text{for any}\ z\in \mathbb{F}_p^{*}.$
			\item[$\mathrm{(3)}$] $\sum\limits_{y\in \mathbb{F}_p^*}\eta(y)\xi_p^y=\sqrt{p^*}=\begin{cases}
			\sqrt{p},&\text{if}\ p\equiv 1\ (\mathrm{mod}\ 4),\\
			\sqrt{-p},&\text{if}\ p\equiv3 \ (\mathrm{mod}\ 4).
			\end{cases}$
			\item[$\mathrm(4)$] $\sum\limits_{y\in SQ}\xi_p^{ya}=\frac{\eta(a)\sqrt{p^*}-1}{2}\ \text {for any}\ a \in \mathbb{F}_p^*.$
			
			\item[$\mathrm(5)$] $\sum\limits_{y\in NSQ}\xi_p^{ya}=\frac{-\eta(a)\sqrt{p^*}-1}{2}\  \text {for any}\ a \in \mathbb{F}_p^*.$
		\end{itemize}
	\end{lemma}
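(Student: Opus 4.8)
The plan is to deduce all five identities from two elementary facts: that $\eta$ is the unique nontrivial quadratic multiplicative character of $\mathbb{F}_p^*$, and that the sum of all $p$-th roots of unity vanishes, $\sum_{y\in\mathbb{F}_p}\xi_p^y=0$. Parts $(1)$ and $(2)$ are immediate. For $(1)$, exactly $\frac{p-1}{2}$ elements of $\mathbb{F}_p^*$ lie in $SQ$ and $\frac{p-1}{2}$ in $NSQ$, so $\sum_{y\in\mathbb{F}_p^*}\eta(y)$ has equally many terms equal to $+1$ and to $-1$; equivalently, $\eta$ is a nontrivial character of the group $\mathbb{F}_p^*$, whence its sum over the group is $0$. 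For $(2)$, when $z\ne 0$ the map $y\mapsto zy$ permutes $\mathbb{F}_p^*$, so $\sum_{y\in\mathbb{F}_p^*}\xi_p^{zy}=\sum_{w\in\mathbb{F}_p^*}\xi_p^{w}=\left(\sum_{w\in\mathbb{F}_p}\xi_p^{w}\right)-1=-1$.

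The substantive point is part $(3)$, the evaluation of the quadratic Gauss sum $g:=\sum_{y\in\mathbb{F}_p^*}\eta(y)\xi_p^y$. Expanding $g^2=\sum_{x,y\in\mathbb{F}_p^*}\eta(xy)\xi_p^{x+y}$, substituting $y=xt$, and applying $(1)$ and $(2)$ to the inner sum over $x$ yields $g^2=\eta(-1)p=p^*$ by a short computation, so $g=\pm\sqrt{p^*}$. Pinning the sign down to $+$ is exactly Gauss's classical theorem on the sign of the quadratic Gauss sum; I would invoke it from the reference rather than reproduce Gauss's argument (be it the Fourier-analytic proof or the finite-product evaluation). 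This sign determination, not the value $g^2=p^*$, is the only genuinely hard ingredient of the whole lemma.

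Parts $(4)$ and $(5)$ then follow by expressing the indicator functions of $SQ$ and $NSQ$ as $\frac{1+\eta(y)}{2}$ and $\frac{1-\eta(y)}{2}$ on $\mathbb{F}_p^*$. For $a\in\mathbb{F}_p^*$,
\[\sum_{y\in SQ}\xi_p^{ya}=\tfrac12\sum_{y\in\mathbb{F}_p^*}\xi_p^{ya}+\tfrac12\sum_{y\in\mathbb{F}_p^*}\eta(y)\xi_p^{ya}.\]
The first sum is $-1$ by $(2)$; in the second, substituting $w=ya$ and using $\eta(y)=\eta(w)\eta(a)$ (valid since $\eta(a)^{-1}=\eta(a)$) together with $(3)$ gives $\eta(a)\sqrt{p^*}$, so the total is $\frac{\eta(a)\sqrt{p^*}-1}{2}$, which is $(4)$. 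Identity $(5)$ follows by the same computation with $\frac{1-\eta(y)}{2}$, or simply by subtracting $(4)$ from $(2)$. Everything past the Gauss-sum sign is routine bookkeeping, so the single obstacle is Gauss's sign theorem.
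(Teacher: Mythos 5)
Your proposal is correct: parts (1) and (2) are the standard character-sum facts, your computation $g^2=\eta(-1)p=p^*$ is right, and the reduction of (4) and (5) to (2) and (3) via the indicator $\frac{1\pm\eta(y)}{2}$ is exactly the routine bookkeeping you describe. The paper itself gives no proof of this lemma at all — it is quoted directly from the reference \cite{Lidl} — and your treatment matches what that reference does, including the one genuinely nontrivial point, the determination of the sign of the quadratic Gauss sum, which you correctly identify as Gauss's theorem and are entitled to cite rather than reprove.
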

	\begin{lemma}\cite{Lidl}\label{le 8}
		Let $H\subseteq \mathbb{F}_p^n$ be a subspace of dimension $r$, then for any $\alpha\in \mathbb{F}_p^n$, we have 
		\[\chi_\alpha(H)=\begin{cases}
		p^r, &\text{if}\ \alpha\in H^{\bot},\\
		0, &\text{otherwise}.
		\end{cases}\]
	\end{lemma}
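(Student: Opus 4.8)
The plan is to read this as the familiar orthogonality identity for additive characters restricted to a subgroup, and to prove it with a one-line translation (shift) argument. Recall that $\chi_\alpha(H)=\sum_{x\in H}\xi_p^{\alpha\cdot x}$, so $\chi_\alpha$ restricted to $H$ is an additive character of the finite abelian group $(H,+)$, and this restricted character is trivial precisely when $\alpha\cdot x=0$ for all $x\in H$, i.e. when $\alpha\in H^{\bot}$. The statement then amounts to: the sum of a character over a finite group equals the group order if the character is trivial and $0$ otherwise.

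First I would dispose of the case $\alpha\in H^{\bot}$: by definition of $H^{\bot}$ we have $\alpha\cdot x=0$ in $\mathbb{F}_p$ for every $x\in H$, so each summand $\xi_p^{\alpha\cdot x}$ equals $1$ and therefore $\chi_\alpha(H)=|H|=p^{r}$ since $\dim_{\mathbb{F}_p}H=r$. Next, suppose $\alpha\notin H^{\bot}$. Then there exists $y\in H$ with $\alpha\cdot y\neq 0$, hence $\xi_p^{\alpha\cdot y}\neq 1$. Since $H$ is a subspace, the map $x\mapsto x+y$ is a bijection of $H$ onto itself; re-indexing the sum by this bijection and using the bilinearity of the inner product gives
\[
\chi_\alpha(H)=\sum_{x\in H}\xi_p^{\alpha\cdot x}=\sum_{x\in H}\xi_p^{\alpha\cdot(x+y)}=\xi_p^{\alpha\cdot y}\sum_{x\in H}\xi_p^{\alpha\cdot x}=\xi_p^{\alpha\cdot y}\,\chi_\alpha(H).
\]
Therefore $(1-\xi_p^{\alpha\cdot y})\chi_\alpha(H)=0$, and since $\xi_p^{\alpha\cdot y}\neq 1$ we conclude $\chi_\alpha(H)=0$.

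There is essentially no obstacle in this argument; the only point requiring minor care is that the hypothesis $\alpha\notin H^{\bot}$ must be used in its correct form, namely that it produces $y\in H$ (and not merely $y\in\mathbb{F}_p^n$) with $\alpha\cdot y\neq 0$, which is exactly the definition $H^{\bot}=\{z\in\mathbb{F}_p^n : z\cdot x=0 \text{ for all } x\in H\}$. Alternatively, one could simply cite this as the standard character-sum relation from \cite{Lidl}, the translation computation above being precisely its short proof; either route completes the verification.
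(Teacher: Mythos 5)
Your proof is correct: the case $\alpha\in H^{\bot}$ is immediate, and the shift-by-$y$ argument with $y\in H$, $\alpha\cdot y\neq 0$ yields $(1-\xi_p^{\alpha\cdot y})\chi_\alpha(H)=0$ and hence $\chi_\alpha(H)=0$, exactly as required. The paper does not prove this lemma at all but simply cites it from \cite{Lidl}; your translation argument is precisely the standard proof of that cited character-sum identity, so nothing needs to be changed.
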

	
	In what follows, a series of auxiliary results are presented, which will be useful for our constructions.
	
	\begin{lemma}\label{le 9}
		For any $i,\ j\in \mathbb{F}_p$, define $K^{(m)}(i,j)=\sum\limits_{c\in \mathbb{F}_p^*}\xi_p^{c^{1-m}j-ci}$ and $S^{(m)}(i,j)=\sum\limits_{c\in\mathbb{F}_p^*}\eta(c)\xi_p^{c^{1-m}j-ci}$, where $m$ is a positive integer satisfying $\mathrm{gcd}(m-1,\ p-1)=1$, then we have the following.
		\begin{itemize}
			\item[$\mathrm(1)$] $K^{(m)}(0,0)=p-1$, and for any $i,\ j\in \mathbb{F}_p^*,\ K^{{(m)}}(i,0)=-1$ and $K^{{(m)}}(0,j)=-1$.
			\item[$\mathrm(2)$]	
			$\sum\limits_{i\in SQ}K^{{(m)}}(i,j)=
			\begin{cases}
			\frac{-(p-1)}{2}, &\text{if}\ j=0,\\
			\frac{\eta(j)p+1}{2},&\text{if}\ j\ne0,
			\end{cases}$
			and \\$\sum\limits_{i\in NSQ}K^{{(m)}}(i,j)=
			\begin{cases}
			\frac{-(p-1)}{2},& \text{if}\ j=0,\\
			\frac{-\eta(j)p+1}{2},&\text{if}\ j\ne0.
			\end{cases}$
			\item[$\mathrm(3)$]$\sum\limits_{i\in\mathbb{F}_p^*}K^{(m)}(i,j)=
			\begin{cases}
			-(p-1),&\text{if}\ j=0,\\
			1,&\text{if}\ j\ne 0.
			\end{cases}$
			\item[$\mathrm(4)$] $S^{(m)}(0,0)=0$, and for any $i,\ j\in \mathbb{F}_p^*,\ S^{(m)}(i,0)=\eta(-i)\sqrt{p^*}$ and $S^{(m)}(0,j)=\eta(j)\sqrt{p^*}$.
			\item[$\mathrm(5)$]	
			$\sum\limits_{i\in SQ}S^{(m)}(i,j)=
			\begin{cases}
			\frac{(p-1)}{2}\eta(-1)\sqrt{p^*}, &\text{if}\ j=0,\\
			\frac{-\eta(-1)-\eta(j)}{2}\sqrt{p^*},&\text{if}\ j\ne0,
			\end{cases}$
			and \\$\sum\limits_{i\in NSQ}S^{(m)}(i,j)=
			\begin{cases}
			\frac{-(p-1)}{2}\eta(-1)\sqrt{p^*},& \text{if}\ j=0,\\
			\frac{\eta(-1)-\eta(j)}{2}\sqrt{p^*},&\text{if}\ j\ne0.
			\end{cases}$
			\item[$\mathrm(6)$]$\sum\limits_{i\in\mathbb{F}_p^*}S^{(m)}(i,j)=
			\begin{cases}
			0,&\text{if}\ j=0,\\
			-\eta(j)\sqrt{p^*},&\text{if}\ j\ne 0.
			\end{cases}$
		\end{itemize}
	\end{lemma}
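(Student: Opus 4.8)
The plan is to reduce every identity to the elementary character sums of Lemma~\ref{le 7} by exploiting two structural facts about the exponent $1-m$. First, since $\gcd(m-1,p-1)=1$ we also have $\gcd(1-m,p-1)=1$, so $c\mapsto c^{1-m}$ is a permutation of $\mathbb{F}_p^*$. Second, because $p$ is odd, $p-1$ is even, so $\gcd(m-1,p-1)=1$ forces $m-1$ to be odd; hence $1-m$ is odd, and therefore $\eta(c^{1-m})=\eta(c)^{1-m}=\eta(c)$ for every $c\in\mathbb{F}_p^*$. Combining the two facts, for any $j\in\mathbb{F}_p$ the substitution $d=c^{1-m}$ yields the ``de-twisting'' identities
\begin{equation*}
\sum_{c\in\mathbb{F}_p^*}\xi_p^{c^{1-m}j}=\sum_{d\in\mathbb{F}_p^*}\xi_p^{dj},\qquad
\sum_{c\in\mathbb{F}_p^*}\eta(c)\,\xi_p^{c^{1-m}j}=\sum_{d\in\mathbb{F}_p^*}\eta(d)\,\xi_p^{dj},
\end{equation*}
whose right-hand sides equal $p-1$ or $-1$ (resp. $0$ or $\eta(j)\sqrt{p^*}$) according as $j=0$ or $j\neq0$, by Lemma~\ref{le 7}(1)--(3).

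For parts (1) and (4) I would substitute the special values of $i,j$ directly: $K^{(m)}(0,0)=p-1$ and $S^{(m)}(0,0)=\sum_c\eta(c)=0$ are immediate; $K^{(m)}(i,0)=\sum_c\xi_p^{-ci}=-1$ and, after the change of variable $u=-ci$, $S^{(m)}(i,0)=\eta(-1)\eta(i)\sum_u\eta(u)\xi_p^{u}=\eta(-i)\sqrt{p^*}$; and $K^{(m)}(0,j)$, $S^{(m)}(0,j)$ are exactly the de-twisting identities. For parts (3) and (6) I would interchange the order of summation and factor out the inner sum $\sum_{i\in\mathbb{F}_p^*}\xi_p^{-ci}=-1$ (Lemma~\ref{le 7}(2)), which leaves $-\sum_c\xi_p^{c^{1-m}j}$ resp. $-\sum_c\eta(c)\xi_p^{c^{1-m}j}$, again evaluated by the de-twisting identities.

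The substantive cases are (2) and (5). Interchanging summation order, the inner sum is $\sum_{i\in SQ}\xi_p^{-ci}=\tfrac{\eta(-c)\sqrt{p^*}-1}{2}$ (with the sign of the $\sqrt{p^*}$-term flipped for $NSQ$) by Lemma~\ref{le 7}(4)--(5). Writing $\eta(-c)=\eta(-1)\eta(c)$, this splits $\sum_{i\in SQ}K^{(m)}(i,j)$ into $\tfrac{\eta(-1)\sqrt{p^*}}{2}\sum_c\eta(c)\xi_p^{c^{1-m}j}-\tfrac12\sum_c\xi_p^{c^{1-m}j}$, and $\sum_{i\in SQ}S^{(m)}(i,j)$ into $\tfrac{\eta(-1)\sqrt{p^*}}{2}\sum_c\eta(c)^2\xi_p^{c^{1-m}j}-\tfrac12\sum_c\eta(c)\xi_p^{c^{1-m}j}=\tfrac{\eta(-1)\sqrt{p^*}}{2}\sum_c\xi_p^{c^{1-m}j}-\tfrac12\sum_c\eta(c)\xi_p^{c^{1-m}j}$ (using $\eta(c)^2=1$). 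Each of the four twisted sums is evaluated by the de-twisting identities, and the stated closed forms then follow from $p^*=\eta(-1)p$ and $\eta(-1)^2=\eta(j)^2=1$; the $NSQ$ formulas come out identically up to the single sign change.

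I do not expect a genuine obstacle: once the two observations about $1-m$ are isolated, the rest is routine manipulation of Gauss-type sums. The one point requiring care is the second observation — that $1-m$ is odd, which is precisely what makes the $\eta$-twist collapse — and this is exactly where the hypothesis $\gcd(m-1,p-1)=1$ (together with $p-1$ even) is used; the remaining risk is purely the sign bookkeeping in parts (2) and (5).
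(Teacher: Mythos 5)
Your proof is correct and follows essentially the same route as the paper: both rest on the two observations that $c\mapsto c^{1-m}$ permutes $\mathbb{F}_p^*$ and that $\eta(c^{1-m})=\eta(c)$ (since $\gcd(m-1,p-1)=1$ with $p-1$ even forces $1-m$ odd), after which everything reduces to the character sums of Lemma~\ref{le 7}. The paper states these two facts and leaves the remaining computation to the reader, while you carry out the sign bookkeeping explicitly; your evaluations in parts (2) and (5), including the use of $\eta(-1)p^*=p$, all check out.
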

	\begin{proof}
		Since $m$ is a positive integer satisfying $\mathrm{gcd}(m-1,p-1)=1$, then for any $c\in \mathbb{F}_p^*$, when $c$ runs through $\mathbb{F}_p^*$, $c^{1-m}$ runs through $\mathbb{F}_p^*$ and $\eta(c)=\eta(c^{1-m})$. Then by the definitions of $K^{(m)}(i,j)$, $S^{(m)}(i,j)$ and Lemma \ref{le 7}, we easily obtain the results.\qed
	\end{proof}
	
	\begin{lemma}\label{le 10}
		Let $f(x):\mathbb{F}_{p}^n\longrightarrow\mathbb{F}_p$ be a dual-bent function belonging to $\mathcal{DBF}$. Then  we have the following.
		\begin{itemize}
			\item [$\bullet$]
			When $n$ is even, then, for any $\alpha\in \mathbb{F}_p^n$, $i\in\mathbb{F}_p$,
			\begin{align*}
			\chi_\alpha(c_i(f))&=\frac{1+\epsilon_\alpha}{2}p^{\frac{n}{2}-1}K^{(t)}(i, f^*(\alpha))+p^{-1}\chi_\alpha(B_+(f^*)),\\ \chi_\alpha(d_i(f))&=\frac{\epsilon_\alpha-1}{2}p^{\frac{n}{2}-1}K^{(t)}(i,f^*(\alpha))+p^{-1}\chi_\alpha(B_-(f^*)).
			\end{align*}
			\item [$\bullet$]
			When $n$ is odd, then, for any $\alpha\in\mathbb{F}_p^n$, $i\in\mathbb{F}_p$,
			\begin{align*} \chi_\alpha(c_i(f))=\frac{\epsilon_\alpha+\eta(-1)}{2}\sqrt{p^*}p^{\frac{n-3}{2}}S^{(t)}(i,f^*(\alpha))+p^{-1}\chi_\alpha(B_+(f^*)),\\ \chi_\alpha(d_i(f))=\frac{\epsilon_\alpha-\eta(-1)}{2}\sqrt{p^*}p^{\frac{n-3}{2}}S^{(t)}(i,f^*(\alpha))+p^{-1}\chi_\alpha(B_-(f^*)).
			\end{align*}
		\end{itemize}
	\end{lemma}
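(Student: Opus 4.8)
The plan is to expand $\chi_\alpha(c_i(f))$ directly from the definition $c_i(f)=\{x\in B_+(f^*):f(x)=i\}$, using the orthogonality relation $\frac{1}{p}\sum_{c\in\mathbb{F}_p}\xi_p^{c(f(x)-i)}=[f(x)=i]$ to detect the condition $f(x)=i$. This gives
\[
\chi_\alpha(c_i(f))=\frac{1}{p}\sum_{x\in B_+(f^*)}\sum_{c\in\mathbb{F}_p}\xi_p^{c(f(x)-i)+\alpha\cdot x}
=\frac{1}{p}\chi_\alpha(B_+(f^*))+\frac{1}{p}\sum_{c\in\mathbb{F}_p^*}\xi_p^{-ci}\sum_{x\in B_+(f^*)}\xi_p^{cf(x)+\alpha\cdot x}.
\]
The first term is the claimed $p^{-1}\chi_\alpha(B_+(f^*))$, so everything reduces to evaluating the inner sum $\sum_{x\in B_+(f^*)}\xi_p^{cf(x)+\alpha\cdot x}$ for each $c\in\mathbb{F}_p^*$.

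The next step is to reduce that inner sum to the quantity $S_0(f^*,\cdot)$ handled in Lemma~\ref{le 2}. Since $f\in\mathcal{DBF}$ and $\gcd(t-1,p-1)=1$ (with $f^*(ax)=a^tf^*(x)$), I would substitute $x=c^{-1}z$ — as $x$ ranges over $B_+(f^*)$ so does $z=cx$, because $B_+(f^*)$ is a subspace (Lemma~\ref{le 1}(5), together with the scaling-invariance in Lemma~\ref{le 1}(3)). Using the homogeneity $f(c^{-1}z)=c^{-h}f(z)$ is the natural move, but it is cleaner to instead scale the \emph{argument of the character}: write $\sum_{x\in B_+(f^*)}\xi_p^{cf(x)+\alpha\cdot x}$ and note $cf(x)=f(c^{1/(h)}\cdots)$ — this is where care is needed. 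The clean route: apply the Galois automorphism $\sigma_c$ is not available since $c$ need not be coprime in the exponent; instead, observe that $S_0(f^*,y)=\sum_{\alpha\in B_+(f^*)}\xi_p^{f(\alpha)+\alpha\cdot y}$ and that replacing $\alpha$ by $c\alpha$ gives $\sum_{\alpha\in B_+(f^*)}\xi_p^{c^h f(\alpha)+c\alpha\cdot y}$; choosing the right power of $c$ via the map $c\mapsto c^{h}$ (a bijection on $\mathbb{F}_p^*$ since $\gcd(h-1,p-1)=1$ forces $\gcd(h,p-1)$ small — actually one uses $\gcd(h-1,p-1)=1$) lets me write $\sum_{x\in B_+(f^*)}\xi_p^{cf(x)+\alpha\cdot x}=S_0(f^*,\,c^{t-1}\alpha)$ after the substitution $x\mapsto c^{-1}x$ and the identity $f^*(c^{-1}\alpha)=c^{-t}f^*(\alpha)$. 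Plugging in Lemma~\ref{le 2}, this equals $\frac{\epsilon_{\alpha}+1}{2}p^{n/2}\xi_p^{f^*(c^{t-1}\alpha)}=\frac{\epsilon_{\alpha}+1}{2}p^{n/2}\xi_p^{c^{1-t}f^*(\alpha)}$ for $n$ even (here I use $\epsilon_{c^{t-1}\alpha}=\epsilon_\alpha$ from Lemma~\ref{le 1}(3), and $f^*(c^{t-1}\alpha)=(c^{t-1})^{t}f^*(\alpha)$; one checks the exponent bookkeeping so that it matches the $c^{1-m}$ appearing in $K^{(m)}$, $S^{(m)}$ with $m=t$). The odd-$n$ case is identical except $p^{n/2}$ is replaced by $\sqrt{p^*}\,p^{(n-1)/2}$ and the coefficient becomes $\frac{\epsilon_\alpha+\eta(-1)}{2}$.

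Substituting back, the contribution of the $c\neq 0$ terms becomes
\[
\frac{1}{p}\sum_{c\in\mathbb{F}_p^*}\xi_p^{-ci}\cdot\frac{\epsilon_\alpha+1}{2}p^{n/2}\xi_p^{c^{1-t}f^*(\alpha)}
=\frac{\epsilon_\alpha+1}{2}\,p^{n/2-1}\sum_{c\in\mathbb{F}_p^*}\xi_p^{c^{1-t}f^*(\alpha)-ci}
=\frac{\epsilon_\alpha+1}{2}\,p^{n/2-1}K^{(t)}(i,f^*(\alpha)),
\]
which is exactly the stated formula for $\chi_\alpha(c_i(f))$ when $n$ is even; for $n$ odd the sum over $c$ carries the extra factor $\eta(c)$ coming from $\sqrt{p^*}$... — no, more precisely one must trace through whether a $\eta(c)$ appears. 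It does not for $c_i(f)$ in even $n$; it does for odd $n$ because there the analogue of Lemma~\ref{le 2} already produces $\sqrt{p^*}$, and after the $c\mapsto c^{-1}$ substitution the dual evaluation $\xi_p^{f^*(c^{t-1}\alpha)}$ is fine but the residual identity $S_0(f^*,c^{t-1}\alpha)$ involves $\epsilon_{c^{t-1}\alpha}$ only, no extra character — so the $S^{(t)}$ sum arises instead because... I would need to recheck: the cleanest explanation is that in odd $n$ the relevant reduction produces $\sum_{c\in\mathbb{F}_p^*}\eta(c)\xi_p^{c^{1-t}f^*(\alpha)-ci}$, and that is precisely $S^{(t)}(i,f^*(\alpha))$; the $\eta(c)$ enters because $\sqrt{p^*}$ does not scale trivially — writing $\sqrt{p^*}=\sum_{y}\eta(y)\xi_p^y$ and tracking the substitution $y\mapsto cy$ via Lemma~\ref{le 7}(3) produces the $\eta(c)$ factor. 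The derivation for $\chi_\alpha(d_i(f))$ is word-for-word the same with $B_+(f^*)$ replaced by $B_-(f^*)$ and $S_0$ replaced by $S_1$ from Lemma~\ref{le 2}, which changes the coefficient $\frac{\epsilon_\alpha+1}{2}$ to $\frac{\epsilon_\alpha-1}{2}$ (even $n$) and $\frac{\epsilon_\alpha+\eta(-1)}{2}$ to $\frac{\epsilon_\alpha-\eta(-1)}{2}$ (odd $n$).

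The main obstacle is the bookkeeping in the substitution $x\mapsto c^{-1}x$: one must correctly combine the homogeneity of $f$ (degree $h$), the homogeneity of $f^*$ (degree $t$), the relation $\gcd(t-1,p-1)=1$ so that $c\mapsto c^{1-t}$ is a bijection of $\mathbb{F}_p^*$, and the scaling-invariance of $B_+(f^*)$, so that the exponent of $c$ lands exactly as $c^{1-t}$ multiplying $f^*(\alpha)$, matching the definition of $K^{(t)}$ and $S^{(t)}$ in Lemma~\ref{le 9}. Once the exponent is pinned down, invoking Lemma~\ref{le 2} for the value of the resulting $B_\pm(f^*)$-sum and collecting terms is routine.
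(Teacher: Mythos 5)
Your opening (orthogonality of characters, splitting off the $c=0$ term, and aiming to evaluate $\sum_{x\in B_+(f^*)}\xi_p^{cf(x)+\alpha\cdot x}$ via Lemma \ref{le 2}) is exactly the paper's skeleton, but the step that carries all the content is wrong in your write-up. You dismiss the Galois automorphism as ``not available,'' yet it is precisely the device that makes the argument work: for every $c\in\mathbb{F}_p^*$ the automorphism $\sigma_c$ of $\mathbb{Q}(\xi_p)$ is defined (there is no coprimality obstruction --- $c$ only needs to be prime to $p$), and one has $\xi_p^{cf(x)+\alpha\cdot x}=\sigma_c\bigl(\xi_p^{f(x)+c^{-1}\alpha\cdot x}\bigr)$, hence $\sum_{x\in B_+(f^*)}\xi_p^{cf(x)+\alpha\cdot x}=\sigma_c\bigl(S_0(f^*,c^{-1}\alpha)\bigr)$. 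Your replacement for this step does not hold: the substitution $x\mapsto c^{-1}x$ rescales $f(x)$ by $c^{-h}$, not by $c^{-1}$, so it cannot turn the coefficient of $f$ into $1$; and the map $c\mapsto c^{h}$ is \emph{never} a bijection of $\mathbb{F}_p^*$ here, because $\gcd(h-1,p-1)=1$ with $p-1$ even forces $h-1$ odd, i.e.\ $h$ even, so $\gcd(h,p-1)\ge 2$. The identity you assert, $\sum_{x\in B_+(f^*)}\xi_p^{cf(x)+\alpha\cdot x}=S_0(f^*,c^{t-1}\alpha)$, is false in general (it would force congruences such as $1+h(t-1)\equiv 0 \pmod{p-1}$, whereas in fact $(t-1)(h-1)\equiv 1$), and your subsequent bookkeeping $f^*(c^{t-1}\alpha)=c^{1-t}f^*(\alpha)$ is also incorrect, since homogeneity gives $f^*(c^{t-1}\alpha)=c^{t(t-1)}f^*(\alpha)$.

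The same gap explains your visible uncertainty about where $\eta(c)$ comes from in the odd-$n$ case: it is not produced by any rescaling inside the character sum, but by the Galois action on the quadratic Gauss sum, $\sigma_c(\sqrt{p^*})=\eta(c)\sqrt{p^*}$, applied after Lemma \ref{le 2} has been used on $S_0(f^*,c^{-1}\alpha)$ (respectively $S_1$). Concretely, the correct chain is: $S_0(f^*,c^{-1}\alpha)=\frac{\epsilon_\alpha+1}{2}p^{\frac{n}{2}}\xi_p^{c^{-t}f^*(\alpha)}$ (even $n$) or $\frac{\epsilon_\alpha+\eta(-1)}{2}\sqrt{p^*}\,p^{\frac{n-1}{2}}\xi_p^{c^{-t}f^*(\alpha)}$ (odd $n$), using $\epsilon_{c^{-1}\alpha}=\epsilon_\alpha$ from Lemma \ref{le 1}(3) and $f^*(c^{-1}\alpha)=c^{-t}f^*(\alpha)$; then $\sigma_c$ sends $\xi_p^{c^{-t}f^*(\alpha)}$ to $\xi_p^{c^{1-t}f^*(\alpha)}$ and, in the odd case, contributes the factor $\eta(c)$, so the sum over $c\in\mathbb{F}_p^*$ against $\xi_p^{-ci}$ assembles into $K^{(t)}(i,f^*(\alpha))$ or $S^{(t)}(i,f^*(\alpha))$ respectively. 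Also note you do not need (and cannot in general assert) that $B_+(f^*)$ is a subspace; only its invariance under scaling by $\mathbb{F}_p^*$ is used, and in the automorphism route even that is not needed for the key identity. As written, your proof does not establish the lemma; reinstating the $\sigma_c$ step repairs it and reduces it to the paper's argument.
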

	\begin{proof}
		For any $\alpha\in\mathbb{F}_p^n$, we have 
		\begin{align*}
		\chi_\alpha(c_i(f))&=\sum\limits_{x\in B_+(f^*), f(x)=i}\xi_p^{\alpha\cdot x}\\
		&=p^{-1}\sum\limits_{x\in B_+(f^*)}\xi_p^{\alpha\cdot x}\sum\limits_{c\in\mathbb{F}_p}\xi_p^{(f(x)-i)c}\\
		&=p^{-1}\sum\limits_{c\in\mathbb{F}_p^*}\xi_p^{-ci}\sum\limits_{x\in B_+(f^*)}\xi_p^{cf(x)+\alpha\cdot x}+p^{-1}\sum\limits_{x\in B_+(f^*)}\xi_p^{\alpha\cdot x}\\
		&=p^{-1}\sum\limits_{c\in\mathbb{F}_p^*}\xi_p^{-ci}\sigma_c(\sum\limits_{x\in B_+(f^*)}\xi_p^{f(x)+c^{-1}\alpha\cdot x})+p^{-1}\sum\limits_{x\in B_+(f^*)}\xi_p^{\alpha\cdot x},\\
		\chi_\alpha(d_i(f))&=\sum\limits_{x\in B_-(f^*), f(x)=i}\xi_p^{\alpha\cdot x}\\
		&=p^{-1}\sum\limits_{x\in B_-(f^*)}\xi_p^{\alpha\cdot x}\sum\limits_{c\in\mathbb{F}_p}\xi_p^{(f(x)-i)c}\\
		&=p^{-1}\sum\limits_{c\in\mathbb{F}_p^*}\xi_p^{-ci}\sum\limits_{x\in B_-(f^*)}\xi_p^{cf(x)+\alpha\cdot x}+p^{-1}\sum\limits_{x\in B_-(f^*)}\xi_p^{\alpha\cdot x}\\
		&=p^{-1}\sum\limits_{c\in\mathbb{F}_p^*}\xi_p^{-ci}\sigma_c(\sum\limits_{x\in B_-(f^*)}\xi_p^{f(x)+c^{-1}\alpha\cdot x})+p^{-1}\sum\limits_{x\in B_-(f^*)}\xi_p^{\alpha\cdot x}.
		\end{align*}
		According to Lemma \ref{le 1}, since $f(x)\in\mathcal{DBF}$,  then  for any $c\in\mathbb{F}_p^*$, $\alpha\in\mathbb{F}_p^n$, $\alpha\in B_+(f)$ (respectively $B_-(f)$) if and only if $c\alpha\in B_+(f)$ (respectively $B_-(f)$), i.e., $\epsilon_\alpha=\epsilon_{c\alpha}$. Thus, by Lemma \ref{le 2}, 
		if $n$ is even, we get
		\begin{align*}
		\chi_\alpha(c_i(f))&=p^{-1}\sum_{c\in\mathbb{F}_p^*}\xi_p^{-ci}\sigma_c(\frac{1+\epsilon_\alpha}{2}p^{\frac{n}{2}}\xi_p^{c^{-t}f^*(\alpha)})+p^{-1}\sum\limits_{x\in B_+(f^*)}\xi_p^{\alpha\cdot x}\\
		&=\frac{1+\epsilon_\alpha}{2}p^{\frac{n}{2}-1}\sum\limits_{c\in\mathbb{F}_p^*}\xi_p^{c^{1-t}f^*(\alpha)-ci}+p^{-1}\sum\limits_{x\in B_+(f^*)}\xi_p^{\alpha\cdot x}\\
		&=\frac{1+\epsilon_\alpha}{2}p^{\frac{n}{2}-1}K^{(t)}(i,f^*(\alpha))+p^{-1}\chi_\alpha(B_+(f^*)),\\	
		\chi_\alpha(d_i(f))&=p^{-1}\sum_{c\in\mathbb{F}_p^*}\xi_p^{-ci}\sigma_c(\frac{\epsilon_\alpha-1}{2}p^{\frac{n}{2}}\xi_p^{c^{-t}f^*(\alpha)})+p^{-1}\sum\limits_{x\in B_-(f^*)}\xi_p^{\alpha\cdot x}\\
		&=\frac{\epsilon_\alpha-1}{2}p^{\frac{n}{2}-1}\sum\limits_{c\in\mathbb{F}_p^*}\xi_p^{c^{1-t}f^*(\alpha)-ci}+p^{-1}\sum\limits_{x\in B_-(f^*)}\xi_p^{\alpha\cdot x}
		\end{align*}
		\[
		=\frac{\epsilon_\alpha-1}{2}p^{\frac{n}{2}-1}K^{(t)}(i,f^*(\alpha))+p^{-1}\chi_\alpha(B_-(f^*)).\]
		
		If $n$ is odd, we get
		\begin{align*}
		\chi_\alpha(c_i(f))&=p^{-1}\sum\limits_{c\in\mathbb{F}_p^*}\xi_p^{-ci}\sigma_c(\frac{\epsilon_\alpha+\eta(-1)}{2}\sqrt{p^*}p^{\frac{n-1}{2}}\xi_p^{c^{                                                            -t}f^*(\alpha)})+p^{-1}\sum\limits_{x\in B_+(f^*)}\xi_p^{\alpha\cdot x}\\
		&=\frac{\epsilon_\alpha+\eta(-1)}{2}\sqrt{p^*}p^{\frac{n-3}{2}}\sum\limits_{c\in\mathbb{F}_p^*}\eta(c)\xi_p^{c^{1-t}f^*(\alpha)-ci}+p^{-1}\sum\limits_{x\in B_+(f^*)}\xi_p^{\alpha\cdot x}\\
		&=\frac{\epsilon_\alpha+\eta(-1)}{2}\sqrt{p^*}p^{\frac{n-3}{2}}S^{(t)}(i,f^*(\alpha))+p^{-1}\chi_\alpha(B_+(f^*)),\\
		\chi_\alpha(d_i(f))&=p^{-1}\sum\limits_{c\in\mathbb{F}_p^*}\xi_p^{-ci}\sigma_c(\frac{\epsilon_\alpha-\eta(-1)}{2}\sqrt{p^*}p^{\frac{n-1}{2}}\xi_p^{c^{                                                            -t}f^*(\alpha)})+p^{-1}\sum\limits_{x\in B_-(f^*)}\xi_p^{\alpha\cdot x}\\
		&=\frac{\epsilon_\alpha-\eta(-1)}{2}\sqrt{p^*}p^{\frac{n-3}{2}}\sum\limits_{c\in\mathbb{F}_p^*}\eta(c)\xi_p^{c^{1-t}f^*(\alpha)-ci}+p^{-1}\sum\limits_{x\in B_-(f^*)}\xi_p^{\alpha\cdot x}\\
		&=\frac{\epsilon_\alpha-\eta(-1)}{2}\sqrt{p^*}p^{\frac{n-3}{2}}S^{(t)}(i,f^*(\alpha))+p^{-1}\chi_\alpha(B_-(f^*)).
		\end{align*}
		The proof is now completed.\qed
	\end{proof}
	
	The following corollary follows from Lemma \ref{le 10}.
	\begin{corollary}\label{co 1}
		Let $f(x):\mathbb{F}_{p}^n\longrightarrow\mathbb{F}_p$ be a dual-bent function belonging to $\mathcal{DBF}$. Then  we have the following.
		\begin{itemize}
			\item [$\bullet$]
			When $n$ is even, then, for any $\alpha\in \mathbb{F}_p^n$, $i\in\mathbb{F}_p$,
			\[\chi_\alpha(N_i(f))=\epsilon_\alpha p^{\frac{n}{2}-1}K^{(t)}(i, f^*(\alpha))+p^{n-1}\delta_0(\alpha).\]
			\item [$\bullet$]
			When $n$ is odd, then, for any $\alpha\in\mathbb{F}_p^n$, $i\in\mathbb{F}_p$, \[\chi_\alpha(N_i(f))=\epsilon_\alpha\sqrt{p^*}p^{\frac{n-3}{2}}S^{(t)}(i,f^*(\alpha))+p^{n-1}\delta_0(\alpha).\]
			
			Here $\delta_0(0)=1$ and $\delta_0(\alpha)=0$ for any $\alpha\in \mathbb{F}_p^n\setminus\{0\}$.
		\end{itemize}
	\end{corollary}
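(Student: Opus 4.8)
The plan is to reduce the statement to Lemma~\ref{le 10} by splitting $N_i(f)$ according to the sign of the Walsh coefficient of $f^*$. By the description of the Walsh transform of a bent function, every $\alpha\in\mathbb{F}_p^n$ lies in exactly one of $B_+(f^*)$ and $B_-(f^*)$, so these two sets partition $\mathbb{F}_p^n$. Consequently, for each $i\in\mathbb{F}_p$,
\[
N_i(f)=\{x\in\mathbb{F}_p^n:\ f(x)=i\}=c_i(f)\ \dot\cup\ d_i(f),
\]
a disjoint union (with the convention that one part is empty when $f$ is weakly regular, in which case $\chi_\alpha$ of that part is $0$, consistently with the definition $\chi_\alpha(\emptyset)=0$). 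Since $\chi_\alpha$ is additive over disjoint unions, $\chi_\alpha(N_i(f))=\chi_\alpha(c_i(f))+\chi_\alpha(d_i(f))$ for every $\alpha\in\mathbb{F}_p^n$.

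Next I would substitute the two expressions from Lemma~\ref{le 10}. In the even case this gives
\[
\chi_\alpha(N_i(f))=\Bigl(\tfrac{1+\epsilon_\alpha}{2}+\tfrac{\epsilon_\alpha-1}{2}\Bigr)p^{\frac{n}{2}-1}K^{(t)}(i,f^*(\alpha))+p^{-1}\bigl(\chi_\alpha(B_+(f^*))+\chi_\alpha(B_-(f^*))\bigr),
\]
and in the odd case the analogous identity with $\tfrac{\epsilon_\alpha+\eta(-1)}{2}+\tfrac{\epsilon_\alpha-\eta(-1)}{2}$ in front of $\sqrt{p^*}\,p^{\frac{n-3}{2}}S^{(t)}(i,f^*(\alpha))$. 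In both cases the bracketed sum of the two coefficients collapses to $\epsilon_\alpha$, so the first term is exactly $\epsilon_\alpha p^{\frac{n}{2}-1}K^{(t)}(i,f^*(\alpha))$ (resp.\ $\epsilon_\alpha\sqrt{p^*}\,p^{\frac{n-3}{2}}S^{(t)}(i,f^*(\alpha))$).

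It remains to evaluate the leftover term $p^{-1}\bigl(\chi_\alpha(B_+(f^*))+\chi_\alpha(B_-(f^*))\bigr)$. Because $B_+(f^*)\cup B_-(f^*)=\mathbb{F}_p^n$ is a disjoint union, this equals $p^{-1}\chi_\alpha(\mathbb{F}_p^n)$; applying Lemma~\ref{le 8} with $H=\mathbb{F}_p^n$ (so $r=n$ and $H^{\bot}=\{0\}$), or simply the orthogonality of characters, yields $\chi_\alpha(\mathbb{F}_p^n)=p^n\delta_0(\alpha)$, hence $p^{-1}\chi_\alpha(\mathbb{F}_p^n)=p^{n-1}\delta_0(\alpha)$. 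Combining the two pieces gives exactly the claimed formula in each parity. There is no real obstacle here: the only points requiring a little care are the bookkeeping of the coefficients of $K^{(t)}$ and $S^{(t)}$, and the empty-set convention for $\chi_\alpha$ in the weakly regular case, both of which are handled by the remarks above. \qed
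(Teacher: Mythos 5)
Your proposal is correct and follows exactly the route the paper intends: the paper simply states that the corollary follows from Lemma \ref{le 10}, and your argument — writing $N_i(f)=c_i(f)\,\dot\cup\,d_i(f)$, summing the two formulas of Lemma \ref{le 10}, collapsing the coefficients to $\epsilon_\alpha$, and evaluating $p^{-1}\chi_\alpha(\mathbb{F}_p^n)=p^{n-1}\delta_0(\alpha)$ — is precisely that deduction made explicit. No gaps; the empty-set convention remark correctly covers the weakly regular case.
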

	
	\begin{lemma}\label{le 11}
		Let $f(x):\mathbb{F}_{p}^n\longrightarrow\mathbb{F}_p$ be a dual-bent function belonging to $\mathcal{DBF}$. Then  we have the following.
		\begin{itemize}
			\item [$\bullet$] When $n$ is even, then, for any $\alpha\in\mathbb{F}_p^n$,\\ $\chi_\alpha(\bigcup\limits_{i\in SQ} c_i(f))=
			\begin{cases}
			\frac{(p-1)}{2}(p^{-1}\chi_\alpha(B_+(f^*))-\frac{1+\epsilon_\alpha}{2}p^{\frac{n}{2}-1}),&\hspace{0.5cm}\text{if}\ f^*(\alpha)=0,\\
			\frac{(1+\epsilon_\alpha)(\eta(f^*(\alpha))p+1)}{4}p^{\frac{n}{2}-1}+\\\frac{(p-1)}{2p}\chi_\alpha(B_+(f^*)),&\hspace{0.5cm}\text{if}\ f^*(\alpha)\ne0;
			\end{cases} $\\
			$\chi_\alpha(\bigcup\limits_{i\in NSQ} c_i(f))=
			\begin{cases}
			\frac{(p-1)}{2}(p^{-1}\chi_\alpha(B_+(f^*))-\frac{1+\epsilon_\alpha}{2}p^{\frac{n}{2}-1}),&\hspace{0.2cm}\text{if}\ f^*(\alpha)=0,\\
			\frac{(1+\epsilon_\alpha)(-\eta(f^*(\alpha))p+1)}{4}p^{\frac{n}{2}-1}+\\\frac{(p-1)}{2p}\chi_\alpha(B_+(f^*)),&\hspace{0.2cm}\text{if}\ f^*(\alpha)\ne0;
			\end{cases} $\\
			$\chi_\alpha(\bigcup\limits_{i\in SQ} d_i(f))=
			\begin{cases}
			\frac{(p-1)}{2}(p^{-1}\chi_\alpha(B_-(f^*))-\frac{\epsilon_\alpha-1}{2}p^{\frac{n}{2}-1}),&\hspace{0.4cm}\text{if}\ f^*(\alpha)=0,\\
			\frac{(\epsilon_\alpha-1)(\eta(f^*(\alpha))p+1)}{4}p^{\frac{n}{2}-1}+\\\frac{(p-1)}{2p}\chi_\alpha(B_-(f^*)),&\hspace{0.4cm}\text{if}\ f^*(\alpha)\ne0;
			\end{cases} $\\
			$\chi_\alpha(\bigcup\limits_{i\in NSQ} d_i(f))=
			\begin{cases}
			\frac{(p-1)}{2}(p^{-1}\chi_\alpha(B_-(f^*))-\frac{\epsilon_\alpha-1}{2}p^{\frac{n}{2}-1}),&\hspace{0.2cm}\text{if}\ f^*(\alpha)=0,\\
			\frac{(\epsilon_\alpha-1)(-\eta(f^*(\alpha))p+1)}{4}p^{\frac{n}{2}-1}+\\\frac{(p-1)}{2p}\chi_\alpha(B_-(f^*)),&\hspace{0.2cm}\text{if}\ f^*(\alpha)\ne0.
			\end{cases} $\\
			\item [$\bullet$] When $n$ is odd, then, for any $\alpha\in\mathbb{F}_p^n$,\\ $\chi_\alpha(\bigcup\limits_{i\in SQ} c_i(f))=
			\begin{cases}
			\frac{(p-1)}{2}(p^{-1}\chi_\alpha(B_+(f^*))+\frac{\eta(-1)+\epsilon_\alpha}{2}p^{\frac{n-1}{2}}),&\text{if}\ f^*(\alpha)=0,\\
			-\frac{(\eta(-1)+\epsilon_\alpha)(\eta(f^*(\alpha))+\eta(-1))}{4}p^*p^{\frac{n-                     3}{2}}+\\\frac{(p-1)}{2p}\chi_\alpha(B_+(f^*)),&\text{if}\ f^*(\alpha)\ne0;
			\end{cases} $
			
			$\chi_\alpha(\bigcup\limits_{i\in NSQ} c_i(f))=
			\begin{cases}
			\frac{(p-1)}{2}(p^{-1}\chi_\alpha(B_+(f^*))-\frac{\eta(-1)+\epsilon_\alpha}{2}p^{\frac{n-1}{2}}), &\hspace{-0.2cm}\text{if}\ f^*(\alpha)=0,\\
			\frac{(\eta(-1)+\epsilon_\alpha)(\eta(-1)-\eta (f^*(\alpha)))}{4}p^*p^{\frac{n-                     3}{2}}+\\\frac{(p-1)}{2p}\chi_\alpha(B_+(f^*)), &\hspace{-0.2cm}\text{if}\ f^*(\alpha)\ne0;
			\end{cases} $
			
			$\chi_\alpha(\bigcup\limits_{i\in SQ} d_i(f))=
			\begin{cases}
			\frac{(p-1)}{2}(p^{-1}\chi_\alpha(B_-(f^*))+\frac{\epsilon_\alpha-\eta(-1)}{2}p^{\frac{n-1}{2}}),&\hspace{0cm}\text{if}\ f^*(\alpha)=0,\\
			-\frac{(\epsilon_\alpha-\eta(-1))(\eta(f^*(\alpha))+\eta(-1))}{4}p^*p^{\frac{n-                     3}{2}}+\\\frac{(p-1)}{2p}\chi_\alpha(B_-(f^*)),&\hspace{0cm}\text{if}\ f^*(\alpha)\ne0;
			\end{cases} $
			$\chi_\alpha(\bigcup\limits_{i\in NSQ} d_i(f))=
			\begin{cases}
			\frac{(p-1)}{2}(p^{-1}\chi_\alpha(B_-(f^*))-\frac{\epsilon_\alpha-\eta(-1)}{2}p^{\frac{n-1}{2}}), &\hspace{-0.2cm}\text{if}\ f^*(\alpha)=0,\\
			\frac{(\epsilon_\alpha-\eta(-1))(\eta(-1)-\eta (f^*(\alpha)))}{4}p^*p^{\frac{n-                     3}{2}}+\\\frac{(p-1)}{2p}\chi_\alpha(B_-(f^*)), &\hspace{-0.2cm}\text{if}\ f^*(\alpha)\ne0.
			\end{cases} $
		\end{itemize}
	\end{lemma}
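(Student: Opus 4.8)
The plan is to reduce everything to Lemma~\ref{le 10} together with the evaluations of the character sums $\sum_{i\in SQ}K^{(t)}(i,j)$, $\sum_{i\in NSQ}K^{(t)}(i,j)$, $\sum_{i\in SQ}S^{(t)}(i,j)$ and $\sum_{i\in NSQ}S^{(t)}(i,j)$ recorded in Lemma~\ref{le 9}. The key observation is that, since $f$ is a function, the sets $c_i(f)=\{x\in B_+(f^*):f(x)=i\}$, $i\in\mathbb{F}_p$, are pairwise disjoint, and likewise the sets $d_i(f)$; hence additivity of the character sum over disjoint sets gives
\[
\chi_\alpha\Bigl(\bigcup_{i\in SQ}c_i(f)\Bigr)=\sum_{i\in SQ}\chi_\alpha(c_i(f)),
\]
and similarly for the three remaining unions.

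First I would treat the case $n$ even. Substituting the expression for $\chi_\alpha(c_i(f))$ from Lemma~\ref{le 10} and using $|SQ|=\frac{p-1}{2}$, one obtains
\[
\chi_\alpha\Bigl(\bigcup_{i\in SQ}c_i(f)\Bigr)=\frac{1+\epsilon_\alpha}{2}p^{\frac n2-1}\sum_{i\in SQ}K^{(t)}(i,f^*(\alpha))+\frac{p-1}{2p}\chi_\alpha(B_+(f^*)).
\]
Now Lemma~\ref{le 9}(2) evaluates $\sum_{i\in SQ}K^{(t)}(i,f^*(\alpha))$ as $-\frac{p-1}{2}$ when $f^*(\alpha)=0$ and as $\frac{\eta(f^*(\alpha))p+1}{2}$ when $f^*(\alpha)\neq0$; plugging these in yields exactly the two displayed formulas for $\chi_\alpha(\bigcup_{i\in SQ}c_i(f))$. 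The three remaining unions in the even case are handled identically, replacing the $K^{(t)}$-sum over $SQ$ by the one over $NSQ$ (Lemma~\ref{le 9}(2)), and/or replacing the factor $\frac{1+\epsilon_\alpha}{2}$ and the set $B_+(f^*)$ by $\frac{\epsilon_\alpha-1}{2}$ and $B_-(f^*)$ according to whether we are summing the $c_i(f)$ or the $d_i(f)$.

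The case $n$ odd proceeds in the same way, using the second part of Lemma~\ref{le 10} and the $S^{(t)}$-sums from Lemma~\ref{le 9}(5). Here, after substitution, each contribution carries a factor $\bigl(\sqrt{p^*}\bigr)^2=p^*$, and one invokes the identity $\eta(-1)p^*=p$, so that $\eta(-1)p^*p^{\frac{n-3}{2}}=p^{\frac{n-1}{2}}$, to bring the $f^*(\alpha)=0$ formulas into the stated form; the $f^*(\alpha)\neq0$ formulas follow directly without further simplification.

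I do not anticipate a genuine obstacle here: all of the conceptual content has already been absorbed into Lemmas~\ref{le 9} and~\ref{le 10}, and the only real work is bookkeeping — keeping the four unions ($SQ$/$NSQ$ crossed with $c$/$d$) and the even/odd dichotomy straight, and correctly tracking the signs $\epsilon_\alpha$, $\eta(-1)$ and $\eta(f^*(\alpha))$ through the case split of Lemma~\ref{le 9}.
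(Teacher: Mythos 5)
Your proposal is correct and follows essentially the same route as the paper: the paper's proof likewise writes each $\chi_\alpha$ of a union as the sum of the $\chi_\alpha(c_i(f))$ or $\chi_\alpha(d_i(f))$ over $SQ$ or $NSQ$ and then cites Lemmas \ref{le 9} and \ref{le 10} directly. Your explicit bookkeeping (including the use of $\eta(-1)p^*=p$ in the odd case) just spells out the substitution the paper leaves to the reader.
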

	\begin{proof} Note that for any $\alpha\in\mathbb{F}_p^n$, we have
		\begin{align*}
		\chi_\alpha(\bigcup\limits_{i\in SQ} c_i(f))&=\sum\limits_{i\in SQ}\chi_\alpha(c_i(f)),\\
		\chi_\alpha(\bigcup\limits_{i\in SQ} d_i(f))&=\sum\limits_{i\in SQ}\chi_\alpha(d_i(f)),\\
		\chi_\alpha(\bigcup\limits_{i\in NSQ} c_i(f))&=\sum\limits_{i\in NSQ}\chi_\alpha(c_i(f)),\\
		\chi_\alpha(\bigcup\limits_{i\in NSQ} d_i(f))&=\sum\limits_{i\in NSQ}\chi_\alpha(d_i(f)).
		\end{align*}
		
		Then the desired results directly follow from Lemmas  \ref{le 9} and \ref{le 10}.\qed 
	\end{proof}
	
	The following corollaries follow from Lemma \ref{le 11}.
	\begin{corollary}\label{co 2}
		Let $f(x):\mathbb{F}_{p}^n\longrightarrow\mathbb{F}_p$ be a dual-bent function belonging to $\mathcal{DBF}$. Then  we have the following.
		\begin{itemize}
			\item [$\bullet$] When $n$ is even, then, for any $\alpha\in\mathbb{F}_p^n$,\\
			$\chi_\alpha(\bigcup\limits_{i\in\mathbb{F}_p^*}c_i(f))=\begin{cases}
			(p-1)(p^{-1}\chi_\alpha(B_+(f^*)) -\frac{\epsilon_\alpha+1}{2}p^{\frac{n}{2}-1}),&\hspace{0.3cm}\text{if}\ f^*(\alpha)=0,\\
			\frac{\epsilon_\alpha+1}{2}p^{\frac{n}{2}-1}+\frac{p-1}{p}\chi_\alpha(B_+(f^*)), &\hspace{0.3cm}\text{if}\ f^*(\alpha)\ne0;
			\end{cases}$
			$\chi_\alpha(\bigcup\limits_{i\in\mathbb{F}_p^*}d_i(f))=\begin{cases}
			(p-1)(p^{-1}\chi_\alpha(B_-(f^*)) -\frac{\epsilon_\alpha-1}{2}p^{\frac{n}{2}-1}),&\hspace{0.3cm}\text{if}\ f^*(\alpha)=0,\\
			\frac{\epsilon_\alpha-1}{2}p^{\frac{n}{2}-1}+\frac{p-1}{p}\chi_\alpha(B_-(f^*)), &\hspace{0.3cm}\text{if}\ f^*(\alpha)\ne0.
			\end{cases}$
			\item [$\bullet$]	When $n$ is odd, then, for any $\alpha\in\mathbb{F}_p^n$,\\
			$\chi_\alpha(\bigcup\limits_{i\in\mathbb{F}_p^*}c_i(f))=\begin{cases}
			\frac{p-1}{p}\chi_\alpha(B_+(f^*)) ,&\hspace{1.7cm}\text{if}\ f^*(\alpha)=0,\\
			\frac{-(\epsilon_\alpha+\eta(-1))}{2}\eta(f^*(\alpha))p^*p^{\frac{n-3}{2}}+\\\frac{p-1}{p}\chi_\alpha(B_+(f^*)), &\hspace{1.7cm}\text{if}\ f^*(\alpha)\ne0;
			\end{cases}$
			$\chi_\alpha(\bigcup\limits_{i\in\mathbb{F}_p^*}d_i(f))=\begin{cases}
			\frac{p-1}{p}\chi_\alpha(B_-(f^*)) ,&\hspace{1.7cm}\text{if}\ f^*(\alpha)=0,\\
			\frac{-(\epsilon_\alpha-\eta(-1))}{2}\eta(f^*(\alpha))p^*p^{\frac{n-3}{2}}+\\\frac{p-1}{p}\chi_\alpha(B_-(f^*)), &\hspace{1.7cm}\text{if}\ f^*(\alpha)\ne0.
			\end{cases}$
		\end{itemize}
	\end{corollary}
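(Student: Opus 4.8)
The plan is to deduce the four identities directly from the per-level character sums already computed, using only the additivity of $\chi_\alpha$ over disjoint unions. Since the sets $c_i(f)=\{x\in B_+(f^*):f(x)=i\}$, $i\in\mathbb{F}_p$, are the fibres of $f$ restricted to $B_+(f^*)$, they are pairwise disjoint, and likewise the $d_i(f)$, $i\in\mathbb{F}_p$, are pairwise disjoint. Hence $\chi_\alpha\bigl(\bigcup_{i\in\mathbb{F}_p^*}c_i(f)\bigr)=\sum_{i\in\mathbb{F}_p^*}\chi_\alpha(c_i(f))$ and $\chi_\alpha\bigl(\bigcup_{i\in\mathbb{F}_p^*}d_i(f)\bigr)=\sum_{i\in\mathbb{F}_p^*}\chi_\alpha(d_i(f))$, and each summand is given by Lemma \ref{le 10}.

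Concretely, for $n$ even I would substitute the expression $\chi_\alpha(c_i(f))=\frac{1+\epsilon_\alpha}{2}p^{\frac n2-1}K^{(t)}(i,f^*(\alpha))+p^{-1}\chi_\alpha(B_+(f^*))$ (and its analogue for $d_i(f)$ with $\frac{\epsilon_\alpha-1}{2}$ and $B_-(f^*)$), pull the $\alpha$-only prefactor out of the sum over $i\in\mathbb{F}_p^*$, note that the $p^{-1}\chi_\alpha(B_\pm(f^*))$ term contributes $(p-1)p^{-1}\chi_\alpha(B_\pm(f^*))$, and apply Lemma \ref{le 9}(3): $\sum_{i\in\mathbb{F}_p^*}K^{(t)}(i,f^*(\alpha))$ equals $-(p-1)$ when $f^*(\alpha)=0$ and equals $1$ when $f^*(\alpha)\ne0$. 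Collecting terms gives exactly the two stated cases. Equivalently, one may instead add the two formulas of Lemma \ref{le 11} for $\bigcup_{i\in SQ}$ and $\bigcup_{i\in NSQ}$: in the case $f^*(\alpha)=0$ they coincide and simply double, while in the case $f^*(\alpha)\ne0$ the $\pm\eta(f^*(\alpha))p$ contributions cancel and the rest adds.

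For $n$ odd the argument is identical, using $\chi_\alpha(c_i(f))=\frac{\epsilon_\alpha+\eta(-1)}{2}\sqrt{p^*}\,p^{\frac{n-3}{2}}S^{(t)}(i,f^*(\alpha))+p^{-1}\chi_\alpha(B_+(f^*))$ and Lemma \ref{le 9}(6): $\sum_{i\in\mathbb{F}_p^*}S^{(t)}(i,f^*(\alpha))$ equals $0$ when $f^*(\alpha)=0$ and equals $-\eta(f^*(\alpha))\sqrt{p^*}$ when $f^*(\alpha)\ne0$; plugging the latter in and using $(\sqrt{p^*})^{2}=p^*$ produces the term $-\tfrac{\epsilon_\alpha+\eta(-1)}{2}\eta(f^*(\alpha))\,p^*p^{\frac{n-3}{2}}$, and similarly for $d_i(f)$. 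There is no real obstacle here: the corollary is a bookkeeping consequence of Lemmas \ref{le 9} and \ref{le 10}, and the only points that need attention are the disjointness of the fibres, the identity $(\sqrt{p^*})^2=p^*$, and keeping the cases $f^*(\alpha)=0$ and $f^*(\alpha)\ne0$ separate throughout the computation.
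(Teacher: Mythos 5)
Your proposal is correct and matches the paper's route: the paper derives this corollary by summing the expressions of Lemma \ref{le 11} (equivalently, Lemma \ref{le 10} combined with Lemma \ref{le 9}), which is exactly the bookkeeping you carry out, including the correct use of Lemma \ref{le 9}(3) and (6) and the identity $(\sqrt{p^*})^2=p^*$ in the odd case. Both of your suggested variants (direct summation over $i\in\mathbb{F}_p^*$ via Lemma \ref{le 10}, or adding the $SQ$ and $NSQ$ formulas of Lemma \ref{le 11}) reproduce the stated cases, so no gap remains.
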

	\begin{corollary}\label{co 3}
		Let $f(x):\mathbb{F}_{p}^n\longrightarrow\mathbb{F}_p$ be a dual-bent function belonging to $\mathcal{DBF}$. Then  we have the following.
		\begin{itemize}
			\item [$\bullet$] When $n$ is even, then, for any $\alpha\in\mathbb{F}_p^n$,\\ $\chi_\alpha(\bigcup\limits_{i\in SQ} N_i(f))=
			\begin{cases}
			\frac{(p-1)}{2}(p^{n-1}\delta_0(\alpha)-\epsilon_\alpha p^{\frac{n}{2}-1}),&\hspace{1.5cm}\text{if}\ f^*(\alpha)=0,\\
			\epsilon_\alpha\frac{(\eta(f^*(\alpha))p+1)}{2}p^{\frac{n}{2}-1}+\\\frac{(p-1)}{2}p^{n-1}\delta_0(\alpha),&\hspace{1.5cm}\text{if}\ f^*(\alpha)\ne0;
			\end{cases} $\\
			$\chi_\alpha(\bigcup\limits_{i\in NSQ} N_i(f))=
			\begin{cases}
			\frac{(p-1)}{2}(p^{n-1}\delta_0(\alpha)-\epsilon_\alpha p^{\frac{n}{2}-1}),&\hspace{1.3cm}\text{if}\ f^*(\alpha)=0,\\
			\epsilon_\alpha\frac{(-\eta(f^*(\alpha))p+1)}{2}p^{\frac{n}{2}-1}+\\\frac{(p-1)}{2}p^{n-1}\delta_0(\alpha),&\hspace{1.3cm}\text{if}\ f^*(\alpha)\ne0.
			\end{cases} $\\
			
			\item [$\bullet$] When $n$ is odd, then, for any $\alpha\in\mathbb{F}_p^n$,\\ $\chi_\alpha(\bigcup\limits_{i\in SQ} N_i(f))=
			\begin{cases}
			\frac{(p-1)}{2}(p^{n-1}\delta_0(\alpha)+\epsilon_\alpha p^{\frac{n-1}{2}}),&\hspace{1.6cm}\text{if}\ f^*(\alpha)=0,\\
			-\epsilon_\alpha\frac{(\eta(f^*(\alpha))+\eta(-1))}{2}p^*p^{\frac{n-                     3}{2}}+\\\frac{(p-1)}{2}	p^{n-1}\delta_0(\alpha),&\hspace{1.6cm}\text{if}\ f^*(\alpha)\ne0;
			\end{cases} $\\
			$\chi_\alpha(\bigcup\limits_{i\in NSQ} N_i(f))=
			\begin{cases}
			\frac{(p-1)}{2}(p^{n-1}\delta_0(\alpha)-\epsilon_\alpha p^{\frac{n-1}{2}}), &\hspace{1.4cm}\text{if}\ f^*(\alpha)=0,\\
			\epsilon_\alpha\frac{(\eta(-1)-\eta (f^*(\alpha)))}{2}p^*p^{\frac{n-                     3}{2}}+\\\frac{(p-1)}{2}p^{n-1}\delta_0(\alpha), &\hspace{1.4cm}\text{if}\ f^*(\alpha)\ne0.
			\end{cases} $	
		\end{itemize}
		
		Here $\delta_0(0)=1$ and $\delta_0(\alpha)=0$ for any $\alpha\in \mathbb{F}_p^n\setminus\{0\}$.
	\end{corollary}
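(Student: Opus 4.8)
The plan is to derive everything from Corollary \ref{co 1} and Lemma \ref{le 9}. Since $\bigcup_{i\in SQ}N_i(f)$ and $\bigcup_{i\in NSQ}N_i(f)$ are disjoint unions of the level sets $N_i(f)$, the character sum is additive, so $\chi_\alpha(\bigcup_{i\in SQ}N_i(f))=\sum_{i\in SQ}\chi_\alpha(N_i(f))$ and likewise for $NSQ$. First I would substitute the closed form of $\chi_\alpha(N_i(f))$ from Corollary \ref{co 1} into these sums. Pulling the $i$-independent factors $\epsilon_\alpha p^{\frac n2-1}$ (when $n$ is even) or $\epsilon_\alpha\sqrt{p^*}p^{\frac{n-3}{2}}$ (when $n$ is odd), as well as the term $p^{n-1}\delta_0(\alpha)$, outside the summation over $i$, the task reduces to evaluating $\sum_{i\in SQ}K^{(t)}(i,f^*(\alpha))$ and $\sum_{i\in NSQ}K^{(t)}(i,f^*(\alpha))$ in the even case, and $\sum_{i\in SQ}S^{(t)}(i,f^*(\alpha))$, $\sum_{i\in NSQ}S^{(t)}(i,f^*(\alpha))$ in the odd case, together with the factor $|SQ|=|NSQ|=\frac{p-1}{2}$ multiplying the $\delta_0(\alpha)$ term.

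Next I would apply parts (2) and (5) of Lemma \ref{le 9}, which give exactly these sums in closed form, split according to whether $f^*(\alpha)=0$ or $f^*(\alpha)\neq 0$; this is precisely the case distinction that appears in the statement. In the even-$n$ case no further manipulation is needed: substituting Lemma \ref{le 9}(2) and collecting terms yields the four displayed identities. In the odd-$n$ case I would use $\sqrt{p^*}\cdot\sqrt{p^*}=p^*$ to collapse the prefactor $\sqrt{p^*}p^{\frac{n-3}{2}}$ against the $\sqrt{p^*}$ produced by Lemma \ref{le 9}(5), obtaining the $p^*p^{\frac{n-3}{2}}$ appearing in the statement; in the subcase $f^*(\alpha)=0$ one additionally uses $\eta(-1)p^*=p$, which turns $\epsilon_\alpha\eta(-1)p^*p^{\frac{n-3}{2}}\cdot\frac{p-1}{2}$ into $\frac{p-1}{2}\epsilon_\alpha p^{\frac{n-1}{2}}$, matching the claimed form.

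The only point that needs explicit justification is that, in the subcase $f^*(\alpha)\neq 0$, one may drop the $\delta_0(\alpha)$ term. This is valid because $f^*(0)=0$ by Lemma \ref{le 1}(2), so $f^*(\alpha)\neq 0$ forces $\alpha\neq 0$ and hence $\delta_0(\alpha)=0$. With that observation, all eight identities (four for $n$ even, four for $n$ odd) follow by routine bookkeeping. I do not expect a genuine obstacle here; the only thing requiring care is tracking the sign patterns $\pm\eta(f^*(\alpha))$ and the $\eta(-1)$ factors coming from Lemma \ref{le 9}, so that the four variants for $c_i$ versus $d_i$ and $SQ$ versus $NSQ$ are not confused.
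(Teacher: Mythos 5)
Your proposal is correct, and every identity checks out: summing Corollary \ref{co 1} over $i\in SQ$ (resp.\ $NSQ$) and invoking Lemma \ref{le 9}(2) for $n$ even and Lemma \ref{le 9}(5) for $n$ odd gives exactly the displayed formulas, with the simplification $\eta(-1)p^{*}=p$ handling the $f^*(\alpha)=0$ subcase when $n$ is odd, and $f^*(0)=0$ from Lemma \ref{le 1}(2) justifying that $\delta_0(\alpha)=0$ whenever $f^*(\alpha)\neq 0$ (the statement in fact retains that vanishing term, so nothing even needs to be dropped). The paper instead obtains the corollary from Lemma \ref{le 11}, i.e.\ by writing $N_i(f)=c_i(f)\cup d_i(f)$ and adding $\chi_\alpha(\bigcup_{i\in SQ}c_i(f))+\chi_\alpha(\bigcup_{i\in SQ}d_i(f))$, which requires the extra observations $\chi_\alpha(B_+(f^*))+\chi_\alpha(B_-(f^*))=p^n\delta_0(\alpha)$ and $\frac{\epsilon_\alpha+1}{2}+\frac{\epsilon_\alpha-1}{2}=\epsilon_\alpha$; your route works one level lower, directly from $\chi_\alpha(N_i(f))$ where these cancellations have already been performed, so it is marginally more economical, while the paper's route keeps the bookkeeping uniform with Corollary \ref{co 2}. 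Since Lemma \ref{le 11} is itself proved from Lemmas \ref{le 9} and \ref{le 10}, the two arguments rest on the same computations and are equally valid.
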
	
	\begin{corollary}\label{co 4}
		Let $f(x):\mathbb{F}_{p}^n\longrightarrow\mathbb{F}_p$ be a dual-bent function belonging to $\mathcal{DBF}$. Then  we have the following.
		\begin{itemize}
			\item [$\bullet$] When $n$ is even, then, for any $\alpha\in\mathbb{F}_p^n$,\\ $\chi_\alpha(\bigcup\limits_{i\in \mathbb{F}_p^*} N_i(f))=
			\begin{cases}
			{(p-1)}(p^{n-1}\delta_0(\alpha)-\epsilon_\alpha p^{\frac{n}{2}-1}),&\hspace{1.4cm}\text{if}\ f^*(\alpha)=0,\\
			\epsilon_\alpha p^{\frac{n}{2}-1}+(p-1)	p^{n-1}\delta_0(\alpha),&\hspace{1.4cm}\text{if}\ f^*(\alpha)\ne0.
			\end{cases} $\\
			\item [$\bullet$] When $n$ is odd, then, for any $\alpha\in\mathbb{F}_p^n$,\\ $\chi_\alpha(\bigcup\limits_{i\in \mathbb{F}_p^*} N_i(f))=
			\begin{cases}
			{(p-1)}p^{n-1}\delta_0(\alpha),&\hspace{-0.1cm}\text{if}\ f^*(\alpha)=0,\\
			-\epsilon_\alpha\eta(f^*(\alpha))p^*p^{\frac{n-                     3}{2}}+{(p-1)}	p^{n-1}\delta_0(\alpha),&\hspace{-0.1cm}\text{if}\ f^*(\alpha)\ne0.
			\end{cases} $
		\end{itemize}
		
		Here $\delta_0(0)=1$ and $\delta_0(\alpha)=0$ for any $\alpha\in \mathbb{F}_p^n\setminus\{0\}$.
	\end{corollary}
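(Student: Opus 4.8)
The plan is to use additivity of the character sum $\chi_\alpha$ over disjoint sets. Since the fibres $N_i(f)=\{x\in\mathbb{F}_p^n:\ f(x)=i\}$ are pairwise disjoint, we have
\[\chi_\alpha\Bigl(\bigcup_{i\in\mathbb{F}_p^*}N_i(f)\Bigr)=\sum_{i\in\mathbb{F}_p^*}\chi_\alpha(N_i(f)),\]
so the statement reduces to summing the formula of Corollary \ref{co 1} over the $p-1$ nonzero values of $i$ and evaluating the resulting one‑variable exponential sums by Lemma \ref{le 9}. (Equivalently one could add the two expressions of Corollary \ref{co 3}, since $\bigcup_{i\in\mathbb{F}_p^*}N_i(f)$ is the disjoint union of $\bigcup_{i\in SQ}N_i(f)$ and $\bigcup_{i\in NSQ}N_i(f)$, or subtract $\chi_\alpha(N_0(f))$ from $\chi_\alpha(\mathbb{F}_p^n)=p^n\delta_0(\alpha)$; I would take whichever bookkeeping turns out shortest.)

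For $n$ even, Corollary \ref{co 1} gives $\chi_\alpha(N_i(f))=\epsilon_\alpha p^{\frac n2-1}K^{(t)}(i,f^*(\alpha))+p^{n-1}\delta_0(\alpha)$. Summing over $i\in\mathbb{F}_p^*$ produces $(p-1)p^{n-1}\delta_0(\alpha)+\epsilon_\alpha p^{\frac n2-1}\sum_{i\in\mathbb{F}_p^*}K^{(t)}(i,f^*(\alpha))$; by Lemma \ref{le 9}(3) the last sum equals $-(p-1)$ when $f^*(\alpha)=0$ and $1$ when $f^*(\alpha)\ne 0$, and substituting these two values gives precisely the two displayed cases.

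For $n$ odd, Corollary \ref{co 1} gives $\chi_\alpha(N_i(f))=\epsilon_\alpha\sqrt{p^*}\,p^{\frac{n-3}2}S^{(t)}(i,f^*(\alpha))+p^{n-1}\delta_0(\alpha)$, so summing over $i\in\mathbb{F}_p^*$ leaves $(p-1)p^{n-1}\delta_0(\alpha)+\epsilon_\alpha\sqrt{p^*}\,p^{\frac{n-3}2}\sum_{i\in\mathbb{F}_p^*}S^{(t)}(i,f^*(\alpha))$. By Lemma \ref{le 9}(6) the inner sum is $0$ if $f^*(\alpha)=0$ and $-\eta(f^*(\alpha))\sqrt{p^*}$ if $f^*(\alpha)\ne 0$; in the latter case the identity $(\sqrt{p^*})^2=p^*$ turns the contribution into $-\epsilon_\alpha\eta(f^*(\alpha))p^*p^{\frac{n-3}2}$, which matches the claim.

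There is no genuine obstacle here: the only points that require a little care are that each of the $p-1$ summands contributes one copy of the $\delta_0(\alpha)$ term (so its coefficient becomes $p-1$) and the simplification $(\sqrt{p^*})^2=p^*$ in the odd case; everything else is an immediate substitution from Corollary \ref{co 1} together with Lemma \ref{le 9}.
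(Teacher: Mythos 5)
Your proposal is correct: summing the formula of Corollary \ref{co 1} over $i\in\mathbb{F}_p^*$ and evaluating $\sum_{i\in\mathbb{F}_p^*}K^{(t)}(i,f^*(\alpha))$ and $\sum_{i\in\mathbb{F}_p^*}S^{(t)}(i,f^*(\alpha))$ by Lemma \ref{le 9}(3) and (6) reproduces exactly the stated values, including the coefficient $(p-1)$ on $p^{n-1}\delta_0(\alpha)$ and the simplification $(\sqrt{p^*})^2=p^*$ in the odd case. This is essentially the paper's own (unwritten) argument — the paper merely cites Lemma \ref{le 11}, i.e.\ the same character-sum bookkeeping packaged through the $c_i(f)$, $d_i(f)$ pieces, and your alternative routes (adding the two cases of Corollary \ref{co 3}, or subtracting $\chi_\alpha(N_0(f))$ from $p^n\delta_0(\alpha)$) are equivalent one-line variants.
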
	
	
	\section{The constructions of association schemes}
	\hspace{0.5cm}In this section, we construct some infinite families of symmetric association schemes by using non-weakly regular bent function $f(x)$, which belongs to $\mathcal{DBF}$ and satisfies the following condition.\\
	\textbf{Condition 1:} When $p^n\equiv 1\ (\mathrm{mod}\ 4)$, $B_+(f)$ and $B_+(f^*)$ (or $B_-(f)$ and $B_-(f^*)$) are both non-degenerate $r$-dimensional subspaces over $\mathbb{F}_p$; when $p^n\equiv 3\ (\mathrm{mod}\ 4)$, $B_-(f)$ and $B_+(f^*)$ (or $B_+(f)$ and $B_-(f^*)$) are both non-degenerate $r$-dimensional subspaces over $\mathbb{F}_p$.
	
	In fact, by the construction method introduced in \cite[Remark 4]{Ozbudak}, we can obtain infinitely many non-weakly regular bent functions belonging to $\mathcal{DBF}$ and  satisfying \textbf{Condition 1}.\\
	Define 
	\begin{align*}
	\mathcal{P}&:=\{\{0\},c_0(f)\setminus\{0\},c_1(f), \dots,c_{p-1}(f),d_0(f)\setminus\{0\},d_1(f),\dots,d_{p-1}(f)\},\\
	\mathcal{D}&:=\{\{0\}, c_0(f)\setminus\{0\}, \bigcup\limits_{i\in SQ}c_i(f),\bigcup\limits_{i\in NSQ}c_i(f), d_0(f)\setminus\{0\}, \bigcup\limits_{i\in SQ}d_i(f),\\&\qquad\bigcup\limits_{i\in NSQ}d_i(f)\},\\
	\mathcal{T}&:=\{\{0\}, c_0(f)\setminus\{0\}, \bigcup\limits_{i\in \mathbb{F}_p^*}c_i(f), d_0(f)\setminus\{0\}, \bigcup\limits_{i\in \mathbb{F}_p^*}d_i(f)\}.
	\end{align*} 
	Let $P$, $D$  and $T$ be the sets of all nonempty sets in $\mathcal{P}$, $\mathcal{D}$ and $\mathcal{T}$, respectively. 
	\hspace{0.5cm}  We begin this section with the following lemma.
	\begin{lemma}\label{le 12}
		Let $f(x):\mathbb{F}_{p}^n\longrightarrow\mathbb{F}_p$ be a dual-bent function belonging to $\mathcal{DBF}$. For any two distinct elements $\alpha, \alpha'\in \mathbb{F}_p^n$, if $\chi_\alpha(N_i(f))=\chi_{\alpha'}(N_i(f))$ for any $i\in\mathbb{F}_p$, then $\epsilon_\alpha=\epsilon_{\alpha'}$ and $f^*(\alpha)=f^*(\alpha')$.
	\end{lemma}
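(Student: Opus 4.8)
The plan is to compress the hypothesis, which involves all $p$ of the values $\chi_\alpha(N_i(f))$ at once, into a single scalar identity by taking a well-chosen linear combination over $i\in\mathbb{F}_p$. The right combination is to multiply the assumed equality $\chi_\alpha(N_i(f))=\chi_{\alpha'}(N_i(f))$ by $\xi_p^{i}$ and sum over $i$. Note first that $\sum_{i\in\mathbb{F}_p}\chi_\alpha(N_i(f))\xi_p^{i}=\sum_{x\in\mathbb{F}_p^n}\xi_p^{f(x)+\alpha\cdot x}=\mathcal{W}_f(-\alpha)$, so this operation is meaningful. To evaluate it, I would substitute the formulas of Corollary~\ref{co 1}, using on the one hand that $\sum_{i\in\mathbb{F}_p}\xi_p^{i}=0$ (so the $p^{n-1}\delta_0(\alpha)$ term disappears), and on the other hand the elementary evaluations $\sum_{i\in\mathbb{F}_p}K^{(t)}(i,j)\xi_p^{i}=\sum_{i\in\mathbb{F}_p}S^{(t)}(i,j)\xi_p^{i}=p\,\xi_p^{j}$, which are immediate since $\sum_{i\in\mathbb{F}_p}\xi_p^{(1-c)i}$ equals $p$ for $c=1$ and $0$ otherwise. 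This yields
\[
\sum_{i\in\mathbb{F}_p}\chi_\alpha(N_i(f))\xi_p^{i}=\epsilon_\alpha\,C\,\xi_p^{f^*(\alpha)},\qquad
C=\begin{cases}p^{n/2},& n\text{ even},\\ \sqrt{p^*}\,p^{(n-1)/2},& n\text{ odd},\end{cases}
\]
with $C\neq 0$ in both parities. Applying the same computation with $\alpha'$, the hypothesis forces $\epsilon_\alpha\,C\,\xi_p^{f^*(\alpha)}=\epsilon_{\alpha'}\,C\,\xi_p^{f^*(\alpha')}$, hence $\epsilon_\alpha\,\xi_p^{f^*(\alpha)}=\epsilon_{\alpha'}\,\xi_p^{f^*(\alpha')}$.

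From this single relation both conclusions fall out. Since $\epsilon_\alpha,\epsilon_{\alpha'}\in\{1,-1\}$, if $\epsilon_\alpha\neq\epsilon_{\alpha'}$ then $\xi_p^{\,f^*(\alpha)-f^*(\alpha')}=-1$, which is impossible because $-1$ is not a $p$-th root of unity for odd $p$; therefore $\epsilon_\alpha=\epsilon_{\alpha'}$. Cancelling this common sign gives $\xi_p^{f^*(\alpha)}=\xi_p^{f^*(\alpha')}$, and since $\xi_p$ has order $p$ this means $f^*(\alpha)\equiv f^*(\alpha')\pmod p$, i.e.\ $f^*(\alpha)=f^*(\alpha')$ in $\mathbb{F}_p$, as claimed.

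I do not expect a genuine obstacle: the whole content is the small observation of summing the given equalities against $\xi_p^{i}$, after which everything is bookkeeping built on Corollary~\ref{co 1} and Lemma~\ref{le 7}. The only two points that merit a line of care are that the Kronecker term $p^{n-1}\delta_0(\alpha)$ really cancels (it does, because $\sum_{i\in\mathbb{F}_p}\xi_p^{i}=0$, so distinctness of $\alpha,\alpha'$ is not even needed at this stage) and the closing root-of-unity argument that $-1$ is not a power of $\xi_p$. It is worth remarking that neither the distinctness of $\alpha$ and $\alpha'$, nor non-weak-regularity, nor \textbf{Condition 1}, is used; the statement holds for every dual-bent $f\in\mathcal{DBF}$.
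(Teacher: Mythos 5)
Your proof is correct and follows essentially the same route as the paper's: both sum the assumed equalities against $\xi_p^{i}$ to recover $\mathcal{W}_f(-\alpha)=\mathcal{W}_f(-\alpha')$, which for $f\in\mathcal{DBF}$ equals $\epsilon_\alpha p^{n/2}\xi_p^{f^*(\alpha)}$ (resp.\ $\epsilon_\alpha\sqrt{p^*}p^{(n-1)/2}\xi_p^{f^*(\alpha)}$), and then read off the sign and the dual value. The only cosmetic difference is that you reach this closed form via Corollary~\ref{co 1} and the elementary evaluations of $\sum_i K^{(t)}(i,j)\xi_p^i$ and $\sum_i S^{(t)}(i,j)\xi_p^i$, whereas the paper invokes Lemma~\ref{le 1} ($\epsilon_{-\alpha}=\epsilon_\alpha$, $f^*(-\alpha)=f^*(\alpha)$) directly on $\mathcal{W}_f(-\alpha)$; your explicit final root-of-unity argument and the remark that distinctness is not needed are both sound.
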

	\begin{proof}
		Since $f(x)\in\mathcal{DBF}$, then by Lemma \ref{le 1}, we have that $\epsilon_\beta=\epsilon_{-\beta}$ and $f^{*}(-\beta)=f^*(\beta)$ for any $\beta\in\mathbb{F}_p^n$. Note that 
		$\mathcal{W}_{f}(-\alpha)=\sum\limits_{i\in\mathbb{F}_p}\chi_\alpha(N_i(f))\xi_p^i=\sum\limits_{i\in\mathbb{F}_p}\chi_{\alpha'}(N_i(f))\xi_p^i=\mathcal{W}_{f}(-\alpha')$.
		Thus, if $n$ is even, then  $\epsilon_\alpha p^{\frac{n}{2}}\xi_p^{f^*(\alpha)}=\mathcal{W}_{f}(-\alpha)=\mathcal{W}_{f}(-\alpha')=\epsilon_{\alpha'}p^{\frac{n}{2}}\xi_p^{f^*(\alpha')}$, so $\epsilon_\alpha=\epsilon_{\alpha'}$ and $f^*(\alpha)=f^*(\alpha')$. If $n$ is odd, then $\epsilon_{\alpha}\sqrt{p^*}p^{\frac{n-1}{2}}\xi_p^{f^*(\alpha)}$ $=\mathcal{W}_{f}(-\alpha)=\mathcal{W}_{f}(-\alpha')=\epsilon_{\alpha'}\sqrt{p^*}p^{\frac{n-1}{2}}\xi_p^{f^*(\alpha')}$, so $\epsilon_\alpha=\epsilon_{\alpha'}$ and $f^*(\alpha)=f^*(\alpha')$. The proof is now completed.\qed
	\end{proof}
	
	In the following theorems, when $n$ is even, we present the constructions of association schemes from non-weakly regular bent functions.
	\begin{theorem}\label{th 1}
		Let $n$ be an even integer with $n\ge 4$,  $f(x):\mathbb{F}_{p}^n\longrightarrow\mathbb{F}_p$ be a non-weakly regular bent function belonging to $\mathcal{DBF}$, $B_+(f)$ and $B_+(f^*)$ both be non-degenerate $r$-dimensional subspaces over $\mathbb{F}_p$. Then 
		when $\frac{n}{2}+1\le r \le n-1$, $U_1=\{U_{1,i}\}_{i=0}^{2p+1}=\{\{0\}, B_+(f)^{\bot}\setminus\{0\}, c_0(f)\setminus B_+(f)^{\bot}, c_1(f), \dots, c_{p-1}(f), d_0(f), d_1(f),\\\dots, d_{p-1}(f)\}$ is a partition of $\mathbb{F}_p^n$ and induces a $(2p+1)$-class symmetric association scheme.
	\end{theorem}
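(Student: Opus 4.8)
The plan is to show that $U_1$ is a partition of $\mathbb{F}_p^n$ into $2p+2$ nonempty blocks, that its dual partition $\widehat{U_1}$ also has $2p+2$ blocks, and then to invoke Lemma~\ref{le 6} (together with Lemma~\ref{le 5} and Remark~\ref{re 4}). Since $n$ is even, $p^n\equiv 1\ (\mathrm{mod}\ 4)$, so Lemma~\ref{le 1}(5) applied to $f$ gives $|B_+(f^*)|=p^r$, $B_+(f)^{\bot}\subseteq c_0(f)$, and (from its proof) $0\in B_+(f)\cap B_+(f^*)$, whence $\{0\}\subseteq B_+(f)^{\bot}\subseteq c_0(f)$. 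Thus $c_0(f)=\{0\}\cup\big(B_+(f)^{\bot}\setminus\{0\}\big)\cup\big(c_0(f)\setminus B_+(f)^{\bot}\big)$ is a disjoint union, and combined with $\mathbb{F}_p^n=B_+(f^*)\cup B_-(f^*)$, $B_+(f^*)=\bigcup_{i\in\mathbb{F}_p}c_i(f)$, $B_-(f^*)=\bigcup_{i\in\mathbb{F}_p}d_i(f)$ (and $0\notin B_-(f^*)$), this shows $U_1$ is a partition. Nonemptiness of the blocks follows from Lemma~\ref{le 4} with $k=|B_+(f^*)|=p^r$ and $0\in B_+(f)$: it yields $|c_j(f)|=p^{r-1}-p^{n/2-1}$, $|c_0(f)|=p^{r-1}+(p-1)p^{n/2-1}$, $|d_j(f)|=|d_0(f)|=p^{n-1}-p^{r-1}$ for $j\in\mathbb{F}_p^*$, all positive because $\frac{n}{2}+1\le r\le n-1$, and then $|B_+(f)^{\bot}\setminus\{0\}|=p^{n-r}-1>0$ and $|c_0(f)\setminus B_+(f)^{\bot}|=p^{r-1}+(p-1)p^{n/2-1}-p^{n-r}>0$ (using $p^{r-1}\ge p^{n/2}\ge p^{n-r}$).

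For the dual partition I would argue via character sums. For $\alpha\in\mathbb{F}_p^n$, the family $\{\chi_\alpha(U_{1,i})\}_i$ and the data $\big(\chi_\alpha(c_i(f))\big)_{i\in\mathbb{F}_p}$, $\big(\chi_\alpha(d_i(f))\big)_{i\in\mathbb{F}_p}$, $\chi_\alpha(B_+(f)^{\bot})$ determine each other, because $\chi_\alpha(B_+(f)^{\bot})=1+\chi_\alpha(B_+(f)^{\bot}\setminus\{0\})$ and $\chi_\alpha(c_0(f))=1+\chi_\alpha(B_+(f)^{\bot}\setminus\{0\})+\chi_\alpha(c_0(f)\setminus B_+(f)^{\bot})$. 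Now let $\alpha,\alpha'\in\mathbb{F}_p^n\setminus\{0\}$: if $\alpha\sim_{\widehat{U_1}}\alpha'$, then since $N_i(f)=c_i(f)\cup d_i(f)$ disjointly, $\chi_\alpha(N_i(f))=\chi_{\alpha'}(N_i(f))$ for all $i$, so Lemma~\ref{le 12} forces $\epsilon_\alpha=\epsilon_{\alpha'}$ and $f^*(\alpha)=f^*(\alpha')$; conversely, by Lemma~\ref{le 10} the numbers $\chi_\alpha(c_i(f)),\chi_\alpha(d_i(f))$ depend only on $\epsilon_\alpha$, $f^*(\alpha)$ and $\chi_\alpha(B_+(f^*))$ (recall $\chi_\alpha(B_-(f^*))=-\chi_\alpha(B_+(f^*))$ for $\alpha\ne 0$), while $\chi_\alpha(B_+(f)^{\bot})$ depends only on $\epsilon_\alpha$ by Lemma~\ref{le 8} and $(B_+(f)^{\bot})^{\bot}=B_+(f)$; and $\chi_\alpha(B_+(f^*))$ equals $p^r$ or $0$ according to whether $\alpha\in B_+(f^*)^{\bot}$, as $B_+(f^*)$ is an $r$-dimensional subspace. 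Hence, for $\alpha,\alpha'\ne 0$, $\alpha\sim_{\widehat{U_1}}\alpha'$ exactly when $\epsilon_\alpha=\epsilon_{\alpha'}$, $f^*(\alpha)=f^*(\alpha')$ and $\alpha\in B_+(f^*)^{\bot}\Leftrightarrow\alpha'\in B_+(f^*)^{\bot}$; since Remark~\ref{re 3} isolates $\{0\}$ as its own block, the blocks of $\widehat{U_1}$ are $\{0\}$ together with the nonempty sets among $\{\alpha\ne 0:\epsilon_\alpha=\varepsilon,\ f^*(\alpha)=j,\ \alpha\in B_+(f^*)^{\bot}\}$ and $\{\alpha\ne 0:\epsilon_\alpha=\varepsilon,\ f^*(\alpha)=j,\ \alpha\notin B_+(f^*)^{\bot}\}$, over $\varepsilon\in\{1,-1\}$ and $j\in\mathbb{F}_p$.

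It remains to count these blocks, and this is where $\frac{n}{2}+1\le r\le n-1$ (hence $n\ge 4$) is essential. From $B_+(f)^{\bot}\subseteq c_0(f)\subseteq B_+(f^*)$ we get $B_+(f^*)^{\bot}\subseteq(B_+(f)^{\bot})^{\bot}=B_+(f)$, so $\alpha\in B_+(f^*)^{\bot}$ forces $\epsilon_\alpha=1$; and applying Lemma~\ref{le 1}(5) to $f^*$ (which is non-weakly regular bent, lies in $\mathcal{DBF}$, and satisfies $f^{**}=f$, so $B_+(f^{**})=B_+(f)$) gives $B_+(f^*)^{\bot}\subseteq c_0(f^*)=\{x\in B_+(f):f^*(x)=0\}$, so $\alpha\in B_+(f^*)^{\bot}$ also forces $f^*(\alpha)=0$. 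Consequently the blocks of $\widehat{U_1}$ are exactly $\{0\}$, $B_+(f^*)^{\bot}\setminus\{0\}$, the sets $c_j(f^*)$ for $j\in\mathbb{F}_p^*$, $c_0(f^*)\setminus B_+(f^*)^{\bot}$, and the sets $d_j(f^*)$ for $j\in\mathbb{F}_p$ --- that is, $\widehat{U_1}$ is the partition that the present construction produces from $f^*$ in place of $f$, consistently with Remark~\ref{re 4}(2). All of these are nonempty: by Remark~\ref{re 1}(1) and Lemma~\ref{le 4}, $|c_j(f^*)|=|c_j(f)|=p^{r-1}-p^{n/2-1}>0$ and $|d_j(f^*)|=|d_j(f)|=p^{n-1}-p^{r-1}>0$, $|B_+(f^*)^{\bot}\setminus\{0\}|=p^{n-r}-1>0$, and $|c_0(f^*)|-|B_+(f^*)^{\bot}|=p^{r-1}+(p-1)p^{n/2-1}-p^{n-r}>0$. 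Hence $|\widehat{U_1}|=1+1+(p-1)+1+p=2p+2=|U_1|$, and Lemmas~\ref{le 5} and~\ref{le 6} give that $U_1$ induces a $(2p+1)$-class symmetric association scheme. The main obstacle, and the reason the hypotheses on $r$ and $n$ are imposed, is the bookkeeping that prevents any of these $2p+2$ blocks --- on the primal or the dual side --- from being empty or from coinciding with another.
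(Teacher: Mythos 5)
Your proof is correct and takes essentially the same route as the paper's: show $U_1$ is a partition with nonempty blocks via Lemmas \ref{le 1} and \ref{le 4}, use Lemma \ref{le 12} to force $\epsilon_\alpha=\epsilon_{\alpha'}$ and $f^*(\alpha)=f^*(\alpha')$ on dual-equivalence classes, identify $\widehat{U_1}$ through Lemmas \ref{le 8} and \ref{le 10}, and conclude with Lemma \ref{le 6}. The only difference is that you make explicit the bookkeeping the paper leaves implicit when it simply asserts the form of the dual partition, namely that the classes are governed by the triple $(\epsilon_\alpha, f^*(\alpha), \alpha\in B_+(f^*)^{\bot})$ and that $B_+(f^*)^{\bot}\subseteq c_0(f^*)\cap B_+(f)$ forces the $B_+(f^*)^{\bot}$-class to sit inside the $\epsilon=1$, $f^*=0$ class.
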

	\begin{proof}
		By Lemma \ref{le 1}, we know that $B_+(f)^{\bot}\subseteq c_0(f)$,  $B_+(f^*)^{\bot}\subseteq c_0(f^*)$ and $0\in c_0(f)\cap c_0(f^*)$.  Since $r\ge \frac{n}{2}+1$, then by Lemma \ref{le 4} and Remark \ref{re 1}, we have that $|c_0(f)|=|c_0(f^*)|=p^{r-1}+(p-1)p^{\frac{n}{2}-1}>p^{n-r}$, $|c_i(f)|=|c_i(f^*)|=p^{r-1}-p^{\frac{n}{2}-1}>0$ for any $i\in\mathbb{F}_p^*$, and $|d_j(f)|=|d_j(f^*)|=p^{n-1}-p^{r-1}>0$ for any $j\in\mathbb{F}_p$. Thus, $U_1$ is a partition of $\mathbb{F}_p^n$. For any $\alpha,\alpha' \in \mathbb{F}_p^n$, if $\alpha\sim_{\widehat{U_1}}\alpha'$, where $\widehat{U_1}$ is the dual partition of $U_1$, then we have that $\chi_\alpha(N_i(f))=\chi_{\alpha'}(N_i(f))$ for any $i\in\mathbb{F}_p$. Thus, by Lemma \ref{le 12}, we know that  $\epsilon_\alpha=\epsilon_{\alpha'}$ and $f^*(\alpha)=f^*(\alpha')$. 
		According to Lemmas \ref{le 8} and \ref{le 10}, we know that the dual partition of $U_1$ is $\{\{0\}, B_+(f^*)^{\bot}\setminus\{0\}, c_0(f^*)\setminus B_+(f^*)^{\bot}, c_1(f^*), \dots, c_{p-1}(f^*), d_0(f^*), d_1(f^*), \dots, d_{p-1}(f^*)\}$. By Lemma \ref{le 6}, we have that $U_1$ can induce a $(2p+1)$-class symmetric association scheme.\qed
	\end{proof}
	\begin{remark}
		When $n$ is even, until now, we have not constructed a non-weakly regular bent function such that $B_+(f^*)$ or $B_+(f)$ is a non-degenerate $\frac{n}{2}$-dimensional subspace over $\mathbb{F}_p$. It is a question whether such a function exists, so in Theorem \ref{th 1}, we let $r\ge \frac{n}{2}+1$.
	\end{remark}
	\begin{theorem}\label{th 2}
		Let $n$ be an even integer with $n\ge 4$,  $f(x):\mathbb{F}_{p}^n\longrightarrow\mathbb{F}_p$ be a non-weakly regular bent function belonging to $\mathcal{DBF}$, $B_-(f)$ and $B_-(f^*)$ both be non-degenerate $r$-dimensional subspaces over $\mathbb{F}_p$. Then when $r=\frac{n}{2}+1$,  $U_2=\{U_{2,i}\}_{i=0}^{2p}=\{\{0\},c_0(f), c_1(f),$ $ \dots,$ $ c_{p-1}(f),d_0(f)\setminus\{0\},d_1(f), \dots, d_{p-1}(f)\}$ is a partition of $\mathbb{F}_p^n$ and induces a $2p$-class symmetric association scheme; when 
		$\frac{n}{2}+1<r\le n-1$, $U_3=\{U_{3,i}\}_{i=0}^{2p+1}=\{\{0\}, c_0(f), c_1(f), \dots,$ $c_{p-1}(f), B_-(f)^{\bot}\setminus\{0\}, d_0(f)\setminus B_-(f)^{\bot}, d_1$ $(f), \dots, d_{p-1}(f)\}$ is a partition of $\mathbb{F}_p^n$ and induces a $(2p+1)$-class symmetric association scheme.
		
	\end{theorem}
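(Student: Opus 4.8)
The plan is to follow the proof of Theorem~\ref{th 1}, treating the two ranges $r=\frac{n}{2}+1$ and $\frac{n}{2}+1<r\le n-1$ separately.

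\emph{Step 1: the listed sets form partitions.} Since $n$ is even, $p^n\equiv1\ (\mathrm{mod}\ 4)$, so by Lemma~\ref{le 1}(4) the types of $f$ and $f^*$ coincide; as $B_-(f)$ is a subspace, $0\in B_-(f)$, hence also $0\in B_-(f^*)$. Applying Lemma~\ref{le 1}(5) to $f$ and (using $f^{**}=f$) to $f^*$ gives $B_-(f)^{\bot}\subseteq d_0(f)$, $B_-(f^*)^{\bot}\subseteq d_0(f^*)\subseteq B_-(f)$, $|B_+(f^*)|=p^n-p^r=:k$, and in particular $0\in d_0(f)\cap d_0(f^*)$. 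Substituting $k$ and the case $0\in B_-(f)$ into Lemma~\ref{le 4} (and Remark~\ref{re 1}(1), to get the same cardinalities for $f^*$) yields, for all $i\in\mathbb{F}_p$, $|c_i(f)|=|c_i(f^*)|=p^{n-1}-p^{r-1}$, $|d_0(f)|=|d_0(f^*)|=p^{r-1}-(p-1)p^{\frac{n}{2}-1}$, and $|d_j(f)|=|d_j(f^*)|=p^{r-1}+p^{\frac{n}{2}-1}$ for $j\in\mathbb{F}_p^*$. When $r=\frac{n}{2}+1$ this forces $|d_0(f)|=p^{\frac{n}{2}-1}=|B_-(f)^{\bot}|$, hence $d_0(f)=B_-(f)^{\bot}$ and $d_0(f)\setminus\{0\}=B_-(f)^{\bot}\setminus\{0\}$; since $n\ge4$ all $2p+1$ listed sets are nonempty, so $U_2$ is a partition. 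When $\frac{n}{2}+1<r\le n-1$ one checks $|d_0(f)|>p^{n-r}=|B_-(f)^{\bot}|$, so $d_0(f)\setminus B_-(f)^{\bot}\neq\emptyset$ and $U_3$ is a partition into $2p+2$ parts. The same computations with $f^*$ in place of $f$ show that the analogous sets built from $f^*$ also partition $\mathbb{F}_p^n$.

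\emph{Step 2: identify the dual partition and apply Lemma~\ref{le 6}.} Write $U$ for $U_2$ or $U_3$. If $\alpha\neq\alpha'$ and $\alpha\sim_{\widehat U}\alpha'$, then since the parts of $U$ recombine to all of $N_i(f)=c_i(f)\cup d_i(f)$ (for $U_2$ via $d_0(f)=(d_0(f)\setminus\{0\})\cup\{0\}$; for $U_3$ via the threefold split of $d_0(f)$), we get $\chi_\alpha(N_i(f))=\chi_{\alpha'}(N_i(f))$ for every $i\in\mathbb{F}_p$, so $\epsilon_\alpha=\epsilon_{\alpha'}$ and $f^*(\alpha)=f^*(\alpha')$ by Lemma~\ref{le 12}. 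By Lemma~\ref{le 10} each $\chi_\alpha(c_i(f))$, $\chi_\alpha(d_i(f))$ is determined by $\epsilon_\alpha$, $f^*(\alpha)$ together with $\chi_\alpha(B_+(f^*))$ and $\chi_\alpha(B_-(f^*))$; since $B_-(f^*)$ is an $r$-dimensional subspace, Lemma~\ref{le 8} gives $\chi_\alpha(B_-(f^*))=p^r$ for $\alpha\in B_-(f^*)^{\bot}$ and $0$ otherwise, and $\chi_\alpha(B_+(f^*))=p^n\delta_0(\alpha)-\chi_\alpha(B_-(f^*))$. Using $B_-(f^*)^{\bot}\subseteq d_0(f^*)$ (so $f^*(\alpha)\neq0\Rightarrow\alpha\notin B_-(f^*)^{\bot}$), the disjointness of $c_0(f^*)\subseteq B_+(f)$ from $B_-(f^*)^{\bot}\subseteq B_-(f)$, Remark~\ref{re 3} (no nonzero $\alpha'$ is $\widehat U$-equivalent to $0$), and — for $U_2$ — the equality $d_0(f^*)=B_-(f^*)^{\bot}$ that $r=\frac{n}{2}+1$ forces, one concludes that $\widehat{U_2}=\{\{0\},c_0(f^*),c_1(f^*),\dots,c_{p-1}(f^*),d_0(f^*)\setminus\{0\},d_1(f^*),\dots,d_{p-1}(f^*)\}$ and that $\widehat{U_3}$ is the $f^*$-analogue of $U_3$ (with $B_-(f^*)^{\bot}$ in place of $B_-(f)^{\bot}$). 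Each dual partition has the same number of parts as $U_2$, resp. $U_3$, and its zero class is $\{0\}$, so Lemma~\ref{le 6} gives the desired $2p$-class and $(2p+1)$-class symmetric association schemes.

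\emph{Main obstacle.} The delicate part is Step 2: one must verify in both directions that the data given by $\epsilon_\alpha$, $f^*(\alpha)$, whether $\alpha\in B_-(f^*)^{\bot}$, and whether $\alpha=0$ — on which, by Lemmas~\ref{le 8} and~\ref{le 10}, all values of $\chi_\alpha$ on the parts of $U_2$ and $U_3$ depend — is constant on each part of the conjectured dual partition and takes pairwise distinct value-tuples on distinct parts. The subtle points are the interaction of $f^*(\alpha)=0$ with membership in $B_-(f^*)^{\bot}$, and, for $U_3$, checking that the split of $d_0(f^*)$ is genuinely detected through the term $p^{-1}\chi_\alpha(B_-(f^*))$ appearing in the Lemma~\ref{le 10} formula for $\chi_\alpha(d_j(f))$, $j\in\mathbb{F}_p^*$.
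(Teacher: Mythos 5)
Your proposal is correct and follows essentially the same route as the paper's proof: cardinalities from Lemma~\ref{le 4} and Remark~\ref{re 1} (with $B_-(f)^{\bot}\subseteq d_0(f)$, $B_-(f^*)^{\bot}\subseteq d_0(f^*)$ from Lemma~\ref{le 1}) to verify the partition claims and the collapse $d_0(f^*)=B_-(f^*)^{\bot}$ when $r=\frac{n}{2}+1$, then Lemma~\ref{le 12} together with Lemmas~\ref{le 8} and~\ref{le 10} to identify the dual partitions, and Lemma~\ref{le 6} to conclude. The only difference is presentational: you spell out explicitly how the tuple $(\epsilon_\alpha, f^*(\alpha), \chi_\alpha(B_-(f^*)))$ separates the dual classes, a verification the paper leaves implicit in its appeal to Lemmas~\ref{le 8} and~\ref{le 10}.
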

	\begin{proof}
		
		By Lemma \ref{le 1}, we know that $B_-(f)^{\bot}\subseteq d_0(f)$,  $B_-(f^*)^{\bot}\subseteq d_0(f^*)$ and $0\in d_0(f)\cap d_0(f^*)$.  By Lemma \ref{le 4} and Remark \ref{re 1}, we know that $|c_i(f)|=|c_i(f^*)|=p^{n-1}-p^{r-1}>0$ for any $i\in\mathbb{F}_p$ and  $|d_0(f)|=|d_0(f^*)|=p^{r-1}-(p-1)p^{\frac{n}{2}-1}\ge p^{n-r}$, $|d_i(f)|=|d_i(f^*)|=p^{r-1}+p^{\frac{n}{2}-1}>0$ for any $i\in\mathbb{F}_p^*$. 
		
		Thus, when $r=\frac{n}{2}+1$, $B_-(f^*)^{\bot}=d_0(f^*)$ and $U_2$ is a partition of $\mathbb{F}_p^n$; when $r>\frac{n}{2}+1$, $B_-(f)^{\bot}\subsetneqq d_0(f)$, $B_-(f^*)^{\bot}\subsetneqq d_0(f^*)$ and $U_3$ is a partition of $\mathbb{F}_p^n$. For any $\alpha,\alpha'\in\mathbb{F}_p^n$, if $\alpha\sim_{\widehat{U_2}}\alpha'$ (respectively $\alpha\sim_{\widehat{U_3}}\alpha'$), where $\widehat{U_2}$, $\widehat{U_3}$ are the dual partitions of $U_2$ and $U_3$, respectively, then $\chi_\alpha(N_i(f))=\chi_{\alpha'}(N_i(f))$ for any $i\in \mathbb{F}_p$. Thus, by Lemma \ref{le 12}, we know that $\epsilon_\alpha=\epsilon_{\alpha'}$ and $f^*(\alpha)=f^*(\alpha')$.  
		According to Lemmas \ref{le 8} and \ref{le 10}, we know that when $r=\frac{n}{2}+1$, the dual partition of $U_2$ is $\{\{0\},c_0(f^*), c_1(f^*), \dots, c_{p-1}(f^*),d_0(f^*)\setminus\{0\},d_1(f^*), \dots, d_{p-1}(f^*)\}$, and  when $\frac{n}{2}+1<r\le n-1$, the dual partition of $U_3$ is $\{\{0\},c_0(f^*), c_1(f^*), \dots, c_{p-1}(f^*),B_-(f^*)^{\bot}\setminus\{0\}, d_0(f^*)\setminus B_-(f^*)^{\bot} , d_1(f^*),$ $ \dots, d_{p-1}(f^*)\}$. By Lemma \ref{le 6}, we have that when $r=\frac{n}{2}+1$, $U_2$ can induce a $2p$-class symmetric association scheme; when $\frac{n}{2}+1<r\le n-1$, $U_3$ can induce a $(2p+1)$-class symmetric association scheme. \qed
	\end{proof}
	\begin{remark}
		$(1)$ According  to Remark \ref{re 4}, Lemmas \ref{le 8}, \ref{le 10}, and the proofs of Theorems \ref{th 1}, \ref{th 2}, we give the first and second eigenmatrices of the association schemes induced by $U_i$, $1\le i\le 3$, in Tables 1-6 of Appendix.\\
		$(2)$ The intersection numbers and the Krein parameters of the association schemes induced by $U_i$, $1\le i\le 3$, can be calculated by Equations (8), (9), and Tables 1-6. In Appendix, we give the intersection numbers and the Krein parameters of the association schemes induced by $U_i$, $1\le i\le 3$.
	\end{remark}
	
	Considering the fusions of the association schemes constructed in Theorems \ref{th 1} and \ref {th 2}, we can also obtain some association schemes in the following propositions.
	
	\begin{proposition}\label{po 1}
		Let $n$ be an even integer with $n\ge 4$,  $f(x):\mathbb{F}_{p}^n\longrightarrow\mathbb{F}_p$ be a non-weakly regular bent function belonging to $\mathcal{DBF}$, $B_+(f)$ and $B_+(f^*)$ both be non-degenerate $r$-dimensional subspaces over $\mathbb{F}_p$. Then 
		when $\frac{n}{2}+1 \le r \le n-1 $, $U_4=\{U_{4,i}\}_{i=0}^7=\{\{0\}, B_+(f)^{\bot}\setminus\{0\}, c_0(f)\setminus B_+(f)^{\bot}, \bigcup\limits_{i\in SQ}c_i(f), \bigcup\limits_{i\in NSQ}c_i(f), \\d_0(f),$ $ \bigcup\limits_{i\in SQ}d_i(f), \bigcup\limits_{i\in NSQ}d_i(f)\}$ is a partition of $\mathbb{F}_p^n$ and induces a $7$-class symmetric association scheme, and
		$U_5=\{U_{5,i}\}_{i=0}^5=\{\{0\},$ $ B_+(f)^{\bot}\setminus\{0\}, c_0(f)\setminus B_+(f)^{\bot}, \bigcup\limits_{i\in \mathbb{F}_p^*}c_i(f), $ $d_0(f), \bigcup\limits_{i\in \mathbb{F}_p^*}d_i(f)\}$ is a partition of $\mathbb{F}_p^n$ and induces a $5$-class symmetric association scheme.
	\end{proposition}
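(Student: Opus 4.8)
The plan is to follow the template of the proof of Theorem~\ref{th 1}, with Lemma~\ref{le 8}, Lemma~\ref{le 10}, Lemma~\ref{le 11} (for $U_4$) and Lemma~\ref{le 8}, Lemma~\ref{le 10}, Corollary~\ref{co 2} (for $U_5$) playing the role that Lemma~\ref{le 10} plays there, and then to invoke the criterion of Lemma~\ref{le 6}. The first step is a cardinality check: since $U_4$ and $U_5$ are obtained from the partition $U_1$ of Theorem~\ref{th 1} by amalgamating blocks, and since $B_+(f)$ is a non-degenerate subspace containing $0$ with $|B_+(f^*)|=p^r$, Lemma~\ref{le 4} and Remark~\ref{re 1} give, for $\frac n2+1\le r\le n-1$,
\[
|\textstyle\bigcup_{i\in SQ}c_i(f)|=\tfrac{p-1}{2}\bigl(p^{r-1}-p^{\frac n2-1}\bigr)>0,\qquad |d_0(f)|=p^{n-1}-p^{r-1}>0,
\]
and similarly every block of $U_4$ and of $U_5$ is nonempty (the blocks $B_+(f)^{\bot}\setminus\{0\}$ and $c_0(f)\setminus B_+(f)^{\bot}$ being nonempty already by Theorem~\ref{th 1}). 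Hence $U_4$ is a partition with $8$ blocks and $U_5$ a partition with $6$ blocks, both having $\{0\}$ as their zero block.

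The heart of the proof is to identify the dual partitions. Since $U_4$ (resp.\ $U_5$) is a coarsening of $U_1$, each block of $\widehat{U_4}$ (resp.\ $\widehat{U_5}$) is a union of blocks of $\widehat{U_1}$, which by Theorem~\ref{th 1} equals $\{\{0\},B_+(f^*)^{\bot}\setminus\{0\},c_0(f^*)\setminus B_+(f^*)^{\bot},c_1(f^*),\dots,c_{p-1}(f^*),d_0(f^*),d_1(f^*),\dots,d_{p-1}(f^*)\}$. I would then show that $\widehat{U_4}$ is exactly the $8$-block partition obtained from $\widehat{U_1}$ by replacing $c_1(f^*),\dots,c_{p-1}(f^*)$ with $\bigcup_{i\in SQ}c_i(f^*)$ and $\bigcup_{i\in NSQ}c_i(f^*)$ and replacing $d_1(f^*),\dots,d_{p-1}(f^*)$ with $\bigcup_{i\in SQ}d_i(f^*)$ and $\bigcup_{i\in NSQ}d_i(f^*)$, and that $\widehat{U_5}$ is the corresponding $6$-block partition. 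This needs two verifications. First, that $\chi_\alpha$ is constant on each proposed dual block when evaluated on every block of $U_4$ (resp.\ $U_5$): for $\alpha\in c_i(f^*)$ with $i\in SQ$ one substitutes $\epsilon_\alpha=1$, $f^*(\alpha)=i\ne0$, $\eta(f^*(\alpha))=1$ into Lemma~\ref{le 10}, Lemma~\ref{le 11} and Corollary~\ref{co 2} together with $\chi_\alpha(B_+(f^*))=0$ (because $B_+(f^*)^{\bot}\subseteq c_0(f^*)$ by Lemma~\ref{le 1}(5), so $\alpha\notin B_+(f^*)^{\bot}$), $\chi_\alpha(B_-(f^*))=-\chi_\alpha(B_+(f^*))=0$ (as $\alpha\ne0$ and $B_+(f^*)\sqcup B_-(f^*)=\mathbb F_p^n$), and $\chi_\alpha(B_+(f)^{\bot})=p^{n-r}$ (Lemma~\ref{le 8}, as $\alpha\in B_+(f)$); the remaining proposed blocks are handled the same way, the two subspace blocks $\{0\}$ and $B_+(f^*)^{\bot}\setminus\{0\}$ being detached from $c_0(f^*)\setminus B_+(f^*)^{\bot}$ using $0\in B_+(f)\cap B_+(f^*)$ and $B_+(f^*)^{\bot}\subseteq c_0(f^*)$, and the key point being that for $f^*(\alpha)\ne0$ the formulas only involve $\eta(f^*(\alpha))$, not $f^*(\alpha)$ itself. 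Second, that the $8$ (resp.\ $6$) resulting value vectors are pairwise distinct: $\chi_\alpha(B_+(f)^{\bot})$, which by Lemma~\ref{le 8} equals $p^{n-r}$ on $B_+(f)$ and $0$ elsewhere, separates the blocks contained in $B_+(f)$ from the $d$-blocks; $\chi_\alpha(B_+(f^*))$, which equals $p^r$ exactly on $B_+(f^*)^{\bot}$, together with the entry $\chi_\alpha(d_0(f))$ (equal to $-p^{r-1}$ on $B_+(f^*)^{\bot}\setminus\{0\}$ and to $|d_0(f)|$ at $0$) separates $\{0\}$, $B_+(f^*)^{\bot}\setminus\{0\}$ and $c_0(f^*)\setminus B_+(f^*)^{\bot}$; the entry $\chi_\alpha(\bigcup_{i\in SQ}c_i(f))$, equal to $\tfrac{p+1}{2}p^{\frac n2-1}$ on $\bigcup_{i\in SQ}c_i(f^*)$ and $\tfrac{1-p}{2}p^{\frac n2-1}$ on $\bigcup_{i\in NSQ}c_i(f^*)$, separates the two fused $c$-blocks, and likewise $\chi_\alpha(\bigcup_{i\in SQ}d_i(f))$ separates the two fused $d$-blocks; finally $\chi_\alpha(d_0(f))$, equal to $-(p-1)p^{\frac n2-1}$ on $d_0(f^*)$ but to $p^{\frac n2-1}$ on every $d_i(f^*)$ with $i\ne0$, separates $d_0(f^*)$ from the fused $d$-blocks. (All displayed numbers are pairwise distinct because $p$ is an odd prime and $1\le n-r\le n-1$.)

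Once this is established, $|\widehat{U_4}|=8=|U_4|$ and $|\widehat{U_5}|=6=|U_5|$, so Lemma~\ref{le 6} immediately yields that $U_4$ induces a $7$-class and $U_5$ a $5$-class symmetric association scheme. The step I expect to be the main obstacle is the second verification of the previous paragraph together with its bookkeeping: although every character value is completely forced by Lemma~\ref{le 9}, Lemma~\ref{le 10}, Lemma~\ref{le 11} and Corollary~\ref{co 2}, one must run the case analysis over all of $\{0\}$, $B_+(f^*)^{\bot}\setminus\{0\}$, $c_0(f^*)\setminus B_+(f^*)^{\bot}$, $c_i(f^*)$, $d_0(f^*)$, $d_i(f^*)$ while keeping track of the three overlap facts $B_+(f^*)^{\bot}\subseteq c_0(f^*)$, $B_+(f)^{\bot}\subseteq c_0(f)$, $0\in B_+(f)\cap B_+(f^*)$, so as to be sure that the two subspace blocks detach cleanly and that no two of the proposed dual blocks collapse; reducing $\chi_\alpha$ on the (in general non-subspace) sets $B_-(f^*)$ to $\chi_\alpha(B_-(f^*))=-\chi_\alpha(B_+(f^*))$ for $\alpha\ne0$ is a small but essential ingredient.
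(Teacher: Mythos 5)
Your proposal is correct and takes essentially the same route as the paper: the paper's own proof of Proposition~\ref{po 1} is precisely the appeal to Lemmas~\ref{le 8}, \ref{le 10}, \ref{le 11}, Corollary~\ref{co 2} and the proof of Theorem~\ref{th 1} to identify the dual partitions of $U_4$ and $U_5$, followed by Lemma~\ref{le 6}; you have simply written out the character-value verification that the paper leaves implicit, and your computed values are accurate. One small bookkeeping slip in your distinctness check: the coordinates you name ($\chi_\alpha(B_+(f^*))$, $\chi_\alpha(d_0(f))$ and $\chi_\alpha(\bigcup_{i\in SQ}c_i(f))$) take identical values on $c_0(f^*)\setminus B_+(f^*)^{\bot}$ and on $\bigcup_{i\in NSQ}c_i(f^*)$ (namely $0$, $0$ and $\frac{1-p}{2}p^{\frac{n}{2}-1}$ in both cases), so this particular pair of proposed dual blocks is not separated by the entries you list; it is separated, for instance, by $\chi_\alpha(\bigcup_{i\in NSQ}c_i(f))$ (values $-\frac{p-1}{2}p^{\frac{n}{2}-1}$ versus $\frac{p+1}{2}p^{\frac{n}{2}-1}$) or by $\chi_\alpha(c_0(f)\setminus B_+(f)^{\bot})$, both already available from Lemmas~\ref{le 9}--\ref{le 11}, so the omission is cosmetic and the argument goes through as intended.
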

	\begin{proof}
		By Lemmas \ref{le 8}, \ref{le 10}, \ref{le 11}, Corollary \ref{co 2} and the proof of Theorem \ref{th 1}, we know that $U_4$ and $U_5$ are both partitions of $\mathbb{F}_p^n$, and the dual partitions of $U_4$ and $U_5$ are $\{\{0\}, B_+(f^*)^{\bot}\setminus\{0\}, c_0(f^*)\setminus B_+(f^*)^{\bot}, \bigcup\limits_{i\in SQ}c_i(f^*), \bigcup\limits_{i\in NSQ}c_i(f^*), d_0(f^*), $ $\bigcup\limits_{i\in SQ}d_i(f^*), \bigcup\limits_{i\in NSQ}d_i(f^*)\}$ and $\{\{0\}, B_+(f^*)^{\bot}\setminus\{0\}, c_0(f^*)\setminus B_+(f^*)^{\bot}, \bigcup\limits_{i\in \mathbb{F}_p^*}c_i$ $(f^*), d_0(f^*), \bigcup\limits_{i\in \mathbb{F}_p^*}d_i(f^*)\}$, respectively. Thus, according to Lemma \ref{le 6}, we have that $U_4$ and $U_5$ can induce $7$-class and $5$-class symmetric association schemes, respectively.	
		\qed
	\end{proof}
	
	\begin{proposition}\label{po 2}
		Let $n$ be an even integer with $n\ge 4$,  $f(x):\mathbb{F}_{p}^n\longrightarrow\mathbb{F}_p$ be a non-weakly regular bent function belonging to $\mathcal{DBF}$, $B_-(f)$ and $B_-(f^*)$ both be non-degenerate $r$-dimensional subspaces over $\mathbb{F}_p$. Then when $r= \frac{n}{2}+1$, $U_6=\{U_{6,i}\}_{i=0}^6=\{\{0\}, c_0(f), \bigcup\limits_{i\in SQ}c_i(f),\bigcup\limits_{i\in NSQ}c_i(f), d_0(f)\setminus\{0\}, \bigcup\limits_{i\in SQ}d_i(f), \bigcup\limits_{i\in NSQ}d_i\\(f)\}$ is a partition of $\mathbb{F}_p^n$ and induces a $6$-class symmetric association scheme, and
		$U_7=\{U_{7,i}\}_{i=0}^4=\{\{0\}, c_0(f), \bigcup\limits_{i\in \mathbb{F}_p^*}c_i(f),d_0(f)\setminus\{0\}, \bigcup\limits_{i\in \mathbb{F}_p^*}d_i(f)\}$ is a partition of $\mathbb{F}_p^n$ and induces a $4$-class symmetric association scheme; when  $\frac{n}{2}+1<r\le n-1$, $U_8=\{U_{8,i}\}_{i=0}^7=\{\{0\}, c_0(f), \bigcup\limits_{i\in SQ}c_i(f), \bigcup\limits_{i\in NSQ}c_i(f),B_-(f)^{\bot}\setminus\{0\}, d_0(f)\setminus B_-(f)^{\bot},\bigcup\limits_{i\in SQ}d_i(f), $ $\bigcup\limits_{i\in NSQ}d_i(f)\}$ is a partition of $\mathbb{F}_p^n$ and induces a $7$-class symmetric association scheme, and
		$U_9=\{U_{9,i}\}_{i=0}^5=\{\{0\}, c_0(f), \bigcup\limits_{i\in \mathbb{F}_p^*}c_i\\(f), B_-(f)^{\bot}\setminus\{0\}, d_0(f)\setminus B_-(f)^{\bot}, \bigcup\limits_{i\in \mathbb{F}_p^*}d_i$ $(f)\}$ is a partition of $\mathbb{F}_p^n$ and induces a $5$-class symmetric association scheme.
	\end{proposition}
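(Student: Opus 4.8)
The proof will run exactly parallel to that of Theorem \ref{th 2} (and to Proposition \ref{po 1}): the four families $U_6,U_7$ (case $r=\tfrac n2+1$) and $U_8,U_9$ (case $\tfrac n2+1<r\le n-1$) are fusions of the schemes produced in Theorem \ref{th 2}, and to see that they themselves are symmetric association schemes I would verify the criterion of Lemma \ref{le 6}: a partition of $\mathbb F_p^n$ with zero block $\{0\}$ induces a symmetric association scheme precisely when it has as many classes as its dual partition. So the task splits into (i) checking that the four families really are partitions, and (ii) computing their dual partitions and observing that the class counts agree.

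For (i) all the arithmetic is already in the proof of Theorem \ref{th 2}: $|c_i(f)|=p^{n-1}-p^{r-1}>0$ for $i\in\mathbb F_p$, $|d_i(f)|=p^{r-1}+p^{\frac n2-1}>0$ for $i\in\mathbb F_p^*$, $|d_0(f)|=p^{r-1}-(p-1)p^{\frac n2-1}\ge p^{n-r}$ with $0\in d_0(f)$ and $B_-(f)^{\bot}\subseteq d_0(f)$, and $\{N_i(f)\}_{i\in\mathbb F_p}=\{c_i(f)\cup d_i(f)\}_{i\in\mathbb F_p}$ partitions $\mathbb F_p^n$; hence the $SQ$- and $NSQ$-unions and $d_0(f)\setminus\{0\}$ are nonempty, $B_-(f)^{\bot}=d_0(f)$ when $r=\tfrac n2+1$, and $B_-(f)^{\bot}\subsetneqq d_0(f)$ when $r>\tfrac n2+1$ so that $B_-(f)^{\bot}\setminus\{0\}$ and $d_0(f)\setminus B_-(f)^{\bot}$ are nonempty. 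For (ii) the claim is that the dual partitions are the ``parallel'' fusions obtained by substituting $f^*$ for $f$; for instance $\widehat{U_6}=\{\{0\},c_0(f^*),\bigcup_{i\in SQ}c_i(f^*),\bigcup_{i\in NSQ}c_i(f^*),d_0(f^*)\setminus\{0\},\bigcup_{i\in SQ}d_i(f^*),\bigcup_{i\in NSQ}d_i(f^*)\}$, and similarly $\widehat{U_7},\widehat{U_8},\widehat{U_9}$, which have $7,5,8,6$ blocks respectively — matching $U_6,U_7,U_8,U_9$. To establish the forward inclusion, take $\alpha,\alpha'\in\mathbb F_p^n\setminus\{0\}$ (the block $\{0\}$ is isolated by Remark \ref{re 3}) with $\chi_\alpha$ and $\chi_{\alpha'}$ equal on every block; summing over the $c$-blocks, resp.\ the $d$-blocks, and using $\chi_\alpha(B_+(f^*))=p^n\delta_0(\alpha)-\chi_\alpha(B_-(f^*))$ gives $\chi_\alpha(B_\pm(f^*))=\chi_{\alpha'}(B_\pm(f^*))$, hence by Lemma \ref{le 8} that $\alpha\in B_-(f^*)^{\bot}\Leftrightarrow\alpha'\in B_-(f^*)^{\bot}$. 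Feeding this into Lemmas \ref{le 10}, \ref{le 11} and (for $U_7,U_9$) Corollary \ref{co 2}, the remaining data — the coefficient $\tfrac{1\pm\epsilon_\alpha}{2}$, the value $K^{(t)}(0,f^*(\alpha))\in\{p-1,-1\}$ from Lemma \ref{le 9}(1), and $\eta(f^*(\alpha))$ — must coincide for $\alpha$ and $\alpha'$, and because these are pairwise distinct over the relevant ranges this forces $\epsilon_\alpha=\epsilon_{\alpha'}$, $f^*(\alpha)=0\Leftrightarrow f^*(\alpha')=0$, and $\eta(f^*(\alpha))=\eta(f^*(\alpha'))$ when $f^*(\alpha)\ne0$ (for $U_8,U_9$ the equality $\epsilon_\alpha=\epsilon_{\alpha'}$ comes even more directly from agreement on $B_-(f)^{\bot}\setminus\{0\}$ via Lemma \ref{le 8}). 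Together with membership in $B_-(f^*)^{\bot}$ this pins down the parallel-fusion block of $\alpha$.

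The reverse inclusion — that $\chi_\alpha$ is actually constant on each block of the candidate dual — is the step I expect to carry the real weight, and it is here that $B_-(f^*)^{\bot}\subseteq d_0(f^*)$ (Lemma \ref{le 1}(5), applied to $f^*$ since $f^{**}=f$) is indispensable: it shows that ``$\alpha\in B_-(f^*)^{\bot}$?'' is constant on each block of the parallel fusion — false on every block except the $d_0(f^*)$-block, while on the $d_0(f^*)\setminus\{0\}$ block one has $B_-(f^*)^{\bot}=d_0(f^*)$ when $r=\tfrac n2+1$, and when $r>\tfrac n2+1$ the partitions $U_8,U_9$ are constructed precisely so that $B_-(f^*)^{\bot}\setminus\{0\}$ and $d_0(f^*)\setminus B_-(f^*)^{\bot}$ are separate blocks. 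With this in hand, all the character values appearing in Lemmas \ref{le 10}, \ref{le 11} and Corollary \ref{co 2} are indeed constant on each block of $\widehat{U_6},\widehat{U_7},\widehat{U_8},\widehat{U_9}$, so the dual partitions are exactly the parallel fusions; since their class counts equal those of $U_6,U_7,U_8,U_9$, Lemma \ref{le 6} yields the asserted $6$-, $4$-, $7$- and $5$-class symmetric association schemes. Everything else reduces to the exponential-sum evaluations already recorded in Lemmas \ref{le 9}--\ref{le 11} and their corollaries.
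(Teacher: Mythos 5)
Your proposal is correct and takes essentially the same route as the paper's own proof: the nonemptiness/cardinality facts and the identity $B_-(f^*)^{\bot}=d_0(f^*)$ (resp.\ $B_-(f^*)^{\bot}\subsetneqq d_0(f^*)$) are imported from the proof of Theorem 2, the dual partitions are identified via Lemmas 8, 10, 11 and Corollary 2 as the corresponding fusions with $f^*$ in place of $f$, and Lemma 6 then yields the $6$-, $4$-, $7$- and $5$-class symmetric association schemes. You merely make explicit the two verifications (constancy of the character sums on each candidate dual block, and pairwise distinctness of the resulting value vectors, i.e.\ of the eigenmatrix rows) that the paper leaves implicit by citing those lemmas and its appendix tables.
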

	\begin{proof}
		By Lemmas \ref{le 8}, \ref{le 10}, \ref{le 11}, Corollary \ref{co 2} and the proof of Theorem \ref{th 2}, when $r=\frac{n}{2}+1$, we know that $U_6$ and $U_7$ are both partitions of $\mathbb{F}_p^n$, and the dual partitions of $U_6$ and $U_7$ are $\{\{0\}, c_0(f^*), \bigcup\limits_{i\in SQ}c_i(f^*), \bigcup\limits_{i\in NSQ}c_i(f^*), d_0(f^*)\setminus\{0\}, \bigcup\limits_{i\in SQ}d_i(f^*), \bigcup\limits_{i\in NSQ}d_i(f^*)\}$ and $\{\{0\}, c_0(f^*), \bigcup\limits_{i\in \mathbb{F}_p^*}c_i(f^*), d_0(f^*)\setminus \{0\}, \bigcup\limits_{i\in \mathbb{F}_p^*}d_i$ $(f^*)\}$, respectively; when $\frac{n}{2}+1<r\le n-1$, we know that $U_8$ and $U_9$ are both partitions of $\mathbb{F}_p^n$, and the dual partitions of $U_8$ and $U_9$ are $\{\{0\}, c_0(f^*), \bigcup\limits_{i\in SQ}c_i$ $(f^*), \bigcup\limits_{i\in NSQ}c_i(f^*),  B_-(f^*)^{\bot}\setminus\{0\}, d_0(f^*)\setminus B_-(f^*)^{\bot}, \bigcup\limits_{i\in SQ}d_i(f^*), \bigcup\limits_{i\in NSQ}d_i(f^*)\}$ and $\{\{0\}, c_0(f^*), \bigcup\limits_{i\in \mathbb{F}_p^*}c_i(f^*), B_-(f^*)^{\bot}\setminus\{0\}, d_0(f^*)\setminus B_-(f^*)^{\bot}, \bigcup\limits_{i\in \mathbb{F}_p^*}d_i(f^*)\}$, respectively. Thus, according to Lemma \ref{le 6}, we have that when $r=\frac{n}{2}+1$, $U_6$ and $U_7$ can induce $6$-class and $4$-class symmetric association schemes, respectively; when $\frac{n}{2}+1<r\le n-1$, $U_8$ and $U_9$ can induce $7$-class and $5$-class symmetric association schemes, respectively. \qed
	\end{proof}
	\begin{remark}\label{re 7}
		$(1)$ Note that in Theorem \ref{th 2} and Proposition \ref{po 2}, when $B_-(f^*)$ is a  non-degenerate $(\frac{n}{2}+1)$-dimensional subspace over $\mathbb{F}_p$, by the proofs of Theorem \ref{th 2} and Proposition \ref{po 2}, we know that the  condition that $B_-(f)$ is a  non-degenerate $(\frac{n}{2}+1)$-dimensional subspace is not necessary. Thus, when $p=3$, the $6$-class association scheme constructed in \cite{Ozbudak2} can also be obtained by Theorem \ref{th 2}.\\
		$(2)$ Since $f^{**}(x)=f(x)$ for any $x\in\mathbb{F}_p^n$, according to Remark \ref{re 4}, Lemmas \ref{le 8}, \ref{le 10}, \ref{le 11}, Corollary \ref{co 2}, and the proofs of Propositions \ref{po 1}, \ref{po 2},  we have that the first and second eigenmatrices of the association scheme induced by $U_4$ (respectively $U_5$, $U_6$, $U_7$, $U_8$, $U_9$) are the same. The first (second) eigenmatrices of the association schemes induced by $U_i$, $4\le i\le 9$, are given in Tables 7-12 of Appendix.\\
		$(3)$ The intersection numbers and the Krein parameters of the association schemes induced by $U_i$, $4\le i\le 9$, can be calculated by Equations (8), (9), and Tables 7-12. We easily have that the intersection number $p_{u,v}^w$ and the Krein parameter $q_{u,v}^w$ of the association scheme induced by $U_4$ (respectively $U_5$, $U_6$, $U_7$, $U_8$, $U_9$)  are the same. In Appendix, we give the intersection numbers and the Krein parameters of the association schemes induced by $U_i$, $4\le i\le 9$.
	\end{remark}
	
	In the following proposition, when $n$ is even, we give the sufficient and necessary condition for the partitions $P$, $D$ and $T$ to induce symmetric association schemes.
	\begin{proposition} \label{po 3}
		Let $n$ be an even integer with $n\ge 4$,  $f(x):\mathbb{F}_{p}^n\longrightarrow\mathbb{F}_p$ be a non-weakly regular bent function belonging to $\mathcal{DBF}$. If $B_-(f^*)$ is a non-degenerate $r$-dimensional subspaces over $\mathbb{F}_p$ with $\frac{n}{2}+1\le r\le n-1$, then the partitions $P$, $D$ and $T$ induce symmetric association schemes if and only if $r=\frac{n}{2}+1$.
	\end{proposition}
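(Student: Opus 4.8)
The plan is to reduce the statement, via Lemmas~\ref{le 5} and~\ref{le 6}, to counting: since each of $P$, $D$, $T$ is a partition of $\mathbb{F}_p^n$ with $\{0\}$ as one of its classes, it induces a symmetric association scheme if and only if it has the same number of blocks as its dual partition. So the whole argument is a cardinality computation, hinging on one inclusion. To set it up: as $n$ is even, $p^n\equiv 1\ (\mathrm{mod}\ 4)$, so Lemma~\ref{le 1}(4) gives that $f$ and $f^*$ have the same type, and since $B_-(f^*)$ is a subspace, $0\in B_-(f^*)$, hence $0\in B_-(f)$ and $0\in d_0(f)\cap d_0(f^*)$. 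Because $f\in\mathcal{DBF}$ we have $f^{**}=f$, and $f^*$ is again a non-weakly regular bent function in $\mathcal{DBF}$, so applying Lemma~\ref{le 1}(5) to $f^*$ yields $|B_-(f)|=p^r$, hence $k:=|B_+(f^*)|=|B_+(f)|=p^n-p^r$, and---the key point---$B_-(f^*)^{\bot}\subseteq d_0(f^*)$, i.e.\ every $\alpha$ orthogonal to $B_-(f^*)$ has $\epsilon_\alpha=-1$ and $f^*(\alpha)=0$. Feeding $k$ and $0\in B_-(f)$ into Lemma~\ref{le 4} and Remark~\ref{re 1} gives $|c_i(f)|=|c_i(f^*)|=p^{n-1}-p^{r-1}$ for all $i\in\mathbb{F}_p$, $|d_0(f)|=|d_0(f^*)|=p^{r-1}-(p-1)p^{\frac n2-1}$, and $|d_j(f)|=|d_j(f^*)|=p^{r-1}+p^{\frac n2-1}$ for $j\in\mathbb{F}_p^*$; for $\frac n2+1\le r\le n-1$ and $n\ge 4$ these are all positive and $|d_0(f)|>1$, so $P$, $D$, $T$ are genuine partitions with $|P|=2p+1$, $|D|=7$, $|T|=5$.

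Next I would pin down the dual partitions. For $P$: if $\alpha\sim_{\widehat P}\alpha'$ with $\alpha,\alpha'\ne 0$, then $\chi_\alpha$ and $\chi_{\alpha'}$ agree on every $c_i(f)$ and $d_i(f)$, hence on every $N_i(f)=c_i(f)\cup d_i(f)$, so Lemma~\ref{le 12} forces $\epsilon_\alpha=\epsilon_{\alpha'}$ and $f^*(\alpha)=f^*(\alpha')$, and then the formula for $\chi_\alpha(c_0(f))$ in Lemma~\ref{le 10} forces $\chi_\alpha(B_+(f^*))=\chi_{\alpha'}(B_+(f^*))$; conversely Lemma~\ref{le 10} shows these three equalities imply $\alpha\sim_{\widehat P}\alpha'$. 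By Lemma~\ref{le 8}, for $\alpha\ne 0$ one has $\chi_\alpha(B_+(f^*))=-\chi_\alpha(B_-(f^*))$, which equals $-p^{r}$ if $\alpha\in B_-(f^*)^{\bot}$ and $0$ otherwise; since $B_-(f^*)^{\bot}\subseteq d_0(f^*)$, this extra invariant refines only the block $d_0(f^*)$, so the blocks of $\widehat P$ are exactly the non-empty sets among
\[
\{0\},\qquad c_i(f^*)\ (i\in\mathbb{F}_p),\qquad d_j(f^*)\ (j\in\mathbb{F}_p^*),\qquad B_-(f^*)^{\bot}\setminus\{0\},\qquad d_0(f^*)\setminus B_-(f^*)^{\bot}.
\]
The same computation with Lemma~\ref{le 11} and Corollaries~\ref{co 1}--\ref{co 4} replacing Lemma~\ref{le 10} shows $\widehat D$ and $\widehat T$ are the obvious fusions of this list (for $\widehat D$, merging the $c_i(f^*)$ over $i\in SQ$ and over $i\in NSQ$, and likewise the $d_j(f^*)$; for $\widehat T$, merging all $c_i(f^*)$, resp.\ $d_i(f^*)$, with $i\in\mathbb{F}_p^*$). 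Hence $|\widehat P|=2p+1+\theta$, $|\widehat D|=7+\theta$, $|\widehat T|=5+\theta$, where $\theta=1$ if $d_0(f^*)\setminus B_-(f^*)^{\bot}\ne\emptyset$ and $\theta=0$ otherwise (the remaining listed sets always being non-empty).

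Finally, since $B_-(f^*)^{\bot}\subseteq d_0(f^*)$ with $|B_-(f^*)^{\bot}|=p^{n-r}$ and $|d_0(f^*)|=p^{r-1}-(p-1)p^{\frac n2-1}$, a one-line comparison gives $d_0(f^*)=B_-(f^*)^{\bot}$ exactly when $r=\frac n2+1$ (for $\frac n2+1<r\le n-1$ one has $p^{n-r}\le p^{\frac n2-1}<p^{r-1}-(p-1)p^{\frac n2-1}$). Therefore $\theta=0\iff r=\frac n2+1$, so $|\widehat P|=|P|$, $|\widehat D|=|D|$ and $|\widehat T|=|T|$ all hold precisely when $r=\frac n2+1$, and Lemma~\ref{le 6} finishes the proof. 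The main obstacle is the second paragraph---verifying that the character-value signatures of the blocks of $P$ (and of $D$, $T$) separate exactly the claimed dual blocks and nothing coarser---which rests on Lemma~\ref{le 12} (and its analogues for $D$, $T$ extracted from Corollaries~\ref{co 1}, \ref{co 3}, \ref{co 4}), on keeping track of which of the $c$- or $d$-data degenerates to a constant according to the sign of $\epsilon_\alpha$, and on the inclusion $B_-(f^*)^{\bot}\subseteq d_0(f^*)$, which confines the refinement produced by $\chi_\alpha(B_\pm(f^*))$ to that single block.
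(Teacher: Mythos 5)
Your proposal is correct and takes essentially the same route as the paper's proof: the key inclusion $B_-(f^*)^{\bot}\subseteq d_0(f^*)$ from Lemma~\ref{le 1}, the identification of the dual partitions of $P$, $D$, $T$ via Lemmas~\ref{le 8}, \ref{le 10}, \ref{le 11} together with the invariants $\epsilon_\alpha$, $f^*(\alpha)$ from Lemma~\ref{le 12}, and the class-count criterion of Lemma~\ref{le 6}. The only difference is presentational: the paper gets sufficiency by citing Theorem~\ref{th 2} and Proposition~\ref{po 2} and proves necessity by contradiction, whereas you merge both directions into the single count $|\widehat{P}|=|P|+\theta$ (and similarly for $D$, $T$) with $\theta=0$ exactly when $d_0(f^*)=B_-(f^*)^{\bot}$, i.e.\ when $r=\frac{n}{2}+1$.
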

	\begin{proof}
		We only prove the necessity and the sufficiency can directly follow from Theorem \ref{th 2}, Proposition \ref{po 2} and Remark \ref{re 7}. 
		
		By the proof of Theorem \ref{th 2}, we have that $P=\{\{0\},c_0(f),c_1(f),\dots,c_{p-1}(f),\\d_0(f)\setminus\{0\},d_1(f),\dots,d_{p-1}(f)\}$, $D=\{\{0\},c_0(f),\bigcup\limits_{i\in SQ}c_i(f),\bigcup\limits_{i\in NSQ}c_i(f),d_0(f)\setminus\{0\},\bigcup\limits_{i\in SQ}d_i(f), \bigcup\limits_{i\in NSQ}d_i(f)\}$,  $T=\{\{0\},c_0(f),\bigcup\limits_{i\in \mathbb{F}_p^*}c_i(f),d_0(f)\setminus\{0\},\bigcup\limits_{i\in \mathbb{F}_p^*}d_i(f)\}$, and for any $\alpha, \alpha'\in\mathbb{F}_p$, if $\alpha\sim_{\widehat{P}}\alpha'$, where $\widehat{P}$ is the dual partition of $P$, then $\epsilon_\alpha=\epsilon_{\alpha'}$ and $f^*(\alpha)=f^*(\alpha')$. By Lemma \ref{le 1}, we know that  $B_-(f^*)^{\bot}\subseteq d_0(f^*)$. Next, we prove that $B_-(f^*)^{\bot}=d_0(f^*)$. On the contrary, 
		assume that $B_-(f^*)^{\bot}\subsetneqq d_0(f^*)$, then by 
		Lemmas \ref{le 8}, \ref{le 10}, \ref{le 11} and  Corollary \ref{co 2}, we know that the dual partitions of $P$, $T$ and $D$ are $ \{\{0\}, c_0(f^*), c_1(f^*),\dots, c_{p-1}(f^*), B_-(f^*)^{\bot}\setminus \{0\}, d_0(f^*)\setminus B_-(f^*)^{\bot}, d_1(f^*), \dots, d_{p-1}(f^*)\}$, $ \{\{0\}, c_0(f^*), \bigcup\limits_{i\in SQ}c_i(f^*), \bigcup\limits_{i\in NSQ}c_i$ $(f^*), B_-(f^*)^{\bot}\setminus\{0\}, d_0(f^*)\setminus B_-(f^*)^{\bot}, \bigcup\limits_{i\in SQ}d_i(f^*),$ $\bigcup\limits_{i\in NSQ}d_i(f^*)\}$ and $ \{\{0\}, c_0$ $(f^*), \bigcup\limits_{i\in \mathbb{F}_p^*}c_i(f^*), B_-(f^*)^{\bot}\setminus\{0\}, d_0(f^*)\setminus B_-(f^*)^{\bot}, \bigcup\limits_{i\in \mathbb{F}_p^*}d_i(f^*)\}$, respectively. Thus, according to Lemma \ref{le 6}, we have that $P$, $D$ and $T$ can not induce symmetric association schemes, so $B_-(f^*)^{\bot}=d_0(f^*)$, which implies $r=\frac{n}{2}+1.$	\qed
	\end{proof}
	
	In the following, when $n$ is even, we give some examples.

	\begin{example}
		Let $f(x):\mathbb{F}_5^4\longrightarrow \mathbb{F}_5$, $f(x_1,x_2,x_3,x_4)=x_1^2+x_2^2x_4^4+x_2^2+x_3x_4$. Then
		\begin{itemize}
			\item [$\bullet$] $f(x_1,x_2,x_3,x_4)$ is a non-weakly regular bent function of Type $(+)$.
			\item[$\bullet$] $f^*(x_1,x_2,x_3,x_4)=x_1^2+2x_2^2x_3^4+x_2^2+4x_3x_4$ is a non-weakly regular bent function of Type $(+)$.
			\item[$\bullet$] $B_+(f)=\mathbb{F}_5^2\times\{0\}\times\mathbb{F}_5$ and $B_+(f^*)=\mathbb{F}_5^2\times\mathbb{F}_5\times\{0\}$ are both non-degenerate $3$-dimensional subspaces over $\mathbb{F}_5$.
			\item[$\bullet$] $\{\{0\}, B_+(f)^{\bot}\setminus\{0\}, c_0(f)\setminus B_+(f)^{\bot}, c_1(f), c_2(f),c_3(f),c_4(f),d_0(f),d_1(f),d_2$ $(f),d_3(f),d_4(f)\}$, $\{\{0\}, B_+(f)^{\bot}\setminus\{0\}, c_0(f)\setminus B_+(f)^{\bot}, c_1(f)\cup c_4(f), c_2(f)\cup c_3(f),d_0(f),d_1(f)\cup d_4(f),d_2(f)\cup d_3(f)\}$ and $\{\{0\}, B_+(f)^{\bot}\setminus\{0\}, c_0(f)\setminus B_+(f)^{\bot}, \bigcup\limits_{i\in\mathbb{F}_5^*}c_i(f),d_0(f),\bigcup\limits_{i\in\mathbb{F}_5^*}d_i(f)\}$ induce $11$-class, $7$-class and $5$-class symmetric association schemes, respectively.
		\end{itemize}
	\end{example}
	\begin{example}
		Let $f(x):\mathbb{F}_5^4\longrightarrow \mathbb{F}_5$, $f(x_1,x_2,x_3,x_4)=x_1^2+4x_2^2x_4^4+2x_2^2+x_3x_4$. Then
		\begin{itemize}
			\item [$\bullet$] $f(x_1,x_2,x_3,x_4)$ is a non-weakly regular bent function of Type $(-)$.
			\item[$\bullet$] $f^*(x_1,x_2,x_3,x_4)=x_1^2+3x_2^2x_3^4+3x_2^2+4x_3x_4$ is a non-weakly regular bent function of Type $(-)$.
			\item[$\bullet$] $B_-(f)=\mathbb{F}_5^2\times\{0\}\times\mathbb{F}_5$ and $B_-(f^*)=\mathbb{F}_5^2\times\mathbb{F}_5\times\{0\}$ are both non-degenerate $3$-dimensional subspaces over $\mathbb{F}_5$.
			\item[$\bullet$] $\{\{0\},  c_0(f), c_1(f), c_2(f),c_3(f),c_4(f),d_0(f)\setminus\{0\},d_1(f),d_2(f),d_3(f),d_4(f)\}$, $\{\{0\},  c_0(f), c_1(f)\cup c_4(f), c_2(f)\cup c_3(f),d_0(f)\setminus\{0\},d_1(f)\cup d_4(f),d_2(f)\cup d_3(f)\}$ and $\{\{0\}, c_0(f), \bigcup\limits_{i\in\mathbb{F}_5^*}c_i(f),d_0(f)\setminus\{0\},\bigcup\limits_{i\in\mathbb{F}_5^*}d_i(f)\}$ induce $10$-class, $6$-class and $4$-class symmetric association schemes, respectively.
		\end{itemize}
	\end{example}
	\begin{example}
		Let $f(x):\mathbb{F}_5^6\longrightarrow \mathbb{F}_5$, $f(x_1,x_2,x_3,x_4,x_5,x_6)=x_1^2+4x_2^2x_6^4+2x_2^2+x_3^2+x_4^2+x_5x_6$. Then
		\begin{itemize}
			\item [$\bullet$] $f(x_1,x_2,x_3,x_4,x_5,x_6)$ is a non-weakly regular bent function of Type $(-)$.
			\item[$\bullet$] $f^*(x_1,x_2,x_3,x_4,x_5,x_6)=x_1^2+3x_2^2x_5^4+3x_2^2+x_3^2+x_4^2+4x_5x_6$ is a non-weakly regular bent function of Type $(-)$.
			\item[$\bullet$] $B_-(f)=\mathbb{F}_5^4\times\{0\}\times\mathbb{F}_5$ and $B_-(f^*)=\mathbb{F}_5^4\times\mathbb{F}_5\times\{0\}$ are both non-degenerate $5$-dimensional subspaces over $\mathbb{F}_5$.
			\item[$\bullet$] $\{\{0\},  c_0(f), c_1(f), c_2(f),c_3(f),c_4(f),B_-(f)^{\bot}\setminus\{0\},d_0(f)\setminus B_-(f)^{\bot},d_1(f),d_2$ $(f),d_3(f),d_4(f)\}$, $\{\{0\},  c_0(f), c_1(f)\cup c_4(f), c_2(f)\cup c_3(f),B_-(f)^{\bot}\setminus\{0\},d_0$ $(f)\setminus B_-(f)^{\bot},d_1(f)\cup d_4(f),d_2(f)\cup d_3(f)\}$ and $\{\{0\}, c_0(f), \bigcup\limits_{i\in\mathbb{F}_5^*}c_i(f),B_-(f)^{\bot}$ $\setminus\{0\},d_0(f)\setminus B_-(f)^{\bot},\bigcup\limits_{i\in\mathbb{F}_5^*}d_i(f)\}$ induce $11$-class, $7$-class and $5$-class symmetric association schemes, respectively.
		\end{itemize}
	\end{example}
	
	Let $SQ=\{s_1, s_2, \dots, s_{\frac{p-1}{2}}\}$ with $s_1<s_2< \cdots <s_{\frac{p-1}{2}}$, and $NSQ=\{n_1, n_2, \dots, n_{\frac{p-1}{2}}\}$ with $n_1<n_2< \cdots <n_{\frac{p-1}{2}}$.	In the following theorems, when $n$ is odd, we present the constructions of association schemes from non-weakly regular bent functions. 
	
	\begin{theorem}\label{th 3}
		Let $n$ be an odd integer with $n\ge 3$,  $f(x):\mathbb{F}_{p}^n\longrightarrow\mathbb{F}_p$ be a non-weakly regular bent function belonging to $\mathcal{DBF}$. Let $B_+(f)$ and $B_+(f^*)$ both be non-degenerate $r$-dimensional subspaces over $\mathbb{F}_p$ for $p\equiv 1\ (\mathrm{mod}\ 4)$, and $B_-(f)$ and $B_+(f^*)$ both be non-degenerate $r$-dimensional subspaces over $\mathbb{F}_p$ for $p\equiv 3\ (\mathrm{mod}\ 4)$. Then when $r=\frac{n+1}{2}$, for $p\equiv 1\ (\mathrm{mod}\ 4)$, $U_{10}=\{U_{10,i}\}_{i=0}^{(3p+1)/2}=\{\{0\}, c_0(f)\setminus\{0\}, c_{s_1}(f), c_{s_2}(f), \dots, c_{s_{\frac{p-1}{2}}}(f), d_0(f), d_1(f), \dots, d_{p-1}(f)\}$ is a partition of $\mathbb{F}_p^n$ and induces a $\frac{3p+1}{2}$-class symmetric association scheme; for $p\equiv 3\ (\mathrm{mod}\ 4)$, $U_{11}=\{U_{11,i}\}_{i=0}^{(3p+1)/2}=\{\{0\}, c_0(f)\setminus\{0\}, c_{n_1}(f), c_{n_2}(f),\dots, c_{n_{\frac{p-1}{2}}}\\(f), d_0(f), d_1(f), \dots, d_{p-1}(f)\}$ is a partition of $\mathbb{F}_p^n$ and induces a $\frac{3p+1}{2}$-class symmetric association scheme. When  $\frac{n+1}{2}<r\le n-1$, for $p\equiv 1\ (\mathrm{mod}\ 4)$, $U_{12}=\{U_{12,i}\}_{i=0}^{2p+1}=\{\{0\}, B_+(f)^{\bot}\setminus\{0\}, c_0(f)\setminus B_+(f)^{\bot}, c_1(f), \dots, c_{p-1}(f), d_0\\(f), d_1(f),\dots, d_{p-1}(f)\}$ is a partition of $\mathbb{F}_p^n$ and induces a $(2p+1)$-class symmetric association scheme; for $p\equiv 3\ (\mathrm{mod}\ 4)$, $U_{13}=\{U_{13,i}\}_{i=0}^{2p+1}=\{\{0\}, B_-(f)^{\bot}\setminus\{0\}, c_0(f)\setminus B_-(f)^{\bot}, c_1(f), \dots,c_{p-1}(f), d_0(f),d_1(f), \dots, $ $d_{p-1}\\(f)\}$ is a partition of $\mathbb{F}_p^n$ and induces a $(2p+1)$-class symmetric association scheme.
	\end{theorem}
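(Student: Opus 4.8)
The plan is to run the same argument as in the proofs of Theorems~\ref{th 1} and \ref{th 2}, the one genuinely new feature being that for odd $n$ the quadratic character $\eta$ dictates which blocks $c_j(f)$ are empty. First I would put $k=|B_+(f^*)|=p^r$ and use Lemma~\ref{le 4} together with Remark~\ref{re 1} to write down the cardinalities of all relevant sets in terms of $p$, $n$, $r$. When $r=\frac{n+1}{2}$ one has $\frac{k}{p}=p^{\frac{n-1}{2}}$, so for $p\equiv1\ (\mathrm{mod}\ 4)$ Lemma~\ref{le 4} gives $|c_j(f)|=p^{\frac{n-1}{2}}(1+\eta(j))$, hence $c_j(f)=\emptyset$ for every $j\in NSQ$; for $p\equiv3\ (\mathrm{mod}\ 4)$ it gives $|c_j(f)|=p^{\frac{n-1}{2}}(1-\eta(j))$, hence $c_j(f)=\emptyset$ for every $j\in SQ$. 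This is exactly why $U_{10}$ keeps only the blocks $c_{s_i}(f)$, $s_i\in SQ$, and $U_{11}$ only the blocks $c_{n_i}(f)$, $n_i\in NSQ$. Moreover in this case $B_+(f)^{\bot}$ (when $p\equiv1$) resp.\ $B_-(f)^{\bot}$ (when $p\equiv3\ (\mathrm{mod}\ 4)$) has dimension $\frac{n-1}{2}$, which equals $|c_0(f)|$, so by Lemma~\ref{le 1} the inclusion of this subspace in $c_0(f)$ is an equality and no splitting of $c_0(f)$ is needed. When $\frac{n+1}{2}<r\le n-1$ one has $2r>n+1$, so $p^{n-r}<p^{r-1}=|c_0(f)|$, the inclusion is proper, $c_0(f)$ must be split into the subspace minus $\{0\}$ and its complement in $c_0(f)$, and every $c_j(f)$ and $d_j(f)$ is nonempty; this yields $U_{12}$ and $U_{13}$. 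A direct check then shows the listed nonempty sets partition $\mathbb{F}_p^n$.

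For the association-scheme property I would use Lemma~\ref{le 6}, so I must identify the dual partition $\widehat U$ of $U\in\{U_{10},U_{11},U_{12},U_{13}\}$. If $\alpha\sim_{\widehat U}\alpha'$ then, since each $N_i(f)=c_i(f)\cup d_i(f)$ is a union of blocks of $U$, we get $\chi_\alpha(N_i(f))=\chi_{\alpha'}(N_i(f))$ for all $i\in\mathbb{F}_p$, so Lemma~\ref{le 12} yields $\epsilon_\alpha=\epsilon_{\alpha'}$ and $f^*(\alpha)=f^*(\alpha')$. Conversely, by the odd-$n$ part of Lemma~\ref{le 10}, $\chi_\alpha(c_i(f))$ and $\chi_\alpha(d_i(f))$ are determined by $\epsilon_\alpha$, $f^*(\alpha)$, the sums $S^{(t)}(i,f^*(\alpha))$ and $\chi_\alpha(B_\pm(f^*))$; by Lemma~\ref{le 8} the last term depends only on whether $\alpha$ lies in the subspace $B_+(f^*)^{\bot}$ resp.\ $B_-(f^*)^{\bot}$, and (using $f^{**}=f$ and Lemma~\ref{le 1}) that subspace is precisely $c_0(f^*)$ resp.\ $d_0(f^*)$, whose membership is decided by the pair $(\epsilon_\alpha,f^*(\alpha))$. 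Hence $(\epsilon_\alpha,f^*(\alpha))=(\epsilon_{\alpha'},f^*(\alpha'))$ forces $\chi_\alpha$ and $\chi_{\alpha'}$ to agree on every block, so $\widehat U$ is exactly the partition by the pair $(\epsilon_\alpha,f^*(\alpha))$. Transporting the cardinalities computed above from $f$ to $f^*$ via $f^{**}=f$ and Remark~\ref{re 1}, one recognizes $\widehat U$ as the partition built from $f^*$ in the same way as $U$ from $f$; for instance $\widehat{U_{10}}=\{\{0\},c_0(f^*)\setminus\{0\},c_{s_1}(f^*),\dots,c_{s_{(p-1)/2}}(f^*),d_0(f^*),\dots,d_{p-1}(f^*)\}$, and similarly for the others.

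Finally I would count classes. Splitting $\widehat U$ according to $\epsilon_\alpha=\pm1$: for $U_{10}$ and $U_{11}$, the side $\epsilon_\alpha=1$ contributes $\{0\}$, $c_0(f^*)\setminus\{0\}$ and the $\frac{p-1}{2}$ surviving $c_j(f^*)$, while the side $\epsilon_\alpha=-1$ contributes the $p$ sets $d_j(f^*)$, giving $\frac{3p+3}{2}$ blocks, i.e.\ $d'=\frac{3p+1}{2}=d$; for $U_{12}$ and $U_{13}$ the same bookkeeping gives $d'=2p+1=d$. By Lemma~\ref{le 6}, $d=d'$ is exactly the condition for the associated partition of $\mathbb{F}_p^n\times\mathbb{F}_p^n$ to be a symmetric association scheme with the stated number of classes. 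The step I expect to be hardest is the first one: one has to track, separately for $p\equiv1$ and $p\equiv3\ (\mathrm{mod}\ 4)$ and for $r=\frac{n+1}{2}$ versus $r>\frac{n+1}{2}$, which $c_j(f)$ vanish, whether $B_\pm(f)^{\bot}$ equals or is strictly contained in $c_0(f)$, and — when passing to $\widehat U$ — the $(+)\leftrightarrow(-)$ cross-over between $B_\pm(f)$ and $B_\pm(f^*)$ in the $p^n\equiv3\ (\mathrm{mod}\ 4)$ case; once that is settled the remainder is the by-now-standard dual-partition computation.
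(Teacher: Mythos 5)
Your treatment of the case $r=\frac{n+1}{2}$ is essentially the paper's argument and checks out: the vanishing of $|c_j(f)|=p^{\frac{n-1}{2}}(1+\eta(j))$ (resp.\ $p^{\frac{n-1}{2}}(1-\eta(j))$), the equality $B_+(f^*)^{\bot}=c_0(f^*)$ (resp.\ $=d_0(f^*)$) coming from $p^{n-r}=p^{r-1}$, and the resulting count $\frac{3p+3}{2}$ of dual blocks are all correct, modulo the $(+)\leftrightarrow(-)$ bookkeeping you yourself flag for $p\equiv 3\pmod 4$. The genuine gap is in the regime $\frac{n+1}{2}<r\le n-1$. There your key claim --- that $B_+(f^*)^{\bot}$ \emph{equals} $c_0(f^*)$ resp.\ $d_0(f^*)$, hence ``$\widehat U$ is exactly the partition by the pair $(\epsilon_\alpha,f^*(\alpha))$'' --- is false: Lemma \ref{le 1} only gives the inclusion, and for $r>\frac{n+1}{2}$ it is proper, since $|B_+(f^*)^{\bot}|=p^{n-r}<p^{r-1}=|c_0(f^*)|$ (resp.\ $|d_0(f^*)|$). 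Take $\alpha\in B_+(f^*)^{\bot}\setminus\{0\}$ and $\alpha'\in c_0(f^*)\setminus B_+(f^*)^{\bot}$: they share the pair $(\epsilon,f^*)=(1,0)$, yet $\chi_\alpha(B_+(f^*))=p^{r}\ne 0=\chi_{\alpha'}(B_+(f^*))$ by Lemma \ref{le 8}, so by Lemma \ref{le 10} their character sums on the blocks $c_i(f)$ differ by $p^{r-1}$ and $\widehat{U_{12}}$ separates them. The dual partition is thus strictly finer than the pair partition: it is $\{\{0\},B_+(f^*)^{\bot}\setminus\{0\},c_0(f^*)\setminus B_+(f^*)^{\bot},c_1(f^*),\dots,c_{p-1}(f^*),d_0(f^*),\dots,d_{p-1}(f^*)\}$ for $U_{12}$, and the analogous partition with $d_0(f^*)$ split at $B_+(f^*)^{\bot}$ for $U_{13}$.

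This is not a cosmetic slip, because it invalidates your class count. If $\widehat{U_{12}}$ really were the pair partition (with $\{0\}$ isolated), it would have $2p+1$ blocks, i.e.\ $d'=2p\ne 2p+1=d$, and Lemma \ref{le 6} would then assert that $U_{12}$ does \emph{not} induce an association scheme --- the opposite of your ``the same bookkeeping gives $d'=2p+1=d$''. The correct argument, and the one the paper gives, is that the extra dual class carved out by $B_+(f^*)^{\bot}$ exactly mirrors the extra primal class $B_+(f)^{\bot}\setminus\{0\}$ versus $c_0(f)\setminus B_+(f)^{\bot}$ (resp.\ $B_-(f)^{\bot}$ inside $c_0(f)$ for $p\equiv 3\pmod 4$), so both partitions have $2p+2$ blocks and $d=d'=2p+1$. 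You already noted on the primal side that the inclusion is proper for $r>\frac{n+1}{2}$ and forces the splitting of $c_0(f)$; you must carry the same splitting to the dual side, using $\chi_\alpha(B_+(f^*))$ from Lemma \ref{le 8} as an additional separating invariant, instead of invoking the equality $B_+(f^*)^{\bot}=c_0(f^*)$ (resp.\ $d_0(f^*)$), which holds only when $r=\frac{n+1}{2}$.
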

	\begin{proof}
		By Lemma \ref{le 1}, we know that for $p \equiv 1\ (\mathrm{mod}\ 4)$, $B_+(f)^{\bot}\subseteq c_0(f)$,  $B_+(f^*)^{\bot}\subseteq c_0(f^*)$ and $0\in c_0(f)\cap c_0(f^*)$; for $p \equiv 3\ (\mathrm{mod}\ 4)$, $B_-(f)^{\bot}\subseteq c_0(f)$, $B_+(f^*)^{\bot}\subseteq d_0(f^*)$ and $0\in c_0(f)\cap d_0(f^*)$. By Lemma \ref{le 4} and Remark \ref{re 1}, we know that for $p\equiv 1\ (\mathrm{mod}\ 4)$, $|c_0(f)|=|c_0(f^*)|=p^{r-1}>0$, $|d_i(f)|=|d_i(f^*)|=p^{n-1}-p^{r-1}>0$ for any $i\in\mathbb{F}_p$ and $|c_i(f)|=|c_i(f^*)|=p^{r-1}+\eta(i)p^{\frac{n-1}{2}}\ge 0$ for any $i\in\mathbb{F}_p^*$; for $p\equiv 3\ (\mathrm{mod}\ 4)$, $|c_0(f)|=|d_0(f^*)|=p^{r-1}>0$, $|d_i(f)|=|c_i(f^*)|=p^{n-1}-p^{r-1}>0$ for any $i\in\mathbb{F}_p$ and $|c_j(f)|=p^{r-1}-\eta(j)p^{\frac{n-1}{2}}\ge 0$, $|d_j(f^*)|=p^{r-1}+\eta(j)p^{\frac{n-1}{2}}\ge 0$ for any $j\in\mathbb{F}_p^*$. 
		
		Thus, when $r=\frac{n+1}{2}$, we know that $B_+(f^*)^{\bot}=c_0(f^*)$ for $p\equiv 1\ (\mathrm{mod}\ 4)$, $B_+(f^*)^{\bot}=d_0(f^*)$ for $p\equiv 3\ (\mathrm{mod}\ 4)$, and $U_{10}$ and $U_{11}$ are both partitions of $\mathbb{F}_p^n$. When $\frac{n+1}{2}<r\le n-1$, we know that $B_+(f)^{\bot}\subsetneqq c_0(f)$,  $B_+(f^*)^{\bot}\subsetneqq c_0(f^*)$ for $p \equiv 1\ (\mathrm{mod}\ 4)$,  $B_-(f)^{\bot}\subsetneqq c_0(f)$, $B_+(f^*)^{\bot}\subsetneqq d_0(f^*)$ for $p \equiv 3\ (\mathrm{mod}\ 4)$, and $U_{12}$ and $U_{13}$ are both partitions of $\mathbb{F}_p^n$. For any $\alpha,\alpha'\in \mathbb{F}_p^n$, if $\alpha\sim_{\widehat{U_{10}}}\alpha'$ (respectively $\alpha\sim_{\widehat{U_{11}}}\alpha'$, $\alpha\sim_{\widehat{U_{12}}}\alpha'$, $\alpha\sim_{\widehat{U_{13}}}\alpha'$), where $\widehat{U_{10}}$, $\widehat{U_{11}}$, $\widehat{U_{12}}$ and $\widehat{U_{13}}$ are dual partitions of $U_{10}$, $U_{11}$, $U_{12}$ and $U_{13}$, respectively, then $\chi_\alpha(N_i(f))=\chi_{\alpha'}(N_i(f))$ for any $i\in \mathbb{F}_p$. Thus, by Lemma \ref{le 12}, we know that $\epsilon_\alpha=\epsilon_{\alpha'}$ and $f^*(\alpha)=f^*(\alpha')$. According to Lemmas \ref{le 8} and \ref{le 10}, we know that when $r=\frac{n+1}{2}$, then for $p\equiv 1\ (\mathrm{mod}\ 4)$, the dual partition of $U_{10}$ is $\{\{0\}, c_0(f^*)\setminus\{0\}, c_{s_1}(f^*), c_{s_2}(f^*), \dots,$ $ c_{s_{\frac{p-1}{2}}}(f^*), d_0(f^*), d_1(f^*), \dots, d_{p-1}(f^*)\}$; for $p\equiv 3\ (\mathrm{mod}\ 4)$, the dual partition of $U_{11}$ is $\{\{0\}, c_0(f^*), c_1(f^*), \dots, c_{p-1}(f^*),d_0(f^*)\setminus\{0\},d_{s_1}(f^*), d_{s_2}(f^*),\dots,$ $ d_{s_{\frac{p-1}{2}}}(f^*)\}$. When $\frac{n+1}{2}<r\le n-1$, then for $p\equiv 1\ (\mathrm{mod}\ 4)$, the dual partition of $U_{12}$ is $\{\{0\}, B_+(f^*)^{\bot}\setminus\{0\}, c_0(f^*)\setminus B_+(f^*)^{\bot}, c_1(f^*), \dots, c_{p-1}(f^*), d_0(f^*), $ $d_1(f^*), \dots, d_{p-1}(f^*)\}$; for $p\equiv 3\ (\mathrm{mod}\ 4)$, the dual partition of $U_{13}$ is $\{\{0\}, $ $c_0(f^*), c_1(f^*), \dots, c_{p-1}(f^*), B_+(f^*)^{\bot}\setminus\{0\}, d_0(f^*)\setminus B_+(f^*)^{\bot}, d_1(f^*), \dots, d_{p-1}$ $(f^*)\}$. By Lemma \ref{le 6}, when $r=\frac{n+1}{2}$, we have that $U_{10}$ and $U_{11}$ can induce  $\frac{3p+1}{2}$-class symmetric association schemes; when $\frac{n+1}{2}<r\le n-1$, we have that $U_{12}$ and $U_{13}$ can induce $(2p+1)$-class symmetric association schemes. \qed
	\end{proof}
	
	\begin{theorem}\label{th 4}
		Let $n$ be an odd integer with $n\ge 3$,  $f(x):\mathbb{F}_{p}^n\longrightarrow\mathbb{F}_p$ be a non-weakly regular bent function belonging to $\mathcal{DBF}$. Let $B_-(f)$ and $B_-(f^*)$ both be non-degenerate  $r$-dimensional subspaces over $\mathbb{F}_p$ for $p\equiv 1\ (\mathrm{mod}\ 4)$, and $B_+(f)$ and $B_-(f^*)$ both be non-degenerate $r$-dimensional subspaces over $\mathbb{F}_p$ for $p\equiv 3\ (\mathrm{mod}\ 4)$. Then when $r=\frac{n+1}{2}$, for $p\equiv 1\ (\mathrm{mod}\ 4)$, $U_{14}=\{U_{14,i}\}_{i=0}^{(3p+1)/2}=\{\{0\}, c_0(f), c_1(f), \dots, c_{p-1}(f), d_0(f)\setminus\{0\}, d_{n_1}(f), d_{n_2}(f), \dots, d_{n_{\frac{p-1}{2}}}(f)\}$ is a partition of $\mathbb{F}_p^n$ and induces a $\frac{3p+1}{2}$-class symmetric association scheme; for $p\equiv 3\ (\mathrm{mod}\ 4)$, $U_{15}=\{U_{15,i}\}_{i=0}^{(3p+1)/2}=\{\{0\},  c_0(f), c_1(f), \dots, c_{p-1}(f), d_0(f)\setminus\{0\},d_{s_1}(f), d_{s_2}(f),\dots,$ $ d_{s_{\frac{p-1}{2}}}(f)\}$ is a partition of $\mathbb{F}_p^n$ and induces a $\frac{3p+1}{2}$-class symmetric association scheme.
		When $\frac{n+1}{2}<r\le n-1$, for $p\equiv 1\ (\mathrm{mod}\ 4)$, $U_{16}=\{U_{16,i}\}_{i=0}^{2p+1}=\{\{0\}, c_0(f), c_1(f), \dots,$ $ c_{p-1}(f), B_-(f)^{\bot}\setminus\{0\}, d_0(f)\setminus B_-(f)^{\bot}, d_1(f), \dots, d_{p-1}(f)\}$ is a partition of $\mathbb{F}_p^n$ and induces a $(2p+1)$-class symmetric association scheme; for $p\equiv 3\ (\mathrm{mod}\ 4)$, $U_{17}=\{U_{17,i}\}_{i=0}^{2p+1}=\{\{0\},c_0(f), c_1(f), \dots, c_{p-1}(f), B_+(f)^{\bot}\setminus\{0\}, d_0(f)\setminus B_+(f)^{\bot}, d_1(f), \dots,$ $ d_{p-1}\\(f)\}$ is a partition of $\mathbb{F}_p^n$ and induces a $(2p+1)$-class symmetric association scheme.
	\end{theorem}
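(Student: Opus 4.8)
The plan is to run the argument of Theorem \ref{th 3} verbatim, but with the roles of $B_+(f)$ and $B_-(f)$ — equivalently, of the sets $c_i(f)$ and $d_i(f)$, and of $SQ$ and $NSQ$ — interchanged. Since $n$ is odd we have $p^n\equiv p\ (\mathrm{mod}\ 4)$, so the cases $p\equiv 1,3\ (\mathrm{mod}\ 4)$ correspond to the cases $p^n\equiv 1,3\ (\mathrm{mod}\ 4)$ of the auxiliary lemmas. First I would record, from Lemma \ref{le 1}(5) applied to $f$ and (using $f^{**}=f$) to $f^*$: for $p\equiv 1\ (\mathrm{mod}\ 4)$, $B_-(f)^{\bot}\subseteq d_0(f)$, $B_-(f^*)^{\bot}\subseteq d_0(f^*)$, $0\in d_0(f)\cap d_0(f^*)$ and $|B_+(f^*)|=p^n-p^r$; for $p\equiv 3\ (\mathrm{mod}\ 4)$, $B_+(f)^{\bot}\subseteq d_0(f)$, $B_-(f^*)^{\bot}\subseteq c_0(f^*)$, $0\in d_0(f)\cap c_0(f^*)$ and $|B_+(f)|=p^r$.

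Feeding these cardinalities into Lemma \ref{le 4} and Remark \ref{re 1} yields $|c_i(f)|=p^{n-1}-p^{r-1}>0$ for every $i\in\mathbb{F}_p$ (together with the corresponding identities for $f^*$), $|d_0(f)|=p^{r-1}$, and $|d_j(f)|=p^{r-1}\mp\eta(j)p^{\frac{n-1}{2}}$ for $j\in\mathbb{F}_p^*$, the sign being determined by $p\ (\mathrm{mod}\ 4)$; hence exactly one quadratic class of the sets $d_j(f)$, $j\ne 0$ — those indexed by $SQ$ when $p\equiv 1$, by $NSQ$ when $p\equiv 3$ — is empty precisely when $r=\frac{n+1}{2}$ and nonempty when $r>\frac{n+1}{2}$. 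Moreover $|B_-(f)^{\bot}|=|B_+(f)^{\bot}|=p^{n-r}$ equals $|d_0(f)|$ exactly when $r=\frac{n+1}{2}$ and is strictly smaller when $r>\frac{n+1}{2}$, so $d_0(f)$ coincides with the relevant orthogonal complement and needs no splitting when $r=\frac{n+1}{2}$, whereas for $r>\frac{n+1}{2}$ one splits it as $B_\pm(f)^{\bot}\setminus\{0\}$ together with $d_0(f)\setminus B_\pm(f)^{\bot}$. This is exactly what makes $U_{14}$ (resp. $U_{15},U_{16},U_{17}$) a genuine partition of $\mathbb{F}_p^n$ with $\frac{3p+3}{2}$ (resp. $\frac{3p+3}{2},2p+2,2p+2$) blocks, i.e. with the asserted number of classes.

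For the scheme property I would apply Lemma \ref{le 6}, so everything reduces to $|\widehat{U_j}|=|U_j|$. For each of these partitions, every $N_i(f)=c_i(f)\sqcup d_i(f)$ is a union of blocks (together with the singleton $\{0\}$), so $\alpha\sim_{\widehat{U_j}}\alpha'$ forces $\chi_\alpha(N_i(f))=\chi_{\alpha'}(N_i(f))$ for all $i\in\mathbb{F}_p$, and Lemma \ref{le 12} then gives $\epsilon_\alpha=\epsilon_{\alpha'}$ and $f^*(\alpha)=f^*(\alpha')$. Inserting this into the formulas of Lemmas \ref{le 8} and \ref{le 10} — and, when $r>\frac{n+1}{2}$, also distinguishing whether $\alpha$ lies in the relevant $B_\pm(f)^{\bot}$ — I would evaluate $\chi_\alpha$ on every block of $U_j$ and read off the dual partition explicitly. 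For $p\equiv 1\ (\mathrm{mod}\ 4)$ it comes out in the same shape with $f$ replaced by $f^*$ (the construction is self-dual, consistent with the hypothesis being symmetric under $f\leftrightarrow f^*$); for $p\equiv 3\ (\mathrm{mod}\ 4)$ it comes out in a different but equinumerous shape: for $U_{15}$ the dual is $\{\{0\}, c_0(f^*)\setminus\{0\}, c_{n_1}(f^*),\dots,c_{n_{\frac{p-1}{2}}}(f^*), d_0(f^*), d_1(f^*),\dots,d_{p-1}(f^*)\}$, and for $U_{17}$ it is $\{\{0\}, B_-(f^*)^{\bot}\setminus\{0\}, c_0(f^*)\setminus B_-(f^*)^{\bot}, c_1(f^*),\dots,c_{p-1}(f^*), d_0(f^*),\dots,d_{p-1}(f^*)\}$. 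In each case $|\widehat{U_j}|=|U_j|$, so Lemma \ref{le 6} delivers the claimed symmetric association schemes.

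The one genuinely delicate point is this last step: one must verify that, once $\epsilon_\alpha$, $f^*(\alpha)$ and the membership $\alpha\in B_\pm(f)^{\bot}$ are pinned down, the explicit expressions from Lemma \ref{le 10} for $\chi_\alpha(c_i(f))$ and $\chi_\alpha(d_i(f))$ (those involving the sums $S^{(t)}(i,f^*(\alpha))$ and the terms $p^{-1}\chi_\alpha(B_\pm(f^*))$, the latter computed via Lemma \ref{le 8}) really do collapse to a list of values separating precisely the blocks named above and no more, so that $|\widehat{U_j}|$ equals $|U_j|$ rather than exceeding it. This is a direct though somewhat tedious computation relying only on the results of Section 3; the principal obstacle is organizational — keeping the four cases ($p\equiv 1$ vs. $p\equiv 3$, and $r=\frac{n+1}{2}$ vs. $r>\frac{n+1}{2}$) and the accompanying interchange of squares and nonsquares consistent throughout.
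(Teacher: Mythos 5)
Your proposal is correct and follows essentially the same route as the paper's proof: the same inclusions and cardinality counts from Lemma \ref{le 1}, Lemma \ref{le 4} and Remark \ref{re 1} to show $U_{14}$–$U_{17}$ are partitions, then Lemma \ref{le 12} to pin down $\epsilon_\alpha$ and $f^*(\alpha)$, the character formulas of Lemmas \ref{le 8} and \ref{le 10} to identify the dual partitions (which you state exactly as the paper does, including the square/nonsquare swap for $p\equiv 3\ (\mathrm{mod}\ 4)$), and Lemma \ref{le 6} to conclude. The computational verification you flag as the delicate point is carried out at the same level of detail (i.e., asserted from Lemmas \ref{le 8} and \ref{le 10}) in the paper as well.
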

	\begin{proof}
		By Lemma \ref{le 1}, we know that for $p \equiv 1\ (\mathrm{mod}\ 4)$, $B_-(f)^{\bot}\subseteq d_0(f)$, $B_-(f^*)^{\bot}\subseteq d_0(f^*)$ and $0\in d_0(f)\cap d_0(f^*)$; for $p \equiv 3\ (\mathrm{mod}\ 4)$, $B_+(f)^{\bot}\subseteq d_0(f)$, $B_-(f^*)^{\bot}\subseteq c_0(f^*)$ and $0\in d_0(f)\cap c_0(f^*)$. By Lemma \ref{le 4} and Remark \ref{re 1}, we know that for $p\equiv 1\ (\mathrm{mod}\ 4)$,  $|c_i(f)|=|c_i(f^*)|=p^{n-1}-p^{r-1}>0$ for any $i\in\mathbb{F}_p$, $|d_0(f)|=|d_0(f^*)|=p^{r-1}>0$ and $|d_i(f)|=|d_i(f^*)|=p^{r-1}-\eta(i)p^{\frac{n-1}{2}}\ge 0$ for any $i\in\mathbb{F}_p^*$; for $p\equiv 3\ (\mathrm{mod}\ 4)$, $|c_i(f)|=|d_i(f^*)|=p^{n-1}-p^{r-1}>0$ for any $i\in\mathbb{F}_p$ , $|d_0(f)|=|c_0(f^*)|=p^{r-1}>0$ and $|d_j(f)|=p^{r-1}+\eta(j)p^{\frac{n-1}{2}}\ge 0$, $|c_j(f^*)|=p^{r-1}-\eta(j)p^{\frac{n-1}{2}}\ge 0$ for any $j\in\mathbb{F}_p^*$. 
		
		Thus, when $r=\frac{n+1}{2}$, $B_-(f^*)^{\bot}=d_0(f^*)$ for $p\equiv 1\ (\mathrm{mod}\ 4)$, $B_-(f^*)^{\bot}=c_0(f^*)$ for $p\equiv 3\ (\mathrm{mod}\ 4)$, and $U_{14}$ and $U_{15}$ are both partitions of $\mathbb{F}_p^n$. When $\frac{n+1}{2}<r\le n-1$, we know that $B_-(f)^{\bot}\subsetneqq d_0(f)$,  $B_-(f^*)^{\bot}\subsetneqq d_0(f^*)$ for $p \equiv 1\ (\mathrm{mod}\ 4)$, $B_+(f)^{\bot}\subsetneqq d_0(f)$, $B_-(f^*)^{\bot}\subsetneqq c_0(f^*)$ for $p \equiv 3\ (\mathrm{mod}\ 4)$, and  $U_{16}$ and $U_{17}$ are both partitions of $\mathbb{F}_p^n$. For any $\alpha,\alpha'\in \mathbb{F}_p^n$, if $\alpha\sim_{\widehat{U_{14}}}\alpha'$ (respectively $\alpha\sim_{\widehat{U_{15}}}\alpha'$, $\alpha\sim_{\widehat{U_{16}}}\alpha'$, $\alpha\sim_{\widehat{U_{17}}}\alpha'$), where $\widehat{U_{14}}$, $\widehat{U_{15}}$, $\widehat{U_{16}}$ and $\widehat{U_{17}}$ are dual partitions of $U_{14}$, $U_{15}$, $U_{16}$ and $U_{17}$, respectively, then $\chi_\alpha(N_i(f))=\chi_{\alpha'}(N_i(f))$ for any $i\in \mathbb{F}_p$. Thus, by Lemma \ref{le 12}, we know that $\epsilon_\alpha=\epsilon_{\alpha'}$ and $f^*(\alpha)=f^*(\alpha')$. According to Lemmas \ref{le 8} and \ref{le 10}, we know that when $r=\frac{n+1}{2}$, then for $p\equiv 1\ (\mathrm{mod}\ 4)$, the dual partition of  $U_{14}$ is $\{\{0\}, c_0(f^*), c_1(f^*), \dots, c_{p-1}(f^*), d_0(f^*)\setminus\{0\}, d_{n_1}(f^*), d_{n_2}(f^*), \dots, d_{n_{\frac{p-1}{2}}}(f^*)\}$; for $p\equiv 3\ (\mathrm{mod}\ 4)$, the dual partition of $U_{15}$ is $\{\{0\}, c_0(f^*)\setminus\{0\},c_{n_1}(f^*), c_{n_2}(f^*),\\\dots, c_{n_{\frac{p-1}{2}}}(f^*),d_0(f^*), d_1(f^*), \dots, d_{p-1}$ $(f^*)\}$. When $\frac{n+1}{2}<r\le n-1$, then for $p\equiv 1\ (\mathrm{mod}\ 4)$, the dual partition of $U_{16}$ is $\{\{0\}, c_0(f^*), c_1(f^*), \dots, c_{p-1}(f^*),\\ B_-(f^*)^{\bot}\setminus\{0\}, d_0(f^*)\setminus B_-(f^*)^{\bot}, d_1(f^*),$ $ \dots, d_{p-1}(f^*)\}$; for $p\equiv 3\ (\mathrm{mod}\ 4)$, the dual partition of  $U_{17}$ is $\{\{0\},B_-(f^*)^{\bot}\setminus\{0\}, c_0(f^*)\setminus B_-(f^*)^{\bot}, c_1(f^*), \dots, c_{p-1}\\(f^*), d_0(f^*), d_1(f^*), \dots, d_{p-1}(f^*)\}$. By Lemma \ref{le 6}, when $r=\frac{n+1}{2}$, we have that $U_{14}$ and $U_{15}$ can induce $\frac{3p+1}{2}$-class symmetric association schemes; when $\frac{n+1}{2}<r\le n-1$, we have that $U_{16}$ and $U_{17}$ can induce $(2p+1)$-class symmetric association schemes. \qed
	\end{proof}
	\begin{remark}
		$(1)$ According to Remark \ref{re 4}, Lemmas \ref{le 8}, \ref{le 10}, and the proofs of Theorems \ref{th 3}, \ref{th 4}, we give the first and second eigenmatrices of the association schemes induced by $U_i$, $10\le i\le 17$, in Tables 13-28 of Appendix. \\
		$(2)$ The intersection numbers and the Krein parameters of the association schemes induced by $U_i$, $10\le i\le 17$, can be calculated by Equations (8), (9), and Tables 13-28. In Appendix, we give the intersection numbers and the Krein parameters of the association schemes induced by $U_i$, $10\le i\le 17$.
		
	\end{remark}
	
	Considering the fusions of the association schemes constructed in Theorems \ref{th 3} and \ref{th 4}, we can also obtain some association schemes in the following propositions.
	\begin{proposition}\label{po 4}
		Let $n$ be an odd integer with $n\ge 3$,  $f(x):\mathbb{F}_{p}^n\longrightarrow\mathbb{F}_p$ be a non-weakly regular bent function belonging to $\mathcal{DBF}$. Let $B_+(f)$ and $B_+(f^*)$ both be non-degenerate  $r$-dimensional subspaces over $\mathbb{F}_p$ for $p\equiv 1\ (\mathrm{mod}\ 4)$, and $B_-(f)$ and $B_+(f^*)$ both be non-degenerate $r$-dimensional subspaces over $\mathbb{F}_p$ for $p\equiv 3\ (\mathrm{mod}\ 4)$. Then when $r=\frac{n+1}{2}$, for $p\equiv 1\ (\mathrm{mod}\ 4)$, $U_{18}=\{U_{18,i}\}_{i=0}^{5}=\{\{0\}, c_0(f)\setminus\{0\}, \bigcup\limits_{i\in SQ}c_i(f), d_0(f), \bigcup\limits_{i\in SQ} d_i(f), \bigcup\limits_{i\in NSQ} d_i(f)\}$ is a partition of $\mathbb{F}_p^n$ and induces a $5$-class symmetric association scheme; for $p\equiv 3\ (\mathrm{mod}\ 4)$, $U_{19}=\{U_{19,i}\}_{i=0}^{5}=\{\{0\}, c_0(f)\setminus\{0\}, \bigcup\limits_{i\in NSQ}c_i(f), d_0(f), \bigcup\limits_{i\in SQ}d_i(f), \bigcup\limits_{i\in NSQ}d_i(f)\}$ is a partition of $\mathbb{F}_p^n$ and induces a $5$-class symmetric association scheme. When $\frac{n+1}{2}<r\le n-1$, for $p\equiv 1\ (\mathrm{mod}\ 4)$, $U_{20}=\{U_{20,i}\}_{i=0}^{7}=\{\{0\}, B_+(f)^{\bot}\setminus\{0\}, c_0(f)\setminus B_+(f)^{\bot}, \bigcup\limits_{i\in SQ}c_i(f), \bigcup\limits_{i\in NSQ}c_i(f), d_0$ $(f),\bigcup\limits_{i\in SQ}d_i(f), \bigcup\limits_{i\in NSQ}d_i(f)\}$ is a partition of $\mathbb{F}_p^n$ and induces a $7$-class symmetric association scheme; for $p\equiv 3\ (\mathrm{mod}\ 4)$, $U_{21}=\{U_{21,i}\}_{i=0}^{7}=\{\{0\}, B_-(f)^{\bot}\setminus\{0\}, c_0(f)\setminus B_-(f)^{\bot},\bigcup\limits_{i\in SQ}c_i(f),\\ \bigcup\limits_{i\in NSQ}c_i(f), d_0(f),\bigcup\limits_{i\in SQ}d_i(f), \bigcup\limits_{i\in NSQ}d_i(f)\}$ is a partition of $\mathbb{F}_p^n$ and induces a $7$-class symmetric association scheme.
	\end{proposition}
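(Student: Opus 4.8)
The plan is to follow the pattern of the proofs of Propositions \ref{po 1} and \ref{po 2}: realise each of $U_{18},U_{19},U_{20},U_{21}$ as a fusion of one of the partitions $U_{10},U_{11},U_{12},U_{13}$ from Theorem \ref{th 3}, compute its dual partition from the character-sum formulas, and then invoke Lemma \ref{le 6}. Concretely, $U_{18}$ is obtained from $U_{10}$ by fusing $c_{s_1}(f),\dots,c_{s_{\frac{p-1}{2}}}(f)$ into $\bigcup_{i\in SQ}c_i(f)$ and fusing $d_1(f),\dots,d_{p-1}(f)$ into the two blocks $\bigcup_{i\in SQ}d_i(f)$ and $\bigcup_{i\in NSQ}d_i(f)$, and similarly $U_{19},U_{20},U_{21}$ are fusions of $U_{11},U_{12},U_{13}$; hence, by Theorem \ref{th 3} and its proof, all four are partitions of $\mathbb{F}_p^n$. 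When $r=\frac{n+1}{2}$ I would additionally record, from Lemma \ref{le 4} and Remark \ref{re 1}, that $c_i(f)=\emptyset$ for $i\in NSQ$ if $p\equiv 1\ (\mathrm{mod}\ 4)$ and for $i\in SQ$ if $p\equiv 3\ (\mathrm{mod}\ 4)$, and that $B_+(f^*)^{\bot}=c_0(f^*)$ (resp. $B_+(f^*)^{\bot}=d_0(f^*)$); this is why only one of the halves $SQ,NSQ$ contributes a $c$-block in $U_{18},U_{19}$ and why no separate $B_+(f)^{\bot}$-block is needed there, whereas for $\frac{n+1}{2}<r\le n-1$ the inequality $p^{r-1}>p^{\frac{n-1}{2}}$ shows every listed block of $U_{20},U_{21}$ is nonempty.

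Second, I would compute the dual partitions $\widehat{U_j}$. For $\alpha\neq 0$, Lemma \ref{le 8} evaluates $\chi_\alpha$ on subspaces such as $B_+(f)^{\bot}$ (it is $p^{n-r}$ if $\alpha\in B_+(f)$ and $0$ otherwise), while Lemmas \ref{le 10}, \ref{le 11} and Corollary \ref{co 2} express $\chi_\alpha$ on the remaining blocks purely in terms of $\epsilon_\alpha$, of whether $f^*(\alpha)=0$, of $\eta(f^*(\alpha))$, and of $\chi_\alpha(B_+(f^*))$; since $B_+(f^*)=\bigcup_{i\in\mathbb{F}_p}c_i(f)$ (a disjoint union), $\chi_\alpha(B_+(f^*))$ is itself a sum of the values on these blocks, so conversely the signature of $\alpha$ recovers $\epsilon_\alpha$, $\eta(f^*(\alpha))$ and the flags ``$f^*(\alpha)=0$'' and ``$\alpha\in B_+(f)^{\bot}$'' --- this is the fused counterpart of Lemma \ref{le 12}. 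Feeding these back into the formulas, and using $f^{**}(x)=f(x)$ together with the fact that $f^*$ is again a non-weakly regular bent function in $\mathcal{DBF}$ satisfying \textbf{Condition 1} with the roles of $B_{\pm}(f)$ and $B_{\pm}(f^*)$ interchanged, I expect to find that $\widehat{U_{18}},\widehat{U_{19}},\widehat{U_{20}},\widehat{U_{21}}$ are exactly $U_{18},U_{19},U_{20},U_{21}$ with every occurrence of $f$ replaced by $f^*$; in particular $|\widehat{U_j}|=|U_j|$ for each $j$, namely $6$ when $r=\frac{n+1}{2}$ and $8$ when $\frac{n+1}{2}<r\le n-1$.

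Finally, since $d=d'$ in every case, Lemma \ref{le 6} gives that $U_{18},U_{19}$ induce $5$-class symmetric association schemes and $U_{20},U_{21}$ induce $7$-class symmetric association schemes, which is the claim. The step I expect to be the main obstacle is the explicit determination of the dual partitions above: one has to check that $\chi_\alpha$ is constant on each claimed dual block and separates distinct ones. This comes down to a finite case analysis over whether $\epsilon_\alpha=1$ or $-1$, whether $f^*(\alpha)$ is $0$, a square, or a nonsquare, and whether $\alpha\in B_+(f)^{\bot}$; the containments $B_+(f^*)^{\bot}\subseteq c_0(f^*)\subseteq B_+(f)$ for $p\equiv 1\ (\mathrm{mod}\ 4)$ (and the analogue $B_+(f^*)^{\bot}\subseteq d_0(f^*)\subseteq B_-(f)$ for $p\equiv 3\ (\mathrm{mod}\ 4)$) from Lemma \ref{le 1} are what pin down the $B_+(f)^{\bot}$-blocks and force $\chi_\alpha(B_+(f^*))=0$ outside them, while the vanishing patterns of the Gauss-type sums $K^{(t)},S^{(t)}$ from Lemma \ref{le 9} are what separate the $c$- and $d$-blocks according to $\eta(f^*(\alpha))$; one should also keep in mind that $\{0\}$ is automatically its own dual block by Remark \ref{re 3}.
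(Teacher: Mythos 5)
Your overall route is the same as the paper's: realize $U_{18}$--$U_{21}$ as fusions of the partitions $U_{10}$--$U_{13}$ from Theorem \ref{th 3}, compute their dual partitions via Lemmas \ref{le 8}, \ref{le 10}, \ref{le 11} (and Corollary \ref{co 2}), and conclude with Lemma \ref{le 6}. The step that would fail as written is your prediction that each dual partition is ``the same partition with $f$ replaced by $f^*$''. This self-dual shape is correct only for $p\equiv 1\ (\mathrm{mod}\ 4)$, i.e.\ for $U_{18}$ and $U_{20}$. For $p\equiv 3\ (\mathrm{mod}\ 4)$ the types of $f$ and $f^*$ differ (Lemma \ref{le 1}(4)): here $B_-(f)$ and $B_+(f^*)$ are the subspaces, $0\in B_-(f)$, the sets $c_j(f^*),d_j(f^*)$ sit inside $B_+(f),B_-(f)$ respectively, and by Lemma \ref{le 4} and Remark \ref{re 1} it is the blocks $d_j(f^*)$ with $j\in NSQ$ (not $c_j(f^*)$) that vanish when $r=\frac{n+1}{2}$, while $B_+(f^*)^{\bot}\subseteq d_0(f^*)$ rather than $c_0(f^*)$. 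Accordingly the paper finds $\widehat{U_{19}}=\{\{0\}, c_0(f^*), \bigcup_{i\in SQ}c_i(f^*), \bigcup_{i\in NSQ}c_i(f^*), d_0(f^*)\setminus\{0\}, \bigcup_{i\in SQ}d_i(f^*)\}$ and $\widehat{U_{21}}=\{\{0\}, c_0(f^*), \bigcup_{i\in SQ}c_i(f^*), \bigcup_{i\in NSQ}c_i(f^*), B_+(f^*)^{\bot}\setminus\{0\}, d_0(f^*)\setminus B_+(f^*)^{\bot}, \bigcup_{i\in SQ}d_i(f^*), \bigcup_{i\in NSQ}d_i(f^*)\}$; your literal $f\mapsto f^*$ substitution is not even a partition in these two cases (it counts $0$ twice, retains the empty block $\bigcup_{i\in NSQ}d_i(f^*)$ when $r=\frac{n+1}{2}$, and refers to $B_-(f^*)^{\bot}$ although $B_-(f^*)$ is not a subspace there).

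The flaw is localized and repairable: the case analysis you propose over $\epsilon_\alpha$, over $f^*(\alpha)=0$ versus $\eta(f^*(\alpha))=\pm 1$, and over membership in the relevant orthogonal complement does yield the corrected duals above, and since these still have $6$ respectively $8$ blocks, Lemma \ref{le 6} delivers the claimed $5$- and $7$-class symmetric association schemes exactly as in the paper. So the conclusion and the method survive, but the intermediate description of $\widehat{U_{19}}$ and $\widehat{U_{21}}$ must be corrected before the argument is complete.
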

	\begin{proof}
		
		By Lemmas \ref{le 8}, \ref{le 10}, \ref{le 11} and the proof of Theorem \ref{th 3}, when $r=\frac{n+1}{2}$, we know that $U_{18}$ and $U_{19}$ are both partitions of $\mathbb{F}_p^n$, and the dual partitions of $U_{18}$ and $U_{19}$ are $\{\{0\}, c_0(f^*)\setminus\{0\}, \bigcup\limits_{i\in SQ}c_i(f^*), d_0(f^*), \bigcup\limits_{i\in SQ}d_i(f^*), \bigcup\limits_{i\in NSQ}d_i(f^*)\}$ and $\{\{0\}, c_0(f^*), \bigcup\limits_{i\in SQ}c_i(f^*), \bigcup\limits_{i\in NSQ}c_i(f^*), d_0(f^*)\setminus\{0\}, \bigcup\limits_{i\in SQ}d_i(f^*)\}$, respectively. When $\frac{n+1}{2}<r\le n-1$, we know that $U_{20}$ and $U_{21}$ are both partitions of $\mathbb{F}_p^n$, and the dual partitions of $U_{20}$ and $U_{21}$ are $\{\{0\}, B_+(f^*)^{\bot}\setminus\{0\}, c_0(f^*)\setminus B_+(f^*)^{\bot}, \bigcup\limits_{i\in SQ}c_i(f^*), \bigcup\limits_{i\in NSQ}c_i(f^*), d_0(f^*),  \bigcup\limits_{i\in SQ}d_i(f^*), \bigcup\limits_{i\in NSQ}d_i(f^*)\}$ and $\{\{0\},$ $  c_0(f^*), \bigcup\limits_{i\in SQ}c_i(f^*), \bigcup\limits_{i\in NSQ}c_i(f^*), B_+(f^*)^{\bot}\setminus\{0\}, d_0(f^*)\setminus B_+(f^*)^{\bot},  \bigcup\limits_{i\in SQ}d_i(f^*), $ $\bigcup\limits_{i\in NSQ}d_i(f^*)\}$, respectively. Thus, according to Lemma \ref{le 6}, we have that when $r=\frac{n+1}{2}$, $U_{18}$ and $U_{19}$ can induce $5$-class symmetric association schemes. When $\frac{n+1}{2}<r\le n-1$, $U_{20}$ and $U_{21}$ can induce $7$-class symmetric association schemes.\qed	\end{proof}
	\begin{remark}
		$(1)$ Since $f^{**}(x)=f(x)$ for any $x\in\mathbb{F}_p^n$, for $p\equiv 1\ (\mathrm{mod}\ 4)$, according to Remark \ref{re 4}, Lemmas \ref{le 8}, \ref{le 10}, \ref{le 11}, and the proof of Proposition \ref{po 4}, we have that the first and second eigenmatrices of the association scheme induced by $U_{18}$ (respectively $U_{20}$) are the same. The first (second) eigenmatrices of the association schemes induced by $U_{18}$ and $U_{20}$ are given in Tables 29 and 32 of Appendix. For $p\equiv 3\ (\mathrm{mod}\ 4)$, we give the first and second eigenmatrices of the association schemes induced by $U_{19}$ and $U_{21}$ in Tables 30, 31, 33 and 34  of Appendix. \\
		$(2)$ The intersection numbers and the Krein parameters of the association schemes induced by $U_i$, $18\le i\le 21$, can be calculated by Equations (8), (9), and Tables 29-34. We easily have that the intersection number $p_{u,v}^w$ and the Krein parameter $q_{u,v}^w$ of the association scheme induced by $U_{18}$ (respectively $U_{20}$) are the same. In Appendix, we give the intersection numbers and the Krein parameters of the association schemes induced by $U_i$, $18\le i\le 21$.
		
	\end{remark}

	\begin{proposition}\label{po 5}
		Let $n$ be an odd integer with $n\ge 3$,  $f(x):\mathbb{F}_{p}^n\longrightarrow\mathbb{F}_p$ be a non-weakly regular bent function belonging to $\mathcal{DBF}$. Let $B_-(f)$ and $B_-(f^*)$ both be non-degenerate  $r$-dimensional subspaces over $\mathbb{F}_p$ for $p\equiv 1\ (\mathrm{mod}\ 4)$, and $B_+(f)$ and $B_-(f^*)$ both be non-degenerate $r$-dimensional subspaces over $\mathbb{F}_p$ for $p\equiv 3\ (\mathrm{mod}\ 4)$. Then when $r=\frac{n+1}{2}$, for $p\equiv 1\ (\mathrm{mod}\ 4)$, $U_{22}=\{U_{22,i}\}_{i=0} ^5=\{\{0\}, c_0(f), \bigcup\limits_{i\in SQ}c_i(f), \bigcup\limits_{i\in NSQ}c_i(f), d_0(f)\setminus\{0\}, \bigcup\limits_{i\in NSQ}d_i(f)\}$ is a partition of  $\mathbb{F}_p^n$ and induces a $5$-class symmetric association scheme; for $p\equiv 3\ (\mathrm{mod}\ 4)$, $U_{23}=\{U_{23,i}\}_{i=0}^5=\{\{0\}, c_0(f), \bigcup\limits_{i\in SQ}c_i(f), \bigcup\limits_{i\in NSQ}c_i(f),d_0(f)\setminus\{0\}, \bigcup\limits_{i\in SQ} d_i(f)\}$ is a partition of $\mathbb{F}_p^n$ and induces a $5$-class symmetric association schemes. When $\frac{n+1}{2}<r\le n-1$, for $p\equiv 1\ (\mathrm{mod}\ 4)$, $U_{24}=\{U_{24,i}\}_{i=0} ^7=\{\{0\}, c_0(f), \bigcup\limits_{i\in SQ}c_i(f),\bigcup\limits_{i\in NSQ}c_i(f), B_-(f)^{\bot}\setminus\{0\}, d_0(f)\setminus B_-(f)^{\bot}, \bigcup\limits_{i\in SQ}d_i(f),\\ \bigcup\limits_{i\in NSQ}d_i(f)\}$ is a partition of $\mathbb{F}_p^n$ and induces a $7$-class symmetric association scheme; for $p\equiv 3\ (\mathrm{mod}\ 4)$, $U_{25}=\{U_{25,i}\}_{i=0}^7=\{\{0\},c_0(f),$ $ \bigcup\limits_{i\in SQ}c_i(f), \bigcup\limits_{i\in NSQ}c_i\\(f),B_+(f)^{\bot}\setminus\{0\}, d_0(f)\setminus B_+(f)^{\bot}, \bigcup\limits_{i\in SQ}d_i(f),\bigcup\limits_{i\in NSQ}d_i(f)\}$ is a partition of $\mathbb{F}_p^n$ and induces a $7$-class symmetric association scheme.
	\end{proposition}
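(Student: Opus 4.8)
The plan is to argue exactly as in the proof of Proposition \ref{po 4}. Each of $U_{22},U_{23},U_{24},U_{25}$ is the coarsening of the partition $U_{14},U_{15},U_{16},U_{17}$ of Theorem \ref{th 4} obtained by merging the cells $c_i(f)$ (and $d_i(f)$) with index $i\in SQ$ into one cell and those with $i\in NSQ$ into another. Since a fusion of a symmetric association scheme need not itself be one, this must be checked, and the cleanest way is to verify the hypotheses of Lemma \ref{le 6} directly for each of the four coarsened partitions. Thus the proof has, for each partition, three steps: (i) check it is a genuine partition of $\mathbb{F}_p^n$ whose $0$-th cell is $\{0\}$; (ii) identify its dual partition; (iii) compare the number of cells on the two sides and invoke Lemma \ref{le 6}.

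Step (i) reuses the cardinality bookkeeping from the proof of Theorem \ref{th 4}. By Lemma \ref{le 4} and Remark \ref{re 1} (with $f^{**}=f$, so $|c_i(f)|=|c_i(f^*)|$ and $|d_i(f)|=|d_i(f^*)|$), for $p\equiv 1\pmod 4$ and $n$ odd one has $|d_j(f)|=p^{r-1}-\eta(j)p^{(n-1)/2}$ for $j\in\mathbb{F}_p^*$; when $r=\frac{n+1}{2}$ this vanishes for $j\in SQ$, so $\bigcup_{i\in SQ}d_i(f)=\emptyset$ and $U_{22}$ correctly lists only the nonempty $\bigcup_{i\in NSQ}d_i(f)$, while $c_0(f),\bigcup_{i\in SQ}c_i(f),\bigcup_{i\in NSQ}c_i(f),d_0(f)\setminus\{0\}$ are all nonempty; since $0\in d_0(f)$ by Lemma \ref{le 1}(5), $U_{22}$ is a partition. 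For $p\equiv 3\pmod 4$ the roles of $SQ$ and $NSQ$ interchange, yielding the form of $U_{23}$. When $\frac{n+1}{2}<r\le n-1$ every $c_j(f)$ and $d_j(f)$ is nonempty and, by Lemma \ref{le 1}(5), $B_-(f)^{\bot}$ (for $p\equiv1$, resp.\ $B_+(f)^{\bot}$ for $p\equiv3$) is properly contained in $d_0(f)$ and contains $0$, so the splitting of $d_0(f)$ in $U_{24}$ and $U_{25}$ is legitimate and both are partitions.

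For step (ii), if $\alpha\sim_{\widehat{U_{22}}}\alpha'$ then $\chi_\alpha$ and $\chi_{\alpha'}$ agree on every cell of $U_{22}$. By Lemmas \ref{le 8}, \ref{le 10}, \ref{le 11} and Corollary \ref{co 2}, and using $\chi_\alpha(B_+(f^*))=p^n\delta_0(\alpha)-\chi_\alpha(B_-(f^*))$ with $\chi_\alpha(B_-(f^*))$ equal to $p^r$ or $0$ according as $\alpha\in B_-(f^*)^{\bot}$ or not, each such character sum is an explicit function of $\epsilon_\alpha$, of whether $f^*(\alpha)=0$ and of $\eta(f^*(\alpha))$ when $f^*(\alpha)\neq 0$, and of whether $\alpha\in B_-(f^*)^{\bot}$. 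Reading these values off — exactly as in the proof of Theorem \ref{th 4}, now with $f^*$ in the role of $f$ and Lemma \ref{le 1}(5) applied to $f^*$ (so, e.g., $B_-(f^*)^{\bot}=d_0(f^*)$ when $r=\frac{n+1}{2}$ and $p\equiv 1\pmod 4$) — one finds that the dual partition of $U_{22}$ is $\{\{0\},c_0(f^*),\bigcup_{i\in SQ}c_i(f^*),\bigcup_{i\in NSQ}c_i(f^*),d_0(f^*)\setminus\{0\},\bigcup_{i\in NSQ}d_i(f^*)\}$, and likewise the dual partitions of $U_{23},U_{24},U_{25}$ are the corresponding fusions of the dual partitions of $U_{15},U_{16},U_{17}$ written down in the proof of Theorem \ref{th 4}. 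In each of the four cases the dual partition has the same number of cells as the primal one — six for $U_{22},U_{23}$ and eight for $U_{24},U_{25}$.

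Step (iii) is then immediate from Lemma \ref{le 6}: a partition with $0$-th cell $\{0\}$ induces a symmetric association scheme iff it and its dual have the same number of cells, so $U_{22},U_{23}$ induce $5$-class and $U_{24},U_{25}$ induce $7$-class symmetric association schemes. The main obstacle is the verification hidden in step (ii): one must check that the character-sum formulas of Lemmas \ref{le 10}, \ref{le 11} and Corollary \ref{co 2} are not merely constant on the claimed dual cells but take pairwise distinct values on them. This is exactly where the non-degeneracy of $B_-(f^*)$ (resp.\ $B_+(f)$) is used — it makes the indicator $\chi_\alpha(B_-(f^*))$ distinguish $\{0\}$, and when $r>\frac{n+1}{2}$ also the cell $B_-(f^*)^{\bot}\setminus\{0\}$, from the remaining cells — while $\eta(f^*(\alpha))$ separates the $SQ$- and $NSQ$-unions of the $c_i$'s and $d_i$'s. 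Beyond the two routine case distinctions, $p\equiv 1$ versus $p\equiv 3\pmod 4$ and $r=\frac{n+1}{2}$ versus $\frac{n+1}{2}<r\le n-1$, there is no new difficulty, each case being handled verbatim as in Theorem \ref{th 4}.
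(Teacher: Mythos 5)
Your proposal is correct and follows essentially the same route as the paper: verify via the cardinality bookkeeping of Theorem \ref{th 4} (Lemma \ref{le 4}, Remark \ref{re 1}, Lemma \ref{le 1}(5)) that $U_{22}$--$U_{25}$ are partitions with $\{0\}$ as a cell, identify their dual partitions from the character-sum formulas of Lemmas \ref{le 8}, \ref{le 10}, \ref{le 11} exactly as in the proof of Theorem \ref{th 4}, and conclude by matching class numbers through Lemma \ref{le 6}. The paper's proof is precisely this, so no discrepancy.
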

	\begin{proof}
		
		By Lemmas \ref{le 8}, \ref{le 10}, \ref{le 11} and the proof of Theorem \ref{th 4}, when $r=\frac{n+1}{2}$, we know that $U_{22}$ and $U_{23}$ are both partitions of $\mathbb{F}_p^n$, and the dual partitions of $U_{22}$ and $U_{23}$ are $\{\{0\}, c_0(f^*), \bigcup\limits_{i\in SQ}c_i(f^*), \bigcup\limits_{i\in NSQ}c_i(f^*), d_0(f^*)\setminus\{0\}, \bigcup\limits_{i\in NSQ}d_i(f^*)\}$ and $\{\{0\}, c_0(f^*)\setminus\{0\}, \bigcup\limits_{i\in NSQ}c_i(f^*), d_0(f^*), \bigcup\limits_{i\in SQ}d_i(f^*), \bigcup\limits_{i\in NSQ}d_i(f^*)\}$, respectively. When $\frac{n+1}{2}<r\le n-1$, we know that $U_{24}$ and $U_{25}$ are both partitions of $\mathbb{F}_p^n$, and the dual partitions of $U_{24}$ and $U_{25}$ are $\{\{0\},  c_0(f^*), \bigcup\limits_{i\in SQ}c_i(f^*), $ $\bigcup\limits_{i\in NSQ}c_i(f^*), B_-(f^*)^{\bot}\setminus\{0\}, d_0(f^*)\setminus B_-(f^*)^{\bot}, \bigcup\limits_{i\in SQ}d_i(f^*), \bigcup\limits_{i\in NSQ}d_i(f^*)\}$ and $\{\{0\}, B_-(f^*)^{\bot}\setminus\{0\}, c_0(f^*)\setminus B_-(f^*)^{\bot}, \bigcup\limits_{i\in SQ}c_i(f^*), \bigcup\limits_{i\in NSQ}c_i(f^*), d_0(f^*),\bigcup\limits_{i\in SQ}d_i$ $(f^*),\bigcup\limits_{i\in NSQ}d_i(f^*)\}$, respectively. Thus, according to Lemma \ref{le 6}, we have that when $r=\frac{n+1}{2}$, $U_{22}$ and $U_{23}$ can induce $5$-class symmetric association schemes. When $\frac{n+1}{2}<r\le n-1$, $U_{24}$ and $U_{25}$ can induce $7$-class symmetric association schemes.\qed
	\end{proof}
	\begin{remark}\label{re 10} $\mathrm{(1)}$ Note that in Theorem \ref{th 3} and Proposition \ref{po 4}, when $B_+(f^*)$ is a non-degenerate $\frac{n+1}{2}$-dimensional subspace over $\mathbb{F}_p$, by the proofs of Theorem \ref{th 3} and Proposition \ref{po 4}, we know that the condition that $B_+(f)$ for $p\equiv 1\ (\mathrm{mod}\ 4)$ and $B_-(f)$ for $p\equiv 3\ (\mathrm{mod}\ 4)$ are non-degenerate $\frac{n+1}{2}$-dimensional subspaces is not necessary. In Theorem \ref{th 4} and Proposition \ref{po 5}, when $B_-(f^*)$ is a non-degenerate  $\frac{n+1}{2}$-dimensional subspace over $\mathbb{F}_p$, by the proofs of Theorem \ref{th 4} and Proposition \ref{po 5}, we know that the condition that $B_-(f)$ for $p\equiv 1\ (\mathrm{mod}\ 4)$ and $B_+(f)$ for $p\equiv 3\ (\mathrm{mod}\ 4)$ are non-degenerate $\frac{n+1}{2}$-dimensional subspaces is not necessary. Thus, when $p=3$, the $5$-class association schemes constructed in \cite{Ozbudak2} can also be obtained by Theorems \ref{th 3} and \ref{th 4}.\\
		$(2)$ Since $f^{**}(x)=f(x)$ for any $x\in\mathbb{F}_p^n$, according to Remark \ref{re 4}, Lemmas \ref{le 8}, \ref{le 10}, \ref{le 11}, and the proof of Proposition \ref{po 5}, when $p\equiv 1\ (\mathrm{mod}\ 4)$, we have that the first and second eigenmatrices of the association scheme induced by $U_{22}$ (respectively $U_{24}$) are the same. For fixed parameters $n$ and $p$, where $p\equiv 3\ (\mathrm{mod}\ 4)$, the first (respectively the second) eigenmatrix of the association scheme induced by $U_{23}$ is the same as the second (respectively the first) eigenmatrix of the association scheme induced by $U_{19}$. For fixed parameters $n$, $p$ and $r$, where $p\equiv 3\ (\mathrm{mod}\ 4)$, the first (respectively the second) eigenmatrix of the association scheme induced by $U_{25}$ is the same as the second (respectively the first) eigenmatrix of the association scheme induced by $U_{21}$. The first (second) eigenmatrices of the association schemes induced by $U_{22}$ and $U_{24}$ are given in Tables 35 and 36 of Appendix.
		\\
		$(3)$ When $p\equiv 1\ (\mathrm{mod}\ 4)$, the intersection numbers and the Krein parameters of the association schemes induced by $U_{22}$ and $U_{24}$ can be calculated by Equations (8), (9), and Tables 35 and 36. We easily have that the intersection number $p_{u,v}^w$ and the Krein parameter $q_{u,v}^w$ of the association scheme induced by $U_{22}$ (respectively $U_{24}$) are the same. For fixed parameters $n$ and $p$, where $p\equiv 3\ (\mathrm{mod}\ 4)$, the intersection number $p_{u,v}^w$ (respectively the Krein parameter $q_{u,v}^w$) of the association scheme induced by $U_{23}$ is the same as the Krein parameter $q_{u,v}^w$ (respectively the intersection number $p_{u,v}^w$) of the association scheme induced by $U_{19}$. For fixed parameters $n$, $p$ and $r$, where $p\equiv 3\ (\mathrm{mod}\ 4)$, the intersection number $p_{u,v}^w$ (respectively the Krein parameter $q_{u,v}^w$) of the association scheme induced by $U_{25}$ is the same as the Krein parameter $q_{u,v}^w$ (respectively the intersection number $p_{u,v}^w$) of the association scheme induced by $U_{21}$. In Appendix, we give the intersection numbers and the Krein parameters of the association schemes induced by $U_{22}$ and $U_{24}$.

	\end{remark}
	
	In the following proposition, when $n$ is odd, we give the sufficient and necessary conditions for the partitions $P$ and $D$ to induce symmetric association schemes.
	\begin{proposition}\label{po 6}
		Let $n$ be an odd integer with $n\ge 3$,  $f(x):\mathbb{F}_{p}^n\longrightarrow\mathbb{F}_p$ be a non-weakly regular bent function belonging to $\mathcal{DBF}$, then we have the following.
		\begin{itemize}
			\item[$\mathrm{(1)}$]If $B_+(f^*)$ is a non-degenerate $r$-dimensional subspace over $\mathbb{F}_p$ with $\frac{n+1}{2}\le r\le n-1$, then 
			the partitions $P$ and $D$ induce symmetric association schemes if and only if $r=\frac{n+1}{2}$.  
			\item[$\mathrm{(2)}$]If $B_-(f^*)$ is a non-degenerate $r$-dimensional subspace over $\mathbb{F}_p$ with $\frac{n+1}{2}\le r\le n-1$, then 
			the partitions $P$ and $D$ induce symmetric association schemes if and only if $r=\frac{n+1}{2}$.  
		\end{itemize}
	\end{proposition}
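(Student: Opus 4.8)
The plan is to follow the pattern of the proof of Proposition \ref{po 3}, treating the four subcases --- part $(1)$ or $(2)$, combined with $p\equiv 1$ or $p\equiv 3\ (\mathrm{mod}\ 4)$ --- in parallel, and in each case splitting the argument into a sufficiency part and a necessity part. Throughout I would use that $f^*\in\mathcal{DBF}$ with $f^{**}=f$, so that Lemmas \ref{le 1} and \ref{le 4} and Remark \ref{re 1} apply equally to $f$ and to $f^*$.

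For the sufficiency, assume $r=\frac{n+1}{2}$. First I would compute all the cardinalities $|c_i(f)|$ and $|d_i(f)|$ from Lemma \ref{le 4} and Remark \ref{re 1}, using Lemma \ref{le 1}$(5)$ applied to $f^*$ to locate the subspace $B_+(f^*)^{\bot}$ (respectively $B_-(f^*)^{\bot}$) inside $c_0(f^*)$ or $d_0(f^*)$. For instance, in part $(1)$ with $p\equiv 1\ (\mathrm{mod}\ 4)$ this gives $|c_0(f)|=p^{\frac{n-1}{2}}$, $|c_j(f)|=(1+\eta(j))p^{\frac{n-1}{2}}$ for $j\in\mathbb{F}_p^*$ --- so $c_j(f)=\emptyset$ precisely for $j\in NSQ$ --- and $|d_j(f)|=p^{n-1}-p^{\frac{n-1}{2}}>0$ for all $j\in\mathbb{F}_p$; the other three subcases follow by interchanging $c\leftrightarrow d$ and/or $SQ\leftrightarrow NSQ$ as dictated by Lemma \ref{le 1}$(5)$. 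Hence, after deleting its empty members, the partition $P$ becomes exactly one of $U_{10}$, $U_{11}$, $U_{14}$, $U_{15}$, and $D$ becomes exactly one of $U_{18}$, $U_{19}$, $U_{22}$, $U_{23}$; by Theorems \ref{th 3}, \ref{th 4}, Propositions \ref{po 4}, \ref{po 5} and Remark \ref{re 10}$(1)$ (which removes the auxiliary hypothesis on $B_+(f)$, respectively $B_-(f)$, when $r=\frac{n+1}{2}$), all of these partitions induce symmetric association schemes.

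For the necessity I would prove the contrapositive: if $\frac{n+1}{2}<r\le n-1$, then neither $P$ nor $D$ induces a symmetric association scheme. If $\alpha\sim_{\widehat P}\alpha'$ (respectively $\alpha\sim_{\widehat D}\alpha'$), then Lemma \ref{le 12} gives $\epsilon_\alpha=\epsilon_{\alpha'}$ and $f^*(\alpha)=f^*(\alpha')$, after which Lemmas \ref{le 8}, \ref{le 10} and \ref{le 11} show that the class of $\alpha$ is refined only by the value $\chi_\alpha(B_+(f^*))\in\{0,p^r\}$, that is, by whether $\alpha\in B_+(f^*)^{\bot}$. Reading off the character sums yields, in part $(1)$ with $p\equiv 1\ (\mathrm{mod}\ 4)$, $\widehat P=\{\{0\},B_+(f^*)^{\bot}\setminus\{0\},c_0(f^*)\setminus B_+(f^*)^{\bot},c_1(f^*),\dots,c_{p-1}(f^*),d_0(f^*),d_1(f^*),\dots,d_{p-1}(f^*)\}$ and $\widehat D=\{\{0\},B_+(f^*)^{\bot}\setminus\{0\},c_0(f^*)\setminus B_+(f^*)^{\bot},\bigcup\limits_{i\in SQ}c_i(f^*),\bigcup\limits_{i\in NSQ}c_i(f^*),d_0(f^*),\bigcup\limits_{i\in SQ}d_i(f^*),\bigcup\limits_{i\in NSQ}d_i(f^*)\}$, with $c_0(f^*)$ replaced by $d_0(f^*)$ (and $B_+$ by $B_-$) in the remaining subcases. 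Since $r>\frac{n+1}{2}$, Lemma \ref{le 4} gives $|c_j(f^*)|=p^{r-1}+\eta(j)p^{\frac{n-1}{2}}>0$ for every $j\in\mathbb{F}_p^*$ (and analogously in the other subcases), so every set listed above is nonempty, while $|B_+(f^*)^{\bot}|=p^{n-r}<p^{r-1}=|c_0(f^*)|$ forces the strict inclusion $B_+(f^*)^{\bot}\subsetneqq c_0(f^*)$. Therefore $|\widehat P|=|P|+1$ and $|\widehat D|=|D|+1$, so by Lemmas \ref{le 5} and \ref{le 6} neither $P$ nor $D$ induces a symmetric association scheme; hence $r=\frac{n+1}{2}$.

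The conceptual content is light, and essentially all the work lies in the case analysis. The two delicate points are: verifying via Lemma \ref{le 1}$(5)$, Lemma \ref{le 4} and Remark \ref{re 1} exactly which of the $c_j(f)$ and $d_j(f)$ vanish when $r=\frac{n+1}{2}$, so that $P$ and $D$ genuinely collapse onto the listed $U_i$; and checking, when $r>\frac{n+1}{2}$, that the only new class of $\widehat P$ and of $\widehat D$ arises from the strict inclusion $B_{\pm}(f^*)^{\bot}\subsetneqq c_0(f^*)$ or $\subsetneqq d_0(f^*)$, with no unexpected splitting or emptiness elsewhere. Both are the same kind of bookkeeping already carried out in the proofs of Theorems \ref{th 3} and \ref{th 4} and Proposition \ref{po 3}.
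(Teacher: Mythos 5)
Your proposal is correct and follows essentially the same route as the paper: sufficiency exactly as there, from Theorems \ref{th 3}, \ref{th 4}, Propositions \ref{po 4}, \ref{po 5} and Remark \ref{re 10} after checking via Lemma \ref{le 1}(5), Lemma \ref{le 4} and Remark \ref{re 1} which $c_j(f)$, $d_j(f)$ vanish when $r=\frac{n+1}{2}$; and necessity by showing that for $r>\frac{n+1}{2}$ the dual partitions of $P$ and $D$ split $c_0(f^*)$ (respectively $d_0(f^*)$) along $B_{\pm}(f^*)^{\bot}$ and so have one class too many, contradicting Lemmas \ref{le 5} and \ref{le 6}. The one cosmetic slip is invoking Lemma \ref{le 12} for $\widehat{D}$-equivalence, whose hypothesis (equality of $\chi_\alpha(N_i(f))$ for every single $i$) is not furnished by $D$-equivalence; as in the paper, $\widehat{D}$ should be read off directly from Lemmas \ref{le 8}, \ref{le 10} and \ref{le 11}, which is in fact what your stated dual partition uses, so the conclusion is unaffected.
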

	\begin{proof}
		$\mathrm{(1)}$	When $B_+(f^*)$ is a non-degenerate $r$-dimensional subspace over $\mathbb{F}_p$, we only prove the necessity and the sufficiency can directly follow from Theorem \ref{th 3}, Proposition \ref{po 4} and Remark \ref{re 10}.
		
		By Lemma \ref{le 1}, we know that $B_+(f^*)^{\bot}\subseteq c_0(f^*)$ for $p\equiv 1\ (\mathrm{mod}\ 4)$ and $B_+(f^*)^{\bot}\subseteq d_0(f^*)$ for $p\equiv 3\ (\mathrm{mod}\ 4)$. 
		Assume that $B_+(f^*)^{\bot}\subsetneqq c_0(f^*)$  for $p\equiv 1\ (\mathrm{mod}\ 4)$ and $B_+(f^*)^{\bot}\subsetneqq d_0(f^*)$ for $p\equiv 3\ (\mathrm{mod}\ 4)$, which implies that $r>\frac{n+1}{2}$, then by the proof of Theorem \ref{th 3}, we have that $P=\{\{0\},c_0(f)\setminus\{0\},c_1(f),\dots,c_{p-1}(f),d_0(f),d_1(f),\dots,d_{p-1}(f)\}$ and  $D=\{\{0\},c_0(f)\setminus\{0\},$ $\bigcup\limits_{i\in SQ}c_i(f),\bigcup\limits_{i\in NSQ}c_i(f),d_0(f),\bigcup\limits_{i\in SQ}d_i(f), \bigcup\limits_{i\in NSQ}d_i(f)\}$. For any $\alpha, \alpha'\in\mathbb{F}_p^n$, if $\alpha\sim_{\widehat{P}}\alpha'$, where $\widehat{P}$ is the dual partition of $P$, then $\chi_\alpha(N_i(f))=\chi_{\alpha'}(N_i(f))$ for any $i\in \mathbb{F}_p$. Thus, by Lemma \ref{le 12}, we know that $\epsilon_\alpha=\epsilon_{\alpha'}$ and $f^*(\alpha)=f^*(\alpha')$.  By Lemmas \ref{le 8}, \ref{le 10} and \ref{le 11}, we know that the dual partition of $P$ is $ \{\{0\}, $ $B_+(f^*)^{\bot}\setminus\{0\}, c_0(f^*)\setminus B_+(f^*)^{\bot},c_1(f^*),\dots,c_{p-1}(f^*),d_0(f^*)\setminus\{0\},d_1(f^*),\dots,$ $d_{p-1}(f^*)\}$ for $p\equiv 1\ (\mathrm{mod}\ 4)$ and  $\{\{0\}, c_0(f^*),c_1(f^*),\dots,$ $c_{p-1}$ $(f^*),$ $ B_+(f^*)^{\bot}\setminus\{0\},d_0(f^*)\setminus B_+(f^*)^{\bot},d_1(f^*),\dots,d_{p-1}(f^*)\}$ for $p\equiv 3\ (\mathrm{mod}\ 4)$. The dual partition of $D$ is $ \{\{0\},B_+(f^*)^{\bot}\setminus\{0\}, c_0(f^*)\setminus B_+(f^*)^{\bot},\bigcup\limits_{i\in SQ}c_i(f^*),\bigcup\limits_{i\in NSQ}c_i(f^*),$ $d_0(f^*),\bigcup\limits_{i\in SQ}d_i(f^*), \bigcup\limits_{i\in NSQ}d_i(f^*)\}$ for $p\equiv 1\ (\mathrm{mod}\ 4)$ and $ \{\{0\}, c_0(f^*),\bigcup\limits_{i\in SQ}c_i$ $(f^*),\bigcup\limits_{i\in NSQ}c_i(f^*),B_+(f^*)^{\bot}\setminus\{0\},d_0(f^*)\setminus B_+(f^*)^{\bot},\bigcup\limits_{i\in SQ}d_i(f^*),\bigcup\limits_{i\in NSQ}d_i(f^*)\}$ for $p\equiv 3\ (\mathrm{mod}\ 4)$.  
		Thus, according to Lemma \ref{le 6}, we have that $P$ and $D$ can not induce symmetric association schemes, so $B_+(f^*)^{\bot}= c_0(f^*)$  for $p\equiv 1\ (\mathrm{mod}\ 4)$ and $B_+(f^*)^{\bot}=d_0(f^*)$ for $p\equiv 3\ (\mathrm{mod}\ 4)$, which implies that $r=\frac{n+1}{2}$.
		
		$\mathrm{(2)}$	When $B_-(f^*)$ is a non-degenerate $r$-dimensional subspace over $\mathbb{F}_p$, the proof is similar, so we omit it.\qed
	\end{proof}
	
	In the following, when $n$ is odd, we give some examples.
	\begin{example}
		Let $f(x):\mathbb{F}_5^3\longrightarrow \mathbb{F}_5$, $f(x_1,x_2,x_3)=x_1^2x_3^4+x_1^2+x_2x_3$. Then
		\begin{itemize}
			\item [$\bullet$] $f(x_1,x_2,x_3)$ is a non-weakly regular bent function of Type $(+)$.
			\item[$\bullet$] $f^*(x_1,x_2,x_3)=2x_1^2x_2^4+x_1^2+4x_2x_3$ is a non-weakly regular bent function of Type $(+)$.
			\item[$\bullet$] $B_+(f)=\mathbb{F}_5\times\{0\}\times\mathbb{F}_5$ and $B_+(f^*)=\mathbb{F}_5\times\mathbb{F}_5\times\{0\}$ are both non-degenerate $2$-dimensional subspaces over $\mathbb{F}_5$.
			\item[$\bullet$] $\{\{0\},  c_0(f)\setminus\{0\}, c_1(f), c_4(f),d_0(f),d_1(f),d_2(f),d_3(f),d_4(f)\}$ and $\{\{0\},  c_0$ $(f)\setminus\{0\}, c_1(f)\cup c_4(f),d_0(f),d_1(f)\cup d_4(f),d_2(f)\cup d_3(f)\}$ induce $8$-class and $5$-class  symmetric association schemes, respectively.
		\end{itemize}
	\end{example}
	\begin{example}
		Let $f(x):\mathbb{F}_5^5\longrightarrow \mathbb{F}_5$, $f(x_1,x_2,x_3,x_4,x_5)=x_1^2+x_2^2x_5^4+x_2^2+x_3^2+x_4x_5$. Then
		\begin{itemize}
			\item [$\bullet$] $f(x_1,x_2,x_3,x_4)$ is a non-weakly regular bent function of Type $(+)$.
			\item[$\bullet$] $f^*(x_1,x_2,x_3,x_4,x_5)=x_1^2+2x_2^2x_4^4+x_2^2+x_3^2+4x_4x_5$ is a non-weakly regular bent function of Type $(+)$.
			\item[$\bullet$] $B_+(f)=\mathbb{F}_5^3\times\{0\}\times\mathbb{F}_5$ and $B_+(f^*)=\mathbb{F}_5^3\times\mathbb{F}_5\times\{0\}$ are both non-degenerate $4$-dimensional subspaces over $\mathbb{F}_5$.
			\item[$\bullet$] $\{\{0\}, B_+(f)^{\bot}\setminus \{0\}, c_0(f)\setminus B_+(f)^{\bot}, c_1(f), c_2(f),c_3(f),c_4(f),d_0(f),d_1(f),d_2$ $(f),d_3(f),d_4(f)\}$ and $\{\{0\},  B_+(f)^{\bot}\setminus \{0\}, c_0(f)\setminus B_+(f)^{\bot}, c_1(f)\cup c_4(f), c_2$ $(f)\cup c_3(f),d_0(f),d_1(f)\cup d_4(f),d_2(f)\cup d_3(f)\}$ induce $11$-class and $7$-class symmetric association schemes, respectively.
		\end{itemize}
	\end{example}
	
	\begin{example}
		Let $f(x):\mathbb{F}_5^3\longrightarrow \mathbb{F}_5$, $f(x_1,x_2,x_3)=4x_1^2x_3^4+2x_1^2+x_2x_3$. Then
		\begin{itemize}
			\item [$\bullet$] $f(x_1,x_2,x_3)$ is a non-weakly regular bent function of Type $(-)$.
			\item[$\bullet$] $f^*(x_1,x_2,x_3)=3x_1^2x_2^4+3x_1^2+4x_2x_3$ is a non-weakly regular bent function of Type $(-)$.
			\item[$\bullet$] $B_-(f)=\mathbb{F}_5\times\{0\}\times\mathbb{F}_5$ and $B_-(f^*)=\mathbb{F}_5\times\mathbb{F}_5\times\{0\}$ are both non-degenerate $2$-dimensional subspaces over $\mathbb{F}_5$.
			\item[$\bullet$] $\{\{0\},  c_0(f), c_1(f),c_2(f),c_3(f), c_4(f),d_0(f)\setminus\{0\},d_2(f),d_3(f)\}$ and $\{\{0\},  c_0$ $(f), c_1(f)\cup c_4(f),c_2(f)\cup c_3(f),d_0(f)\setminus\{0\},d_2(f)\cup d_3(f)\}$ induce $8$-class and $5$-class  symmetric association schemes, respectively.
		\end{itemize}
	\end{example}
	\begin{example}
		Let $f(x):\mathbb{F}_5^5\longrightarrow \mathbb{F}_5$, $f(x_1,x_2,x_3,x_4,x_5)=x_1^2+4x_2^2x_5^4+2x_2^2+x_3^2+x_4x_5$. Then
		\begin{itemize}
			\item [$\bullet$] $f(x_1,x_2,x_3,x_4)$ is a non-weakly regular bent function of Type $(-)$.
			\item[$\bullet$] $f^*(x_1,x_2,x_3,x_4,x_5)=x_1^2+3x_2^2x_4^4+3x_2^2+x_3^2+4x_4x_5$ is a non-weakly regular bent function of Type $(-)$.
			\item[$\bullet$] $B_-(f)=\mathbb{F}_5^3\times\{0\}\times\mathbb{F}_5$ and $B_-(f^*)=\mathbb{F}_5^3\times\mathbb{F}_5\times\{0\}$ are both non-degenerate $4$-dimensional subspaces over $\mathbb{F}_5$.
			\item[$\bullet$] $\{\{0\}, c_0(f), c_1(f), c_2(f),c_3(f),c_4(f),B_-(f)^{\bot}\setminus \{0\},d_0(f)\setminus B_-(f)^{\bot},d_1(f),d_2$ $(f),d_3(f),d_4(f)\}$ and $\{\{0\}, c_0(f), c_1(f)\cup c_4(f), c_2$ $(f)\cup c_3(f),B_-(f)^{\bot}\setminus \{0\},d_0(f)\setminus B_-(f)^{\bot},d_1(f)\cup d_4(f),d_2(f)\cup d_3(f)\}$ induce $11$-class and $7$-class symmetric association schemes, respectively.
		\end{itemize}
	\end{example}
	\begin{example}
		Let $f(x):\mathbb{F}_7^3\longrightarrow\mathbb{F}_7$, $f(x_1,x_2,x_3)=5x_1^2x_3^6+3x_1^2+x_2x_3.$ Then
		\begin{itemize}
			\item[$\bullet$]$f(x_1,x_2,x_3)$ is a non-weakly regular bent function of Type $(-)$.
			\item[$\bullet$] $f^*(x_1,x_2,x_3)=x_1^2x_2^6+4x_1^2+6x_2x_3$ is  a non-weakly regular bent function of Type $(+)$.
			\item[$\bullet$] $B_-(f)=\mathbb{F}_7\times\{0\}\times\mathbb{F}_7$ and $B_+(f^*)=\mathbb{F}_7\times\mathbb{F}_7\times\{0\}$ are both non-degenerate $2$-dimensional subspaces over $\mathbb{F}_7$.
			\item[$\bullet$] $\{\{0\},c_0(f)\setminus\{0\},c_3(f),c_5(f),c_6(f),d_0(f),d_1(f),d_2(f),d_3(f),d_4(f),d_5(f),\\d_6(f)\}$ and $\{ \{0\},c_0(f)\setminus\{0\},c_3(f)\cup c_5(f)\cup c_6(f),d_0(f),d_1(f)\cup d_2(f)\cup d_4(f),d_3(f)\cup d_5(f)\cup d_6(f)\}$ induce $11$-class and $5$-class symmetric association schemes, respectively.
		\end{itemize}
	\end{example}
	\begin{example}
		Let $f(x):\mathbb{F}_7^5\longrightarrow \mathbb{F}_7$, $f(x_1,x_2,x_3,x_4,x_5)=2x_1^2x_5^6+x_1^2+x_2^2+x_3^2+x_4x_5.$ Then
		\begin{itemize}
			\item [$\bullet$] $f(x_1,x_2,x_3,x_4,x_5)$ is a non-weakly regular bent function of Type $(-)$.
			\item[$\bullet$] $f^*(x_1,x_2,x_3,x_4,x_5)=6x_1^2x_4^6+5x_1^2+5x_2^2+5x_3^2+6x_4x_5$ is a non-weakly regular bent function of Type $(+)$.
			\item[$\bullet$] $B_-(f)=\mathbb{F}_7^3\times\{0\}\times\mathbb{F}_7$ and $B_+(f^*)=\mathbb{F}_7^3\times\mathbb{F}_7\times\{0\}$ are both non-degenerate $4$-dimensional subspaces over $\mathbb{F}_7$.
			\item[$\bullet$] $\{\{0\},B_-(f)^{\bot}\setminus\{0\},c_0(f)\setminus B_-(f)^{\bot}, c_1(f),c_2(f),c_3(f),c_4(f),c_5(f),c_6(f),\\d_0(f),d_1(f),d_2(f),d_3(f),d_4(f),d_5(f),d_6(f)\}$ and $\{\{0\},B_-(f)^{\bot}\setminus\{0\},c_0\\(f)\setminus B_-(f)^{\bot},c_1(f)\cup c_2(f)\cup c_4(f), c_3(f)\cup c_5(f)\cup c_6(f), d_0(f),d_1(f)\cup d_2(f)\cup d_4(f),d_3(f)\cup d_5(f)\cup d_6(f)\}$ induce $15$-class and $7$-class symmetric association schemes, respectively.
		\end{itemize}
	\end{example}
	\begin{example}
		Let $f(x):\mathbb{F}_7^3\longrightarrow \mathbb{F}_7$, $f(x_1,x_2,x_3)=2x_1^2x_3^6+x_1^2+x_2x_3.$ Then
		\begin{itemize}
			\item[$\bullet$] $f(x_1,x_2,x_3)$ is a non-weakly regular bent function of Type $(+)$.
			\item[$\bullet$] $f^*(x_1,x_2,x_3)=6x_1^2x_2^6+5x_1^2+6x_2x_3$ is a non-weakly regular bent function of Type $(-)$.
			\item[$\bullet$] $B_+(f)=\mathbb{F}_7\times\{0\}\times\mathbb{F}_7$ and $B_-(f^*)=\mathbb{F}_7\times\mathbb{F}_7\times\{0\}$ are both non-degenerate $2$-dimensional subspaces over $\mathbb{F}_7$.
			\item[$\bullet$] $\{\{0\},c_0(f),c_1(f),c_2(f),c_3(f),c_4(f),c_5(f),c_6(f),d_0(f)\setminus\{0\},d_1(f),d_2(f),\\d_4(f)\}$ and $\{\{0\},c_0(f),c_1(f)\cup c_2(f)\cup c_4(f),c_3(f)\cup c_5(f)\cup c_6(f),d_0(f)\setminus\{0\},d_1(f)\cup d_2(f)\cup d_4(f)\}$ induce $11$-class and $5$-class symmetric association schemes, respectively.
		\end{itemize}
	\end{example} 
	
	\begin{example}
		Let $f(x):\mathbb{F}_7^5\longrightarrow \mathbb{F}_7$, $f(x_1,x_2,x_3,x_4,x_5)=5x_1^2x_5^6+3x_1^2+x_2^2+x_3^2+x_4x_5.$ Then
		\begin{itemize}
			\item [$\bullet$]$f(x_1,x_2,x_3,x_4,x_5)$ is a non-weakly regular bent function of Type $(+)$.
			\item[$\bullet$] $f^*(x_1,x_2,x_3,x_4,x_5)=x_1^2x_4^6+4x_1^2+5x_2^2+5x_3^2+6x_4x_5$ is a non-weakly regular bent function of Type $(-)$.
			\item[$\bullet$] $B_+(f)=\mathbb{F}_7^3\times\{0\}\times\mathbb{F}_7$ and $B_-(f^*)=\mathbb{F}_7^3\times\mathbb{F}_7\times\{0\}$ are both non-degenerate $4$-dimensional subspaces over $\mathbb{F}_7$.
			\item[$\bullet$] $\{\{0\},c_0(f),c_1(f),c_2(f),c_3(f),c_4(f),c_5(f),c_6(f),B_+(f)^{\bot}\setminus\{0\},d_0(f)\setminus B_+\\(f)^{\bot},d_1(f),d_2(f),d_3(f),d_4(f),d_5(f),d_6(f)\}$ and $\{\{0\},c_0(f),c_1(f)\cup c_2(f)\cup c_4(f), c_3(f)\cup c_5(f)\cup c_6(f),B_+(f)^{\bot}\setminus\{0\},d_0(f)\setminus B_+(f)^{\bot}, d_1(f)\cup d_2(f)\cup d_4(f),d_3(f)\cup d_5(f)\cup d_6(f)\}$ induce $15$-class and $7$-class symmetric association schemes, respectively.
		\end{itemize}
	\end{example}
	Define \begin{align*}
	U&:=\{\{0\},N_0(f)\setminus\{0\}, \bigcup\limits_{i\in SQ} N_i(f),\bigcup\limits_{i\in NSQ}N_i(f)\},\\  V&:=\{\{0\}, N_0(f)\setminus\{0\},\bigcup\limits_{i\in \mathbb{F}_p^*}N_i(f)\}.
	\end{align*}
	In \cite{Chee}, Chee \emph{et al.} proved that when $n$ is even, if $f(x)$ is a weakly regular bent function belonging to $\mathcal{DBF}$, then $U$ and $V$ can induce $3$-class and $2$-class symmetric association schemes, respectively. Actually, for any positive integer $n$, we have the following proposition.
	\begin{proposition}\label{po 7}
		Let $n$ be a positive integer with $n\ge 3$,  $f(x):\mathbb{F}_{p}^n\longrightarrow\mathbb{F}_p$ be a weakly regular bent function belonging to $\mathcal{DBF}$, then $U$ is a partition of $\mathbb{F}_p^n$ and induces a $3$-class symmetric association scheme.
	\end{proposition}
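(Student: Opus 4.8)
The plan is to reduce everything to the criterion of Lemma~\ref{le 6}: I will show that $U=\{U_i\}_{i=0}^{3}$ is a partition of $\mathbb{F}_p^n$ into four nonempty parts with $U_0=\{0\}$, and that its dual partition $\widehat{U}$ again has exactly four parts (necessarily with zero part $\{0\}$ by Remark~\ref{re 3}); then $d=d'=3$, and Lemma~\ref{le 6} yields a $3$-class symmetric association scheme at once.

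First I would check that $U$ is a partition. Since $f\in\mathcal{DBF}$ we have $f(0)=0$, so $0\in N_0(f)$, and the level sets $N_i(f)$, $i\in\mathbb{F}_p$, partition $\mathbb{F}_p^n$ by definition; hence $U_0,\dots,U_3$ are pairwise disjoint with union $\mathbb{F}_p^n$. For nonemptiness I would invoke Lemma~\ref{le 3}: by Lemma~\ref{le 1}(2), $f^*(0)=0$, so Lemma~\ref{le 3} applies with $j_0=0$ and, because $n\ge 3$, it gives $|N_0(f)|>1$ and $|N_i(f)|>0$ for all $i\in\mathbb{F}_p^{*}$ (the cases $n$ even and $n$ odd being covered by the two formulas there). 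As $p\ge 3$, both $SQ$ and $NSQ$ are nonempty, so $N_0(f)\setminus\{0\}$, $\bigcup_{i\in SQ}N_i(f)$ and $\bigcup_{i\in NSQ}N_i(f)$ are all nonempty, and $U$ is a genuine $4$-part partition.

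Next I would determine $\widehat{U}$. The decisive simplification is that $f$ is weakly regular, so $\epsilon_\alpha$ equals a single constant $\epsilon\in\{1,-1\}$ for every $\alpha$. Feeding this into Corollary~\ref{co 1} (for $\chi_\alpha(N_0(f))$, hence $\chi_\alpha(N_0(f)\setminus\{0\})=\chi_\alpha(N_0(f))-1$) and Corollary~\ref{co 3} (for $\chi_\alpha(\bigcup_{i\in SQ}N_i(f))$ and $\chi_\alpha(\bigcup_{i\in NSQ}N_i(f))$), the $4$-tuple $\bigl(\chi_\alpha(U_0),\chi_\alpha(U_1),\chi_\alpha(U_2),\chi_\alpha(U_3)\bigr)$ is seen to depend on $\alpha$ only through $\delta_0(\alpha)$ and through which of $\{0\}$, $SQ$, $NSQ$ contains $f^*(\alpha)$. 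Since $f^*$ is a bent function with $f^*(0)=0$ (and $f^{**}=f$), the sets
\[
\widehat{U}=\bigl\{\{0\},\ N_0(f^*)\setminus\{0\},\ \bigcup_{i\in SQ}N_i(f^*),\ \bigcup_{i\in NSQ}N_i(f^*)\bigr\}
\]
form a partition of $\widehat{\mathbb{F}_p^n}\cong\mathbb{F}_p^n$ (nonemptiness of the parts follows exactly as in the previous step applied to $f^*$), each part is contained in a single $\sim_{\widehat{U}}$-class, and — comparing the explicit values from Corollaries~\ref{co 1} and~\ref{co 3}, using $p\ge 3$ and $n\ge 3$ — the four parts yield pairwise distinct tuples. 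Hence these four sets are precisely the $\sim_{\widehat{U}}$-classes, so $|\widehat{U}|=4$; combining with $|U|=4$, Lemma~\ref{le 6} gives the $3$-class symmetric association scheme.

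I expect the main obstacle to be the last point of the preceding paragraph: organizing the character sums $\chi_\alpha(\bigcup_{i\in SQ}N_i(f))$ and $\chi_\alpha(\bigcup_{i\in NSQ}N_i(f))$ from Corollary~\ref{co 3} across the four regimes ($n$ even vs.\ odd, $p\equiv 1$ vs.\ $3\pmod 4$) and verifying that the four candidate classes are genuinely separated, so that $\widehat{U}$ has neither fewer nor more than four parts. Everything else (the partition check and the reduction to Lemma~\ref{le 6}) is routine.
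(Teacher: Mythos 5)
Your proposal is correct and follows essentially the same route as the paper: establish that $U$ is a partition via Lemma~\ref{le 3}, compute the dual partition $\{\{0\},\,N_0(f^*)\setminus\{0\},\,\bigcup_{i\in SQ}N_i(f^*),\,\bigcup_{i\in NSQ}N_i(f^*)\}$ from Corollaries~\ref{co 1} and~\ref{co 3} using the constancy of $\epsilon_\alpha$, and conclude with Lemma~\ref{le 6}. The separation check you flag as the main obstacle is exactly the verification implicit in the paper's appeal to those corollaries, so no genuinely new idea is needed.
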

	\begin{proof}
		By Lemma \ref{le 3}, we know that $U$ is a partition of $\mathbb{F}_p^n$. According to  Corollaries \ref{co 1} and \ref{co 3}, we have that the dual partition of $U$ is $\{\{0\}, N_0(f^*)\setminus\{0\}, \bigcup\limits_{i\in SQ}N_i$ $(f^*),\bigcup\limits_{i\in NSQ}N_i(f^*)\}$. Thus, according to Lemma \ref{le 6}, we have that $U$ can induce a $3$-class symmetric association scheme.\qed
	\end{proof}

	In the following proposition, we give the sufficient and necessary conditions for the partitions $U$ and $V$ to induce symmetric association schemes.
	\begin{proposition}\label{po 8}
		Let $n$ be a positive integer with $n\ge 3$,  $f(x):\mathbb{F}_{p}^n\longrightarrow\mathbb{F}_p$ be a dual-bent function belonging to $\mathcal{DBF}$. Then the partition $U$ induces a symmetric association scheme if and only if $f(x)$ is weakly regular, and partition $V$ induces a symmetric association scheme if and only if $f(x)$ is weakly regular and $n$ is even.
	\end{proposition}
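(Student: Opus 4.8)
The plan is to apply Lemma~\ref{le 6}. Since $U$ has three nonzero classes and $V$ has two, each of $U$, $V$ induces a symmetric association scheme if and only if its dual partition has, respectively, exactly three or exactly two nonzero classes; so the statement reduces to counting the classes of $\widehat U$ and $\widehat V$. I would first record, from Corollaries~\ref{co 1}, \ref{co 3} and~\ref{co 4} (using $\chi_\alpha(N_0(f)\setminus\{0\})=\chi_\alpha(N_0(f))-1$, which holds because $f(0)=0$), the values of $\chi_\alpha$ on the classes of $U$ and $V$ for $\alpha\ne 0$. Inspecting these expressions — and using $p\ge 3$, so that the scalars $\tfrac{p-1}{2}p^{n/2-1}$, $p^{n/2-1}$, $p^{*}p^{(n-3)/2}$ that occur are distinct where it matters — one sees that the $\widehat U$-class of $\alpha$ is determined exactly by the pair $(\epsilon_\alpha,\theta(\alpha))$, where $\theta(\alpha)=0$ if $f^*(\alpha)=0$ and $\theta(\alpha)=\eta(f^*(\alpha))$ otherwise; likewise the $\widehat V$-class of $\alpha$ is determined by whether $f^*(\alpha)=0$ together with $\epsilon_\alpha$ (when $n$ is even), and by whether $f^*(\alpha)=0$ together with the sign $\epsilon_\alpha\eta(f^*(\alpha))$ on $\{\alpha:f^*(\alpha)\ne 0\}$ (when $n$ is odd).

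For sufficiency: if $f$ is weakly regular then $\epsilon_\alpha$ equals a constant $\epsilon_0$, and by Lemma~\ref{le 3} the sets $N_0(f^*)\setminus\{0\}$ and $N_j(f^*)$, $j\in\mathbb F_p$, are all nonempty. The parametrisation then gives $\widehat U=\{\{0\},N_0(f^*)\setminus\{0\},\bigcup_{i\in SQ}N_i(f^*),\bigcup_{i\in NSQ}N_i(f^*)\}$ (three nonzero classes — this is Proposition~\ref{po 7}) and, when $n$ is even, $\widehat V=\{\{0\},N_0(f^*)\setminus\{0\},\bigcup_{i\in\mathbb F_p^*}N_i(f^*)\}$ (two nonzero classes, recovering the result of \cite{Chee}), so Lemma~\ref{le 6} yields the schemes. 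Note however that when $n$ is odd, even a weakly regular $f$ gives $\widehat V$ three nonzero classes: $\{0\}$ and $N_0(f^*)\setminus\{0\}$ are classes, and on $\{\alpha:f^*(\alpha)\ne 0\}$ the sign $\epsilon_0\eta(f^*(\alpha))$ takes both values because $f^*$ is surjective. Hence $V$ never induces a symmetric association scheme when $n$ is odd.

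For necessity, assume $f\in\mathcal{DBF}$ is not weakly regular, so $B_+(f)$ and $B_-(f)$ are both nonempty; I first claim $\epsilon_\alpha$ takes both values already on $\mathbb F_p^n\setminus\{0\}$. To see this, rule out $B_+(f)=\{0\}$ and $B_-(f)=\{0\}$ by evaluating the inverse Walsh transform at $0$: since $f(0)=0$ this gives $\sum_{x\in\mathbb F_p^n}\mathcal W_f(x)=p^n$, and substituting $\mathcal W_f(x)=\epsilon_x\mu p^{n/2}\xi_p^{f^*(x)}$ together with $B_\pm(f)=\{0\}$ yields an equation with no solution for $n\ge 3$. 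Granting this, the parametrisation shows $\widehat U$ has at least three nonzero classes and $\widehat V$ at least two, so it remains to exclude equality. If $\widehat U$ had exactly three nonzero classes, then a short analysis of which of the at most six possible $\widehat U$-classes can be nonempty — again using surjectivity of $f^*$ — forces $\epsilon_\alpha$ to be rigidly tied to $f^*(\alpha)$: either $\epsilon_\alpha=+1$ precisely on $N_0(f^*)\setminus\{0\}$ and $-1$ on $\{f^*\ne 0\}$ (or the reverse), or $\epsilon_\alpha=\pm\eta(f^*(\alpha))$ throughout $\{f^*\ne 0\}$; and similarly for $\widehat V$, for either parity of $n$. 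Each such configuration provides an explicit description of $B_\pm(f)$; substituting it into $\sum_x\mathcal W_f(x)=p^n$ together with the value distribution of $f^*$ from Lemma~\ref{le 3} and the Gauss-sum evaluations of Lemma~\ref{le 7} produces an identity that forces a nonzero integer multiple of $\sqrt{p^*}$ to be rational, which is impossible. (When $t$ is odd, the configurations $\epsilon_\alpha=\pm\eta(f^*(\alpha))$ are excluded even more directly, using $f^*(c\alpha)=c^tf^*(\alpha)$ and $\epsilon_{c\alpha}=\epsilon_\alpha$.) Combining with the previous paragraph gives: $U$ induces a symmetric association scheme iff $f$ is weakly regular, and $V$ does iff $f$ is weakly regular and $n$ is even.

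I expect the main obstacle to be this last step — the exclusion of the degenerate correlations between $\epsilon_\alpha$ and $f^*(\alpha)$. Enumerating the \emph{a priori} possible class configurations and running the right character-sum computation to kill each one is where the real work lies; everything else is routine once the dual partitions have been written down via Corollaries~\ref{co 1}--\ref{co 4}.
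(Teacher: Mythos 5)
Your proposal reaches the correct conclusion, and its sufficiency half (Proposition~\ref{po 7}, the theorem of Chee \emph{et al.}, and the observation that for odd $n$ even a weakly regular $f$ gives $\widehat V$ three nonzero classes) is the same as the paper's; but for necessity you take a genuinely different route. The paper writes $\widehat U$ and $\widehat V$ down explicitly as the nonempty members of the lists built from $c_i(f^*)$ and $d_i(f^*)$ (via Corollaries~\ref{co 1}, \ref{co 3} and \ref{co 4}) and then just counts: the cardinality formulas of Lemma~\ref{le 4}, transferred to $f^*$ through Remark~\ref{re 1}, show that too many of these sets are nonempty whenever $f$ is non-weakly regular, so $|\widehat U|>|U|$, $|\widehat V|>|V|$, and Lemma~\ref{le 6} finishes. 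You avoid Lemma~\ref{le 4} entirely: after ruling out $B_{\pm}(f)=\{0\}$ by evaluating $\sum_x\mathcal{W}_f(x)=p^n$, you note that a collapsed dual partition would force $\epsilon_\alpha$ to be a function of $\theta(\alpha)$, enumerate the six onto assignments, and kill each by the same global character-sum identity combined with Lemma~\ref{le 3} and Lemma~\ref{le 7}. This does work (the contradictions materialize in both parities of $n$ and for both $U$ and $V$), but it is the longer road: the counts in Lemma~\ref{le 4} already encode exactly the non-collapse you re-derive by hand, so your case analysis trades one citation for a page of computations; what it buys is a self-contained argument independent of the counting lemma. Two inaccuracies to repair in the write-up: the contradiction is not always of the ``nonzero integer multiple of $\sqrt{p^*}$ is rational'' type --- in the configurations where $\epsilon_\alpha$ depends only on whether $f^*(\alpha)=0$ and $n$ is even, the character sum is entirely rational and what fails is an integer identity such as $2p^{n-1}=2p^{\frac{n}{2}-1}$ --- and your parenthetical shortcut ``when $t$ is odd'' is vacuous, since $\mathrm{gcd}(t-1,p-1)=1$ with $p$ odd forces $t$ to be even, so $\eta(f^*(c\alpha))=\eta(f^*(\alpha))$ and the orbit argument never excludes the $\epsilon_\alpha=\pm\eta(f^*(\alpha))$ configurations.
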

	\begin{proof}
		We only prove the necessity and the sufficiency can directly follow from Proposition \ref{po 7} and \cite[Theorem 4]{Chee}.
		
		Assume that $f(x)$ is non-weakly regular, by Corollaries \ref{co 1} and \ref{co 3}, we have that if $0\in B_+(f)$, then the dual partition of $U$ is the set of the all nonempty sets in $\{\{0\}, c_0(f^*)\setminus\{0\}, \bigcup\limits_{i\in SQ} c_i(f^*), \bigcup\limits_{i\in NSQ}c_i(f^*), d_0(f^*), \bigcup\limits_{i\in SQ} d_i(f^*), \bigcup\limits_{i\in NSQ}d_i$ $(f^*)\}$ and if $0\in B_-(f)$, then the dual partition of $U$ is the set of the all nonempty sets in  $\{\{0\}, c_0(f^*), \bigcup\limits_{i\in SQ} c_i(f^*), \bigcup\limits_{i\in NSQ}c_i(f^*), d_0(f^*)\setminus\{0\}, \bigcup\limits_{i\in SQ} d_i$ $(f^*), \bigcup\limits_{i\in NSQ}d_i(f^*)\}$. By Corollaries \ref{co 1} and \ref{co 4}, we have that when $n$ is even, if $0\in B_+(f)$, then the dual partition of $V$ is the set of the all nonempty sets in $\{\{0\}, c_0(f^*)\setminus\{0\},\bigcup\limits_{i\in\mathbb{F}_p^*}c_i(f^*), d_0(f^*), \bigcup\limits_{i\in\mathbb{F}_p^*}d_i(f^*)\}$ and if $0\in B_-(f)$, then the dual partition of $V$ is the set of the all nonempty sets in $\{\{0\}, c_0(f^*),$ $\bigcup\limits_{i\in\mathbb{F}_p^*}c_i(f^*), d_0(f^*)\setminus\{0\}, \bigcup\limits_{i\in\mathbb{F}_p^*}d_i(f^*)\}$.
		When $n$ is odd, the dual partition of $V$ is  the set of the all nonempty sets in $\{\{0\}, N_0(f^*)\setminus\{0\}, \bigcup\limits_{i\in SQ} c_i(f^*)\bigcup\bigcup\limits_{i\in NSQ}d_i$ $(f^*), \bigcup\limits_{i\in NSQ}c_i(f^*)\bigcup\bigcup\limits_{i\in SQ} d_i(f^*)\}$. Thus, according to Lemmas \ref{le 4}, \ref{le 6} and Remark \ref{re 1}, we know that $U$ and $V$ can not induce symmetric association schemes when $f(x)$ is non-weakly regular.
		
		When $f(x)$ is weakly regular and $n$ is odd, then by Corollaries \ref{co 1} and \ref{co 4}, we know that the dual partition of $V$ is $\{\{0\}, N_0(f)\setminus\{0\}, \bigcup\limits_{i\in SQ}N_i(f), \bigcup\limits_{i\in NSQ} N_i(f)\}$. Thus, by Lemma \ref{le 6}, we know that $V$ can not induce symmetric association scheme when $f(x)$ is weakly regular and $n$ is odd. The proof is now completed.\qed
	\end{proof}
	
	\section{Conclusion}
	\quad This paper made further endeavors to construct association schemes from bent functions. Using non-weakly regular bent functions belonging to $\mathcal{DBF}$, we constructed infinite families of $2p$-class, $(2p+1)$-class and $\frac{3p+1}{2}$-class symmetric association schemes. Thus, we gave an answer to the problem proposed in \cite{Ozbudak2}. Furthermore, by fusing those association schemes, we obtained symmetric association schemes of classes $4$, $5$, $6$ and $7$. It is worth to mention that the $5$-class and $6$-class association schemes constructed in \cite{Ozbudak2} can be obtained by our constructions. In addition, we gave the sufficient and necessary conditions for the partitions $P$, $D$, $T$, $U$ and $V$ to induce symmetric association schemes. 
	
	\section*{Acknowledgements}              
	\quad This research is supported by the National Key Research and Development Program of China (Grant Nos. 2022YFA1005000
	and 2018YFA0704703), the National Natural Science Foundation of China (Grant Nos. 12141108, 62371259, 12226336), the Postdoctoral Fellowship Program of CPSF (Grant No. GZC20231177), the Fundamental Research Funds for the Central Universities of China (Nankai University), the Nankai Zhide Foundation.  
	
\newpage
\section*{Appendix. The first and second eigenmatrices, the intersection numbers and the Krein parameters of the constructed association schemes}
 \qquad Let $P_m=[P_m(ij)]$ and $Q_m=[Q_m(ij)]$ be the first and second eigenmatrices of the association schemes induced by $U_m$, where $1\le m \le 25$, and $P_m(ij)$ and $Q_m(ij)$ are the elements in the $i$-th row and $j$-the column of $P_m$ and $Q_m$, respectively. \\
1. The first and second eigenmatrices, the intersection numbers and the Krein parameters of the association scheme induced by $U_1$

The first and second eigenmatrices of the association scheme induced by $U_1$ are given in Tables 1 and 2. 
		\begin{table}[h]
		\renewcommand\arraystretch{1.5}
		\vspace{-15pt}
		\centering
		\caption{{\small The first eigenmatrix of the association scheme induced by $U_1$}}
		\resizebox{\textwidth}{!}{
			\begin{tabular}{|m{1.3cm}|c|c|m{2cm}<{\centering}|m{2cm}<{\centering}|c|m{2cm}<{\centering}|}
				\hline
				\diagbox{$i$}{$P_1(ij)$}{$j$}&1&2&3&$4\le j\le p+2$&$p+3$&$p+4\le j\le 2p+2$\\
				\hline
				1&1&$p^{n-r}-1$&$p^{r-1}-p^{n-r}+(p-1)p^{\frac{n}{2}-1}$&$p^{r-1}-p^{\frac{n}{2}-1}$&$p^{n-1}-p^{r-1}$&$p^{n-1}-p^{r-1}$\\
				\hline
				2&1&$p^{n-r}-1$&$p^{r-1}-p^{n-r}+(p-1)p^{\frac{n}{2}-1}$&$p^{r-1}-p^{\frac{n}{2}-1}$&$-p^{r-1}$&$-p^{r-1}$\\
				\hline
				3&1&$p^{n-r}-1$&$(p-1)p^{\frac{n}{2}-1}-p^{n-r}$&$-p^{\frac{n}{2}-1}$&0&0\\
				
				\hline
				$4\le i\le p+2$&1&$p^{n-r}-1$&$-p^{\frac{n}{2}-1}-p^{n-r}$&$p^{\frac{n}{2}-1}K^{(t)}(j-3,i-3)$&0&0\\
				\hline
				$p+3$&1&$-1$&0&0&$-p^{\frac{n}{2}-1}(p-1)$&$p^{\frac{n}{2}-1}$\\
				\hline
				$p+4\le i\le 2p+2$&1&$-1$&0&0&$p^{\frac{n}{2}-1}$&$-p^{\frac{n}{2}-1}K^{(t)}(j-p-3, i-p-3)$\\
				\hline	\end{tabular}}
		\vspace{-15pt}
	\end{table}
	
	\begin{table}[h]
		\renewcommand\arraystretch{1.5}
		\vspace{-15pt}
		\centering
		\caption{{\small The second eigenmatrix of the association scheme induced by $U_1$}}
		\resizebox{\textwidth}{!}{
			\begin{tabular}{|m{1.3cm}|c|c|m{2cm}<{\centering}|m{2cm}<{\centering}|c|m{2cm}<{\centering}|}
				\hline
				\diagbox{$i$}{$Q_1(ij)$}{$j$}&1&2&3&$4\le j\le p+2$&$p+3$&$p+4\le j\le 2p+2$\\
				\hline
				1&1&$p^{n-r}-1$&$p^{r-1}-p^{n-r}+(p-1)p^{\frac{n}{2}-1}$&$p^{r-1}-p^{\frac{n}{2}-1}$&$p^{n-1}-p^{r-1}$&$p^{n-1}-p^{r-1}$\\
				\hline
				2&1&$p^{n-r}-1$&$p^{r-1}-p^{n-r}+(p-1)p^{\frac{n}{2}-1}$&$p^{r-1}-p^{\frac{n}{2}-1}$&$-p^{r-1}$&$-p^{r-1}$\\
				\hline
				3&1&$p^{n-r}-1$&$(p-1)p^{\frac{n}{2}-1}-p^{n-r}$&$-p^{\frac{n}{2}-1}$&0&0\\
				
				\hline
				$4\le i\le p+2$&1&$p^{n-r}-1$&$-p^{\frac{n}{2}-1}-p^{n-r}$&$p^{\frac{n}{2}-1}K^{(h)}(j-3,i-3)$&0&0\\
				\hline
				$p+3$&1&$-1$&0&0&$-p^{\frac{n}{2}-1}(p-1)$&$p^{\frac{n}{2}-1}$\\
				\hline
				$p+4\le i\le 2p+2$&1&$-1$&0&0&$p^{\frac{n}{2}-1}$&$-p^{\frac{n}{2}-1}K^{(h)}(j-p-3, i-p-3)$\\
				\hline	\end{tabular}}
		\vspace{-15pt}
	\end{table}

Since $p_{u,v}^{w}=p_{v,u}^{w}$ for any $u,v,w\in\{0,1,\dots,2p+1\}$, we only give the values of the  intersection numbers $p_{u,v}^w$ for $u\le v$ in the following eight cases. 
\begin{itemize}
	\item [$(\rm{\romannumeral1})$] $w=0$ and $0\le u\le v\le 2p+1$
	\begin{itemize}
		\item [$\bullet$] $u=v$\\
		$p_{0,0}^0=1$, $p_{1,1}^0=p^{n-r}-1$, $p_{2,2}^0=p^{r-1}+(p-1)p^{\frac{n}{2}-1}-p^{n-r}$, $p_{u,u}^0=p^{r-1}-p^{\frac{n}{2}-1}$ for $3\le u\le p+1$ and $p_{u,u}^0=p^{n-1}-p^{r-1}$ for $p+2\le u\le 2p+1$.
		\item [$\bullet$] $u\ne v$, $p_{u,v}^0=0$.
	\end{itemize}
	
	\item [$(\rm{\rm{\romannumeral2})}$] $u=0$, $1\le w\le 2p+1$ and $0\le v\le 2p+1$\\
	$p_{0,v}^{w}=\begin{cases}
	1,&\text{if}\ w=v,\\
	0,&\text{if}\ w\ne v.
	\end{cases}$
	\item[$(\rm{\rm{\romannumeral3}})$] $1\le w \le p+1$ and $1\le u\le v\le p+1$ 
	\begin{itemize}
		\item [$\bullet$] $w=1$\\
		$p_{u,v}^1=\begin{cases}
		p^{n-r}-2,&\text{if}\ u=v=1,\\
		0,&\text{if} \ u=1\ \text{and} \ v\ge2,\\
		p^{r-1}+(p-1)p^{\frac{n}{2}-1}-p^{n-r},&\text{if}\ u=v=2,\\
		p^{r-1}-p^{\frac{n}{2}-1},&\text{if}\ u\ge 3\ \text{and}\ u=v,\\
		0,&\text{if}\  u\ge 2\ \text{and}\ u\ne v.
		\end{cases}$
		\item[$\bullet$] $w=2$\\
		$p_{u,v}^2=\begin{cases}
		p^{n-r}-1,&\text{if}\ u=1\ \text{and}\ v=2,\\
		0,&\text{if}\ u=1\ \text{and}\ v\ne2,\\
		p^{r-2}+p^{\frac{n}{2}}-2p^{n-r}-p^{\frac{n}{2}-1},&\text{if}\ u=v=2,\\
		p^{r-2},&\text{if}\ u=2\ \text{and}\ v\ge 3,\\
		p^{r-2}-p^{\frac{n}{2}-1},&\text{if}\ u\ge3\ \text{and}\ u=v,\\
		p^{r-2},&\text{if}\ u\ge3\ \text{and}\ u\ne v.
		\end{cases}$
		\item[$\bullet$] $3\le w\le p+1$\\
		$p_{u,v}^w=\begin{cases}
		p^{n-r}-1,&\hspace{-8pt}\text{if}\ u=1\ \text{and}\ v=w,\\
		0,&\hspace{-8pt}\text{if}\ u=1\ \text{and}\ v\ne w ,\\
		p^{r-2}+p^{\frac{n}{2}-1},&\hspace{-8pt}\text{if}\ u=v=2,\\
		p^{r-2}-p^{\frac{n}{2}-2}+p^{\frac{n}{2}
			-1}-p^{n-r-1}-(p^{\frac{n}{2}-3}+\\p^{n-r-2})\sum\limits_{i\in\mathbb{F}_p}K^{(h)}(i,w-2)K^{(t)}(v-2,i),&\hspace{-8pt}\text{if}\ u=2\ \text{and}\ v\ge3,\\
		p^{r-2}-2p^{\frac{n}{2}-2}+p^{\frac{n}{2}-3}\sum\limits_{i\in\mathbb{F}_p}K^{(h)}(i,w-2)\times\\K^{(t)}(u-2,i)K^{(t)}(v-2,i),&\hspace{-8pt}\text{if}\ 3\le u\le v.
		\end{cases}$
	\end{itemize}
	\item[$(\rm{\rm{\romannumeral4}})$] $1\le w\le p+1$, $1\le u\le p+1$ and $p+2\le v\le 2p+1$, $p_{u,v}^w=0$.
	\item[$(\rm{\rm{\romannumeral5}})$] $1\le w \le p+1$ and $p+2\le u\le v\le 2p+1$\\
	$p_{u,v}^w=\begin{cases}
	p^{n-2}-p^{r-1},&\text{if}\ w=1\ \text{and}\ u=v,\\
	p^{n-2},&\text{if}\ w=1\ \text{and}\ u\ne v,\\
	p^{n-2}-p^{r-2},&\text{if}\ w\ge2.
	\end{cases}$
	\item[$(\rm{\romannumeral6})$] $p+2\le w\le 2p+1$ and $1\le u\le v\le p+1$, $p_{u,v}^w=0$.
	\item[$(\rm{\romannumeral7})$] $p+2\le w \le 2p+1$, $1\le u\le p+1$ and $p+2\le v\le 2p+1$\\
	$p_{u,v}^w=\begin{cases}
	p^{n-r-1}-p^{-1}-p^{-2}\sum\limits_{i\in\mathbb{F}_p}K^{(h)}(i,w-p-2)\times\\
	K^{(t)}(v-p-2,i),&\text{if}\ u=1,\\
	p^{r-2}+(p-1)p^{\frac{n}{2}-2}-p^{n-r-1},&\text{if}\ u=2,\\
	p^{r-2}-p^{\frac{n}{2}-2},&\text{if}\ u\ge 3.
	\end{cases}$
	
	\item[$(\rm{\romannumeral8})$] $p+2\le w\le 2p+1$ and $p+2\le u\le v\le 2p+1$\\
	$p_{u,v}^w=p^{n-2}-2p^{r-2}-p^{\frac{n}{2}-3}\sum\limits_{i\in\mathbb{F}_p}K^{(h)}(i,w-p-2)K^{(t)}(u-p-2,i)K^{(t)}(v-p-2,i).$
\end{itemize}

Since $q_{u,v}^{w}=q_{v,u}^{w}$ for any $u,v,w\in\{0,1,\dots,2p+1\}$, we only give the values of the  Krein parameters $q_{u,v}^w$ for $u\le v$ in the following eight cases.
\begin{itemize}
	\item [$(\rm{\romannumeral1})$] $w=0$ and $0\le u\le v\le 2p+1$
	\begin{itemize}
		\item [$\bullet$] $u=v$\\
		$q_{0,0}^0=1$, $q_{1,1}^0=p^{n-r}-1$, $q_{2,2}^0=p^{r-1}+(p-1)p^{\frac{n}{2}-1}-p^{n-r}$, $q_{u,u}^0=p^{r-1}-p^{\frac{n}{2}-1}$ for $3\le u\le p+1$ and $q_{u,u}^0=p^{n-1}-p^{r-1}$ for $p+2\le u\le 2p+1$.
		\item [$\bullet$] $u\ne v$, $q_{u,v}^0=0$.
	\end{itemize}
	
	\item [$(\rm{\rm{\romannumeral2})}$] $u=0$, $1\le w\le 2p+1$ and $0\le v\le 2p+1$\\
	$q_{0,v}^{w}=\begin{cases}
	1,&\text{if}\ w=v,\\
	0,&\text{if}\ w\ne v.
	\end{cases}$
	\item[$(\rm{\rm{\romannumeral3}})$] $1\le w \le p+1$ and $1\le u\le v\le p+1$ 
	\begin{itemize}
		\item [$\bullet$] $w=1$\\
		$q_{u,v}^1=\begin{cases}
		p^{n-r}-2,&\text{if}\ u=v=1,\\
		0,&\text{if} \ u=1\ \text{and} \ v\ge2,\\
		p^{r-1}+(p-1)p^{\frac{n}{2}-1}-p^{n-r},&\text{if}\ u=v=2,\\
		p^{r-1}-p^{\frac{n}{2}-1},&\text{if}\ u\ge 3\ \text{and}\ u=v,\\
		0,&\text{if}\  u\ge 2\ \text{and}\ u\ne v.
		\end{cases}$
		\item[$\bullet$] $w=2$\\
		$q_{u,v}^2=\begin{cases}
		p^{n-r}-1,&\text{if}\ u=1\ \text{and}\ v=2,\\
		0,&\text{if}\ u=1\ \text{and}\ v\ne2,\\
		p^{r-2}+p^{\frac{n}{2}}-2p^{n-r}-p^{\frac{n}{2}-1},&\text{if}\ u=v=2,\\
		p^{r-2},&\text{if}\ u=2\ \text{and}\ v\ge 3,\\
		p^{r-2}-p^{\frac{n}{2}-1},&\text{if}\ u\ge3\ \text{and}\ u=v,\\
		p^{r-2},&\text{if}\ u\ge3\ \text{and}\ u\ne v.
		\end{cases}$
		\item[$\bullet$] $3\le w\le p+1$\\
		$q_{u,v}^w=\begin{cases}
		p^{n-r}-1,&\hspace{-8pt}\text{if}\ u=1\ \text{and}\ v=w,\\
		0,&\hspace{-8pt}\text{if}\ u=1\ \text{and}\ v\ne w ,\\
		p^{r-2}+p^{\frac{n}{2}-1},&\hspace{-8pt}\text{if}\ u=v=2,\\
		p^{r-2}-p^{\frac{n}{2}-2}+p^{\frac{n}{2}
			-1}-p^{n-r-1}-(p^{\frac{n}{2}-3}+\\p^{n-r-2})\sum\limits_{i\in\mathbb{F}_p}K^{(t)}(i,w-2)K^{(h)}(v-2,i),&\hspace{-8pt}\text{if}\ u=2\ \text{and}\ v\ge3,\\
		p^{r-2}-2p^{\frac{n}{2}-2}+p^{\frac{n}{2}-3}\sum\limits_{i\in\mathbb{F}_p}K^{(t)}(i,w-2)\times\\K^{(h)}(u-2,i)K^{(h)}(v-2,i),&\hspace{-8pt}\text{if}\ 3\le u\le v.
		\end{cases}$
	\end{itemize}
	\item[$(\rm{\rm{\romannumeral4}})$] $1\le w\le p+1$, $1\le u\le p+1$ and $p+2\le v\le 2p+1$, $q_{u,v}^w=0$.
	\item[$(\rm{\rm{\romannumeral5}})$] $1\le w \le p+1$ and $p+2\le u\le v\le 2p+1$\\
	\newpage
	$q_{u,v}^w=\begin{cases}
	p^{n-2}-p^{r-1},&\text{if}\ w=1\ \text{and}\ u=v,\\
	p^{n-2},&\text{if}\ w=1\ \text{and}\ u\ne v,\\
	p^{n-2}-p^{r-2},&\text{if}\ w\ge2.
	\end{cases}$
	\item[$(\rm{\romannumeral6})$] $p+2\le w\le 2p+1$ and $1\le u\le v\le p+1$, $q_{u,v}^w=0$.
	\item[$(\rm{\romannumeral7})$] $p+2\le w \le 2p+1$, $1\le u\le p+1$ and $p+2\le v\le 2p+1$\\
	$q_{u,v}^w=\begin{cases}
	p^{n-r-1}-p^{-1}-p^{-2}\sum\limits_{i\in\mathbb{F}_p}K^{(t)}(i,w-p-2)\times\\
	K^{(h)}(v-p-2,i),&\text{if}\ u=1,\\
	p^{r-2}+(p-1)p^{\frac{n}{2}-2}-p^{n-r-1},&\text{if}\ u=2,\\
	p^{r-2}-p^{\frac{n}{2}-2},&\text{if}\ u\ge 3.
	\end{cases}$
	
	\item[$(\rm{\romannumeral8})$] $p+2\le w\le 2p+1$ and $p+2\le u\le v\le 2p+1$\\
	$q_{u,v}^w=p^{n-2}-2p^{r-2}-p^{\frac{n}{2}-3}\sum\limits_{i\in\mathbb{F}_p}K^{(t)}(i,w-p-2)K^{(h)}(u-p-2,i)K^{(h)}(v-p-2,i).$
\end{itemize}
2. The first and second eigenmatrices, the intersection numbers and the Krein parameters of the association scheme induced by  $U_2$

The first and second eigenmatrices of the association scheme induced by $U_2$ are given in Tables 3 and 4.
\begin{table}[h]
	\vspace{-15pt}
	\centering
	\caption{The first eigenmatrix of the association scheme induced by $U_2$}
	\renewcommand\arraystretch{1.5}	
	\resizebox{\textwidth}{!}{
		\begin{tabular}{|m{1.3cm}|c|c|m{3cm}<{\centering}|c|m{2.5cm}<{\centering}|}
			\hline
			\diagbox{$i$}{$P_2(ij)$}{$j$}&1&2&$3\le j\le p+1$&$p+2$&$p+3\le j\le 2p+1$\\
			\hline
			1&	1&$p^{n-1}-p^{\frac{n}{2}}$&$p^{n-1}-p^{\frac{n}{2}}$&$p^{\frac{n}{2}-1}-1$&$p^{\frac{n}{2}}+p^{\frac{n}{2}-1}$\\
			\hline
			2&1&$(p-1)p^{\frac{n}{2}-1}$&$-p^{\frac{n}{2}-1}$&$-1$&0\\
			\hline
			$3\le i\le p+1$&1&$-p^{\frac{n}{2}-1}$&$p^{\frac{n}{2}-1}K^{(t)}(j-2,i-2)$&$-1$&0\\
			\hline
			$p+2$&1&$-p^{\frac{n}{2}}$&$-p^{\frac{n}{2}}$&$p^{\frac{n}{2}-1}-1$&$p^{\frac{n}{2}-1}+p^{\frac{n}{2}}$\\
			\hline
			$p+3\le i\le 2p+1$&1&0&0&$p^{\frac{n}{2}-1}-1$&$-p^{\frac{n}{2}-1}K^{(t)}(j-p-2,i-p-2)$\\
			\hline
	\end{tabular}}
	\vspace{-15pt}
\end{table}	
\begin{table}[h]
	\vspace{-15pt}
	\centering
	\caption{The second eigenmatrix of the association scheme induced by $U_2$}
	\renewcommand\arraystretch{1.5}	
	\resizebox{\textwidth}{!}{
		\begin{tabular}{|m{1.3cm}|c|c|m{3cm}<{\centering}|c|m{2.5cm}<{\centering}|}
			\hline
			\diagbox{$i$}{$Q_2(ij)$}{$j$}&1&2&$3\le j\le p+1$&$p+2$&$p+3\le j\le 2p+1$\\
			\hline
			1&	1&$p^{n-1}-p^{\frac{n}{2}}$&$p^{n-1}-p^{\frac{n}{2}}$&$p^{\frac{n}{2}-1}-1$&$p^{\frac{n}{2}}+p^{\frac{n}{2}-1}$\\
			\hline
			2&1&$(p-1)p^{\frac{n}{2}-1}$&$-p^{\frac{n}{2}-1}$&$-1$&0\\
			\hline
			$3\le i\le p+1$&1&$-p^{\frac{n}{2}-1}$&$p^{\frac{n}{2}-1}K^{(h)}(j-2,i-2)$&$-1$&0\\
			\hline
			$p+2$&1&$-p^{\frac{n}{2}}$&$-p^{\frac{n}{2}}$&$p^{\frac{n}{2}-1}-1$&$p^{\frac{n}{2}-1}+p^{\frac{n}{2}}$\\
			\hline
			$p+3\le i\le 2p+1$&1&0&0&$p^{\frac{n}{2}-1}-1$&$-p^{\frac{n}{2}-1}K^{(h)}(j-p-2,i-p-2)$\\
			\hline
	\end{tabular}}
	\vspace{-15pt}
\end{table}	

Since $p_{u,v}^{w}=p_{v,u}^{w}$ for any $u,v,w\in\{0,1,\dots,2p\}$, we only give the values of the  intersection numbers $p_{u,v}^w$ for $u\le v$ in the following eight cases. 
\begin{itemize}
	\item [$(\rm{\romannumeral1})$] $w=0$ and $0\le u\le v\le 2p$
	\begin{itemize}
		\item [$\bullet$] $u=v$\\
		$p_{0,0}^0=1$, $p_{1,1}^0=p^{n-1}-p^{\frac{n}{2}}$, $p_{u,u}^0=p^{n-1}-p^{\frac{n}{2}}$ for $2\le u\le p$, $p_{p+1,p+1}^0=p^{\frac{n}{2}-1}-1$ and $p_{u,u}^0=p^{\frac{n}{2}}+p^{\frac{n}{2}-1}$ for $p+2\le u\le 2p$.
		\item[$\bullet$] $u\ne v$, $p_{u,v}^0=0$.
	\end{itemize}
	
\item [$(\rm{\romannumeral2})$] $u=0$, $1\le w\le 2p$ and $0\le v\le 2p$\\
$p_{0,v}^w=\begin{cases}
1,&\text{if}\ w=v,\\
0,&\text{if}\ w\ne v.
\end{cases}$
\item [$(\rm{\romannumeral3})$] $1\le w\le p$ and $1\le u\le v\le p$\\
$p_{u,v}^w=p^{n-2}-2p^{\frac{n}{2}-1}+p^{\frac{n}{2}-3}\sum\limits_{i\in\mathbb{F}_p}K^{(h)}(i,w-1)K^{(t)}(u-1,i)K^{(t)}(v-1,i).$
\item [$(\rm{\romannumeral4})$] $1\le w\le p$, $1\le u\le p$ and $p+1\le v\le 2p$\\
$p_{u,v}^w=\begin{cases}
p^{\frac{n}{2}-2}-p^{-1}-p^{-2}\sum\limits_{i\in\mathbb{F}_p}K^{(h)}(i,w-1)K^{(t)}(u-1,i),&\text{if}\ v=p+1,\\
p^{\frac{n}{2}-1}+p^{\frac{n}{2}-2},&\text{if}\ v\ge p+2.
\end{cases}$
\item [$(\rm{\romannumeral5})$] $1\le w\le p$ and $p+1\le u\le v\le 2p$, $p_{u,v}^w=0$.\\

\item [$(\romannumeral6)$] $p+1\le w\le 2p$ and $1\le u\le v\le p$\\
$p_{u,v}^w=\begin{cases}
p^{n-2}-p^{\frac{n}{2}},&\text{if}\ w=p+1\ \text{and}\ u=v,\\
p^{n-2},&\text{if}\ w=p+1\ \text{and}\ u\ne v,\\
p^{n-2}-p^{\frac{n}{2}-1},&\text{if}\ w\ge p+2.
\end{cases}$
\item [$(\rm{\romannumeral7})$] $p+1\le w\le 2p$, $1\le u\le p$ and $p+1\le v\le 2p$, $p_{u,v}^w=0$.
\item [$(\rm{\romannumeral8})$] $p+1\le w\le 2p$ and $p+1\le u\le v\le 2p$

\begin{itemize}
	\item [$\bullet$] $w=p+1$\\
	$p_{u,v}^{p+1}=\begin{cases}
	p^{\frac{n}{2}-1}-2,&\text{if}\ u=v=p+1,\\
	0,&\text{if}\ u=p+1\ \text{and}\ v\ge p+2,\\
	p^{\frac{n}{2}}+p^{\frac{n}{2}-1},&\text{if}\ u\ge p+2\ \text{and}\ u=v,\\
	0,&\text{if}\ u\ge p+2\ \text{and}\ u\ne v.
	\end{cases}$
	\item [$\bullet$] $p+2\le w \le 2p$\\
$p_{u,v}^w=\begin{cases}
0,&\hspace{-8pt}\text{if}\ u=v=p+1,\\
p^{\frac{n}{2}-2}+p^{\frac{n}{2}-3}-p^{-1}-p^{-2}+(p^{\frac{n}{2}-3}-p^{-2})&\hspace{-8pt}\text{if}\ u=p+1\ \text{and}\\
\sum\limits_{i\in\mathbb{F}_p^*}K^{(h)}(i,w-p-1)K^{(t)}(v-p-1,i),&\hspace{-8pt} v\ge p+2,\\
p^{\frac{n}{2}-1}+2p^{\frac{n}{2}-2}+p^{\frac{n}{2}-3}-p^{\frac{n}{2}-3}\sum\limits_{i\in\mathbb{F}_p^*}K^{(h)}(i,\\
w-p-1)K^{(t)}(u-p-1,i)K^{(t)}(v-p-1,i),&\hspace{-8pt}\text{if}\ p+2\le u\le v.
\end{cases}$
\end{itemize}
\end{itemize}

Since $q_{u,v}^{w}=q_{v,u}^{w}$ for any $u,v,w\in\{0,1,\dots,2p\}$, we only give the values of the  Krein parameters $q_{u,v}^w$ for $u\le v$ in the following eight cases.
\begin{itemize}
	\item [$(\rm{\romannumeral1})$] $w=0$ and $0\le u\le v\le 2p$
	\begin{itemize}
		\item [$\bullet$] $u=v$\\
		$q_{0,0}^0=1$, $q_{1,1}^0=p^{n-1}-p^{\frac{n}{2}}$, $q_{u,u}^0=p^{n-1}-p^{\frac{n}{2}}$ for $2\le u\le p$, $q_{p+1,p+1}^0=p^{\frac{n}{2}-1}-1$ and $q_{u,u}^0=p^{\frac{n}{2}}+p^{\frac{n}{2}-1}$ for $p+2\le u\le 2p$.
		\item[$\bullet$] $u\ne v$, $q_{u,v}^0=0$.
	\end{itemize}
	
	\item [$(\rm{\romannumeral2})$] $u=0$, $1\le w\le 2p$ and $0\le v\le 2p$\\
	$q_{0,v}^w=\begin{cases}
	1,&\text{if}\ w=v,\\
	0,&\text{if}\ w\ne v.
	\end{cases}$
	\item [$(\rm{\romannumeral3})$] $1\le w\le p$ and $1\le u\le v\le p$\\
	$q_{u,v}^w=p^{n-2}-2p^{\frac{n}{2}-1}+p^{\frac{n}{2}-3}\sum\limits_{i\in\mathbb{F}_p}K^{(t)}(i,w-1)K^{(h)}(u-1,i)K^{(h)}(v-1,i).$
	\item [$(\rm{\romannumeral4})$] $1\le w\le p$, $1\le u\le p$ and $p+1\le v\le 2p$\\
	$q_{u,v}^w=\begin{cases}
	p^{\frac{n}{2}-2}-p^{-1}-p^{-2}\sum\limits_{i\in\mathbb{F}_p}K^{(t)}(i,w-1)K^{(h)}(u-1,i),&\text{if}\ v=p+1,\\
	p^{\frac{n}{2}-1}+p^{\frac{n}{2}-2},&\text{if}\ v\ge p+2.
	\end{cases}$
	\item [$(\rm{\romannumeral5})$] $1\le w\le p$ and $p+1\le u\le v\le 2p$, $q_{u,v}^w=0$.\\
	
	\item [$(\romannumeral6)$] $p+1\le w\le 2p$ and $1\le u\le v\le p$\\
	$q_{u,v}^w=\begin{cases}
	p^{n-2}-p^{\frac{n}{2}},&\text{if}\ w=p+1\ \text{and}\ u=v,\\
	p^{n-2},&\text{if}\ w=p+1\ \text{and}\ u\ne v,\\
	p^{n-2}-p^{\frac{n}{2}-1},&\text{if}\ w\ge p+2.
	\end{cases}$
	\item [$(\rm{\romannumeral7})$] $p+1\le w\le 2p$, $1\le u\le p$ and $p+1\le v\le 2p$, $q_{u,v}^w=0$.
	\item [$(\rm{\romannumeral8})$] $p+1\le w\le 2p$ and $p+1\le u\le v\le 2p$
	
	\begin{itemize}
		\item [$\bullet$] $w=p+1$\\
		$q_{u,v}^{p+1}=\begin{cases}
		p^{\frac{n}{2}-1}-2,&\text{if}\ u=v=p+1,\\
		0,&\text{if}\ u=p+1\ \text{and}\ v\ge p+2,\\
		p^{\frac{n}{2}}+p^{\frac{n}{2}-1},&\text{if}\ u\ge p+2\ \text{and}\ u=v,\\
		0,&\text{if}\ u\ge p+2\ \text{and}\ u\ne v.
		\end{cases}$
		\item [$\bullet$] $p+2\le w \le 2p$\\
		$q_{u,v}^w=\begin{cases}
		0,&\hspace{-10pt}\text{if}\ u=v=p+1,\\
		p^{\frac{n}{2}-2}+p^{\frac{n}{2}-3}-p^{-1}-p^{-2}+(p^{\frac{n}{2}-3}-&\hspace{-10pt}\text{if}\ u=p+1\ \text{and}\\
		p^{-2})\sum\limits_{i\in\mathbb{F}_p^*}K^{(t)}(i,w-p-1)K^{(h)}(v-p-1,i),&\hspace{-10pt}\ v\ge p+2,\\
		p^{\frac{n}{2}-1}+2p^{\frac{n}{2}-2}+p^{\frac{n}{2}-3}-p^{\frac{n}{2}-3}\sum\limits_{i\in\mathbb{F}_p^*}K^{(t)}(i,w-\\
		p-1)K^{(h)}(u-p-1,i)K^{(h)}(v-p-1,i),&\hspace{-10pt}\text{if}\ p+2\le u\le v.
		\end{cases}$
	\end{itemize}
\end{itemize}
3. The first and second eigenmatrices, the intersection numbers and the Krein parameters of the association scheme induced by $U_3$

The first and second eigenmatrices of the association scheme induced by $U_3$ are given in Tables 5 and 6. 
\begin{table}[h]
	\vspace{0pt}
	\centering
	\caption{The first eigenmatrix of the association scheme induced by $U_3$}
	\renewcommand\arraystretch{1.5}	
	\resizebox{\textwidth}{!}{
		\begin{tabular}{|m{1.3cm}|c|c|m{2cm}<{\centering}|c|m{2cm}<{\centering}|m{2cm}<{\centering}|}
			\hline
			\diagbox{$i$}{$P_3(ij)$}{$j$}&1&2&$3\le j\le p+1$&$p+2$&$p+3$&$p+4\le j\le 2p+2$\\
			\hline
			1&1&$p^{n-1}-p^{r-1}$&$p^{n-1}-p^{r-1}$&$p^{n-r}-1$&$p^{r-1}-p^{n-r}-(p-1)p^{\frac{n}{2}-1}$&$p^{r-1}+p^{\frac{n}{2}-1}$\\
			\hline
			2&1&$(p-1)p^{\frac{n}{2}-1}$&$-p^{\frac{n}{2}-1}$&$-1$&0&0\\
			\hline
			$3\le i\le p+1$&1&$-p^{\frac{n}{2}-1}$&$p^{\frac{n}{2}-1}K^{(t)}(j-2,i-2)$&$-1$&0&0\\
			\hline
			$p+2$&1&$-p^{r-1}$&$-p^{r-1}$&$p^{n-r}-1$&$p^{r-1}-p^{n-r}-(p-1)p^{\frac{n}{2}-1}$&$p^{\frac{n}{2}-1}+p^{r-1}$\\
			\hline
			$p+3$&1&0&0&$p^{n-r}-1$&$-p^{n-r}-(p-1)p^{\frac{n}{2}-1}$&$p^{\frac{n}{2}-1}$\\
			\hline
			$p+4\le i\le 2p+2$&1&0&0&$p^{n-r}-1$&$p^{\frac{n}{2}-1}-p^{n-r}$&$-p^{\frac{n}{2}-1}K^{(t)}(j-p-3,i-p-3)$\\
			\hline
	\end{tabular}}
	\vspace{0pt}
\end{table}
\begin{table}[h]
	\vspace{-8pt}
	\centering
	\caption{The second eigenmatrix of the association scheme induced by $U_3$}
	\renewcommand\arraystretch{1.5}	
	\resizebox{\textwidth}{!}{
		\begin{tabular}{|m{1.3cm}|c|c|m{2cm}<{\centering}|c|m{2cm}<{\centering}|m{2cm}<{\centering}|}
			\hline
			\diagbox{$i$}{$Q_3(ij)$}{$j$}&1&2&$3\le j\le p+1$&$p+2$&$p+3$&$p+4\le j\le 2p+2$\\
			\hline
			1&1&$p^{n-1}-p^{r-1}$&$p^{n-1}-p^{r-1}$&$p^{n-r}-1$&$p^{r-1}-p^{n-r}-(p-1)p^{\frac{n}{2}-1}$&$p^{r-1}+p^{\frac{n}{2}-1}$\\
			\hline
			2&1&$(p-1)p^{\frac{n}{2}-1}$&$-p^{\frac{n}{2}-1}$&$-1$&0&0\\
			\hline
			$3\le i\le p+1$&1&$-p^{\frac{n}{2}-1}$&$p^{\frac{n}{2}-1}K^{(h)}(j-2,i-2)$&$-1$&0&0\\
			\hline
			$p+2$&1&$-p^{r-1}$&$-p^{r-1}$&$p^{n-r}-1$&$p^{r-1}-p^{n-r}-(p-1)p^{\frac{n}{2}-1}$&$p^{\frac{n}{2}-1}+p^{r-1}$\\
			\hline
			$p+3$&1&0&0&$p^{n-r}-1$&$-p^{n-r}-(p-1)p^{\frac{n}{2}-1}$&$p^{\frac{n}{2}-1}$\\
			\hline
			$p+4\le i\le 2p+2$&1&0&0&$p^{n-r}-1$&$p^{\frac{n}{2}-1}-p^{n-r}$&$-p^{\frac{n}{2}-1}K^{(h)}(j-p-3,i-p-3)$\\
			\hline
	\end{tabular}}
	\vspace{-15pt}
\end{table}

Since $p_{u,v}^{w}=p_{v,u}^{w}$ for any $u,v,w\in\{0,1,\dots,2p+1\}$, we only give the values of the  intersection numbers  $p_{u,v}^w$ for $u\le v$ in the following eight cases. 
	
\begin{itemize}
	\item [$(\rm{\romannumeral1})$] $w=0$ and $0\le u\le v\le 2p+1$
	\begin{itemize}
		\item [$\bullet$] $u=v$\\
		$p_{0,0}^0=1$, $p_{u,u}^0=p^{n-1}-p^{r-1}$ for $1\le u\le p$, $p_{p+1,p+1}^0=p^{n-r}-1$, $p_{p+2,p+2}^0=p^{r-1}-(p-1)p^{\frac{n}{2}-1}-p^{n-r}$ and $p_{u,u}^0=p^{r-1}+p^{\frac{n}{2}-1}$ for $p+3\le u\le 2p+1$.
		\item[$\bullet$] $u\ne v$, $p_{u,v}^0=0$.
	\end{itemize}
	\item [$(\rm{\romannumeral2})$] $u=0$, $1\le w\le 2p+1$ and $0 \le v\le 2p+1$\\
	$p_{u,v}^w=\begin{cases}
	1,&\text{if}\ w=v,\\
	0,&\text{if}\ w\ne v.
	\end{cases}$
	\item [$(\rm{\romannumeral3})$]
	$1\le w\le p$ and $1\le u\le v\le p$\\
	$p_{u,v}^w=p^{n-2}-2p^{r-2}+p^{\frac{n}{2}-3}\sum\limits_{i\in\mathbb{F}_p}K^{(h)}(i,w-1)K^{(t)}(u-1,i)K^{(t)}(v-1,i).$
	\item [$(\rm{\romannumeral4})$] $1\le w\le p$, $1\le u\le p$ and $p+1\le v\le 2p+1$\\
		$p_{u,v}^w=\begin{cases}
p^{n-r-1}-p^{-1}-p^{-2}\sum\limits_{i\in\mathbb{F}_p}K^{(h)}(i,w-1)K^{(t)}(u-1,i),&\text{if}\ v=p+1,\\
p^{r-2}-(p-1)p^{\frac{n}{2}-2}-p^{n-r-1},&\text{if}\ v=p+2,\\
p^{r-2}+p^{\frac{n}{2}-2},&\text{if}\ v\ge p+3.
		\end{cases}$
\item [$(\rm{\romannumeral5})$]$1\le w\le p$ and $p+1\le u\le v\le2p+1$, $p_{u,v}^w=0.$
\item [$(\rm{\romannumeral6})$] $p+1\le w\le 2p+1$ and $1\le u\le v\le p$\\
$p_{u,v}^w=\begin{cases}
p^{n-2}-p^{r-1},&\text{if}\ w=p+1\ \text{and}\ u=v,\\
p^{n-2},&\text{if}\ w=p+1\ \text{and}\ u\ne v,\\
p^{n-2}-p^{r-2},&\text{if}\ w\ge p+2.
\end{cases}$
\item [$(\rm{\romannumeral7})$] $p+1\le w\le 2p+1$, $1\le u\le p$ and $p+1\le v\le 2p+1$, $p_{u,v}^w=0.$
\item [$(\rm{\romannumeral8})$] $p+1\le w\le 2p+1$ and $p+1\le u\le v\le 2p+1$
\begin{itemize}
	\item [$\bullet$] $w=p+1$\\
	$p_{u,v}^{p+1}=\begin{cases}
	p^{n-r}-2,&\text{if}\ u=v=p+1,\\
	0,&\text{if}\ u=p+1\ \text{and}\ v\ge p+2,\\
	p^{r-1}-(p-1)p^{\frac{n}{2}-1}-p^{n-r},&\text{if}\ u=v=p+2,\\
	p^{r-1}+p^{\frac{n}{2}-1},&\text{if}\ u\ge p+3\ \text{and}\ u=v,\\
	0,&\text{if}\ u\ge p+2\ \text{and}\ u\ne v.
	\end{cases}$
	\item [$\bullet$] $w=p+2$\\
	$p_{u,v}^{p+2}=\begin{cases}
p^{n-r}-1,&\text{if}\ u=p+1\ \text{and}\ v=p+2,\\
0,&\text{if}\ u=p+1\ \text{and}\ v\ne p+2,\\
p^{r-2}+p^{\frac{n}{2}-1}-2p^{n-r}-p^{\frac{n}{2}},&\text{if}\ u=v=p+2,\\
p^{r-2},&\text{if}\ u=p+2\ \text{and}\ v\ge p+3,\\
p^{r-2}+p^{\frac{n}{2}-1},&\text{if}\ u\ge p+3\ \text{and}\ u=v,\\
p^{r-2},&\text{if}\ u\ge p+3\ \text{and}\ u\ne v.
	\end{cases}$
	\item [$\bullet$] $p+3\le w\le 2p+1$\\
	$p_{u,v}^w=\begin{cases}
p^{n-r}-1,&\hspace{-12pt}\text{if}\ u=p+1\ \text{and}\\&\hspace{-12pt} v=w,\\
0,&\hspace{-12pt}\text{if}\ u=p+1\ \text{and}\\&\hspace{-12pt} v\ne w,\\
p^{r-2}-p^{\frac{n}{2}-1},&\hspace{-12pt}\text{if}\ u=v=p+2,\\
p^{r-2}-(p-1)p^{\frac{n}{2}
-2}-p^{n-r-1}+(p^{\frac{n}{2}-3}-&\hspace{-12pt}\text{if}\ u=p+2\ \text{and}\\p^{n-r-2})\sum\limits_{i\in\mathbb{F}_p}K^{(h)}(i,w-p-2)K^{(t)}(v-p-2,i),&\hspace{-10pt} v\ge p+3,\\
p^{r-2}+2p^{\frac{n}{2}-2}-p^{\frac{n}{2}-3}\sum\limits_{i\in\mathbb{F}_p}K^{(h)}(i,w-p-2)\times\\
K^{(t)}(u-p-2,i)K^{(t)}(v-p-2,i)&\hspace{-12pt}\text{if}\ p+3\le u\le v.
	\end{cases}$
\end{itemize}
\end{itemize}
	
Since $q_{u,v}^{w}=q_{v,u}^{w}$ for any $u,v,w\in\{0,1,\dots,2p+1\}$, we only give the values of the  Krein parameters  $q_{u,v}^w$ for $u\le v$ in the following eight cases.

\begin{itemize}
	\item [$(\rm{\romannumeral1})$] $w=0$ and $0\le u\le v\le 2p+1$
	\begin{itemize}
		\item [$\bullet$] $u=v$\\
		$q_{0,0}^0=1$, $q_{u,u}^0=p^{n-1}-p^{r-1}$ for $1\le u\le p$, $q_{p+1,p+1}^0=p^{n-r}-1$, $q_{p+2,p+2}^0=p^{r-1}-(p-1)p^{\frac{n}{2}-1}-p^{n-r}$ and $q_{u,u}^0=p^{r-1}+p^{\frac{n}{2}-1}$ for $p+3\le u\le 2p+1$.
		\item[$\bullet$] $u\ne v$, $q_{u,v}^0=0$.
	\end{itemize}
	\item [$(\rm{\romannumeral2})$] $u=0$, $1\le w\le 2p+1$ and $0\le v\le 2p+1$\\
	\newpage
	$q_{u,v}^w=\begin{cases}
	1,&\text{if}\ w=v,\\
	0,&\text{if}\ w\ne v.
	\end{cases}$
	\item [$(\rm{\romannumeral3})$]
	$1\le w\le p$ and $1\le u\le v\le p$\\
	$q_{u,v}^w=p^{n-2}-2p^{r-2}+p^{\frac{n}{2}-3}\sum\limits_{i\in\mathbb{F}_p}K^{(t)}(i,w-1)K^{(h)}(u-1,i)K^{(h)}(v-1,i).$
	\item [$(\rm{\romannumeral4})$] $1\le w\le p$, $1\le u\le p$ and $p+1\le v\le 2p+1$\\
	$q_{u,v}^w=\begin{cases}
	p^{n-r-1}-p^{-1}-p^{-2}\sum\limits_{i\in\mathbb{F}_p}K^{(t)}(i,w-1)K^{(h)}(u-1,i),&\text{if}\ v=p+1,\\
	p^{r-2}-(p-1)p^{\frac{n}{2}-2}-p^{n-r-1},&\text{if}\ v=p+2,\\
	p^{r-2}+p^{\frac{n}{2}-2},&\text{if}\ v\ge p+3.
	\end{cases}$
	\item [$(\rm{\romannumeral5})$]$1\le w\le p$ and $p+1\le u\le v\le2p+1$, $q_{u,v}^w=0.$
	\item [$(\rm{\romannumeral6})$] $p+1\le w\le 2p+1$ and $1\le u\le v\le p$\\
	$q_{u,v}^w=\begin{cases}
	p^{n-2}-p^{r-1},&\text{if}\ w=p+1\ \text{and}\ u=v,\\
	p^{n-2},&\text{if}\ w=p+1\ \text{and}\ u\ne v,\\
	p^{n-2}-p^{r-2},&\text{if}\ w\ge p+2.
	\end{cases}$
	\item [$(\rm{\romannumeral7})$] $p+1\le w\le 2p+1$, $1\le u\le p$ and $p+1\le v\le 2p+1$, $q_{u,v}^w=0.$
	\item [$(\rm{\romannumeral8})$] $p+1\le w\le 2p+1$ and $p+1\le u\le v\le 2p+1$
	\begin{itemize}
		\item [$\bullet$] $w=p+1$\\
		$q_{u,v}^{p+1}=\begin{cases}
		p^{n-r}-2,&\text{if}\ u=v=p+1,\\
		0,&\text{if}\ u=p+1\ \text{and}\ v\ge p+2,\\
		p^{r-1}-(p-1)p^{\frac{n}{2}-1}-p^{n-r},&\text{if}\ u=v=p+2,\\
		p^{r-1}+p^{\frac{n}{2}-1},&\text{if}\ u\ge p+3\ \text{and}\ u=v,\\
		0,&\text{if}\ u\ge p+2\ \text{and}\ u\ne v.
		\end{cases}$
		\item [$\bullet$] $w=p+2$\\
		$q_{u,v}^{p+2}=\begin{cases}
		p^{n-r}-1,&\text{if}\ u=p+1\ \text{and}\ v=p+2,\\
		0,&\text{if}\ u=p+1\ \text{and}\ v\ne p+2,\\
		p^{r-2}+p^{\frac{n}{2}-1}-2p^{n-r}-p^{\frac{n}{2}},&\text{if}\ u=v=p+2,\\
		p^{r-2},&\text{if}\ u=p+2\ \text{and}\ v\ge p+3,\\
		p^{r-2}+p^{\frac{n}{2}-1},&\text{if}\ u\ge p+3\ \text{and}\ u=v,\\
		p^{r-2},&\text{if}\ u\ge p+3\ \text{and}\ u\ne v.
		\end{cases}$
		\item [$\bullet$] $p+3\le w\le 2p+1$\\
		$q_{u,v}^w=\begin{cases}
		p^{n-r}-1,&\hspace{-12pt}\text{if}\ u=p+1\ \text{and}\\&\hspace{-12pt} v=w,\\
		0,&\hspace{-12pt}\text{if}\ u=p+1\ \text{and}\\&\hspace{-12pt} v\ne w,\\
		p^{r-2}-p^{\frac{n}{2}-1},&\hspace{-12pt}\text{if}\ u=v=p+2,\\
		p^{r-2}-(p-1)p^{\frac{n}{2}
			-2}-p^{n-r-1}+(p^{\frac{n}{2}-3}-&\hspace{-12pt}\text{if}\ u=p+2\ \text{and}\\p^{n-r-2})\sum\limits_{i\in\mathbb{F}_p}K^{(t)}(i,w-p-2)K^{(h)}(v-p-2,i),&\hspace{-10pt} v\ge p+3,\\
		p^{r-2}+2p^{\frac{n}{2}-2}-p^{\frac{n}{2}-3}\sum\limits_{i\in\mathbb{F}_p}K^{(t)}(i,w-p-2)\times\\
		K^{(h)}(u-p-2,i)K^{(h)}(v-p-2,i)&\hspace{-12pt}\text{if}\ p+3\le u\le v.
		\end{cases}$
	\end{itemize}
\end{itemize}	
4.The first and second eigenmatrices, the intersection numbers and the Krein parameters of the association scheme induced by $U_4$

Note that the first and second eigenmatrices of the association scheme induced by $U_4$ are the same. The first (second) eigenmatrix of the association scheme induced by $U_4$ is given in Table 7.
\begin{table}[h]
	\vspace{-15pt}
	\centering
	\caption{The first (second) eigenmatrix of the association scheme induced by $U_4$}
	\renewcommand\arraystretch{1.5}	
	\resizebox{\textwidth}{!}{
		\begin{tabular}{|c|m{11cm}|}
			\hline
			$i$&\hspace{4.25cm}$P_4(ij)\ (Q_4(ij))$\\
			\hline
			1&$P_4(11)=1$, $P_4(12)=p^{n-r}-1$, $P_4(13)=p^{r-1}+(p-1)p^{\frac{n}{2}-1}-p^{n-r}$, $P_4(14)=P_4(15)=\frac{(p-1)}{2}(p^{r-1}-p^{\frac{n}{2}-1})$, $P_4(16)=p^{n-1}-p^{r-1}$, $P_4(17)=P_4(18)=\frac{(p-1)}{2}(p^{n-1}-p^{r-1})$\\
			\hline
			2&$P_4(21)=1$, $P_4(22)=p^{n-r}-1$, $P_4(23)=p^{r-1}+(p-1)p^{\frac{n}{2}-1}-p^{n-r}$, $P_4(24)=P_4(25)=\frac{(p-1)}{2}(p^{r-1}-p^{\frac{n}{2}-1)}$, $P_4(26)=-p^{r-1}$, $P_4(27)=P_4(28)=-\frac{(p-1)}{2}p^{r-1}$\\
			\hline
			3&$P_4(31)=1$, $P_4(32)=p^{n-r}-1$, $P_4(33)=(p-1)p^{\frac{n}{2}-1}-p^{n-r}$, $P_4(34)=P_4(35)=-\frac{(p-1)}{2}p^{\frac{n}{2}-1}$, $P_4(36)=P_4(37)=P_4(38)=0$\\
			\hline
			4&$P_4(41)=1$, $P_4(42)=p^{n-r}-1$, $P_4(43)=-p^{\frac{n}{2}-1}-p^{n-r}$, $P_4(44)=\frac{p+1}{2}p^{\frac{n}{2}-1}$, $P_4(45)=\frac{-p+1}{2}p^{\frac{n}{2}-1}$, $P_4(46)=P_4(47)=P_4(48)=0$\\
			\hline
			5&$P_4(51)=1$,$P_4(52)=p^{n-r}-1$, $P_4(53)=-p^{\frac{n}{2}-1}-p^{n-r}$, $P_4(54)=\frac{-p+1}{2}p^{\frac{n}{2}-1}$, $P_4(55)=\frac{p+1}{2}p^{\frac{n}{2}-1}$, $P_4(56)=P_4(57)=P_4(58)=0$\\
			\hline
			6&$P_4(61)=1$, $P_4(62)=-1$, $P_4(63)=P_4(64)=P_4(65)=0$, $P_4(66)=-(p-1)p^{\frac{n}{2}-1}$, $P_4(67)=P_4(68)=\frac{(p-1)}{2}p^{\frac{n}{2}-1}$\\
			\hline
			7&$P_4(71)=1$, $P_4(72)=-1$, $P_4(73)=P_4(74)=P_4(75)=0$, $P_4(76)=p^{\frac{n}{2}-1}$, $P_4(77)=-\frac{(p+1)}{2}p^{\frac{n}{2}-1}$, $P_4(78)=\frac{p-1}{2}p^{\frac{n}{2}-1}$\\
			\hline
			8&$P_4(81)=1$, $P_4(82)=-1$, $P_4(83)=P_4(84)=P_4(85)=0$, $P_4(86)=p^{\frac{n}{2}-1}$, $P_4(87)=\frac{p-1}{2}p^{\frac{n}{2}-1}$, $P_4(88)=-\frac{(p+1)}{2}p^{\frac{n}{2}-1}$\\
			\hline		
	\end{tabular}}
	\vspace{-15pt}
\end{table}

Since $p_{u,v}^{w}=p_{v,u}^{w}$ for any $u,v,w\in\{0,1,\dots,7\}$, we only give the values of the  intersection numbers $p_{u,v}^w$ for $u\le v$ in the following eight cases. The Krein parameter $q_{u,v}^w$ is the same as the intersection number $p_{u,v}^w$ for any $u,v,w\in\{0,1,\dots,7\}$.
\begin{itemize}
\item [$(\rm{\romannumeral1})$] $w=0$
\begin{itemize}
	\item [$\bullet$] $u=v$\\
	$p_{0,0}^{0}=1$, $p_{1,1}^0=p^{n-r}-1$, $p_{2,2}^0=p^{r-1}+(p-1)p^{\frac{n}{2}-1}-p^{n-r}$, $p_{3,3}^0=p_{4,4}^0=
\frac{(p-1)}{2}(p^{r-1}-p^{\frac{n}{2}-1})$, $p_{5,5}^0=p^{n-1}-p^{r-1}$,  $p_{6,6}^0=p_{7,7}^0=\frac{(p-1)}{2}(p^{n-1}-p^{r-1}).$
	\item[$\bullet$]$u\ne v$, 
	$p_{u,v}^0=0$ for $0\le u<v\le 7$.
\end{itemize}
\item [$(\rm{\romannumeral2})$] $w=1$
\begin{itemize}
	\item [$\bullet$] $p_{0,1}^1=1$ and $p_{0,v}^1=0$ for $0\le v\le 7$ and $v\ne 1$.
	\item [$\bullet$] $p_{1,1}^1=p^{n-r}-2$ and $p_{1,v}^1=0$ for $2\le v\le 7$.
	\item[$\bullet$] $p_{2,2}^1=p^{r-1}+(p-1)p^{\frac{n}{2}-1}-p^{n-r}$ and $p_{2,v}^1=0$ for $3\le v\le 7$.
	\item[$\bullet$] $p_{3,3}^1=\frac{(p-1)}{2}(p^{r-1}-p^{\frac{n}{2}-1})$ and $p_{3,v}^1=0$ for $4\le v\le 7$.
	\item[$\bullet$] $p_{4,4}^1=\frac{(p-1)}{2}(p^{r-1}-p^{\frac{n}{2}-1})$ and $p_{4,v}^1=0$ for $5\le v\le 7$.
	\item [$\bullet$] $p_{5,5}^1=p^{n-2}-p^{r-1}$ and $p_{5,6}^1=p_{5,7}^1=\frac{(p-1)}{2}p^{n-2}$.
	\item[$\bullet$] $p_{6,6}^1=\frac{(p-1)}{4}(p^{n-1}-p^{n-2}-2p^{r-1})$ and $p_{6,7}^1=\frac{(p-1)^2}{4}p^{n-2}$.
	\item[$\bullet$] $p_{7,7}^1=\frac{(p-1)}{4}(p^{n-1}-p^{n-2}-2p^{r-1})$.
\end{itemize}
\item [$(\rm{\romannumeral3})$] $w=2$
\begin{itemize}
	\item [$\bullet$] $p_{0,2}^2=1$ and $p_{0,v}^2=0$ for $0\le v\le 7$ and $v\ne 2$.
	\item[$\bullet$] $p_{1,2}^2=p^{n-r}-1$ and $p_{1,v}^2=0$ for $1\le v\le 7$ and $v\ne 2$.
	\item[$\bullet$] $p_{2,2}^2=p^{r-2}+p^{\frac{n}{2}}-2p^{n-r}-p^{\frac{n}{2}-1}$, $p_{2,3}^2=p_{2,4}^2=\frac{(p-1)}{2}p^{r-2}$ and $p_{2,v}^2=0$ for $5\le v\le 7$.
	\item[$\bullet$] $p_{3,3}^2=\frac{(p-1)}{4}(p^{r-1}-p^{r-2}-2p^{\frac{n}{2}-1})$, $p_{3,4}^2=\frac{(p-1)^2}{4}p^{r-2}$ and $p_{3,v}^2=0$ for $5\le v\le 7$.
	\item[$\bullet$] $p_{4,4}^2=\frac{(p-1)}{4}(p^{r-1}-p^{r-2}-2p^{\frac{n}{2}-1})$ and $p_{4,v}^2=0$ for $5\le v\le 7$.
	\item[$\bullet$] $p_{5,5}^2=p^{n-2}-p^{r-2}$ and $p_{5,6}^2=p_{5,7}^2=\frac{(p-1)}{2}(p^{n-2}-p^{r-2})$.
	\item[$\bullet$] $p_{6,6}^2=p_{6,7}^2=\frac{(p-1)^2}{4}(p^{n-2}-p^{r-2})$.
	\item[$\bullet$] $p_{7,7}^2=\frac{(p-1)^2}{4}(p^{n-2}-p^{r-2})$.
\end{itemize}
\item [$(\rm{\romannumeral4})$] $w=3$
\begin{itemize}
	\item [$\bullet$] $p_{0,3}^3=1$ and $p_{0,v}^3=0$ for $0\le v\le 7$ and $v\ne 3$.
	\item[$\bullet$] $p_{1,3}^3=p^{n-r}-1$ and $p_{1,v}^3=0$ for $1\le v\le 7$ and $v\ne 3$.
	\item[$\bullet$] $p_{2,2}^3=p^{r-2}+p^{\frac{n}{2}-1}$, $p_{2,3}^3=\frac{1}{2}(p^{r-1}+p^{\frac{n}{2}}-2p^{n-r}-p^{r-2}-3p^{\frac{n}{2}-1})$, $p_{2,4}^3=\frac{(p-1)}{2}(p^{r-2}+p^{\frac{n}{2}-1})$ and $p_{2,v}^3=0$ for $5\le v\le 7$.
	\item[$\bullet$] $p_{3,3}^3=\frac{1}{4}(p^{r-2}-2p^{r-1}+6p^{\frac{n}{2}-1}+p^r-2p^{\frac{n}{2}})$, $p_{3,4}^3=\frac{(p-1)}{4}(p^{r-1}-2p^{\frac{n}{2}-1}-p^{r-2})$ and $p_{3,v}^3=0$ for $5\le v\le 7$.
	\item[$\bullet$] $p_{4,4}^3=\frac{(p-1)}{4}(p^{r-1}-p^{r-2}-2p^{\frac{n}{2}-1})$ and $p_{4,v}^3=0$ for $5\le v\le 7$.
	\item[$\bullet$] $p_{5,5}^3=p^{n-2}-p^{r-2}$ and $p_{5,6}^3=p_{5,7}^3=\frac{(p-1)}{2}(p^{n-2}-p^{r-2})$.
	\item[$\bullet$] $p_{6,6}^3=p_{6,7}^3=\frac{(p-1)^2}{4}(p^{n-2}-p^{r-2})$.
	\item[$\bullet$] $p_{7,7}^3=\frac{(p-1)^2}{4}(p^{n-2}-p^{r-2})$.
\end{itemize}
\item [$(\rm{\romannumeral5})$] $w=4$
\begin{itemize}
	\item [$\bullet$] $p_{0,4}^4=1$ and $p_{0,v}^4=0$ for $0\le v\le 7$ and $v\ne 4$.
	\item [$\bullet$] $p_{1,4}^4=p^{n-r}-1$ and $p_{1,v}^4=0$ for $1\le v\le 7$ and $v\ne 4$.
	\item[$\bullet$] $p_{2,2}^4=p^{r-2}+p^{\frac{n}{2}-1}$, $p_{2,3}^4=\frac{(p-1)}{2}(p^{r-2}+p^{\frac{n}{2}-1})$, $p_{2,4}^4=\frac{1}{2}(p^{r-1}+p^{\frac{n}{2}}-2p^{n-r}-p^{r-2}-3p^{\frac{n}{2}-1})$ and $p_{2,v}^4=0$ for $5\le v\le 7$.
	\item[$\bullet$] $p_{3,3}^4=p_{3,4}^4=\frac{(p-1)}{4}(p^{r-1}-p^{r-2}-2p^{\frac{n}{2}-1})$ and $p_{3,v}^4=0$ for $5\le v\le 7$.
	\item[$\bullet$] $p_{4,4}^4=\frac{1}{4}(p^{r-2}-2p^{r-1}+6p^{\frac{n}{2}-1}+p^{r}-2p^{\frac{n}{2}})$ and $p_{4,v}^4=0$ for $5\le v\le 7$.
	\item[$\bullet$] $p_{5,5}^4=p^{n-2}-p^{r-2}$ and $p_{5,6}^4=p_{5,7}^4=\frac{(p-1)}{2}(p^{n-2}-p^{r-2})$.
	\item[$\bullet$] $p_{6,6}^4=p_{6,7}^4=\frac{(p-1)^2}{4}(p^{n-2}-p^{r-2})$.
	\item[$\bullet$] $p_{7,7}^4=\frac{(p-1)^2}{4}(p^{n-2}-p^{r-2})$.
\end{itemize}
\item [$(\romannumeral6)$] $w=5$
\begin{itemize}
	\item [$\bullet$] $p_{0,5}^5=1$ and $p_{0,v}^5=0$ for $0\le v\le 7$ and $v\ne 5$.
	\item[$\bullet$] $p_{1,v}^5=0$ for $1\le v\le 4$, $p_{1,5}^5=p^{n-r-1}-1$ and $p_{1,6}^5=p_{1,7}^5=\frac{(p-1)}{2}p^{n-r-1}$.
	\item[$\bullet$] $p_{2,v}^5=0$ for $2\le v\le 4$, $p_{2,5}^5=p^{r-2}+(p-1)p^{\frac{n}{2}-2}-p^{n-r-1}$ and $p_{2,6}^5=p_{2,7}^5=\frac{(p-1)}{2}(p^{r-2}+(p-1)p^{\frac{n}{2}-2}-p^{n-r-1})$.
	\item[$\bullet$] $p_{3,3}^5=p_{3,4}^5=0$, $p_{3,5}^5=\frac{(p-1)}{2}(p^{r-2}-p^{\frac{n}{2}-2})$ and $p_{3,6}^5=p_{3,7}^5=\frac{(p-1)^2}{4}(p^{r-2}\\-p^{\frac{n}{2}-2})$.
	\item[$\bullet$] $p_{4,4}^5=0$, $p_{4,5}^5=\frac{(p-1)}{2}(p^{r-2}-p^{\frac{n}{2}-2})$ and $p_{4,6}^5=p_{4,7}^5=\frac{(p-1)^2}{4}(p^{r-2}-p^{\frac{n}{2}-2})$.
		\item[$\bullet$] $p_{5,5}^5=p^{n-2}+3p^{\frac{n}{2}-1}-2p^{r-2}-2p^{\frac{n}{2}-2}-p^{\frac{n}{2}}$ and $p_{5,6}^5=p_{5,7}^5=\frac{(p-1)}{2}(p^{n-2}-2p^{r-2}+p^{\frac{n}{2}-1}-2p^{\frac{n}{2}-2})$.
		\item[$\bullet$] $p_{6,6}^5=\frac{(p-1)}{4}(p^{n-1}-2p^{r-1}-p^{n-2}+2p^{r-2}+2p^{\frac{n}{2}-2})$ and $p_{6,7}^5=\frac{(p-1)^2}{4}(p^{n-2}\\-2p^{r-2}-2p^{\frac{n}{2}-2})$.
		\item[$\bullet$] $p_{7,7}^5=\frac{(p-1)}{4}(p^{n-1}-2p^{r-1}-p^{n-2}+2p^{r-2}+2p^{\frac{n}{2}-2})$.
		\end{itemize}
	\item [$(\rm{\romannumeral7})$] $w=6$
	\begin{itemize}
		\item [$\bullet$] $p_{1,6}^6=1$ and $p_{1,v}^6=0$ for $0\le v\le7$ and $v\ne 6$.
		\item[$\bullet$] $p_{1,v}^6=0$ for $1\le v\le 4$, $p_{1,5}^6=p^{n-r-1}$, $p_{1,6}^6=\frac{(p-1)}{2}p^{n-r-1}-1$ and $p_{1,7}^6=\frac{(p-1)}{2}p^{n-r-1}$.
	\item[$\bullet$] $p_{2,v}^6=0$ for $2\le v\le 4$, $p_{2,5}^6=p^{r-2}+(p-1)p^{\frac{n}{2}-2}-p^{n-r-1}$ and $p_{2,6}^6=p_{2,7}^6=\frac{(p-1)}{2}(p^{r-2}+(p-1)p^{\frac{n}{2}-2}-p^{n-r-1})$.
	\item[$\bullet$] $p_{3,3}^6=p_{3,4}^6=0$, $p_{3,5}^6=\frac{(p-1)}{2}(p^{r-2}-p^{\frac{n}{2}-2})$ and $p_{3,6}^6=p_{3,7}^6=\frac{(p-1)^2}{4}(p^{r-2}\\-p^{\frac{n}{2}-2})$.
	\item[$\bullet$] $p_{4,4}^6=0$, $p_{4,5}^6=\frac{(p-1)}{2}(p^{r-2}-p^{\frac{n}{2}-2})$ and $p_{4,6}^6=p_{4,7}^6=\frac{(p-1)^2}{4}(p^{r-2}-p^{\frac{n}{2}-2})$.
	\item[$\bullet$] $p_{5,5}^6=p^{n-2}-2p^{r-2}+p^{\frac{n}{2}-1}-2p^{\frac{n}{2}-2}$, $p_{5,6}^6=\frac{1}{2}(p^{n-1}-2p^{r-1}-p^{n-2}+2p^{r-2}+2p^{\frac{n}{2}-2})$ and $p_{5,7}^6=\frac{(p-1)}{2}(p^{n-2}-2p^{r-2}-2p^{\frac{n}{2}-2})$.
	\item[$\bullet$] $p_{6,6}^6=\frac{1}{4}(p^{n-2}+p^n-2p^{n-1}-2p^{\frac{n}{2}-1}-2p^{\frac{n}{2}-2}+4p^{r-1}-2p^{r-2}-2p^r)$ and $p_{6,7}^6=\frac{(p-1)}{4}(p^{n-1}-2p^{r-1}-p^{n-2}+2p^{r-2}+2p^{\frac{n}{2}-2})$.
	\item[$\bullet$] $p_{7,7}^6=\frac{(p-1)}{4}(p^{n-1}-p^{n-2}+2p^{\frac{n}{2}-2}-2p^{r-1}+2p^{r-2})$.
	\end{itemize}
\item [$(\rm{\romannumeral8})$] $w=7$
\begin{itemize}
	\item [$\bullet$] $p_{0,7}^7=1$ and $p_{0,v}^7=0$ for $0\le v\le 6$.
	\item[$\bullet$] $p_{1,v}^7=0$ for $1\le v\le 4$, $p_{1,5}^7=p^{n-r-1}$, $p_{1,6}^7=\frac{(p-1)}{2}p^{n-r-1}$ and $p_{1,7}^7=\frac{(p-1)}{2}p^{n-r-1}-1$.
	\item[$\bullet$] $p_{2,v}^7=0$ for $2\le v\le 4$, $p_{2,5}^7=p^{r-2}+(p-1)p^{\frac{n}{2}-2}-p^{n-r-1}$ and $p_{2,6}^7=p_{2,7}^7=\frac{(p-1)}{2}(p^{r-2}+(p-1)p^{\frac{n}{2}-2}-p^{n-r-1})$.
	\item[$\bullet$] $p_{3,3}^7=p_{3,4}^7=0$, $p_{3,5}^7=\frac{(p-1)}{2}(p^{r-2}-p^{\frac{n}{2}-2})$ and $p_{3,6}^7=p_{3,7}^7=\frac{(p-1)^2}{4}(p^{r-2}\\-p^{\frac{n}{2}-2})$.
	\item[$\bullet$] $p_{4,4}^7=0$, $p_{4,5}^7=\frac{(p-1)}{2}(p^{r-2}-p^{\frac{n}{2}-2})$ and $p_{4,6}^7=p_{4,7}^7=\frac{(p-1)^2}{4}(p^{r-2}-p^{\frac{n}{2}-2})$.
	\item[$\bullet$] $p_{5,5}^7=p^{n-2}-2p^{r-2}+p^{\frac{n}{2}-1}-2p^{\frac{n}{2}-2}$, $p_{5,6}^7=\frac{(p-1)}{2}(p^{n-2}-2p^{r-2}-2p^{\frac{n}{2}-2})$, $p_{5,7}^7=\frac{1}{2}(p^{n-1}-2p^{r-1}-p^{n-2}+2p^{r-2}+2p^{\frac{n}{2}-2})$.
	\item[$\bullet$] $p_{6,6}^7=p_{6,7}^7=\frac{(p-1)}{4}(p^{n-1}-2p^{r-1}-p^{n-2}+2p^{r-2}+2p^{\frac{n}{2}-2})$.
	\item[$\bullet$] $p_{7,7}^7=\frac{1}{4}(p^{n-2}+p^n-2p^{n-1}-2p^{\frac{n}{2}-1}-2p^{\frac{n}{2}-2}+4p^{r-1}-2p^{r-2}-2p^r)$.	
	\end{itemize}
\end{itemize}	
5. The first and second eigenmatrices, the intersection numbers and the Krein parameters of the association scheme induced by $U_5$.

Note that the first and second eigenmatrices of the association scheme induced by $U_5$ are the same. The first (second) eigenmatrix of the association scheme induced by $U_5$ is given in Table 8. 
\begin{table}[h]
	\vspace{-15pt}
	\centering
	\caption{The first (second) eigenmatrix of the association scheme induced by $U_5$}
	\renewcommand\arraystretch{1.5}	
	\resizebox{\textwidth}{!}{
		\begin{tabular}{|m{2.1cm}<{\centering}|c|c|m{2cm}<{\centering}|m{2cm}<{\centering}|c|m{2cm}<{\centering}|}
			\hline
			\diagbox{$i$}{{\tiny $P_5(ij)\ (Q_5(ij))$}}{$j$}&1&2&3&4&5&6\\
			\hline	
			1&1&$p^{n-r}-1$&$(p-1)p^{\frac{n}{2}-1}+p^{r-1}-p^{n-r}$&$(p-1)(p^{r-1}-p^{\frac{n}{2}-1})$&$p^{n-1}-p^{r-1}$&$(p-1)(p^{n-1}-p^{r-1})$\\
			\hline
			2&1&$p^{n-r}-1$&$(p-1)p^{\frac{n}{2}-1}+p^{r-1}-p^{n-r}$&$(p-1)(p^{r-1}-p^{\frac{n}{2}-1})$&$-p^{r-1}$&$-(p-1)p^{r-1}$\\
			\hline
			3&1&$p^{n-r}-1$&$(p-1)p^{\frac{n}{2}-1}-p^{n-r}$&$-(p-1)p^{\frac{n}{2}-1}$&0&0\\
			\hline
			4&1&$p^{n-r}-1$&$-p^{\frac{n}{2}-1}-p^{n-r}$&$p^{\frac{n}{2}-1}$&0&0\\
			\hline
			5&1&$-1$&0&0&$-(p-1)p^{\frac{n}{2}-1}$&$(p-1)p^{\frac{n}{2}-1}$\\
			\hline
			6&1&$-1$&0&0&$p^{\frac{n}{2}-1}$&$-p^{\frac{n}{2}-1}$\\
			\hline	
	\end{tabular}}
	\vspace{-15pt}
\end{table}	

Since $p_{u,v}^{w}=p_{v,u}^{w}$ for any $u,v,w\in\{0,1,\dots,5\}$, we only give the values of the  intersection numbers $p_{u,v}^w$ for $u\le v$ in the following six cases. The Krein parameter $q_{u,v}^w$ is the same as the intersection number $p_{u,v}^w$ for any  $u,v,w\in\{0,1,\dots,5\}$.
\begin{itemize}
	\item [$(\rm{\romannumeral1})$] $w=0$
	\begin{itemize}
		\item [$\bullet$]$u=v$\\
	$p_{0,0}^0=1$, $p_{1,1}^0=p^{n-r}-1$, $p_{2,2}^0=p^{r-1}+(p-1)p^{\frac{n}{2}-1}-p^{n-r}$, $p_{3,3}^0=(p-1)(p^{r-1}-p^{\frac{n}{2}-1})$, $p_{4,4}^0=p^{n-1}-p^{r-1}$, $p_{5,5}^0=(p-1)(p^{n-1}-p^{r-1})$.
	\item[$\bullet$] $u\ne v$, $p_{u,v}^0=0$ for $0\le u<v\le 5$.
			\end{itemize}
	\item [$(\rm{\romannumeral2})$] $w=1$
	\begin{itemize}
		\item [$\bullet$] $p_{0,1}^1=1$ and $p_{0,v}^1=0$ for $0\le v\le 5$ and $v\ne 1$.
		\item[$\bullet$] $p_{1,1}^1=p^{n-r}-2$ and $p_{1,v}^1=0$ for $2\le v\le 5$.
		\item[$\bullet$] $p_{2,2}^1=p^{r-1}+(p-1)p^{\frac{n}{2}-1}-p^{n-r}$ and $p_{2,v}^1=0$ for $3\le v\le 5$.
		\item[$\bullet$] $p_{3,3}^1=(p-1)(p^{r-1}-p^{\frac{n}{2}-1})$ and $p_{3,4}^1=p_{3,5}^1=0$.
		\item[$\bullet$] $p_{4,4}^1=p^{n-2}-p^{r-1}$ and $p_{4,5}^1=(p-1)p^{n-2}$.
		\item[$\bullet$] $p_{5,5}^1=(p-1)(p^{n-1}-p^{r-1}-p^{n-2})$.
	\end{itemize}	
	\item [$(\rm{\romannumeral3})$] $w=2$	
	\begin{itemize}
		\item [$\bullet$] $p_{0,2}^2=1$ and $p_{0,v}^2=0$ for $0\le v\le 5$ and $v\ne 2$.
		\item[$\bullet$] $p_{1,2}^2=p^{n-r}-1$ and $p_{1,v}^2=0$ for $0\le v\le 5$ and $v\ne 2$.
		\item[$\bullet$] $p_{2,2}^2=p^{r-2}+p^{\frac{n}{2}}-2p^{n-r}-p^{\frac{n}{2}-1}$, $p_{2,3}^2=(p-1)p^{r-2}$ and $p_{2,4}^2=p_{2,5}^2=0$.
		\item[$\bullet$] $p_{3,3}^2=(p-1)(p^{r-1}-p^{r-2}-p^{\frac{n}{2}-1})$ and $p_{3,4}^2=p_{3,5}^2=0$.
		\item[$\bullet$] $p_{4,4}^2=p^{n-2}-p^{r-2}$ and $p_{4,5}^2=(p-1)(p^{n-2}-p^{r-2})$.
		\item[$\bullet$] $p_{5,5}^2=(p-1)^2(p^{n-2}-p^{r-2})$.
	\end{itemize}
	\item [$(\rm{\romannumeral4})$] $w=3$
	\begin{itemize}
		\item [$\bullet$] $p_{0,3}^3=1$ and $p_{0,v}^3=0$ for $0\le v\le 5$ and $v\ne 3$.
		\item[$\bullet$] $p_{1,3}^3=p^{n-r}-1$ and $p_{1,v}^3=0$ for $0\le v\le 5$ and $v\ne 3$.
		\item[$\bullet$] $p_{2,2}^3=p^{r-2}+p^{\frac{n}{2}-1}$, $p_{2,3}^3=p^{r-1}+p^{\frac{n}{2}}-p^{n-r}-p^{r-2}-2p^{\frac{n}{2}-1}$ and $p_{2,4}^3=p_{2,5}^3=0$.
		\item[$\bullet$] $p_{3,3}^3=p^{r-2}-2p^{r-1}+3p^{\frac{n}{2}-1}+p^{r}-2p^{\frac{n}{2}}$ and $p_{3,4}^3=p_{3,5}^3=0$.
		\item[$\bullet$] $p_{4,4}^3=p^{n-2}-p^{r-2}$ and $p_{4,5}^3=(p-1)(p^{n-2}-p^{r-2})$.
		\item[$\bullet$] $p_{5,5}^3=(p-1)^2(p^{n-2}-p^{r-2})$.
		\end{itemize}		
		\item [$(\rm{\romannumeral5})$] $w=4$	
		\begin{itemize}
			\item [$\bullet$] $p_{0,4}^4=1$ and $p_{0,v}^4=0$ for $0\le v\le 5$ and $v\ne 4$.
			\item[$\bullet$] $p_{1,v}^4=0$ for $1\le v\le 3$, $p_{1,4}^4=p^{n-r-1}-1$ and $p_{1,5}^4=(p-1)p^{n-r-1}$.
			
			\item[$\bullet$] $p_{2,2}^4=p_{2,3}^4=0$, $p_{2,4}^4=p^{r-2}+(p-1)p^{\frac{n}{2}-2}-p^{n-r-1}$ and $p_{2,5}^4=(p-1)(p^{r-2}+(p-1)p^{\frac{n}{2}-2}-p^{n-r-1})$.
			\item[$\bullet$] $p_{3,3}^4=0$, $p_{3,4}^4=(p-1)(p^{r-2}-p^{\frac{n}{2}-2})$ and $p_{3,5}^4=(p-1)^2
(p^{r-2}-p^{\frac{n}{2}-2})$.
\item[$\bullet$] $p_{4,4}^4=p^{n-2}+3p^{\frac{n}{2}-1}-2p^{r-2}-2p^{\frac{n}{2}-2}-p^{\frac{n}{2}}$ and $p_{4,5}^4=(p-1)(p^{n-2}-2p^{r-2}+p^{\frac{n}{2}-1})-2p^{\frac{n}{2}-2}$.
\item[$\bullet$] $p_{5,5}^4=(p-1)(p^{n-1}-2p^{r-1}-p^{n-2}+2p^{r-2}-p^{\frac{n}{2}-1}+2p^{\frac{n}{2}-2})$.				
		\end{itemize}
		\item [$(\romannumeral6)$] $w=5$
		\begin{itemize}
			\item [$\bullet$] $p_{0,5}^5=1$ and $p_{0,v}^5=0$ for $0\le v\le 4$.
			\item[$\bullet$] $p_{1,v}^5=0$ for $1\le v\le 3$, $p_{1,4}^5=p^{n-r-1}$ and $p_{1,5}^5=p^{n-r}-p^{n-r-1}-1$.
			\item[$\bullet$] $p_{2,2}^5=p_{2,3}^5=0$, $p_{2,4}^5=p^{r-2}+(p-1)p^{\frac{n}{2}-2}-p^{n-r-1}$ and $p_{2,5}^5=(p-1)(p^{r-2}+(p-1)p^{\frac{n}{2}-2}-p^{n-r-1})$.
			\item[$\bullet$] $p_{3,3}^5=0$, $p_{3,4}^5=(p-1)(p^{r-2}-p^{\frac{n}{2}-2})$ and $p_{3,5}^5=(p-1)^2(p^{r-2}-p^{\frac{n}{2}-2})$.
			\item[$\bullet$] $p_{4,4}^5=p^{n-2}-2p^{r-2}+p^{\frac{n}{2}-1}-2p^{\frac{n}{2}-2}$ and $p_{4,5}^5=p^{n-1}-2p^{r-1}-p^{n-2}+2p^{r-2}-p^{\frac{n}{2}-1}+2p^{\frac{n}{2}-2}$.
			\item[$\bullet$] $p_{5,5}^5=(p-1)^2(p^{n-2}-2p^{r-2})+(p-2)p^{\frac{n}{2}-2}$.
		\end{itemize}	
\end{itemize}	
6. The first and second eigenmatrices, the intersection numbers and the Krein parameters of the association scheme induced by  $U_6$.

Note that the first and second eigenmarices of the association scheme induced by $U_6$ are the same. The first (second) eigenmatrix of the association scheme induced by $U_6$ is given in Table 9. 
\begin{table}[h]
	\vspace{-15pt}
	\centering
	\caption{The first (second) eigenmatrix of the association scheme induced by $U_6$}
	\renewcommand\arraystretch{1.5}	
	\resizebox{\textwidth}{!}{
		\begin{tabular}{|c|m{11cm}|}
			\hline
			$i$&\hspace{4.25cm}$P_6(ij)\ (Q_6(ij))$\\
			\hline
			1&$P_6(11)=1$, $P_6(12)=p^{n-1}-p^{\frac{n}{2}}$, $P_6(13)=P_6(14)=\frac{(p-1)}{2}(p^{n-1}-p^{\frac{n}{2}})$, $P_6(15)=p^{\frac{n}{2}-1}-1$, $P_6(16)=P_6(17)=\frac{(p-1)}{2}(p^{\frac{n}{2}}+p^{\frac{n}{2}-1})$\\
			\hline
			2&$P_6(21)=1$, $P_6(22)=(p-1)p^{\frac{n}{2}-1}$, $P_6(23)=P_6(24)=-\frac{(p-1)}{2}p^{\frac{n}{2}-1}$, $P_{6}(25)=-1$, $P_{6}(26)=P_6(27)=0$\\
			\hline
			3&$P_6(31)=1$, $P_{6}(32)=-p^{\frac{n}{2}-1}$, $P_{6}(33)=\frac{p+1}{2}p^{\frac{n}{2}-1}$, $P_{6}(34)=\frac{-p+1}{2}p^{\frac{n}{2}-1}$, $P_{6}(35)=-1$, $P_{6}(36)=P_6(37)=0$\\
			\hline
			4&$P_6(41)=1$, $P_6(42)=-p^{\frac{n}{2}-1}$, $P_{6}(43)=\frac{-p+1}{2}p^{\frac{n}{2}-1}$, $P_6(44)=\frac{p+1}{2}p^{\frac{n}{2}-1}$, $P_6(45)=-1$, $P_6(46)=P_6(47)=0$
			\\
			\hline
			5&$P_6(51)=1$, $P_6(52)=-p^{\frac{n}{2}}$, $P_6(53)=P_6(54)=-\frac{(p-1)}{2}p^{\frac{n}{2}}$, $P_6(55)=p^{\frac{n}{2}-1}-1$, $P_6(56)=P_6(57)=\frac{p-1}{2}(p^{\frac{n}{2}-1}+p^{\frac{n}{2}})$\\
			\hline
			6&$P_6(61)=1$, $P_6(62)=P_6(63)=P_6(64)=0$, $P_6(65)=p^{\frac{n}{2}-1}-1$, $P_6(66)=-\frac{(p+1)}{2}p^{\frac{n}{2}-1}$, $P_6(67)=\frac{p-1}{2}p^{\frac{n}{2}-1}$\\
			\hline
			7&$P_6(71)=1$, $P_6(72)=P_6(73)=P_6(74)=0$, $P_6(75)=p^{\frac{n}{2}-1}-1$, $P_6(76)=\frac{p-1}{2}p^{\frac{n}{2}-1}$, $P_6(77)=-\frac{(p+1)}{2}p^{\frac{n}{2}-1}$\\
			\hline
			
	\end{tabular}}
	\vspace{-15pt}
\end{table}

Since $p_{u,v}^{w}=p_{v,u}^{w}$ for any $u,v,w\in\{0,1,\dots,6\}$, we only give the values of the  intersection numbers $p_{u,v}^w$ for $u\le v$ in the following seven cases. The Krein parameter $q_{u,v}^w$ is the same as the intersection number $p_{u,v}^w$ for any $u,v,w\in\{0,1,\dots,6\}$.
\begin{itemize}
	\item [$(\rm{\romannumeral1})$] $w=0$
	\begin{itemize}
		\item [$\bullet$] $u=v$\\
		$p_{0,0}^0=1$, $p_{1,1}^0=p^{n-1}-p^{\frac{n}{2}}$, $p_{2,2}^0=p_{3,3}^0=\frac{(p-1)}{2}(p^{n-1}-p^{\frac{n}{2}})$, $p_{4,4}^0=p^{\frac{n}{2}-1}-1$ and $p_{5,5}^0=p_{6,6}^0=\frac{(p-1)}{2}(p^{\frac{n}{2}}+p^{\frac{n}{2}-1})$.
		\item[$\bullet$] $u\ne v$, $p_{u,v}^0=0$ for $0\le u<v\le 6$.	
	\end{itemize}
\item [$(\rm{\romannumeral2})$] $w=1$
\begin{itemize}
	\item [$\bullet$] $p_{0,1}^1=1$ and $p_{0,v}^1=0$ for $0\le v\le 6$ and $v\ne 1$.
	\item[$\bullet$] $p_{1,1}^1=p^{\frac{n}{2}-2}(p^{\frac{n}{2}}-5p+p^2+2)$, $p_{1,2}^1=p_{1,3}^1=\frac{(p-1)}{2}p^{\frac{n}{2}-2}(p^{\frac{n}{2}}-3p+2)$, $p_{1,4}^1=p^{\frac{n}{2}-2}-1$ and $p_{1,5}^1=p_{1,6}^1=\frac{(p-1)}{2}(p^{\frac{n}{2}-1}+p^{\frac{n}{2}-2})$.
	\item[$\bullet$] $p_{2,2}^1=\frac{(p-1)}{4}p^{\frac{n}{2}-2}(p^{\frac{n}{2}+1}+2p-p^{\frac{n}{2}}-2p^2-2)$, $p_{2,3}^1=\frac{(p-1)^2}{4}p^{\frac{n}{2}-2}(p^{\frac{n}{2}}-2p+2)$, $p_{2,4}^1=\frac{(p-1)}{2}p^{\frac{n}{2}-2}$ and $p_{2,5}^1=p_{2,6}^1=\frac{(p-1)^2}{4}(p+1)p^{\frac{n}{2}-2}$.
	\item[$\bullet$] $p_{3,3}^1=\frac{(p-1)}{4}p^{\frac{n}{2}-2}(p^{\frac{n}{2}+1}+2p-p^{\frac{n}{2}}-2p^2-2)$, $p_{3,4}^1=\frac{(p-1)}{2}p^{\frac{n}{2}-2}$ and $p_{3,5}^1=p_{3,6}^1=\frac{(p-1)^2}{4}(p+1)p^{\frac{n}{2}-2}$.
	\item[$\bullet$] $p_{u,v}^1=0$ for $4\le u\le v\le 6$.
\end{itemize}
\item [$(\rm{\romannumeral3})$] $w=2$
\begin{itemize}
	\item [$\bullet$] $p_{0,2}^2=1$ and $p_{0,v}^2=0$ for $0\le v\le 6$ and $v\ne 2$.
	\item[$\bullet$] $p_{1,1}^2=p^{\frac{n}{2}-2}(p^{\frac{n}{2}}-3p+2)$, $p_{1,2}^2=\frac{1}{2}p^{\frac{n}{2}-2}(p^{\frac{n}{2}+1}+2p-p^{\frac{n}{2}}-2p^2-2)$, $p_{1,3}^2=\frac{(p-1)}{2}p^{\frac{n}{2}-2}(p^{\frac{n}{2}}-2p+2)$, $p_{1,4}^2=p^{\frac{n}{2}-2}$ and $p_{1,5}^2=p_{1,6}^2=\frac{(p-1)}{2}(p^{\frac{n}{2}-1}+p^{\frac{n}{2}-2})$.
\item[$\bullet$] $p_{2,2}^2=\frac{1}{4}p^{\frac{n}{2}-2}(p^{\frac{n}{2}}-2p^{\frac{n}{2}+1}+p^{\frac{n}{2}+2}+4p^2-2p^3+2)$, $p_{2,3}^2=\frac{(p-1)}	{4}p^{\frac{n}{2}-2}(p^{\frac{n}{2}+1}\\+2p-p^{\frac{n}{2}}-2p^2-2)$, $p_{2,4}^2=\frac{1}{2}(p^{\frac{n}{2}-1}-p^{\frac{n}{2}-2}-2)$ and $p_{2,5}^2=p_{2,6}^2=\frac{(p-1)^2}{4}(p+1)p^{\frac{n}{2}-2}$.
\item[$\bullet$] $p_{3,3}^2=\frac{(p-1)}{4}p^{\frac{n}{2}-2}(p^{\frac{n}{2}+1}+2p-p^{\frac{n}{2}}-2p^2-2)$, $p_{3,4}^2=\frac{(p-1)}{2}p^{\frac{n}{2}-2}$ and $p_{3,5}^2=p_{3,6}^2=\frac{(p-1)^2}{4}(p^{\frac{n}{2}-1}+p^{\frac{n}{2}-2})$.
\item[$\bullet$] $p_{u,v}^2=0$ for $4\le u\le v\le 6$.
		\end{itemize}	
\item [$(\rm{\romannumeral4})$] $w=3$
\begin{itemize}
	\item [$\bullet$] $p_{0,3}^3=1$ and $p_{0,v}^3=0$ for $0\le v\le 6$ and $v\ne 3$.
	\item[$\bullet$] $p_{1,1}^3=p^{\frac{n}{2}-2}(p^{\frac{n}{2}}-3p+2)$, $p_{1,2}^3=\frac{(p-1)}{2}p^{\frac{n}{2}-2}(p^{\frac{n}{2}}-2p+2)$, $p_{1,3}^3=\frac{1}{2}p^{\frac{n}{2}-2}(p^{\frac{n}{2}+1}+2p-p^{\frac{n}{2}}-2p^2-2)$, $p_{1,4}^3=p^{\frac{n}{2}-2}$ and $p_{1,5}^3=p_{1,6}^3=\frac{(p-1)}{2}(p^{\frac{n}{2}-1}+p^{\frac{n}{2}-2})$.
	\item[$\bullet$] $p_{2,2}^3=p_{2,3}^3=\frac{(p-1)}{4}p^{\frac{n}{2}-2}(p^{\frac{n}{2}+1}+2p-p^{\frac{n}{2}}-2p^2-2)$, $p_{2,4}^3=\frac{(p-1)}{2}p^{\frac{n}{2}-2}$ and $p_{2,5}^3=p_{2,6}^3=\frac{(p-1)^2}{4}(p^{\frac{n}{2}-1}+p^{\frac{n}{2}-2})$.
	\item[$\bullet$] $p_{3,3}^3=\frac{1}{4}p^{\frac{n}{2}-2}(p^{\frac{n}{2}}-2p^{\frac{n}{2}+1}+p^{\frac{n}{2}+2}+4p^2-2p^3+2)$, $p_{3,4}^3=\frac{1}{2}(p^{\frac{n}{2}-1}-p^{\frac{n}{2}-2}-2)$ and $p_{3,5}^3=p_{3,6}^3=\frac{(p-1)^2}{4}(p^{\frac{n}{2}-1}+p^{\frac{n}{2}-2})$.
	\item[$\bullet$] $p_{u,v}^3=0$ for $4\le u\le v\le 6$.
	\end{itemize}
	\item [$(\rm{\romannumeral5})$] $w=4$
	\begin{itemize}
		\item [$\bullet$] $p_{0,4}^4=1$ and $p_{0,v}^4=0$ for $0\le v\le 6$ and $v\ne 4$.
		\item[$\bullet$] $p_{1,1}^4=p^{n-2}-p^{\frac{n}{2}}$, $p_{1,2}^4=p_{1,3}^4=\frac{(p-1)}{2}p^{n-2}$ and $p_{1,v}^4=0$ for $4\le v\le 6$.
		\item[$\bullet$] $p_{2,2}^4=\frac{(p-1)}{4}p^{\frac{n}{2}-2}(p^{\frac{n}{2}+1}-p^{\frac{n}{2}}-2p^2)$, $p_{2,3}^4=\frac{(p-1)^2}{4}p^{n-2}$ and $p_{2,v}^4=0$ for $4\le v\le 6$.
		\item[$\bullet$] $p_{3,3}^4=\frac{(p-1)}{4}p^{\frac{n}{2}-2}(p^{\frac{n}{2}+1}-p^{\frac{n}{2}}-2p^2)$ and $p_{3,v}^4=0$ for $4\le v\le 6$.
		\item[$\bullet$] $p_{4,4}^4=p^{\frac{n}{2}-1}-2$ and $p_{4,5}^4=p_{4,6}^4=0$.
		\item[$\bullet$] $p_{5,5}^4=\frac{(p-1)}{2}(p^{\frac{n}{2}}+p^{\frac{n}{2}-1})$ and $p_{5,6}^4=0$.
		\item[$\bullet$] $p_{6,6}^4=\frac{(p-1)}{2}(p^{\frac{n}{2}}+p^{\frac{n}{2}-1})$.
	\end{itemize}
\item [$(\romannumeral6)$] $w=5$
\begin{itemize}
	\item [$\bullet$] $p_{0,5}^5=1$ and $p_{0,v}^5=0$ for $0\le v\le 6$ and $v\ne 5$.
	\item[$\bullet$] $p_{1,1}^5=p^{n-2}-p^{\frac{n}{2}-1}$, $p_{1,2}^5=p_{1,3}^5=\frac{(p-1)}{2}p^{\frac{n}{2}-2}(p^{\frac{n}{2}}-p)$ and $p_{1,v}^5=0$ for $4\le v\le 6$.
	\item[$\bullet$] $p_{2,2}^5=p_{2,3}^5=\frac{(p-1)^2}{4}p^{\frac{n}{2}-2}(p^{\frac{n}{2}}-p)$ and $p_{2,v}^5=0$ for $4\le v\le6$.
	\item[$\bullet$] $p_{3,3}^5=\frac{(p-1)^2}{4}p^{\frac{n}{2}-2}(p^{\frac{n}{2}}-p)$ and $p_{3,v}^5=0$ for $4\le v\le 6$.
	\item[$\bullet$] $p_{4,4}^5=0$, $p_{4,5}^5=p^{\frac{n}{2}-1}-1$ and $p_{4,6}^5=0$.
	\item[$\bullet$] $p_{5,5}^5=\frac{1}{4}p^{\frac{n}{2}-1}(p^2-5)$ and $p_{5,6}^5=\frac{(p-1)}{4}(p^{\frac{n}{2}}+p^{\frac{n}{2}-1})$.
	\item[$\bullet$] $p_{6,6}^5=\frac{(p-1)}{4}(p^{\frac{n}{2}}+p^{\frac{n}{2}-1})$.
\end{itemize}	
\item [$(\rm{\romannumeral7})$] $w=6$
\begin{itemize}
	\item [$\bullet$] $p_{0,6}^6=1$ and $p_{0,v}^6=0$ for $0\le v\le 5$.
	\item[$\bullet$] $p_{1,1}^6=p^{n-2}-p^{\frac{n}{2}-1}$, $p_{1,2}^6=p_{1,3}^6=\frac{(p-1)}{2}p^{\frac{n}{2}-2}(p^{\frac{n}{2}}-p)$ and $p_{1,v}^6=0$ for $4\le v\le 6$.
	\item[$\bullet$] $p_{2,2}^6=p_{2,3}^6=\frac{(p-1)^2}{4}p^{\frac{n}{2}-2}(p^{\frac{n}{2}}-p)$ and $p_{2,v}^6=0$ for $4\le v\le 6$.
	\item[$\bullet$] $p_{3,3}^6=\frac{(p-1)^2}{4}p^{\frac{n}{2}-2}(p^{\frac{n}{2}}-p)$ and $p_{3,v}^6=0$ for $4\le v\le 6$.
	\item[$\bullet$] $p_{4,4}^6=p_{4,5}^6=0$ and $p_{4,6}^6=p^{\frac{n}{2}-1}-1$.
	\item[$\bullet$] $p_{5,5}^6=\frac{(p-1)^2}{4}p^{\frac{n}{2}-1}$ and $p_{5,6}^6=\frac{(p^2-1)}{4}p^{\frac{n}{2}-1}$.
	\item[$\bullet$] $p_{6,6}^6=\frac{(p^2-5)}{4}p^{\frac{n}{2}-1}$.
\end{itemize}
\end{itemize}
7. The first and second eigenmatrices, the intersection numbers and the Krein parameters of the association scheme induced by  $U_7$.

Note that the first and second eigenmatrices of the association scheme induced by $U_7$ are the same. The first (second) eigenmatrix is given in Table 10.
\begin{table}[h]
	\vspace{-15pt}
	\centering
	\caption{The first (second) eigenmatrix of the association scheme induced by $U_7$}
	\renewcommand\arraystretch{1.5}	
	\resizebox{\textwidth}{!}{
		\begin{tabular}{|m{2.1cm}<{\centering}|c|c|c|c|c|}
			\hline
			\diagbox{$i$}{{\tiny $P_7(ij)\ (Q_7(ij))$}}{$j$}&1&2&3&4&5\\
			\hline
			1&1&$p^{n-1}-p^{\frac{n}{2}}$&$(p-1)(p^{n-1}-p^{\frac{n}{2}})$&$p^{\frac{n}{2}-1}-1$&$(p-1)(p^{\frac{n}{2}}+p^{\frac{n}{2}-1})$\\
			\hline
			2&1&$(p-1)p^{\frac{n}{2}-1}$&$-(p-1)p^{\frac{n}{2}-1}$&$-1$&0\\
			\hline
			3&1&$-p^{\frac{n}{2}-1}$&$p^{\frac{n}{2}-1}$&$-1$&0\\
			\hline
			4&1&$-p^{\frac{n}{2}}$&$-(p-1)p^{\frac{n}{2}}$&$p^{\frac{n}{2}-1}-1$&$(p-1)(p^{\frac{n}{2}-1}+p^{\frac{n}{2}})$\\
			\hline
			5&1&0&0&$p^{\frac{n}{2}-1}-1$&$-p^{\frac{n}{2}-1}$\\
			\hline
	\end{tabular}}
	\vspace{-15pt}
\end{table}

Since $p_{u,v}^{w}=p_{v,u}^{w}$ for any $u,v,w\in\{0,1,2,3,4\}$, we only give the values of the  intersection numbers $p_{u,v}^w$ for $u\le v$ in the following five cases. The Krein parameter $q_{u,v}^w$  is the same as the intersection number $p_{u,v}^w$ for any $u,v,w\in\{0,1,2,3,4\}$.
\begin{itemize}
	\item [$(\rm{\romannumeral1})$] $w=0$
	\begin{itemize}
		\item [$\bullet$] $u=v$\\
		$p_{0,0}^0=1$, $p_{1,1}^0=p^{n-1}-p^{\frac{n}{2}}$, $p_{2,2}^0=(p-1)(p^{n-1}-p^{\frac{n}{2}})$, $p_{3,3}^0=p^{\frac{n}{2}-1}-1$ and $p_{4,4}^0=(p-1)(p^{\frac{n}{2}}+p^{\frac{n}{2}-1})$.
		\item[$\bullet$] $u\ne v$, $p_{u,v}^0=0$ for $0\le u<v\le 4$.
	\end{itemize}
\item [$(\rm{\romannumeral2})$] $w=1$
\begin{itemize}
	\item [$\bullet$] $p_{0,1}^1=1$ and $p_{0,v}^1=0$ for $0\le v\le 4$ and $v\ne 1$.
	\item[$\bullet$] $p_{1,1}^1=p^{\frac{n}{2}-2}(p^{\frac{n}{2}}-5p+p^2+2)$, $p_{1,2}^1=p^{\frac{n}{2}-2}(p-1)(p^{\frac{n}{2}}-3p+2)$, $p_{1,3}^1=p^{\frac{n}{2}-2}-1$ and $p_{1,4}^1=(p-1)(p^{\frac{n}{2}-1}+p^{\frac{n}{2}-2})$.
	\item[$\bullet$] $p_{2,2}^1=p^{\frac{n}{2}-2}(p-1)(p^{\frac{n}{2}+1}+3p-p^{\frac{n}{2}}-2p^2-2)$, $p_{2,3}^1=(p-1)p^{\frac{n}{2}-2}$ and $p_{2,4}^1=p^{\frac{n}{2}-2}(p-1)^2(p+1)$.
	\item[$\bullet$] $p_{3,3}^1=p_{3,4}^1=p_{4,4}^1=0$.
\end{itemize}	
	\item [$(\rm{\romannumeral3})$] $w=2$
	\begin{itemize}
		\item [$\bullet$] $p_{0,2}^2=1$ and $p_{0,v}^2=0$ for $0\le v\le4$ and $v\ne 2$.
		\item[$\bullet$] $p_{1,1}^2=p^{\frac{n}{2}-2}(p^{\frac{n}{2}}-3p+2)$, $p_{1,2}^2=p^{\frac{n}{2}-2}(p^{\frac{n}{2}+1}-p^{\frac{n}{2}}-2p^2+3p+2)$, $p_{1,3}^2=p^{\frac{n}{2}-2}$ and $p_{1,4}^2=p^{\frac{n}{2}-2}(p^2-1)$.
		\item[$\bullet$] $p_{2,2}^2=p^{\frac{n}{2}-2}(p^{\frac{n}{2}+2}-2p^{\frac{n}{2}+1}+p^{\frac{n}{2}}-2p^3+4p^2-3p+2)$, $p_{2,3}^2=p^{\frac{n}{2}-1}-p^{\frac{n}{2}-2}-1$ and $p_{2,4}^2=p^{\frac{n}{2}-2}(p-1)^2(p+1)$.
		\item[$\bullet$] $p_{3,3}^2=p_{3,4}^2=p_{4,4}^2=0$.
			\end{itemize}
	\item [$(\rm{\romannumeral4})$] $w=3$
	\begin{itemize}
		\item [$\bullet$] $p_{0,3}^3=1$ and $p_{0,v}^3=0$ for $0\le v\le 4$ and $v\ne 3$.
		\item[$\bullet$] $p_{1,1}^3=p^{\frac{n}{2}-2}(p^{\frac{n}{2}}-p^2)$, $p_{1,2}^3=p^{n-2}(p-1)$ and $p_{1,3}^3=p_{1,4}^3=0$.
		\item[$\bullet$] $p_{2,2}^3=p^{\frac{n}{2}-2}(p-1)(p^{\frac{n}{2}+1}-p^{\frac{n}{2}}-p^2)$ and $p_{2,3}^3=p_{2,4}^3=0$.
			\item[$\bullet$] $p_{3,3}^3=p^{\frac{n}{2}-1}-2$ and $p_{3,4}^3=0$.
			\item[$\bullet$] $p_{4,4}^3=p^{\frac{n}{2}-1}(p^2-1)$.
	\end{itemize}
\item [$(\rm{\romannumeral5})$] $w=4$
\begin{itemize}
	\item [$\bullet$] $p_{0,4}^4=1$ and $p_{0,v}^4=0$ for $0\le v\le 3$.
	\item[$\bullet$] $p_{1,1}^4=p^{\frac{n}{2}-2}(p^{\frac{n}{2}}-p)$, $p_{1,2}^4=p^{\frac{n}{2}-2}(p-1)(p^{\frac{n}{2}}-p)$ and $p_{1,3}^4=p_{1,4}^4=0$.
	\item[$\bullet$] $p_{2,2}^4=p^{\frac{n}{2}-2}(p-1)^2(p^{\frac{n}{2}}-p)$ and $p_{2,3}^4=p_{2,4}^4=0$.
	\item[$\bullet$] $p_{3,3}^4=0$ and $p_{3,4}^4=p^{\frac{n}{2}-1}-1$.
	\item[$\bullet$] $p_{4,4}^4=p^{\frac{n}{2}-1}(p^2-2)$.
\end{itemize}
\end{itemize}
8. The first and second eigenmatrices, the intersection numbers and the Krein parameters of the association scheme induced by  $U_8$.

Note that the first and second eigenmatrices of the association scheme induced by $U_8$ are the same. The first (second) eigenmatrix is given in Table 11. 
\begin{table}[h]
	\vspace{0pt}
	\centering
	\caption{The first (second) eigenmatrix of the association scheme induced by $U_8$}
	\renewcommand\arraystretch{1.5}	
	\resizebox{\textwidth}{!}{
		\begin{tabular}{|c|m{11cm}|}
			\hline
			$i$&\hspace{4.25cm} $P_8(ij)\ (Q_8(ij))$\\
			\hline
			1&$P_8(11)=1$, $P_8(12)=p^{n-1}-p^{r-1}$, $P_8(13)=P_8(14)=\frac{(p-1)}{2}(p^{n-1}-p^{r-1})$, $P_8(15)=p^{n-r}-1$, $P_8(16)=p^{r-1}-(p-1)p^{\frac{n}{2}-1}-p^{n-r}$, $P_8(17)=P_8(18)=\frac{(p-1)}{2}(p^{r-1}+p^{\frac{n}{2}-1})$\\
			\hline
			2&$P_8(21)=1$, $P_8(22)=(p-1)p^{\frac{n}{2}-1}$, $P_8(23)=P_8(24)=-\frac{(p-1)}{2}p^{\frac{n}{2}-1}$, $P_8(25)=-1$, $P_8(26)=P_8(27)=P_8(28)=0$\\
			\hline
			3&$P_8(31)=1$, $P_8(32)=-p^{\frac{n}{2}-1}$, $P_8(33)=\frac{p+1}{2}p^{\frac{n}{2}-1}$, $P_8(34)=\frac{-p+1}{2}p^{\frac{n}{2}-1}$, $P_8(35)=-1$, $P_8(36)=P_8(37)=P_8(38)=0$\\
			\hline
			4&$P_8(41)=1$, $P_8(42)=-p^{\frac{n}{2}-1}$, $P_8(43)=\frac{-p+1}{2}p^{\frac{n}{2}-1}$, $P_8(44)=\frac{p+1}{2}p^{\frac{n}{2}-1}$, $P_8(45)=-1$, $P_8(46)=P_8(47)=P_8(48)=0$\\
			\hline
			5&$P_8(51)=1$, $P_8(52)=-p^{r-1}$, $P_8(53)=P_8(54)=-\frac{(p-1)}{2}p^{r-1}$, $P_8(55)=p^{n-r}-1$, $P_8(56)=p^{r-1}-(p-1)p^{\frac{n}{2}-1}-p^{n-r}$, $P_8(57)=P_8(58)=\frac{(p-1)}{2}(p^{\frac{n}{2}-1}+p^{r-1})$\\
			\hline
			6&$P_8(61)=1$, $P_8(62)=P_8(63)=P_8(64)=0$, $P_8(65)=p^{n-r}-1$, $P_8(66)=-(p-1)p^{\frac{n}{2}-1}-p^{n-r}$, $P_8(67)=P_8(68)=\frac{p-1}{2}p^{\frac{n}{2}-1}$\\
			\hline
			7&$P_8(71)=1$, $P_8(72)=P_8(73)=P_8(74)=0$, $P_8(75)=p^{n-r}-1$, $P_8(76)=p^{\frac{n}{2}-1}-p^{n-r}$, $P_8(77)=-\frac{(p+1)}{2}p^{\frac{n}{2}-1}$, $P_8(78)=\frac{p-1}{2}p^{\frac{n}{2}-1}$\\
			\hline
			8&$P_8(81)=1$, $P_8(82)=P_8(83)=P_8(84)=0$, $P_8(85)=p^{n-r}-1$, $P_8(86)=p^{\frac{n}{2}-1}-p^{n-r}$, $P_8(87)=\frac{p-1}{2}p^{\frac{n}{2}-1}$, $P_8(88)=-\frac{(p+1)}{2}p^{\frac{n}{2}-1}$\\
			\hline
	\end{tabular}}
	\vspace{-15pt}
\end{table}

Since $p_{u,v}^{w}=p_{v,u}^{w}$ for any $u,v,w\in\{0,1,\dots,7\}$, we only give the values of the intersection numbers $p_{u,v}^w$ for $u\le v$ in the following eight cases. The Krein parameter $q_{u,v}^w$ is  the same as the intersection number $p_{u,v}^w$ for any $u,v,w\in\{0,1,\dots,7\}$.
\begin{itemize}
	\item [$(\rm{\romannumeral1})$] $w=0$
	\begin{itemize}
		\item [$\bullet$] $u=v$\\
		$p_{0,0}^0=1$, $p_{1,1}^0=p^{n-1}-p^{r-1}$, $p_{2,2}^0=p_{3,3}^0=\frac{(p-1)}{2}(p^{n-1}-p^{r-1})$, $p_{4,4}^0=p^{n-r}-1$, $p_{5,5}^0=p^{r-1}-(p-1)p^{\frac{n}{2}-1}-p^{n-r}$ and $p_{6,6}^0=p_{7,7}^0=\frac{(p-1)}{2}(p^{r-1}+p^{\frac{n}{2}-1})$.
			\item[$\bullet$] $u\ne v$, $p_{u,v}^0=0$ for $0\le u<v\le 7$.
	\end{itemize}
	\item [$(\rm{\romannumeral2})$] $w=1$
	\begin{itemize}
		\item[$\bullet$] $p_{0,1}^1=1$ and $p_{0,v}^1=0$ for $0\le v\le 7$ and $v\ne 1$.
		\item [$\bullet$] $p_{1,1}^1=p^{n-2}-2p^{r-2}-3p^{\frac{n}{2}-1}+2p^{\frac{n}{2}-2}+p^{\frac{n}{2}}$, $p_{1,2}^1=p_{1,3}^1=\frac{(p-1)}{2}(p^{n-2}-2p^{r-2}-p^{\frac{n}{2}-1}+2p^{\frac{n}{2}-2})$, $p_{1,4}^1=p^{n-r-1}-1$, $p_{1,5}^1=p^{r-2}-(p-1)p^{\frac{n}{2}-2}-p^{n-r-1}$ and $p_{1,6}^1=p_{1,7}^1=\frac{(p-1)}{2}(p^{r-2}+p^{\frac{n}{2}-2})$.
			\item[$\bullet$] $p_{2,2}^1=\frac{(p-1)}{4}(p^{n-1}+2p^{r-2}-2p^{r-1}-p^{n-2}-2p^{\frac{n}{2}-2})$, $p_{2,3}^1=\frac{(p-1)^2}{4}(p^{n-2}-2p^{r-2}+2p^{\frac{n}{2}-2})$, $p_{2,4}^1=\frac{(p-1)}{2}p^{n-r-1}$, $p_{2,5}^1=\frac{(p-1)}{2}(p^{r-2}-(p-1)p^{\frac{n}{2}-2}-p^{n-r-1})$ and  $p_{2,6}^1=p_{2,7}^1=\frac{(p-1)^2}{4}(p^{r-2}+p^{\frac{n}{2}-2})$.	
		\item[$\bullet$] $p_{3,3}^1=\frac{(p-1)}{4}(p^{n-1}+2p^{r-2}-2p^{r-1}-p^{n-2}-2p^{\frac{n}{2}-2})$, $p_{3,4}^1=\frac{(p-1)}{2}p^{n-r-1}$, $p_{3,5}^1=\frac{(p-1)}{2}(p^{r-2}-(p-1)p^{\frac{n}{2}-2}-p^{n-r-1})$ and $p_{3,6}^1=p_{3,7}^1=\frac{(p-1)^2}{4}(p^{r-2}+p^{\frac{n}{2}-2})$.
		\item[$\bullet$] $p_{u,v}^1=0$ for $4\le u\le v\le 7$.
		\end{itemize}
		\item [$(\rm{\romannumeral3})$] $w=2$
		\begin{itemize}
			\item [$\bullet$] $p_{0,2}^2=1$ and $p_{0,v}^2=0$ for $0\le v\le 7$ and $v\ne 2$.
			\item[$\bullet$] $p_{1,1}^2=p^{n-2}-2p^{r-2}
-p^{\frac{n}{2}-1}+2p^{\frac{n}{2}-2}$, $p_{1,2}^2=\frac{1}{2}(p^{n-1}+2p^{r-2}-2p^{r-1}-p^{n-2}-2p^{\frac{n}{2}-2})$, $p_{1,3}^2=\frac{(p-1)}{2}(p^{n-2}-2p^{r-2}+2p^{\frac{n}{2}-2})$, $p_{1,4}^2=p^{n-r-1}$, $p_{1,5}^2=p^{r-2}-(p-1)p^{\frac{n}{2}-2}-p^{n-r-1}$ and $p_{1,6}^2=p_{1,7}^2=\frac{(p-1)}{2}(p^{r-2}+p^{\frac{n}{2}-2})$.
\item[$\bullet$] $p_{2,2}^2=\frac{1}{4}(4p^{r-1}-2p^{n-1}+p^{n-2}-2p^{r-2}+2p^{\frac{n}{2}-1}+p^n-2p^r+2p^{\frac{n}{2}-2})$, $p_{2,3}^2=\frac{(p-1)}{4}(p^{n-1}+2p^{r-2}-2p^{r-1}-p^{n-2}-2p^{\frac{n}{2}-2})$, $p_{2,4}^2=\frac{1}{2}(p^{n-r}-p^{n-r-1}-2)$, $p_{2,5}^2=\frac{(p-1)}{2}(p^{r-2}-(p-1)p^{\frac{n}{2}-2}-p^{n-r-1})$ and $p_{2,6}^2=p_{2,7}^2=\frac{(p-1)^2}{4}(p^{r-2}+p^{\frac{n}{2}-2})$.

\item[$\bullet$] $p_{3,3}^2=\frac{(p-1)}{4}(p^{n-1}+2p^{r-2}-2p^{r-1}-p^{n-2}-2p^{\frac{n}{2}-2})$, $p_{3,4}^2=\frac{(p-1)}{2}p^{n-r-1}$, $p_{3,5}^2=\frac{(p-1)}{2}(p^{r-2}-(p-1)p^{\frac{n}{2}-2}-p^{n-r-1})$ and $p_{3,6}^2=p_{3,7}^2=\frac{(p-1)^2}{4}(p^{r-2}+p^{\frac{n}{2}-2})$.
\item[$\bullet$] $p_{u,v}^2=0$ for $4\le u\le v\le 7$.
		\end{itemize}
	\item [$(\rm{\romannumeral4})$] $w=3$
	\begin{itemize}
		\item [$\bullet$] $p_{0,3}^3=1$ and $p_{0,v}^3=0$ for $0\le v\le 7$ and $v\ne 3$.
		\item[$\bullet$] $p_{1,1}^3=p^{n-2}-2p^{r-2}-p^{\frac{n}{2}-1}+2p^{\frac{n}{2}-2}$, $p_{1,2}^3=\frac{(p-1)}{2}(p^{n-2}-2p^{r-2}+2p^{\frac{n}{2}-2})$, $p_{1,3}^3=\frac{1}{2}(p^{n-1}+2p^{r-2}-2p^{r-1}-p^{n-2}-2p^{\frac{n}{2}-2})$, $p_{1,4}^3=p^{n-r-1}$, $p_{1,5}^3=p^{r-2}-(p-1)p^{\frac{n}{2}-2}-p^{n-r-1}$ and $p_{1,6}^3=p_{1,7}^3=\frac{(p-1)}{2}(p^{r-2}+p^{\frac{n}{2}-2})$.
		\item[$\bullet$] $p_{2,2}^3=p_{2,3}^3=\frac{(p-1)}{4}(p^{n-1}+2p^{r-2}-2p^{r-1}-p^{n-2}-2p^{\frac{n}{2}-2})$, $p_{2,4}^3=\frac{(p-1)}{2}p^{n-r-1}$, $p_{2,5}^3=\frac{(p-1)}{2}(p^{r-2}-(p-1)p^{\frac{n}{2}-2}-p^{n-r-1})$ and $p_{2,6}^3=p_{2,7}^3=\frac{(p-1)^2}{4}(p^{r-2}+p^{\frac{n}{2}-2})$.
		\item[$\bullet$] $p_{3,3}^3=\frac{1}{4}(4p^{r-1}-2p^{n-1}+p^{n-2}-2p^{r-2}+2p^{\frac{n}{2}-1}+p^n-2p^r+2p^{\frac{n}{2}-2})$, $p_{3,4}^3=\frac{1}{2}(p^{n-r}-p^{n-r-1}-2)$, $p_{3,5}^3=\frac{(p-1)}{2}(p^{r-2}-(p-1)p^{\frac{n}{2}-2}-p^{n-r-1})$ and $p_{3,6}^3=p_{3,7}^3=\frac{(p-1)^2}{4}(p^{r-2}+p^{\frac{n}{2}-2})$.
		\item[$\bullet$] $p_{u,v}^3=0$ for $4\le u\le v\le 7$.
	\end{itemize}	
	\item [$(\rm{\romannumeral5})$] $w=4$
	\begin{itemize}
		\item [$\bullet$] $p_{0,4}^4=1$ and $p_{0,v}^4=0$ for $0\le v\le 7$ and $v\ne 4$.
		\item[$\bullet$] $p_{1,1}^4=p^{n-2}-p^{r-1}$, $p_{1,2}^4=p_{1,3}^4=\frac{(p-1)}{2}p^{n-2}$ and $p_{1,v}^4=0$ for $4\le v\le 7$.
		\item[$\bullet$] $p_{2,2}^4=\frac{(p-1)}{4}(p^{n-1}-2p^{r-1}-p^{n-2})$, $p_{2,3}^4=\frac{(p-1)^2}{4}p^{n-2}$ and $p_{2,v}^4=0$ for $4\le v\le 7$.
		\item[$\bullet$] $p_{3,3}^4=\frac{(p-1)}{4}(p^{n-1}-2p^{r-1}-p^{n-2})$ and $p_{3,v}^4=0$ for $4\le v\le 7$.
		\item[$\bullet$] $p_{4,4}^4=p^{n-r}-2$ and $p_{4,5}^4=p_{4,6}^4=p_{4,7}^4=0$.
		\item[$\bullet$] $p_{5,5}^4=p^{r-1}-(p-1)p^{\frac{n}{2}-1}-p^{n-r}$ and $p_{5,6}^4=p_{5,7}^4=0$.
		\item[$\bullet$] $p_{6,6}^4=\frac{(p-1)}{2}(p^{r-1}+p^{\frac{n}{2}-1})$ and $p_{6,7}^4=0$.
		\item[$\bullet$] $p_{7,7}^4=\frac{(p-1)}{2}(p^{r-1}+p^{\frac{n}{2}-1})$.
	\end{itemize}	
		\item [$(\romannumeral6)$] $w=5$
	\begin{itemize}
		\item [$\bullet$] $p_{0,5}^5=1$ and $p_{0,v}^5=0$ for $0\le v\le7$ and $v\ne 5$.
		\item[$\bullet$] $p_{1,1}^5=p^{n-2}-p^{r-2}$, $p_{1,2}^5=p_{1,3}^5=\frac{(p-1)}{2}(p^{n-2}-p^{r-2})$ and $p_{1,v}^5=0$ for $4\le v\le 7$.
		\item[$\bullet$] $p_{2,2}^5=p_{2,3}^5=\frac{(p-1)^2}{4}(p^{n-2}-p^{r-2})$ and $p_{2,v}^5=0$ for $4\le v\le 7$.
		\item[$\bullet$] $p_{3,3}^5=\frac{(p-1)^2}{4}(p^{n-2}-p^{r-2})$ and $p_{3,v}^5=0$ for $4\le v\le 7$.
		\item[$\bullet$] $p_{4,4}^5=0$, $p_{4,5}^5=p^{n-r}-1$ and $p_{4,6}^5=p_{4,7}^5=0$.
		\item[$\bullet$] $p_{5,5}^5=p^{r-2}+p^{\frac{n}{2}-1}-2p^{n-r}-p^{\frac{n}{2}}$ and $p_{5,6}^5=p_{5,7}^5=\frac{(p-1)}{2}p^{r-2}$.
			\item[$\bullet$] $p_{6,6}^5=\frac{(p-1)}{4}(p^{r-1}-p^{r-2}+2p^{\frac{n}{2}-1})$ and $p_{6,7}^5=\frac{(p-1)^2}{4}p^{r-2}$.
			\item[$\bullet$] $p_{7,7}^5=\frac{(p-1)}{4}(p^{r-1}-p^{r-2}+2p^{\frac{n}{2}-1})$.
	\end{itemize}
	\item [$(\rm{\romannumeral7})$] $w=6$	
	\begin{itemize}
		\item [$\bullet$] $p_{0,6}^6=1$ and $p_{0,v}^6=0$ for $0\le v\le 7$ and $v\ne 6$.
		\item[$\bullet$] $p_{1,1}^6=p^{n-2}-p^{r-2}$, $p_{1,2}^6=p_{1,3}^6=\frac{(p-1)}{2}(p^{n-2}-p^{r-2})$ and $p_{1,v}^6=0$ for $4\le v\le 7$.
		\item[$\bullet$] $p_{2,2}^6=p_{2,3}^6=\frac{(p-1)^2}{4}(p^{n-2}-p^{r-2})$ and $p_{2,v}^6=0$ for $4\le v\le 7$.
		\item[$\bullet$] $p_{3,3}^6=\frac{(p-1)^2}{4}(p^{n-2}-p^{r-2})$ and $p_{3,v}^6=0$ for $4\le v\le 7$.
		\item[$\bullet$] $p_{4,4}^6=p_{4,5}^6=0$, $p_{4,6}^6=p^{n-r}-1$ and $p_{4,7}^6=0$.
		\item[$\bullet$] $p_{5,5}^6=p^{r-2}-p^{\frac{n}{2}-1}$, $p_{5,6}^6=\frac{1}{2}(p^{r-1}+3p^{\frac{n}{2}-1}-2p^{n-r}-p^{r-2}-p^{\frac{n}{2}})$ and $p_{5,7}^6=\frac{(p-1)}{2}
			(p^{r-2}-p^{\frac{n}{2}-1})$.
			\item[$\bullet$] $p_{6,6}^6=\frac{1}{4}(p^{r-2}-2p^{r-1}-6p^{\frac{n}{2}-1}+p^r+2p^{\frac{n}{2}})$ and $p_{6,7}^6=\frac{(p-1)}{4}(p^{r-1}-p^{r-2}+2p^{\frac{n}{2}-1})$.
				\item[$\bullet$] $p_{7,7}^6=\frac{(p-1)}{4}(p^{r-1}-p^{r-2}+2p^{\frac{n}{2}-1})$.		
	\end{itemize}
\item[$(\rm{\romannumeral8})$] $w=7$
\begin{itemize}
	\item [$\bullet$] $p_{0,7}^7=1$ and $p_{0,v}^7=0$ for $0\le v\le 6$.
	\item[$\bullet$] $p_{1,1}^7=p^{n-2}-p^{r-2}$, $p_{1,2}^7=p_{1,3}^7=\frac{(p-1)}{2}(p^{n-2}-p^{r-2})$ and $p_{1,v}^7=0$ for $4\le v\le 7$.
	\item[$\bullet$] $p_{2,2}^7=p_{2,3}^7=\frac{(p-1)^2}{4}(p^{n-2}-p^{r-2})$ and $p_{2,v}^7=0$ for $4\le v\le 7$.
	\item[$\bullet$] $p_{3,3}^7=\frac{(p-1)^2}{4}(p^{n-2}-p^{r-2})$ and $p_{3,v}^7=0$ for $4\le v\le 7$.
	\item[$\bullet$] $p_{4,7}^7=p^{n-r}-1$ and $p_{4,4}^7=p_{4,5}^7=p_{4,6}^7=0$.
	\item[$\bullet$] $p_{5,5}^7=p^{r-2}-p^{\frac{n}{2}-1}$, $p_{5,6}^7=\frac{(p-1)}{2}(p^{r-2}-p^{\frac{n}{2}-1})$ and $p_{5,7}^7=\frac{1}{2}(p^{r-1}+3p^{\frac{n}{2}-1}-2p^{n-r}-p^{r-2}-p^{\frac{n}{2}})$.
		\item[$\bullet$] $p_{6,6}^7=p_{6,7}^7=\frac{(p-1)}{4}(p^{r-1}-p^{r-2}+2p^{\frac{n}{2}-1})$.
		\item[$\bullet$] $p_{7,7}^7=\frac{1}{4}(p^{r-2}-2p^{r-1}-6p^{\frac{n}{2}-1}+p^r+2p^{\frac{n}{2}})$.	
\end{itemize}	
\end{itemize}
9. The first and second eigenmatrices, the intersection numbers and the Krein parameters of the association scheme induced by  $U_9$.

Note that the first and second eigenmatrices of the association scheme induced by $U_9$ are the same. The first (second) eigenmatrix is given in Table 12.
\begin{table}[h]
	\vspace{-15pt}
	\centering
	\caption{The first (second) eigenmatrix of the association scheme induced by $U_9$}
	\renewcommand\arraystretch{1.5}	
	\resizebox{\textwidth}{!}{
		\begin{tabular}{|m{2.1cm}<{\centering}|c|c|m{2cm}<{\centering}|c|m{2cm}<{\centering}|m{2cm}<{\centering}|}
			\hline
			\diagbox{$i$}{{\tiny $P_9(ij)\ (Q_9(ij))$}}{$j$}&1&2&3&4&5&6\\
			\hline
			1&	1&$p^{n-1}-p^{r-1}$&$(p-1)(p^{n-1}-p^{r-1})$&$p^{n-r}-1$&$p^{r-1}-p^{n-r}-(p-1)p^{\frac{n}{2}-1}$&$(p-1)(p^{r-1}+p^{\frac{n}{2}-1})$\\
			\hline
			2&1&$(p-1)p^{\frac{n}{2}-1}$&$-(p-1)p^{\frac{n}{2}-1}$&$-1$&0&0\\
			\hline
			3&1&$-p^{\frac{n}{2}-1}$&$p^{\frac{n}{2}-1}$&$-1$&0&0\\
			\hline
			4&1&$-p^{r-1}$&$-(p-1)p^{r-1}$&$p^{n-r}-1$&$p^{r-1}-p^{n-r}-(p-1)p^{\frac{n}{2}-1}$&$(p-1)(p^{\frac{n}{2}-1}+p^{r-1})$\\
			\hline
			5&1&0&0&$p^{n-r}-1$&$-p^{n-r}-p^{\frac{n}{2}}+p^{\frac{n}{2}-1}$&$(p-1)p^{\frac{n}{2}-1}$\\
			\hline
			6&1&0&0&$p^{n-r}-1$&$p^{\frac{n}{2}-1}-p^{n-r}$&$-p^{\frac{n}{2}-1}$\\
			\hline
	\end{tabular}}
	\vspace{-15pt}
\end{table}

Since $p_{u,v}^{w}=p_{v,u}^{w}$ for any $u,v,w\in\{0,1,\dots,5\}$, we only give the values of the intersection numbers $p_{u,v}^w$ for $u\le v$ in the following six cases. The Krein parameter $q_{u,v}^w$ is the same as the intersection number $p_{u,v}^w$ for any $u,v,w\in\{0,1,\dots,5\}$.
\begin{itemize}
	\item [$(\rm{\romannumeral1})$] $w=0$
	\begin{itemize}
		\item [$\bullet$] $u=v$\\
		$p_{0,0}^0=1$, $p_{1,1}^0=p^{n-1}-p^{r-1}$, $p_{2,2}^0=(p-1)(p^{n-1}-p^{r-1})$, $p_{3,3}^0=p^{n-r}-1$, $p_{4,4}^0=p^{r-1}-(p-1)p^{\frac{n}{2}-1}-p^{n-r}$ and $p_{5,5}^0=(p-1)(p^{r-1}+p^{\frac{n}{2}-1})$.
		\item[$\bullet$] $u\ne v$, $p_{u,v}^0=0$ for $0\le u<v\le 5$.
	\end{itemize}
	\item [$(\rm{\romannumeral2})$] $w=1$
	\begin{itemize}
		\item [$\bullet$] $p_{0,1}^1=1$ and $p_{0,v}^1=0$ for $0\le v\le 5$ and $v\ne 1$.
		\item[$\bullet$] $p_{1,1}^1=p^{n-2}-2p^{r-2}-3p^{\frac{n}{2}-1}+2p^{\frac{n}{2}-2}+p^{\frac{n}{2}}$, $p_{1,2}^1=(p-1)(p^{n-2}-2p^{r-2}-p^{\frac{n}{2}-1}+2p^{\frac{n}{2}-2})$, $p_{1,3}^1=p^{n-r-1}-1$, $p_{1,4}^1=p^{r-2}-(p-1)p^{\frac{n}{2}-2}-p^{n-r-1}$ and $p_{1,5}^1=(p-1)(p^{r-2}+p^{\frac{n}{2}-2})$.
		\item[$\bullet$] $p_{2,2}^1=(p-1)(p^{n-1}-2p^{r-1}-p^{n-2}+2p^{r-2}+p^{\frac{n}{2}-1}-2p^{\frac{n}{2}-2})$, $p_{2,3}^1=(p-1)p^{n-r-1}$, $p_{2,4}^1=(p-1)(p^{r-2}-(p-1)p^{\frac{n}{2}-2}-p^{n-r-1})$ and $p_{2,5}^1=(p-1)^2(p^{r-2}+p^{\frac{n}{2}-2})$.
		\item[$\bullet$] $p_{u,v}^1=0$ for $3\le u\le v\le 5$.
	\end{itemize}
	\item [$(\rm{\romannumeral3})$] $w=2$
	\begin{itemize}
		\item [$\bullet$] $p_{0,2}^2=1$ and $p_{0,v}^2=0$ for $0\le v\le 5$ and $v\ne 2$.
		\item[$\bullet$] $p_{1,1}^2=p^{n-2}-2p^{r-2}-p^{\frac{n}{2}-1}+2p^{\frac{n}{2}-2}$, $p_{1,2}^2=p^{n-1}-2p^{r-1}-p^{n-2}+2p^{r-2}+p^{\frac{n}{2}-1}-2p^{\frac{n}{2}-2}$, $p_{1,3}^2=p^{n-r-1}$, $p_{1,4}^2=p^{r-2}-(p-1)p^{\frac{n}{2}-2}-p^{n-r-1}$ and $p_{1,5}^2=(p-1)(p^{r-2}+p^{\frac{n}{2}-2})$.
		\item[$\bullet$] $p_{2,2}^2=(p-1)^2(p^{n-2}-2p^{r-2})-p^{\frac{n}{2}-3}(p^2-2p)$, $p_{2,3}^2=p^{n-r}-p^{n-r-1}-1$, $p_{2,4}^2=(p-1)(p^{r-2}-(p-1)p^{\frac{n}{2}-2}-p^{n-r-1})$ and $p_{2,5}^2=(p-1)^2(p^{r-2}+p^{\frac{n}{2}-2})$.
		\item[$\bullet$] $p_{u,v}^2=0$ for $3\le u\le v\le 5$.
	\end{itemize}
	\item [$(\rm{\romannumeral4})$] $w=3$
	\begin{itemize}
		\item [$\bullet$] $p_{0,3}^3=1$ and $p_{0,v}^3=0$ for $0\le v\le 5$ and $v\ne 3$.
		\item[$\bullet$] $p_{1,1}^3=p^{n-2}-p^{r-1}$, $p_{1,2}^3=(p-1)p^{n-2}$ and $p_{1,v}^3=0$ for $3\le v\le 5$.
		\item[$\bullet$] $p_{2,2}^3=(p-1)(p^{n-1}-p^{r-1}-p^{n-2})$ and $p_{2,v}^3=0$ for $3\le v\le 5$.
		\item[$\bullet$] $p_{3,3}^3=p^{n-r}-2$ and $p_{3,4}^3=p_{3,5}^3=0$.
		\item[$\bullet$] $p_{4,4}^3=p^{r-1}-(p-1)p^{\frac{n}{2}-1}-p^{n-r}$ and $p_{4,5}^3=0$.
		\item[$\bullet$]$p_{5,5}^3=(p-1)(p^{r-1}+p^{\frac{n}{2}-1})$.
	\end{itemize}
	\item [$(\rm{\romannumeral5})$] $w=4$
	\begin{itemize}
		\item [$\bullet$] $p_{0,4}^4=1$ and $p_{0,v}^4=0$ for $0\le v\le 5$ and $v\ne 4$.
		\item[$\bullet$] $p_{1,1}^4=p^{n-2}-p^{r-2}$, $p_{1,2}^4=(p-1)(p^{n-2}-p^{r-2})$ and $p_{1,v}^4=0$ for $3\le v\le 5$.
		\item[$\bullet$] $p_{2,2}^4=(p-1)^2(p^{n-2}-p^{r-2})$ and $p_{2,v}^4=0$ for $3\le v\le 5$.
		\item[$\bullet$] $p_{3,3}^4=0$, $p_{3,4}^4=p^{n-r}-1$ and $p_{3,5}^4=0$.
		\item[$\bullet$] $p_{4,4}^4=p^{r-2}+p^{\frac{n}{2}-1}-2p^{n-r}-p^{\frac{n}{2}}$ and $p_{4,5}^4=(p-1)p^{r-2}$.
		\item[$\bullet$] $p_{5,5}^4=(p-1)(p^{r-1}-p^{r-2}+p^{\frac{n}{2}-1})$.
	\end{itemize}
	\item [$(\rm{\romannumeral6})$] $w=5$
	\begin{itemize}
		\item [$\bullet$] $p_{0,5}^5=1$ and $p_{0,v}^5=0$ for $0\le v\le 4$.
		\item[$\bullet$] $p_{1,1}^5=p^{n-2}-p^{r-2}$, $p_{1,2}^5=(p-1)(p^{n-2}-p^{r-2})$ and $p_{1,v}^5=0$ for $3\le v\le 5$.
		\item[$\bullet$] $p_{2,2}^5=(p-1)^2(p^{n-2}-p^{r-2})$ and $p_{2,v}^5=0$ for $3\le v\le 5$.
		\item[$\bullet$] $p_{3,3}^5=p_{3,4}^5=0$ and $p_{3,5}^5=p^{n-r}-1$.
		\item[$\bullet$] $p_{4,4}^5=p^{r-2}-p^{\frac{n}{2}-1}$ and $p_{4,5}^5=p^{r-1}+2p^{\frac{n}{2}-1}-p^{n-r}-p^{r-2}-p^{\frac{n}{2}}$.
		\item[$\bullet$] $p_{5,5}^5=p^{r-2}-2p^{r-1}-3p^{\frac{n}{2}-1}+p^r+2p^{\frac{n}{2}}$.
	\end{itemize}
\end{itemize}
10. The first and second eigenmatrices, the intersection numbers and the Krein parameters of the association scheme induced by  $U_{10}$.
	
	The first and second eigenmatrices of the association scheme induced by $U_{10}$ are given by Tables 13 and 14. 
	\begin{table}[h]
		\vspace{-15pt}
		\centering
		\caption{The first eigenmatrix of the association scheme induced by $U_{10}$}
		\renewcommand\arraystretch{1.5}	
		\resizebox{\textwidth}{!}{
			\begin{tabular}{|m{1.4cm}<{\centering}|c|c|c|c|m{3.5cm}<{\centering}|}
				\hline
				\diagbox{$i$}{$P_{10}(ij)$}{$j$}&1&2&$3\le j\le \frac{p+3}{2}$&$\frac{p+5}{2}$& $\frac{p+7}{2}\le j\le \frac{3p+3}{2}$\\
				\hline
				1&1&$p^{\frac{n-1}{2}}-1$&$2p^{\frac{n-1}{2}}$&$p^{n-1}-p^{\frac{n-1}{2}}$&$p^{n-1}-p^{\frac{n-1}{2}}$\\
				\hline
				2&1&$p^{\frac{n-1}{2}}-1$&$2p^{\frac{n-1}{2}}$&$-p^{\frac{n-1}{2}}$&$-p^{\frac{n-1}{2}}$\\
				\hline
				$3\le i\le \frac{p+3}{2}$&1&$p^{\frac{n-1}{2}}-1$&$p^{\frac{n-2}{2}}S^{(t)}(s_{j-2},s_{i-2})$&0&0\\
				\hline
				$\frac{p+5}{2}$&1&$-1$&0&0&$-\eta(j-\frac{p+5}{2})p^{\frac{n-1}{2}}$\\
				\hline
				$\frac{p+7}{2}\le i\le \frac{3p+3}{2}$&1&$-1$&0&$-\eta(i-\frac{p+5}{2})p^{\frac{n-1}{2}}$&$-p^{\frac{n-2}{2}}S^{(t)}(j-\frac{p+5}{2}, i-\frac{p+5}{2})$\\
				\hline
		\end{tabular}}
		\vspace{-15pt}
	\end{table}
	\begin{table}[h]
		\vspace{-15pt}
		\centering
		\caption{The second eigenmatrix of the association scheme induced by $U_{10}$}
		\renewcommand\arraystretch{1.5}	
		\resizebox{\textwidth}{!}{
			\begin{tabular}{|m{1.4cm}<{\centering}|c|c|c|c|m{3.5cm}<{\centering}|}
				\hline
				\diagbox{$i$}{$Q_{10}(ij)$}{$j$}&1&2&$3\le j\le \frac{p+3}{2}$&$\frac{p+5}{2}$& $\frac{p+7}{2}\le j\le \frac{3p+3}{2}$\\
				\hline
				1&1&$p^{\frac{n-1}{2}}-1$&$2p^{\frac{n-1}{2}}$&$p^{n-1}-p^{\frac{n-1}{2}}$&$p^{n-1}-p^{\frac{n-1}{2}}$\\
				\hline
				2&1&$p^{\frac{n-1}{2}}-1$&$2p^{\frac{n-1}{2}}$&$-p^{\frac{n-1}{2}}$&$-p^{\frac{n-1}{2}}$\\
				\hline
				$3\le i\le \frac{p+3}{2}$&1&$p^{\frac{n-1}{2}}-1$&$p^{\frac{n-2}{2}}S^{(h)}(s_{j-2},s_{i-2})$&0&0\\
				\hline
				$\frac{p+5}{2}$&1&$-1$&0&0&$-\eta(j-\frac{p+5}{2})p^{\frac{n-1}{2}}$\\
				\hline
				$\frac{p+7}{2}\le i\le \frac{3p+3}{2}$&1&$-1$&0&$-\eta(i-\frac{p+5}{2})p^{\frac{n-1}{2}}$&$-p^{\frac{n-2}{2}}S^{(h)}(j-\frac{p+5}{2}, i-\frac{p+5}{2})$\\
				\hline
		\end{tabular}}
		\vspace{-15pt}
	\end{table}

	Since $p_{u,v}^{w}=p_{v,u}^{w}$ for any $u,v,w\in\{0,1,\dots,\frac{3p+1}{2}\}$, we only give the values of the intersection numbers $p_{u,v}^w$ for $u\le v$ in the following eight cases. 
\begin{itemize}
	\item [$(\rm{\romannumeral1})$] $w=0$ and $0\le u\le v\le \frac{3p+1}{2}$
	\begin{itemize}
		\item[$\bullet$] $u=v$\\
		$p_{0,0}^0=1$, $p_{1,1}^0=p^{\frac{n-1}{2}-1}$, $p_{u,u}^0=2p^{\frac{n-1}{2}}$ for $2\le u\le \frac{p+1}{2}$ and $p_{u,u}^0=p^{n-1}-p^{\frac{n-1}{2}}$ for $\frac{p+3}{2}\le u\le \frac{3p+1}{2}$.
		\item[$\bullet$] $u\ne v$, $p_{u,v}^0=0$.
		\end{itemize}
	\item [$(\rm{\romannumeral2})$] $u=0$ and $1\le w\le \frac{3p+1}{2}$ and $0\le v\le \frac{3p+1}{2}$\\
	$p_{0,v}^w=\begin{cases}
	1,&\text{if}\ w=v,\\
	0,&\text{if}\ w\ne v.
	\end{cases}$
	\item [$(\rm{\romannumeral3})$] $1\le w\le \frac{p+1}{2}$ and $1\le u\le v\le \frac{p+1}{2}$
	\begin{itemize}
		\item [$\bullet$] $w=1$\\
		$p_{u,v}^1=\begin{cases}
		p^{\frac{n-1}{2}}-2, &\text{if}\ u=v=1,\\
		0,&\text{if}\ u=1\ \text{and}\ v\ge 2,\\
		2p^{\frac{n-1}{2}},&\text{if}\ u\ge 2\ \text{and}\ u=v,\\
		0,&\text{if}\ u\ge 2\ \text{and}\ u\ne v.
		\end{cases}$
		\item[$\bullet$] $2\le w\le \frac{p+1}{2}$\\
		$p_{u,v}^w=\begin{cases}
		0,&\hspace{0.5cm}\text{if}\ u=v=1,\\
		2(p^{\frac{n-3}{2}}-p^{-1})+(p^{\frac{n-5}{2}}-p^{-2})\times\\
		\sum\limits_{i\in SQ}S^{(h)}(i,s_{w-1})S^{(t)}(s_{v-1},i),&\hspace{0.5cm}\text{if}\ u=1\ \text{and}\ v\ge 2,\\
		4p^{\frac{n-3}{2}}+p^{\frac{n}{2}-3}\sum\limits_{i\in SQ}	S^{(h)}(i,s_{w-1})\times\\
	S^{(t)}(s_{u-1},i)S^{(t)}(s_{v-1},i),&\hspace{0.5cm}\text{if}\  2\le u\le v.
		\end{cases}$
	\end{itemize}
\item [$(\rm{\romannumeral4})$] $1\le w\le \frac{p+1}{2}$, $1\le u\le \frac{p+1}{2}$ and $\frac{p+3}{2}\le v\le \frac{3p+1}{2}$, $p_{u,v}^w=0$.
\item [$(\rm{\romannumeral5})$] $1\le w\le \frac{p+1}{2}$ and $\frac{p+3}{2}\le u\le v\le \frac{3p+1}{2}$\\
$p_{u,v}^w=\begin{cases}
p^{n-2}-p^{\frac{n-1}{2}},&\text{if}\ w=1\ \text{and}\ u=v,\\
	p^{n-2},&\text{if}\ w=1\ \text{and}\ u\ne v,\\
	p^{n-2}-p^{\frac{n-3}{2}},&\text{if}\ w\ge 2.
		\end{cases}$
\item [$(\rm{\romannumeral6})$] $\frac{p+3}{2}\le w\le \frac{3p+1}{2}$ and $1\le u\le v\le \frac{p+1}{2}$, $p_{u,v}^w=0$.
\item [$(\rm{\romannumeral7})$] $\frac{p+3}{2}\le w\le \frac{3p+1}{2}$, $1\le u\le \frac{p+1}{2}$ and $\frac{p+3}{2}\le v\le \frac{3p+1}{2}$\\
$p_{u,v}^w=\begin{cases}
p^{\frac{n-3}{2}}-p^{-1}-p^{-2}\sum\limits_{i\in \mathbb{F}_p}S^{(h)}(i,w-\frac{p+3}{2})S^{(t)}(v-\frac{p+3}{2},i),&\text{if}\ u=1,\\
	2p^{\frac{n-3}{2}},&\text{if}\ u\ge 2.
\end{cases}	$
\item [$(\rm{\romannumeral8})$] $\frac{p+3}{2}\le w\le \frac{3p+1}{2}$ and $\frac{p+3}{2}\le u\le v\le \frac{3p+1}{2}$\\
$p_{u,v}^w=p^{n-2}-2p^{\frac{n-3}{2}}-p^{\frac{n}{2}-3}\sum\limits_{i\in\mathbb{F}_p}S^{(h)}(i,w-\frac{p+3}{2})S^{(t)}(u-\frac{p+3}{2},i)S^{(t)}(v-\frac{p+3}{2},i)$.
\end{itemize}

Since $q_{u,v}^{w}=q_{v,u}^{w}$ for any $u,v,w\in\{0,1,\dots,\frac{3p+1}{2}\}$, we only give the values of the Krein parameters $q_{u,v}^w$ for $u\le v$ in the following eight cases. 
\begin{itemize}
	\item [$(\rm{\romannumeral1})$] $w=0$ and $0\le u\le v\le \frac{3p+1}{2}$
	\begin{itemize}
		\item[$\bullet$] $u=v$\\
		$q_{0,0}^0=1$, $q_{1,1}^0=p^{\frac{n-1}{2}-1}$, $q_{u,u}^0=2p^{\frac{n-1}{2}}$ for $2\le u\le \frac{p+1}{2}$ and $q_{u,u}^0=p^{n-1}-p^{\frac{n-1}{2}}$ for $\frac{p+3}{2}\le u\le \frac{3p+1}{2}$.
		\item[$\bullet$] $u\ne v$, $q_{u,v}^0=0$.
	\end{itemize}
	\item [$(\rm{\romannumeral2})$] $u=0$ and $1\le w\le \frac{3p+1}{2}$ and $0\le v\le \frac{3p+1}{2}$\\
	$q_{0,v}^w=\begin{cases}
	1,&\text{if}\ w=v,\\
	0,&\text{if}\ w\ne v.
	\end{cases}$
	\item [$(\rm{\romannumeral3})$] $1\le w\le \frac{p+1}{2}$ and $1\le u\le v\le \frac{p+1}{2}$
	\begin{itemize}
		\item [$\bullet$] $w=1$\\
		\newpage
		$q_{u,v}^1=\begin{cases}
		p^{\frac{n-1}{2}}-2, &\text{if}\ u=v=1,\\
		0,&\text{if}\ u=1\ \text{and}\ v\ge 2,\\
		2p^{\frac{n-1}{2}},&\text{if}\ u\ge 2\ \text{and}\ u=v,\\
		0,&\text{if}\ u\ge 2\ \text{and}\ u\ne v.
		\end{cases}$
		\item[$\bullet$] $2\le w\le \frac{p+1}{2}$\\
		$q_{u,v}^w=\begin{cases}
		0,&\hspace{0.5cm}\text{if}\ u=v=1,\\
		2(p^{\frac{n-3}{2}}-p^{-1})+(p^{\frac{n-5}{2}}-p^{-2})\times\\
		\sum\limits_{i\in SQ}S^{(t)}(i,s_{w-1})S^{(h)}(s_{v-1},i),&\hspace{0.5cm}\text{if}\ u=1\ \text{and}\ v\ge 2,\\
		4p^{\frac{n-3}{2}}+p^{\frac{n}{2}-3}\sum\limits_{i\in SQ}S^{(t)}(i,s_{w-1})\times\\
		S^{(h)}(s_{u-1},i)S^{(h)}(s_{v-1},i),&\hspace{0.5cm}\text{if}\  2\le u\le v.
		\end{cases}$
	\end{itemize}
	\item [$(\rm{\romannumeral4})$] $1\le w\le \frac{p+1}{2}$, $1\le u\le \frac{p+1}{2}$ and $\frac{p+3}{2}\le v\le \frac{3p+1}{2}$, $q_{u,v}^w=0$.
	\item [$(\rm{\romannumeral5})$] $1\le w\le \frac{p+1}{2}$ and $\frac{p+3}{2}\le u\le v\le \frac{3p+1}{2}$\\
	$q_{u,v}^w=\begin{cases}
	p^{n-2}-p^{\frac{n-1}{2}},&\text{if}\ w=1\ \text{and}\ u=v,\\
	p^{n-2},&\text{if}\ w=1\ \text{and}\ u\ne v,\\
	p^{n-2}-p^{\frac{n-3}{2}},&\text{if}\ w\ge 2.
	\end{cases}$
	\item [$(\rm{\romannumeral6})$] $\frac{p+3}{2}\le w\le \frac{3p+1}{2}$ and $1\le u\le v\le \frac{p+1}{2}$, $q_{u,v}^w=0$.
	\item [$(\rm{\romannumeral7})$] $\frac{p+3}{2}\le w\le \frac{3p+1}{2}$, $1\le u\le \frac{p+1}{2}$ and $\frac{p+3}{2}\le v\le \frac{3p+1}{2}$\\
	$q_{u,v}^w=\begin{cases}
	p^{\frac{n-3}{2}}-p^{-1}-p^{-2}\sum\limits_{i\in \mathbb{F}_p}S^{(t)}(i,w-\frac{p+3}{2})S^{(h)}(v-\frac{p+3}{2},i),&\text{if}\ u=1,\\
	2p^{\frac{n-3}{2}},&\text{if}\ u\ge 2.
	\end{cases}	$
	\item [$(\rm{\romannumeral8})$] $\frac{p+3}{2}\le w\le \frac{3p+1}{2}$ and $\frac{p+3}{2}\le u\le v\le \frac{3p+1}{2}$\\
	$q_{u,v}^w=p^{n-2}-2p^{\frac{n-3}{2}}-p^{\frac{n}{2}-3}\sum\limits_{i\in\mathbb{F}_p}S^{(t)}(i,w-\frac{p+3}{2})S^{(h)}(u-\frac{p+3}{2},i)S^{(h)}(v-\frac{p+3}{2},i)$.
\end{itemize}
	11. The first and second eigenmatrices, the intersection numbers and the  Krein parameters of the association scheme induced by $U_{11}$.

The first and second eigenmatrices of the association scheme induced by $U_{11}$ are given by Tables 15 and 16. 
\begin{table}[h]
	\vspace{-15pt}
	\centering
	\caption{The first eigenmatrix of the association scheme induced by $U_{11}$}
	\renewcommand\arraystretch{1.5}	
	\resizebox{\textwidth}{!}{
		\begin{tabular}{|m{1.4cm}<{\centering}|c|c|m{3.8cm}<{\centering}|c|m{3.5cm}<{\centering}|}
			\hline
			\diagbox{$i$}{$P_{11}(ij)$}{$j$}&1&2&$3\le j\le \frac{p+3}{2}$&$\frac{p+5}{2}$& $\frac{p+7}{2}\le j\le \frac{3p+3}{2}$\\
			\hline
			1&1&$p^{\frac{n-1}{2}}-1$&$2p^{\frac{n-1}{2}}$&$p^{n-1}-p^{\frac{n-1}{2}}$&$p^{n-1}-p^{\frac{n-1}{2}}$\\
			\hline
			2&1&$-1$&0&0&$\eta(j-\frac{p+5}{2})p^{\frac{n-1}{2}}$\\
			\hline
			$3\le i\le p+1$&1&$-1$&0&$-\eta(i-2)p^{\frac{n-1}{2}}$&$\sqrt{-1}p^{\frac{n-2}{2}}S^{(t)}(j-\frac{p+5}{2},i-2)$\\
			\hline
			$p+2$&1&$p^{\frac{n-1}{2}}-1$&$2p^{\frac{n-1}{2}}$&$-p^{\frac{n-1}{2}}$&$-p^{\frac{n-1}{2}}$\\
			\hline
			$p+3\le i\le \frac{3p+3}{2}$&1&$p^{\frac{n-1}{2}}-1$&$-\sqrt{-1}p^{\frac{n-2}{2}}S^{(t)}(n_{j-2}, s_{i-p-2})$&0&0\\
			\hline
	\end{tabular}}
	\vspace{-15pt}
\end{table}
\begin{table}[h]
	\vspace{0pt}
	\centering
	\caption{The second eigenmatrix of the association scheme induced by $U_{11}$}
	\renewcommand\arraystretch{1.5}	
	\resizebox{\textwidth}{!}{
		\begin{tabular}{|m{1.4cm}<{\centering}|c|c|m{3.55cm}<{\centering}|c|m{3.6cm}<{\centering}|}
			\hline
			\diagbox{$i$}{$Q_{11}(ij)$}{$j$}&1&2&$3\le j\le p+1$&$p+2$& $p+3\le j\le \frac{3p+3}{2}$\\
			\hline
			1&1&$p^{n-1}-p^{\frac{n-1}{2}}$&$p^{n-1}-p^{\frac{n-1}{2}}$&$p^{\frac{n-1}{2}}-1$&$2p^{\frac{n-1}{2}}$\\
			\hline
			2&1&$-p^{\frac{n-1}{2}}$&$-p^{\frac{n-1}{2}}$&$p^{\frac{n-1}{2}}-1$&$2p^{\frac{n-1}{2}}$\\
			\hline
			$3\le i\le \frac{p+3}{2}$&1&0&0&$p^{\frac{n-1}{2}}-1$&$\sqrt{-1}p^{\frac{n-2}{2}}S^{(h)}(s_{j-p-2},n_{i-2})$\\
			\hline
			$\frac{p+5}{2}$&1&0&$-\eta(j-2)p^{\frac{n-1}{2}}$&$-1$&0\\
			\hline
			$\frac{p+7}{2}\le i\le \frac{3p+3}{2}$&1&$\eta(i-\frac{p+5}{2})p^{\frac{n-1}{2}}$&$-\sqrt{-1}p^{\frac{n-2}{2}}S^{(h)}(j-2,i-\frac{p+5}{2})$&$-1$&0\\
			\hline
	\end{tabular}}
	\vspace{-15pt}
\end{table}

Since $p_{u,v}^{w}=p_{v,u}^{w}$ for any $u,v,w\in\{0,1,\dots,\frac{3p+1}{2}\}$, we only give the values of  the intersection numbers $p_{u,v}^w$ for $u\le v$ in the following eight cases.	
\begin{itemize}
	\item [$(\rm{\romannumeral1})$] $w=0$ and $0\le u\le v\le \frac{3p+1}{2}$
	\begin{itemize}
		\item [$\bullet$] $u=v$\\
		$p_{0,0}^0=1$, $p_{1,1}^0=p^{\frac{n-1}{2}}-1$, $p_{u,u}^0=2p^{\frac{n-1}{2}}$ for $2\le u\le \frac{p+1}{2}$ and $p_{u,u}^0=p^{n-1}-p^{\frac{n-1}{2}}$ for $\frac{p+3}{2}\le u\le \frac{3p+1}{2}$.
		\item[$\bullet$] $u\ne v$, $p_{u,v}^0=0$.
	\end{itemize}
	\item [$(\rm{\romannumeral2})$] $u=0$, $1\le w\le \frac{3p+1}{2}$ and $0\le v\le \frac{3p+1}{2}$\\
	$p_{0,v}^w=\begin{cases}
	1,&\text{if}\ w=v,\\
	0,&\text{if}\ w\ne v.
	\end{cases}$
		\item [$(\rm{\romannumeral3})$] $1\le w\le \frac{p+1}{2}$ and $1\le u\le v\le \frac{p+1}{2}$
		\begin{itemize}
			\item [$\bullet$] $w=1$\\
			$p_{u,v}^1=\begin{cases}
			p^{\frac{n-1}{2}}-2,&\text{if}\ u=v=1,\\
			0,&\text{if}\ u=1\ \text{and}\ v\ge 2,\\
			2p^{\frac{n-1}{2}},&\text{if}\ u\ge 2\ \text{and}\ u=v,\\
				0,&\text{if}\ u\ge 2\ \text{and}\ u\ne v.
			\end{cases}$
			\item[$\bullet$] $2\le w\le \frac{p+1}{2}$\\
			$p_{u,v}^w=\begin{cases}
			0,&\text{if}\ u=v=1,\\
			2(p^{\frac{n-3}{2}}-p^{-1})+(p^{\frac{n-5}{2}}-p^{-2})\times\\
			\sum\limits_{i\in SQ}S^{(h)}(i,n_{w-1})S^{(t)}(n_{v-1},i),&\text{if}\ u=1\ \text{and}\ v\ge 2,\\
					4p^{\frac{n-3}{2}}-\sqrt{-1}p^{\frac{n-6}{2}}\sum\limits_{i\in SQ}S^{(h)}(i,n_{w-1})\times\\
					S^{(t)}(n_{u-1},i)S^{(t)}(n_{v-1},i),&\text{if}\ 2\le u\le v.
			\end{cases}$
		\end{itemize}
	\item [$(\rm{\romannumeral4})$] $1\le w\le \frac{p+1}{2}$, $1\le u\le \frac{p+1}{2}$ and $\frac{p+3}{2}\le v\le \frac{3p+1}{2}$, $p_{u,v}^w=0$.
	\item [$(\rm{\romannumeral5})$] $1\le w\le \frac{p+1}{2}$ and $\frac{p+3}{2}\le u\le v\le \frac{3p+1}{2}$\\
	$p_{u,v}^w=\begin{cases}
	p^{n-2}-p^{\frac{n-1}{2}},&\text{if}\ w=1\ \text{and}\ u=v,\\
	p^{n-2},&\text{if}\ w=1\ \text{and}\ u\ne v,\\
	p^{n-2}-p^{\frac{n-3}{2}},&\text{if}\ w\ge 2.
	\end{cases}$
	\item [$(\rm{\romannumeral6})$] $\frac{p+3}{2}\le w\le \frac{3p+1}{2}$ and $1\le u\le v\le \frac{p+1}{2}$, $p_{u,v}^w=0$.
	\item [$(\rm{\romannumeral7})$] $\frac{p+3}{2}\le w\le \frac{3p+1}{2}$, $1\le u\le \frac{p+1}{2}$ and $\frac{p+3}{2}\le v\le \frac{3p+1}{2}$\\
	$p_{u,v}^w=\begin{cases}
	p^{\frac{n-3}{2}}-p^{-1}-p^{-2}\sum\limits_{i\in\mathbb{F}_p}S^{(h)}(i,w-\frac{p+3}{2})S^{(t)}(v-\frac{p+3}{2},i),&\text{if}\ u=1,\\
	2p^{\frac{n-3}{2}},&\text{if}\ u\ge 2.
	\end{cases}$

	\item [$(\rm{\romannumeral8})$] $\frac{p+3}{2}\le w\le \frac{3p+1}{2}$ and $\frac{p+3}{2}\le u\le v\le \frac{3p+1}{2}$\\
	$p_{u,v}^w=p^{n-2}-2p^{\frac{n-3}{2}}+\sqrt{-1}p^{\frac{n-6}{2}}\sum\limits_{i\in\mathbb{F}_p}S^{(h)}(i,w-\frac{p+3}{2})S^{(t)}(u-\frac{p+3}{2},i)S^{(t)}(v-\frac{p+3}{2},i)$.
	
\end{itemize}

Since $q_{u,v}^{w}=q_{v,u}^{w}$ for any $u,v,w\in\{0,1,\dots,\frac{3p+1}{2}\}$, we only give the values of  the Krein parameters $q_{u,v}^w$ for $u\le v$ in the following eight cases.
\begin{itemize}
	\item [$(\rm{\romannumeral1})$] $w=0$ and $0\le u\le v\le \frac{3p+1}{2}$
	\begin{itemize}
		\item [$\bullet$] $u=v$\\
		$q_{0,0}^0=1$, $q_{1,1}^0=p^{n-1}-p^{\frac{n-1}{2}}$, $q_{u,u}^0=p^{n-1}-p^{\frac{n-1}{2}}$ for $2\le u\le p$, $q_{p+1,p+1}^0=p^{\frac{n-1}{2}}-1$ and $q_{u,u}^0=2p^{\frac{n-1}{2}}$ for $p+2\le u\le \frac{3p+1}{2}$.
		\item[$\bullet$] $u\ne v$, $q_{u,v}^w=0$.
	\end{itemize}
\item [$(\rm{\romannumeral2})$] $u=0$, $1\le w\le \frac{3p+1}{2}$ and $0\le v\le \frac{3p+1}{2}$\\
	$q_{u,v}^w=\begin{cases}
	1,&\text{if}\ w=v,\\
	0,&\text{if}\ w\ne v.
	\end{cases}$
	\item [$(\rm{\romannumeral3})$] $1\le w\le p$ and $1\le u\le v\le p$\\
	$q_{u,v}^w=p^{n-2}-2p^{\frac{n-3}{2}}-\sqrt{-1}p^{\frac{n-6}{2}}\sum\limits_{i\in\mathbb{F}_p}S^{(t)}(i,w-1)S^{(h)}(u-1,i)S^{(h)}(v-1,i)$.
		\item [$(\rm{\romannumeral4})$] $1\le w\le p$, $1\le u\le p$ and $p+1\le v\le \frac{3p+1}{2}$\\
	$q_{u,v}^w=\begin{cases}
	p^{\frac{n-3}{2}}-p^{-1}-p^{-2}\sum\limits_{i\in\mathbb{F}_p}S^{(t)}(i,w-1)S^{(h)}(u-1,i),&\text{if}\ v=p+1,\\
	2p^{\frac{n-3}{2}},&\text{if}\ v\ge p+2.
	\end{cases}$
	\item [$(\rm{\romannumeral5})$] $1\le w\le p$ and $p+1\le u\le v\le \frac{3p+1}{2}$, $q_{u,v}^w=0$.
	\item [$(\rm{\romannumeral6})$] $p+1\le w\le \frac{3p+1}{2}$ and $1\le u\le v\le p$\\
	$q_{u,v}^w=\begin{cases}
	p^{n-2}-p^{\frac{n-1}{2}},&\text{if}\ w=p+1\ \text{and}\ u=v,\\
	p^{n-2},&\text{if}\ w=p+1\ \text{and}\ u\ne v,\\
	p^{n-2}-p^{\frac{n-3}{2}},&\text{if}\ w\ge p+2.
	\end{cases}$
	\item [$(\rm{\romannumeral7})$] $p+1\le w\le \frac{3p+1}{2}$, $1\le u\le p$ and $p+1\le v\le  \frac{3p+1}{2}$, $q_{u,v}^w=0$.
		\item [$(\rm{\romannumeral8})$] $p+1\le w\le \frac{3p+1}{2}$ and $p+1\le u\le v\le \frac{3p+1}{2}$
		\begin{itemize}
			\item [$\bullet$] $w=p+1$\\
			$q_{u,v}^{p+1}=\begin{cases}
			p^{\frac{n-1}{2}}-2,&\text{if}\ u=v=p+1,\\
				0,&\text{if}\ u=p+1\ \text{and}\ v\ge p+2,\\
				2p^{\frac{n-1}{2}},&\text{if}\ u\ge p+2\ \text{and}\ u=v,\\
				0,&\text{if}\ u\ge p+2\ \text{and}\ u\ne v.
			\end{cases}$
	\item[$\bullet$] $p+2\le w\le \frac{3p+1}{2}$\\
$q_{u,v}^w=\begin{cases}
0,&\text{if}\ u=v=p+1,\\
2(p^{\frac{n-3}{2}}-p^{-1})+(p^{\frac{n-5}{2}}-p^{-2})\times&\text{if}\ u=p+1\ \text{and}\\
\sum\limits_{i\in NSQ}S^{(t)}(i,s_{w-p-1})S^{(h)}(s_{v-p-1},i),&\ v\ge p+2,\\
4p^{\frac{n-3}{2}}+\sqrt{-1}p^{\frac{n-6}{2}}\sum\limits_{i\in NSQ}S^{(t)}(i,s_{w-p-1})\times\\
S^{(h)}(s_{u-p-1},i)S^{(h)}(s_{v-p-1},i),&\text{if}\ p+2\le u\le v.
\end{cases}$
		\end{itemize}
\end{itemize}
		12. The first and second eigenmatrices, the intersection numbers and the Krein parameters of the association scheme induced by  $U_{12}$.
	
	The first and second eigenmatrices of the association scheme induced by $U_{12}$ are given in Tables 17 and 18.
		\begin{table}[h]
		\vspace{-15pt}
		\centering
		\caption{The first eigenmatrix of the association scheme induced by $U_{12}$}
		\renewcommand\arraystretch{1.5}	
		\resizebox{\textwidth}{!}{
			\begin{tabular}{|m{1.4cm}<{\centering}|c|c|m{2cm}<{\centering}|m{2.3cm}<{\centering}|m{2.3cm}<{\centering}|m{2.3cm}<{\centering}|}
				\hline
				\diagbox{$i$}{$P_{12}(ij)$}{$j$}&1&2&3&$4\le j\le p+2$&$p+3$&$p+4\le j\le 2p+2$\\
				\hline
				1&1&$p^{n-r}-1$&$p^{r-1}-p^{n-r}$&$\eta(j-3)p^{\frac{n-1}{2}}+p^{r-1}$&$p^{n-1}-p^{r-1}$&$p^{n-1}-p^{r-1}$\\
				\hline
				2&1&$p^{n-r}-1$&$p^{r-1}-p^{n-r}$&$\eta(j-3)p^{\frac{n-1}{2}}+p^{r-1}$&$-p^{r-1}$&$-p^{r-1}$\\
				\hline
				3&1&$p^{n-r}-1$&$-p^{n-r}$&$\eta(j-3)p^{\frac{n-1}{2}}$&0&0\\
				\hline
				$4\le i\le p+2$&1&$p^{n-r}-1$&$\eta(i-3)p^{\frac{n-1}{2}}-p^{n-r}$&$p^{\frac{n-2}{2}}S^{(t)}(j-3,i-3)$&0&0\\
				\hline
				$p+3$&1&$-1$&0&0&0&$-\eta(j-p-3)p^{\frac{n-1}{2}}$\\
				\hline
				$p+4\le i\le 2p+2$&1&$-1$&0&0&$-\eta(i-p-3)p^{\frac{n-1}{2}}$&$-p^\frac{n-2}{2}S^{(t)}(j-p-3,i-p-3)$\\
				\hline
		\end{tabular}}
		\vspace{-15pt}
	\end{table}
	\begin{table}[h]
		\vspace{-15pt}
		\centering
		\caption{The second eigenmatrix of the association scheme induced by $U_{12}$}
		\renewcommand\arraystretch{1.5}	
		\resizebox{\textwidth}{!}{
			\begin{tabular}{|m{1.4cm}<{\centering}|c|c|m{2cm}<{\centering}|m{2.3cm}<{\centering}|m{2.3cm}<{\centering}|m{2.3cm}<{\centering}|}
				\hline
				\diagbox{$i$}{$Q_{12}(ij)$}{$j$}&1&2&3&$4\le j\le p+2$&$p+3$&$p+4\le j\le 2p+2$\\
				\hline
				1&1&$p^{n-r}-1$&$p^{r-1}-p^{n-r}$&$\eta(j-3)p^{\frac{n-1}{2}}+p^{r-1}$&$p^{n-1}-p^{r-1}$&$p^{n-1}-p^{r-1}$\\
				\hline
				2&1&$p^{n-r}-1$&$p^{r-1}-p^{n-r}$&$\eta(j-3)p^{\frac{n-1}{2}}+p^{r-1}$&$-p^{r-1}$&$-p^{r-1}$\\
				\hline
				3&1&$p^{n-r}-1$&$-p^{n-r}$&$\eta(j-3)p^{\frac{n-1}{2}}$&0&0\\
				\hline
				$4\le i\le p+2$&1&$p^{n-r}-1$&$\eta(i-3)p^{\frac{n-1}{2}}-p^{n-r}$&$p^{\frac{n-2}{2}}S^{(h)}(j-3,i-3)$&0&0\\
				\hline
				$p+3$&1&$-1$&0&0&0&$-\eta(j-p-3)p^{\frac{n-1}{2}}$\\
				\hline
				$p+4\le i\le 2p+2$&1&$-1$&0&0&$-\eta(i-p-3)p^{\frac{n-1}{2}}$&$-p^\frac{n-2}{2}S^{(h)}(j-p-3,i-p-3)$\\
				\hline
		\end{tabular}}
		\vspace{-15pt}
	\end{table}
	
	Since $p_{u,v}^{w}=p_{v,u}^{w}$ for any $u,v,w\in\{0,1,\dots,2p+1\}$, we only give the values of the intersection numbers $p_{u,v}^w$ for $u\le v$ in the following eight cases. 

	\begin{itemize}
		\item [$(\rm{\romannumeral1})$] $w=0$ and $0\le u\le v\le 2p+1$
		\begin{itemize}
			\item [$\bullet$] $u=v$\\
			$p_{0,0}^{0}=1$, $p_{1,1}^0=p^{n-r}-1$, $p_{2,2}^0=p^{r-1}-p^{n-r}$, $p_{u,u}^0=\eta(u-2)p^{\frac{n-1}{2}}+p^{r-1}$ for $3\le u\le p+1$ and $p_{u,u}^0=p^{n-1}-p^{r-1}$ for $p+2\le u\le 2p+1$.
			\item[$\bullet$] $u\ne v$, $p_{u,v}^0=0$.
		\end{itemize}
		\item [$(\rm{\romannumeral2})$] $u=0$, $1\le w\le 2p+1$ and $0\le v\le 2p+1$\\
		$p_{u,v}^w=\begin{cases}
		1,&\text{if}\ w=v,\\
		0,&\text{if}\ w\ne v.
		\end{cases}$
			\item [$(\rm{\romannumeral3})$] $1\le w\le p+1$ and $1\le u\le v\le p+1$
			\begin{itemize}
				\item [$\bullet$] $w=1$\\
				$p_{u,v}^1
				=\begin{cases}
				p^{n-r}-2,&\text{if}\ u=v=1,\\
				0,&\text{if}\ u=1\ \text{and}\ v\ge 2,\\
				p^{r-1}-p^{n-r},&\text{if}\ u=v=2,\\
				\eta(u-2)p^{\frac{n-1}{2}}+p^{r-1},&\text{if}\ u\ge 3\ \text{and}\  u=v,\\
				0,&\text{if}\ u\ge 2\ \text{and}\ u\ne v.
				\end{cases}$
				\item[$\bullet$] $w=2$\\
				$p_{u,v}^2=\begin{cases}
				p^{n-r}-1,&\text{if}\ u=1\ \text{and}\ v=2,\\
				0,&\text{if}\ u=1\ \text{and}\ v\ne 2,\\
				p^{r-2}-2p^{n-r},&\text{if}\ u=v=2,\\
				p^{r-2},&\text{if}\ u=2\ \text{and}\ v\ge 3,\\
				p^{r-2}+(\eta(u-2)+\eta(v-2))p^{\frac{n-3}{2}}+\\
				p^{\frac{n-5}{2}}\sum\limits_{i\in\mathbb{F}_p^{*}}\eta(i)S^{(t)}(u-2,i)S^{(t)}(v-2,i),&\text{if}\ 3\le u\le v.
				\end{cases}$
				\item[$\bullet$] $3\le w\le p+1$\\
				$p_{u,v}^w=\begin{cases}
				p^{n-r}-1,&\text{if}\ u=1\ \text{and}\\& w=v,\\
				0,&\text{if}\ u=1\\& \text{and}\ w\ne v,\\
				p^{r-2}-\eta(w-2)p^{\frac{n-3}{2}},&\text{if}\ u=v=2,\\
				H,&\text{if}\ u=2\\& \text{and}\ v\ge 3,\\
					p^{r-2}+(\eta(u-2)+\eta(v-2))p^{\frac{n-3}{2}}+p^{\frac{n-6}{2}}\times\\
					\sum\limits_{i\in\mathbb{F}_p}S^{(h)}(i,w-2)S^{(t)}(u-2,i)S^{(t)}(v-2,i),&\text{if}\ 3\le u\le v,
				\end{cases}$
				
			where $H=p^{r-2}-p^{n-r-1}+\eta(v-2)p^{\frac{n-3}{2}}-\eta((w-2)(v-2))p^{n-r-1}\\+p^{-2}\sum\limits_{i\in\mathbb{F}_p^*}(\eta(i)p^{\frac{n-1}{2}}-p^{n-r})S^{(h)}(i,w-2)S^{(t)}(v-2,i).$	
			\end{itemize}
			\item [$(\rm{\romannumeral4})$] $1\le w\le p+1$, $1\le u\le p+1$ and $p+2\le v\le 2p+1$, $p_{u,v}^w=0$.
			\item [$(\rm{\romannumeral5})$] $1\le w\le p+1$ and $p+2\le u\le v\le 2p+1$\\
			$p_{u,v}^w=\begin{cases}
			p^{n-2}-p^{r-1},&\text{if}\ w=1\ \text{and}\ u=v,\\
			p^{n-2},&\text{if}\ w=1\ \text{and}\ u\ne v,\\
			p^{n-2}-p^{r-2},&\text{if}\ w\ge 2.
			\end{cases}$
	\item [$(\rm{\romannumeral6})$] $p+2\le w\le 2p+1$ and $1\le u\le v\le p+1$, $p_{u,v}^w=0$.
	\item [$(\rm{\romannumeral7})$] $p+2\le w\le 2p+1$, $1\le u\le p+1$ and $p+2\le v\le 2p+1$\\
	$p_{u,v}^w=\begin{cases}
	p^{n-r-1}-p^{-1}-p^{-2}\sum\limits_{i\in\mathbb{F}_p}S^{(h)}(i,w-p-2)\times\\
	S^{(t)}(v-p-2,i),&\text{if}\ u=1,\\
	p^{r-2}-p^{n-r-1},&\text{if}\ u=2,\\
	p^{r-2}+\eta(u-2)p^{\frac{n-3}{2}},&\text{if } u\ge 3.\end{cases}$
	\item [$(\rm{\romannumeral8})$] $p+2\le w\le 2p+1$ and $p+2\le u\le v\le 2p+1$\\
	$p_{u,v}^w=p^{n-2}-2p^{r-2}-p^{\frac{n-6}{2}}\sum\limits_{i\in\mathbb{F}_p}S^{(h)}(i,w-p-2)S^{(t)}(u-p-2,i)S^{(t)}(v-p-2,i)$.
	\end{itemize}
	
Since $q_{u,v}^{w}=q_{v,u}^{w}$ for any $u,v,w\in\{0,1,\dots,2p+1\}$, we only give the values of the Krein parameters $q_{u,v}^w$ for $u\le v$ in the following eight cases. 

\begin{itemize}
	\item [$(\rm{\romannumeral1})$] $w=0$ and $0\le u\le v\le 2p+1$
	\begin{itemize}
		\item [$\bullet$] $u=v$\\
		$q_{0,0}^{0}=1$, $q_{1,1}^0=p^{n-r}-1$, $q_{2,2}^0=p^{r-1}-p^{n-r}$, $q_{u,u}^0=\eta(u-2)p^{\frac{n-1}{2}}+p^{r-1}$ for $3\le u\le p+1$ and $q_{u,u}^0=p^{n-1}-p^{r-1}$ for $p+2\le u\le 2p+1$.
		\item[$\bullet$] $u\ne v$, $q_{u,v}^0=0$.
	\end{itemize}
	\item [$(\rm{\romannumeral2})$] $u=0$, $1\le w\le 2p+1$ and $0\le v\le 2p+1$\\
	$q_{u,v}^w=\begin{cases}
	1,&\text{if}\ w=v,\\
	0,&\text{if}\ w\ne v.
	\end{cases}$
	\item [$(\rm{\romannumeral3})$] $1\le w\le p+1$ and $1\le u\le v\le p+1$
	\begin{itemize}
		\item [$\bullet$] $w=1$\\
		$q_{u,v}^1
		=\begin{cases}
		p^{n-r}-2,&\text{if}\ u=v=1,\\
		0,&\text{if}\ u=1\ \text{and}\ v\ge 2,\\
		p^{r-1}-p^{n-r},&\text{if}\ u=v=2,\\
		\eta(u-2)p^{\frac{n-1}{2}}+p^{r-1},&\text{if}\ u\ge 3\ \text{and}\  u=v,\\
		0,&\text{if}\ u\ge 2\ \text{and}\ u\ne v.
		\end{cases}$
		\item[$\bullet$] $w=2$\\
		$q_{u,v}^2=\begin{cases}
		p^{n-r}-1,&\text{if}\ u=1\ \text{and}\ v=2,\\
		0,&\text{if}\ u=1\ \text{and}\ v\ne 2,\\
		p^{r-2}-2p^{n-r},&\text{if}\ u=v=2,\\
		p^{r-2},&\text{if}\ u=2\ \text{and}\ v\ge 3,\\
		p^{r-2}+(\eta(u-2)+\eta(v-2))p^{\frac{n-3}{2}}+\\
		p^{\frac{n-5}{2}}\sum\limits_{i\in\mathbb{F}_p^{*}}\eta(i)S^{(h)}(u-2,i)S^{(h)}(v-2,i),&\text{if}\ 3\le u\le v.
		\end{cases}$
		\item[$\bullet$] $3\le w\le p+1$\\
		$q_{u,v}^w=\begin{cases}
		p^{n-r}-1,&\text{if}\ u=1\ \text{and}\\& w=v,\\
		0,&\text{if}\ u=1\ \text{and}\\& w\ne v,\\
		p^{r-2}-\eta(w-2)p^{\frac{n-3}{2}},&\text{if}\ u=v=2,\\
		H,&\text{if}\ u=2\ \text{and}\\& v\ge 3,\\
		p^{r-2}+(\eta(u-2)+\eta(v-2))p^{\frac{n-3}{2}}+p^{\frac{n-6}{2}}\times\\
		\sum\limits_{i\in\mathbb{F}_p}S^{(t)}(i,w-2)S^{(h)}(u-2,i)S^{(h)}(v-2,i),&\text{if}\ 3\le u\le v,
		\end{cases}$
		
		where $H=p^{r-2}-p^{n-r-1}+\eta(v-2)p^{\frac{n-3}{2}}-\eta((w-2)(v-2))p^{n-r-1}\\+p^{-2}\sum\limits_{i\in\mathbb{F}_p^*}(\eta(i)p^{\frac{n-1}{2}}-p^{n-r})S^{(t)}(i,w-2)S^{(h)}(v-2,i).$	
	\end{itemize}
	\item [$(\rm{\romannumeral4})$] $1\le w\le p+1$, $1\le u\le p+1$ and $p+2\le v\le 2p+1$, $q_{u,v}^w=0$.
	\item [$(\rm{\romannumeral5})$] $1\le w\le p+1$ and $p+2\le u\le v\le 2p+1$\\
	$q_{u,v}^w=\begin{cases}
	p^{n-2}-p^{r-1},&\text{if}\ w=1\ \text{and}\ u=v,\\
	p^{n-2},&\text{if}\ w=1\ \text{and}\ u\ne v,\\
	p^{n-2}-p^{r-2},&\text{if}\ w\ge 2.
	\end{cases}$
	\item [$(\rm{\romannumeral6})$] $p+2\le w\le 2p+1$ and $1\le u\le v\le p+1$, $q_{u,v}^w=0$.
	\item [$(\rm{\romannumeral7})$] $p+2\le w\le 2p+1$, $1\le u\le p+1$ and $p+2\le v\le 2p+1$\\
	$q_{u,v}^w=\begin{cases}
	p^{n-r-1}-p^{-1}-p^{-2}\sum\limits_{i\in\mathbb{F}_p}S^{(t)}(i,w-p-2)\times\\
	S^{(h)}(v-p-2,i),&\text{if}\ u=1,\\
	p^{r-2}-p^{n-r-1},&\text{if}\ u=2,\\
	p^{r-2}+\eta(u-2)p^{\frac{n-3}{2}},&\text{if } u\ge 3.\end{cases}$
	\item [$(\rm{\romannumeral8})$] $p+2\le w\le 2p+1$ and $p+2\le u\le v\le 2p+1$\\
	$q_{u,v}^w=p^{n-2}-2p^{r-2}-p^{\frac{n-6}{2}}\sum\limits_{i\in\mathbb{F}_p}S^{(t)}(i,w-p-2)S^{(h)}(u-p-2,i)S^{(h)}(v-p-2,i)$.
\end{itemize}
13. The first and second eigenmatrices, the intersection numbers and the  Krein parameters of the association scheme induced by $U_{13}$.

The first and second eigenmatrices of the association scheme induce by $U_{13}$ are given by Tables 19 and 20. 
\begin{table}[h]
	\vspace{-15pt}
	\centering
	\caption{The first eigenmatrix of the association scheme induced by $U_{13}$}
	\renewcommand\arraystretch{1.5}	
	\resizebox{\textwidth}{!}{
		\begin{tabular}{|m{1.4cm}<{\centering}|c|c|m{2cm}<{\centering}|m{2.4cm}<{\centering}|m{2.3cm}<{\centering}|m{2.3cm}<{\centering}|}
			\hline
			\diagbox{$i$}{$P_{13}(ij)$}{$j$}&1&2&3&$4\le j\le p+2$&$p+3$&$p+4\le j\le 2p+2$\\
			\hline
			1&1&$p^{n-r}-1$&$p^{r-1}-p^{n-r}$&$-\eta(j-3)p^{\frac{n-1}{2}}+p^{r-1}$&$p^{n-1}-p^{r-1}$&$p^{n-1}-p^{r-1}$\\
			\hline
			2&1&$-1$&0&0&0&$\eta(j-p-3)p^{\frac{n-1}{2}}$\\
			\hline
			$3\le i\le p+1$&1&$-1$&0&0&$-\eta(i-2)p^{\frac{n-1}{2}}$&$\sqrt{-1}p^{\frac{n-2}{2}}S^{(t)}(j-p-3,i-2)$\\
			\hline
			$p+2$&1&$p^{n-r}-1$&$p^{r-1}-p^{n-r}$&$-\eta(j-3)p^{\frac{n-1}{2}}+p^{r-1}$&$-p^{r-1}$&$-p^{r-1}$\\
			\hline
			$p+3$&1&$p^{n-r}-1$&$-p^{n-r}$&$-\eta(j-3)p^{\frac{n-1}{2}}$&0&0\\
			\hline
			$p+4\le i\le 2p+2$&1&$p^{n-r}-1$&$-p^{n-r}+\eta(i-p-3)p^{\frac{n-1}{2}}$&$-\sqrt{-1}p^{\frac{n-2}{2}}S^{(t)}(j-3,i-p-3)$&0&0\\
			\hline
	\end{tabular}}
	\vspace{-15pt}
\end{table}
\begin{table}[h]
	\vspace{0pt}
	\centering
	\caption{The second eigenmatrix of the association scheme induced by $U_{13}$}
	\renewcommand\arraystretch{1.5}	
	\resizebox{\textwidth}{!}{
		\begin{tabular}{|m{1.4cm}<{\centering}|c|m{2cm}<{\centering}|m{2.7cm}<{\centering}|c|m{2.3cm}<{\centering}|m{2.3cm}<{\centering}|}
			\hline
			\diagbox{$i$}{$Q_{13}(ij)$}{$j$}&1&2&$3\le j\le p+1$&$p+2$&$p+3$&$p+4\le j\le 2p+2$\\
			\hline
			1&1&$p^{n-1}-p^{r-1}$&$p^{n-1}-p^{r-1}$&$p^{n-r}-1$&$p^{r-1}-p^{n-r}$&${\small \eta(j-p-3)}p^{\frac{n-1}{2}}+p^{r-1}$\\
			\hline
			2&1&$-p^{r-1}$&$-p^{r-1}$&$p^{n-r}-1$&$p^{r-1}-p^{n-r}$&${\small \eta(j-p-3)}p^{\frac{n-1}{2}}+p^{r-1}$\\
			\hline
			3&1&0&0&$p^{n-r}-1$&$-p^{n-r}$&$\eta(j-p-3)p^{\frac{n-1}{2}}$\\
			\hline
			$4\le i\le p+2$&1&0&0&$p^{n-r}-1$&$-\eta(i-3)p^{\frac{n-1}{2}}-p^{n-r}$&$\sqrt{-1}p^{\frac{n-2}{2}}S^{(h)}(j-p-3,i-3)$\\
			\hline
			$p+3$&1&0&$-\eta(j-2)p^{\frac{n-1}{2}}$&$-1$&0&0\\
			\hline
			$p+4\le i\le 2p+2$&1&$\eta(i-p-3)p^{\frac{n-1}{2}}$&$-\sqrt{-1}p^{\frac{n-2}{2}}S^{(h)}(j-2,i-p-3)$&$-1$&0&0\\
			\hline
	\end{tabular}}
	\vspace{-15pt}
\end{table}

Since $p_{u,v}^{w}=p_{v,u}^{w}$ for any $u,v,w\in\{0,1,\dots,2p+1\}$, we only give the values of  the intersection numbers $p_{u,v}^w$ for $u\le v$ in the following eight cases.
\begin{itemize}
	\item [$(\rm{\romannumeral1})$] $w=0$ and $0\le u\le v\le2p+1$
\begin{itemize}
	\item [$\bullet$] $u=v$\\
	$p_{0,0}^0=1$, $p_{1,1}^0=p^{n-r}-1$, $p_{2,2}^0=p^{r-1}-p^{n-r}$, $p_{u,u}^0=p^{r-1}-\eta(u-2)p^{\frac{n-1}{2}}$ for $3\le u\le p+1$ and $p_{u,u}^0=p^{n-1}-p^{r-1}$ for $p+2\le u\le 2p+1$.
	\item[$\bullet$] $u\ne v$, $p_{u,v}^0=0$.
\end{itemize}
	\item [$(\rm{\romannumeral2})$] $u=0$, $1\le w\le 2p+1$ and $0\le v\le 2p+1$\\
	$p_{u,v}^w=\begin{cases}
	1,&\text{if } w=v,\\
	0,&\text{if } w\ne v.
	\end{cases}$
	\item [$(\rm{\romannumeral3})$] $1\le w\le p+1$ and $1\le u\le v\le p+1$
	\begin{itemize}
		\item [$\bullet$] $w=1$\\
		$p_{u,v}^1=\begin{cases}
		p^{n-r}-2,&\text{if } u=v=1,\\
		0,&\text{if } u=1\ \text{and}\ v\ge 2,\\
		p^{r-1}-p^{n-r},&\text{if}\ u=v=2,\\
		p^{r-1}-\eta(u-2)p^{\frac{n-1}{2}},&\text{if}\ u\ge 3\ \text{and}\ u=v,\\
		0,&\text{if}\ u\ge 2\ \text{and}\ u\ne v.
		\end{cases}$
		\item[$\bullet$] $w=2$\\
		$p_{u,v}^2=\begin{cases}
		p^{n-r}-1,&\text{if}\ u=1\ \text{and}\ v=2,\\
		0,&\text{if}\ u=1\ \text{and}\ v\ne 2,\\
		p^{r-2}-2p^{n-r},&\text{if}\ u=v=2,\\
		p^{r-2},&\text{if}\ u=2\ \text{and}\ v\ge 3,\\
		p^{r-2}-(\eta(u-2)+\eta(v-2))p^{\frac{n-3}{2}}-&\\
		p^{\frac{n-5}{2}}\sum\limits_{i\in\mathbb{F}_p^*}\eta(i)S^{(t)}(u-2,i)S^{(t)}(v-2,i),&\text{if}\ 3\le u\le v.
		\end{cases}$
		\item[$\bullet$] $3\le w\le p+1$\\
		$p_{u,v}^w=\begin{cases}
		p^{n-r}-1,&\text{if}\ u=1\ \text{and}\\& w=v,\\
		0,&\text{if}\ u=1\ \text{and}\\& w\ne v,\\
		p^{r-2}+\eta(w-2)p^{\frac{n-3}{2}},&\text{if}\ u=v=2,\\
	H,&\text{if}\ u=2\ \text{and}\\& v\ge 3,\\
	p^{r-2}-(\eta(u-2)+\eta(v-2))p^{\frac{n-3}{2}}-\sqrt{-1}p^{\frac{n-6}{2}}\times\\
\sum\limits_{i\in\mathbb{F}_p}S^{(h)}(i,w-2)S^{(t)}(u-2,i)S^{(t)}(v-2,i),&\text{if}\ 3\le u\le v,
		\end{cases}$
		where $H=p^{r-2}-p^{n-r-1}-\eta(v-2)p^{\frac{n-3}{2}}-\eta((w-2)(v-2))p^{n-r-1}+p^{-2}\sum\limits_{i\in\mathbb{F}_p^*}(\eta(i)p^{\frac{n-1}{2}}-p^{n-r})S^{(h)}(i,w-2)S^{(t)}(v-2,i)$.
	\end{itemize}
\item [$(\rm{\romannumeral4})$] $1\le w\le p+1$, $1\le u\le p+1$ and $p+2\le v\le 2p+1$, $p_{u,v}^w=0$.
\item [$(\rm{\romannumeral5})$] $1\le w\le p+1$ and $p+2\le u\le v\le 2p+1$\\
$p_{u,v}^w=\begin{cases}
p^{n-2}-p^{r-1},&\text{if}\ w=1\ \text{and}\ u=v,\\
p^{n-2},&\text{if}\ w=1\ \text{and}\ u\ne v,\\
p^{n-2}-p^{r-2},&\text{if}\ w\ge 2.
\end{cases}$
\item [$(\rm{\romannumeral6})$] $p+2\le w\le 2p+1$ and $1\le u\le v\le p+1$, $p_{u,v}^w=0$.
\item [$(\rm{\romannumeral7})$] $p+2\le w\le 2p+1$, $1\le u\le p+1$ and $p+2\le v\le 2p+1$\\
$p_{u,v}^w=\begin{cases}
p^{n-r-1}-p^{-1}-p^{-2}\sum\limits_{i\in\mathbb{F}_p}S^{(h)}(i,w-p-2)\times\\
S^{(t)}(v-p-2,i),&\text{if}\ u=1,\\
p^{r-2}-p^{n-r-1},&\text{if}\ u=2,\\
p^{r-2}-\eta(u-2)p^{\frac{n-3}{2}},&\text{if}\ u\ge3.
\end{cases}$
\item [$(\rm{\romannumeral8})$] $p+2\le w\le 2p+1$ and $p+2\le u\le v\le 2p+1$\\
$p_{u,v}^w=p^{n-2}-2p^{r-2}+\sqrt{-1}p^{\frac{n-6}{2}}\sum\limits_{i\in\mathbb{F}_p}S^{(h)}(i,w-p-2)S^{(t)}(u-p-2,i)S^{(t)}(v-p-2,i)$.
\end{itemize}

Since $q_{u,v}^{w}=q_{v,u}^{w}$ for any $u,v,w\in\{0,1,\dots,2p+1\}$, we only give the values of  the Krein parameters $q_{u,v}^w$ for $u\le v$ in the following eight cases.
\begin{itemize}
	\item [$(\rm{\romannumeral1})$] $w=0$ and $0\le u\le v\le 2p+1$
	\begin{itemize}
		\item [$\bullet$] $u=v$\\
		$q_{0,0}^0=1$, $q_{u,u}^0=p^{n-1}-p^{r-1}$ for $1\le u\le p$, $q_{p+1,p+1}^0=p^{n-r}-1$, $q_{p+2,p+2}^0=p^{r-1}-p^{n-r}$ and $q_{u,u}^0=p^{r-1}+\eta(u-p-2)p^{\frac{n-1}{2}}$ for $p+3\le u\le 2p+1$.
		\item[$\bullet$] $u\ne v$, $q_{u,v}^0=0$.
	\end{itemize}
\item [$(\rm{\romannumeral2})$] $u=0$, $1\le w\le 2p+1$ and $0\le v\le 2p+1$\\
$q_{0,v}^w=\begin{cases}
1,&\text{if}\ w=v,\\
0,&\text{if}\ w\ne v.
\end{cases}$
\item [$(\rm{\romannumeral3})$] $1\le w\le p$ and $1\le u\le v\le p$\\
$q_{u,v}^w=p^{n-2}-2p^{r-2}-\sqrt{-1}p^{\frac{n-6}{2}}\sum\limits_{i\in\mathbb{F}_p}S^{(t)}(i,w-1)S^{(h)}(u-1,i)S^{(h)}(v-1,i)$.
\item [$(\rm{\romannumeral4})$] $1\le w\le p$, $1\le u\le p$ and $p+1\le v\le 2p+1$\\
$q_{u,v}^w=\begin{cases}
p^{n-r-1}-p^{-1}-p^{-2}\sum\limits_{i\in\mathbb{F}_p}S^{(t)}(i,w-1)S^{(h)}(u-1,i),&\text{if}\ v=p+1,\\
p^{r-2}-p^{n-r-1},&\text{if}\ v=p+2,\\
p^{r-2}+\eta(v-p-2)p^{\frac{n-3}{2}},&\text{if}\ v\ge p+3.
\end{cases}$
\item [$(\rm{\romannumeral5})$] $1\le w\le p$ and $p+1\le u\le v\le 2p+1$, $q_{u,v}^w=0$.
\item [$(\rm{\romannumeral6})$] $p+1\le w\le 2p+1$ and $1\le u\le v\le p$\\
$q_{u,v}^w=\begin{cases}
p^{n-2}-p^{r-1},&\text{if}\ w=p+1\ \text{and}\ u=v,\\
p^{n-2},&\text{if}\ w=p+1\ \text{and}\ u\ne v,\\
p^{n-2}-p^{r-2},&\text{if}\ w\ge p+2.
\end{cases}$
\item [$(\rm{\romannumeral7})$] $p+1\le w\le 2p+1$, $1\le u\le p$ and $p+1\le v\le 2p+1$, $q_{u,v}^w=0$.
\item [$(\rm{\romannumeral8})$] $p+1\le w\le 2p+1$ and $p+1\le u\le v\le 2p+1$
\begin{itemize}
	\item [$\bullet$] $w=p+1$\\
	$q_{u,v}^{p+1}=\begin{cases}
	p^{n-r}-2,&\text{if}\ u=v=p+1,\\
	0,&\text{if}\ u=p+1\ \text{and}\ v\ge p+2,\\
	p^{r-1}-p^{n-r},&\text{if}\ u=v=p+2,\\
	p^{r-1}+\eta(u-p-2)p^{\frac{n-1}{2}},&\text{if}\ u\ge p+3\ \text{and}\ u=v,\\
	0,&\text{if}\ u\ge p+2\ \text{and}\ u\ne v.
	\end{cases}$
	\item[$\bullet$] $w=p+2$\\
	$q_{u,v}^{p+2}=\begin{cases}
	p^{n-r}-1,&\hspace{-2.9cm}\text{if}\ u=p+1\\&\hspace{-2.9cm} \text{and}\ v=p+2,\\
	0,&\hspace{-2.9cm}\text{if}\ u=p+1\\&\hspace{-2.9cm}\text{and}\ v\ne p+2,\\
	p^{r-2}-2p^{n-r},&\hspace{-2.9cm}\text{if}\ {\tiny u=v=p+2},\\
	p^{r-2},&\hspace{-2.9cm}\text{if}\ u=p+2\\&\hspace{-2.9cm} \text{and}\ v\ge p+3,\\
 p^{r-2}+
 p^{\frac{n-5}{2}}\sum\limits_{i\in\mathbb{F}_p^*}\eta(i) S^{(h)}( u-p-2,i)S^{(h)}( v-\\p-2,i)+( \eta( u-p-2)+\eta( v-p-2))p^{\frac{n-3}{2}},\text{if}\ {\tiny p+3\le u\le v}.
	\end{cases}$
	\item[$\bullet$] $p+3\le w\le 2p+1$\\
	$q_{u,v}^w=\begin{cases}
	p^{n-r}-1,&\text{if}\ u=p+1\ \text{and}\ w=v,\\
	0,&\text{if}\ u=p+1\ \text{and}\ w\ne v,\\
	p^{r-2}-p^{\frac{n-3}{2}}\eta(w-p-2),&\text{if}\ u=v=p+2,\\
	H_1,&\text{if}\ u=p+2\ \text{and}\ v\ge p+3,\\
	H_2,&\text{if}\ p+3\le u\le v,
	\end{cases}$
	
	where $H_1=p^{r-2}+\eta(v-p-2)p^{\frac{n-3}{2}}-p^{n-r-1}-\eta((w-p-2)(v-p-2))p^{n-r-1}-p^{-2}\sum\limits_{i\in\mathbb{F}_p^*}(\eta(i)p^{\frac{n-1}{2}}+p^{n-r})S^{(t)}(i,w-p-2)S^{(h)}(v-p-2,i)$ and  $H_2=p^{r-2}+(\eta(u-p-2)+\eta(v-p-2))p^{\frac{n-3}{2}}+\sqrt{-1}p^{\frac{n-6}{2}}\sum\limits_{i\in\mathbb{F}_p}S^{(t)}(i,w-p-2)S^{(h)}(u-p-2,i)S^{(h	)}(v-p-2,i)$.
\end{itemize}
\end{itemize}
14. The first and second eigenmatrices, the intersection numbers and the Krein parameters of the association scheme induced by  $U_{14}$.

The first and second eigenmatrices of the association scheme induced by $U_{14}$ are given by Tables 21 and 22.
\begin{table}[h]
	\vspace{0pt}
	\centering
	\caption{The first eigenmatrix of the association scheme induced by $U_{14}$}
	\renewcommand\arraystretch{1.5}	
	\resizebox{\textwidth}{!}{
		\begin{tabular}{|m{1.4cm}<{\centering}|c|c|c|c|m{3.8cm}<{\centering}|}
			\hline
			\diagbox{$i$}{$P_{14}(ij)$}{$j$}&1&2&$3\le j\le p+1$&$p+2$&$p+3\le j\le \frac{3p+3}{2}$\\
			\hline
			1&1&$p^{n-1}-p^{\frac{n-1}{2}}$&$p^{n-1}-p^{\frac{n-1}{2}}$&$p^{\frac{n-1}{2}}-1$&$2p^{\frac{n-1}{2}}$\\
			\hline
			2&1&0&$\eta(j-2)p^{\frac{n-1}{2}}$&$-1$&0\\
			\hline
			$3\le i\le p+1$&1&$\eta(i-2)p^{\frac{n-1}{2}}$&$p^{\frac{n-2}{2}}S^{(t)}(j-2,i-2)$&$-1$&0\\
			\hline
			$p+2$&1&$-p^{\frac{n-1}{2}}$&$-p^{\frac{n-1}{2}}$&$p^{\frac{n-1}{2}}-1$&$2p^{\frac{n-1}{2}}$\\
			\hline
			$p+3\le i\le\frac{3p+3}{2}$&1&0&0&$p^{\frac{n-1}{2}}-1$&$-p^{\frac{n-2}{2}}S^{(t)}(n_{j-p-2},n_{i-p-2})$\\
			\hline
	\end{tabular}}
	\vspace{0pt}
\end{table}
\begin{table}[h]
	\vspace{2pt}
	\centering
	\caption{The second eigenmatrix of the association scheme induced by $U_{14}$}
	\renewcommand\arraystretch{1.5}	
	\resizebox{\textwidth}{!}{
		\begin{tabular}{|m{1.4cm}<{\centering}|c|c|c|c|m{3.8cm}<{\centering}|}
			\hline
			\diagbox{$i$}{$Q_{14}(ij)$}{$j$}&1&2&$3\le j\le p+1$&$p+2$&$p+3\le j\le \frac{3p+3}{2}$\\
			\hline
			1&1&$p^{n-1}-p^{\frac{n-1}{2}}$&$p^{n-1}-p^{\frac{n-1}{2}}$&$p^{\frac{n-1}{2}}-1$&$2p^{\frac{n-1}{2}}$\\
			\hline
			2&1&0&$\eta(j-2)p^{\frac{n-1}{2}}$&$-1$&0\\
			\hline
			$3\le i\le p+1$&1&$\eta(i-2)p^{\frac{n-1}{2}}$&$p^{\frac{n-2}{2}}S^{(h)}(j-2,i-2)$&$-1$&0\\
			\hline
			$p+2$&1&$-p^{\frac{n-1}{2}}$&$-p^{\frac{n-1}{2}}$&$p^{\frac{n-1}{2}}-1$&$2p^{\frac{n-1}{2}}$\\
			\hline
			$p+3\le i\le \frac{3p+3}{2}$&1&0&0&$p^{\frac{n-1}{2}}-1$&$-p^{\frac{n-2}{2}}S^{(h)}(n_{j-p-2},n_{i-p-2})$\\
			\hline
	\end{tabular}}
	\vspace{-15pt}
\end{table}

Since $p_{u,v}^{w}=p_{v,u}^{w}$ for any $u,v,w\in\{0,1,\dots,\frac{3p+1}{2}\}$, we only give the values of the intersection numbers $p_{u,v}^w$ for $u\le v$ in the following eight cases.

\begin{itemize}
	\item [$(\rm{\romannumeral1})$] $w=0$ and $0\le u\le v\le \frac{3p+1}{2}$
	\begin{itemize}
		\item [$\bullet$] $u=v$\\
		$p_{0,0}^0=1$, $p_{u,u}^0=p^{n-1}-p^{\frac{n-1}{2}}$ for $1\le u\le p$, $p_{p+1,p+1}^0=p^{\frac{n-1}{2}}-1$ and $p_{u,u}^0=2p^{\frac{n-1}{2}}$ for $p+2\le u\le \frac{3p+1}{2}$.
		\item[$\bullet$] $u\ne v$, $p_{u,v}^0=0$.
	\end{itemize}
\item [$(\rm{\romannumeral2})$]$u=0$, $1\le w\le \frac{3p+1}{2}$ and $0\le v\le \frac{3p+1}{2}$\\
$p_{0,v}^w=\begin{cases}
1,&\text{if}\ w=v,\\
0,&\text{if}\ w\ne v.
\end{cases}$
\item [$(\rm{\romannumeral3})$] $1\le w\le p$ and $1\le u\le v\le p$\\
$p_{u,v}^w=p^{n-2}-2p^{\frac{n-3}{2}}+p^{\frac{n-6}{2}}\sum\limits_{i\in\mathbb{F}_p}S^{(h)}(i,w-1)S^{(t)}(u-1,i)S^{(t)}(v-1,i)$.
\item [$(\rm{\romannumeral4})$] $1\le w\le p$, $1\le u\le p$ and $p+1\le v\le \frac{3p+1}{2}$\\
$p_{u,v}^w=\begin{cases}
p^{\frac{n-3}{2}}-p^{-1}-p^{-2}\sum\limits_{i\in\mathbb{F}_p}S^{(h)}(i,w-1)S^{(t)}(u-1,i),&\text{if}\ v=p+1,\\
2p^{\frac{n-3}{2}},&\text{if}\ v\ge p+2. 
\end{cases}$
\item [$(\rm{\romannumeral5})$] $1\le w\le p$ and $p+1\le u\le v\le \frac{3p+1}{2}$, $p_{u,v}^w=0$.
\item [$(\rm{\romannumeral6})$] $p+1\le w\le \frac{3p+1}{2}$ and $1\le u\le v\le p$\\
$p_{u,v}^w=\begin{cases}
p^{n-2}-p^{\frac{n-1}{2}},&\text{if}\ w=p+1\ \text{and}\ u=v,\\
p^{n-2},&\text{if}\ w=p+1\ \text{and}\ u\ne v,\\
p^{n-2}-p^{\frac{n-3}{2}},&\text{if}\ w\ge p+2.
\end{cases}$
\item [$(\rm{\romannumeral7})$] $p+1\le w\le \frac{3p+1}{2}$, $1\le u\le p$ and $p+1\le v\le \frac{3p+1}{2}$, $p_{u,v}^w=0$.
\item [$(\rm{\romannumeral8})$] $p+1\le w\le  \frac{3p+1}{2}$ and $p+1\le u\le v\le \frac{3p+1}{2}$
\begin{itemize}
	\item [$\bullet$] $w=p+1$\\
	\newpage
	$p_{u,v}^{p+1}=\begin{cases}
	p^{\frac{n-1}{2}}-2,&\text{if}\ u=v=p+1,\\
	0,&\text{if}\ u=p+1\ \text{and}\ v\ge p+2,\\
	2p^{\frac{n-1}{2}},&\text{if}\ u\ge p+2\ \text{and}\ u=v,\\
	0,&\text{if}\ u\ge p+2\ \text{and}\ u\ne v.
	\end{cases}$
	\item[$\bullet$] $p+2\le w\le \frac{3p+1}{2}$\\
	$p_{u,v}^w=\begin{cases}
	0,&\text{if}\ u=v=p+1,\\
	2(p^{\frac{n-3}{2}}-p^{-1})+(p^{\frac{n-5}{2}}-p^{-2})\times&\text{if}\ u=p+1\ \text{and}\\
	\sum\limits_{i\in NSQ}S^{(h)}(i,n_{w-p-1})S^{(t)}(n_{v-p-1},i),& v\ge p+2,\\
	4p^{\frac{n-3}{2}}-p^{\frac{n-6}{2}}\sum\limits_{i\in NSQ}S^{(h)}(i,n_{w-p-1})\times&\\
	S^{(t)}(n_{u-p-1},i)S^{(t)}(n_{v-p-1},i),&\text{if}\ p+2\le u\le v.
	\end{cases}$
\end{itemize}
\end{itemize}

Since $q_{u,v}^{w}=q_{v,u}^{w}$ for any $u,v,w\in\{0,1,\dots,\frac{3p+1}{2}\}$, we only give the values of the Krein parameters $p_{u,v}^w$ for $u\le v$ in the following eight cases.

\begin{itemize}
	\item [$(\rm{\romannumeral1})$] $w=0$ and $0\le u\le v\le \frac{3p+1}{2}$
	\begin{itemize}
		\item [$\bullet$] $u=v$\\
		$q_{0,0}^0=1$, $q_{u,u}^0=p^{n-1}-p^{\frac{n-1}{2}}$ for $1\le u\le p$, $q_{p+1,p+1}^0=p^{\frac{n-1}{2}}-1$ and $q_{u,u}^0=2p^{\frac{n-1}{2}}$ for $p+2\le u\le \frac{3p+1}{2}$.
		\item[$\bullet$] $u\ne v$, $q_{u,v}^0=0$.
	\end{itemize}
	\item [$(\rm{\romannumeral2})$]$u=0$, $1\le w\le \frac{3p+1}{2}$ and $0\le v\le \frac{3p+1}{2}$\\
	$q_{0,v}^w=\begin{cases}
	1,&\text{if}\ w=v,\\
	0,&\text{if}\ w\ne v.
	\end{cases}$
	\item [$(\rm{\romannumeral3})$] $1\le w\le p$ and $1\le u\le v\le p$\\
	$q_{u,v}^w=p^{n-2}-2p^{\frac{n-3}{2}}+p^{\frac{n-6}{2}}\sum\limits_{i\in\mathbb{F}_p}S^{(t)}(i,w-1)S^{(h)}(u-1,i)S^{(h)}(v-1,i)$.
	\item [$(\rm{\romannumeral4})$] $1\le w\le p$, $1\le u\le p$ and $p+1\le v\le \frac{3p+1}{2}$\\
	$q_{u,v}^w=\begin{cases}
	p^{\frac{n-3}{2}}-p^{-1}-p^{-2}\sum\limits_{i\in\mathbb{F}_p}S^{(t)}(i,w-1)S^{(h)}(u-1,i),&\text{if}\ v=p+1,\\
	2p^{\frac{n-3}{2}},&\text{if}\ v\ge p+2. 
	\end{cases}$
	\item [$(\rm{\romannumeral5})$] $1\le w\le p$ and $p+1\le u\le v\le \frac{3p+1}{2}$, $q_{u,v}^w=0$.
	\item [$(\rm{\romannumeral6})$] $p+1\le w\le \frac{3p+1}{2}$ and $1\le u\le v\le p$\\
	$q_{u,v}^w=\begin{cases}
	p^{n-2}-p^{\frac{n-1}{2}},&\text{if}\ w=p+1\ \text{and}\ u=v,\\
	p^{n-2},&\text{if}\ w=p+1\ \text{and}\ u\ne v,\\
	p^{n-2}-p^{\frac{n-3}{2}},&\text{if}\ w\ge p+2.
	\end{cases}$
	\item [$(\rm{\romannumeral7})$] $p+1\le w\le \frac{3p+1}{2}$, $1\le u\le p$ and $p+1\le v\le \frac{3p+1}{2}$, $q_{u,v}^w=0$.
	\item [$(\rm{\romannumeral8})$] $p+1\le w\le\frac{3p+1}{2}$ and $p+1\le u\le v\le \frac{3p+1}{2}$
	\begin{itemize}
		\item [$\bullet$] $w=p+1$\\
	\newpage
		$q_{u,v}^{p+1}=\begin{cases}
		p^{\frac{n-1}{2}}-2,&\text{if}\ u=v=p+1,\\
		0,&\text{if}\ u=p+1\ \text{and}\ v\ge p+2,\\
		2p^{\frac{n-1}{2}},&\text{if}\ u\ge p+2\ \text{and}\ u=v,\\
		0,&\text{if}\ u\ge p+2\ \text{and}\ u\ne v.
		\end{cases}$
		\item[$\bullet$] $p+2\le w\le \frac{3p+1}{2}$\\
		$q_{u,v}^w=\begin{cases}
		0,&\text{if}\ u=v=p+1,\\
		2(p^{\frac{n-3}{2}}-p^{-1})+(p^{\frac{n-5}{2}}-p^{-2})\times&\text{if}\ u=p+1\ \text{and}\\
		\sum\limits_{i\in NSQ}S^{(t)}(i,n_{w-p-1})S^{(h)}(n_{v-p-1},i),& v\ge p+2,\\
		4p^{\frac{n-3}{2}}-p^{\frac{n-6}{2}}\sum\limits_{i\in NSQ}S^{(t)}(i,n_{w-p-1})\times\\
		S^{(h)}(n_{u-p-1},i)S^{(h)}(n_{v-p-1},i),&\text{if}\ p+2\le u\le v.
		\end{cases}$
	\end{itemize}
\end{itemize}
15. The first and second eigenmatrices, the intersection numbers and the Krein parameters of the association scheme induced by  $U_{15}$.

The first and second eigenmatrices of the association scheme induced by $U_{15}$ are given in Tables 23 and 24.

\begin{table}[h]
	\vspace{-15pt}
	\centering
	\caption{The first eigenmatrix of the association scheme induced by $U_{15}$}
	\renewcommand\arraystretch{1.5}	
	\resizebox{\textwidth}{!}{
		\begin{tabular}{|m{1.4cm}<{\centering}|c|c|m{3.55cm}<{\centering}|c|m{3.8cm}<{\centering}|}
			\hline
			\diagbox{$i$}{$P_{15}(ij)$}{$j$}&1&2&$3\le j\le p+1$&$p+2$& $p+3\le j\le \frac{3p+3}{2}$\\
			\hline
			1&1&$p^{n-1}-p^{\frac{n-1}{2}}$&$p^{n-1}-p^{\frac{n-1}{2}}$&$p^{\frac{n-1}{2}}-1$&$2p^{\frac{n-1}{2}}$\\
			\hline
			2&1&$-p^{\frac{n-1}{2}}$&$-p^{\frac{n-1}{2}}$&$p^{\frac{n-1}{2}}-1$&$2p^{\frac{n-1}{2}}$\\
			\hline
			$3\le i\le \frac{p+3}{2}$&1&0&0&$p^{\frac{n-1}{2}}-1$&$\sqrt{-1}p^{\frac{n-2}{2}}S^{(t)}(s_{j-p-2},n_{i-2})$\\
			\hline
			$\frac{p+5}{2}$&1&0&$-\eta(j-2)p^{\frac{n-1}{2}}$&$-1$&0\\
			\hline
			$\frac{p+7}{2}\le i\le \frac{3p+3}{2}$&1&$\eta(i-\frac{p+5}{2})p^{\frac{n-1}{2}}$&$-\sqrt{-1}p^{\frac{n-2}{2}}S^{(t)}(j-2,i-\frac{p+5}{2})$&$-1$&0\\
			\hline
	\end{tabular}}
	\vspace{-15pt}
\end{table}

\begin{table}[h]
	\vspace{-15pt}
	\centering
	\caption{The second eigenmatrix of the association scheme induced by $U_{15}$}
	\renewcommand\arraystretch{1.5}	
	\resizebox{\textwidth}{!}{
		\begin{tabular}{|m{1.4cm}<{\centering}|c|c|m{3.85cm}<{\centering}|c|m{3.8cm}<{\centering}|}
			\hline
			\diagbox{$i$}{$Q_{15}(ij)$}{$j$}&1&2&$3\le j\le \frac{p+3}{2}$&$\frac{p+5}{2}$& $\frac{p+7}{2}\le j\le \frac{3p+3}{2}$\\
			\hline
			1&1&$p^{\frac{n-1}{2}}-1$&$2p^{\frac{n-1}{2}}$&$p^{n-1}-p^{\frac{n-1}{2}}$&$p^{n-1}-p^{\frac{n-1}{2}}$\\
			\hline
			2&1&$-1$&0&0&$\eta(j-\frac{p+5}{2})p^{\frac{n-1}{2}}$\\
			\hline
			$3\le i\le p+1$&1&$-1$&0&$-\eta(i-2)p^{\frac{n-1}{2}}$&$\sqrt{-1}p^{\frac{n-2}{2}}S^{(h)}(j-\frac{p+5}{2},i-2)$\\
			\hline
			$p+2$&1&$p^{\frac{n-1}{2}}-1$&$2p^{\frac{n-1}{2}}$&$-p^{\frac{n-1}{2}}$&$-p^{\frac{n-1}{2}}$\\
			\hline
			$p+3\le i\le \frac{3p+3}{2}$&1&$p^{\frac{n-1}{2}}-1$&$-\sqrt{-1}p^{\frac{n-2}{2}}S^{(h)}(n_{j-2}, s_{i-p-2})$&0&0\\
			\hline
	\end{tabular}}
	\vspace{-15pt}
\end{table}
Since $p_{u,v}^{w}=p_{v,u}^{w}$ for any $u,v,w\in\{0,1,\dots,\frac{3p+1}{2}\}$, we only give the values of  the intersection numbers $p_{u,v}^w$ for $u\le v$ in the following eight cases.
\begin{itemize}
	\item [$(\rm{\romannumeral1})$] $w=0$ and $0\le u\le v\le \frac{3p+1}{2}$
	\begin{itemize}
		\item [$\bullet$] $u=v$\\
		$p_{0,0}^0=1$, $p_{u,u}^0=p^{n-1}-p^{\frac{n-1}{2}}$ for $1\le u\le p$, $p_{p+1,p+1}^0=p^{\frac{n-1}{2}}-1$ and $p_{u,u}^0=2p^{\frac{n-1}{2}}$ for $p+2\le u\le \frac{3p+1}{2}$.
		\item[$\bullet$] $u\ne v$, $p_{u,v}^w=0$.
	\end{itemize}
	\item [$(\rm{\romannumeral2})$] $u=0$, $1\le w\le \frac{3p+1}{2}$ and $0\le v\le \frac{3p+1}{2}$\\
	$p_{u,v}^w=\begin{cases}
	1,&\text{if}\ w=v,\\
	0,&\text{if}\ w\ne v.
	\end{cases}$
	\item [$(\rm{\romannumeral3})$] $1\le w\le p$ and $1\le u\le v\le p$\\
	$p_{u,v}^w=p^{n-2}-2p^{\frac{n-3}{2}}-\sqrt{-1}p^{\frac{n-6}{2}}\sum\limits_{i\in\mathbb{F}_p}S^{(h)}(i,w-1)S^{(t)}(u-1,i)S^{(t)}(v-1,i)$.
	\item [$(\rm{\romannumeral4})$] $1\le w\le p$, $1\le u\le p$ and $p+1\le v\le \frac{3p+1}{2}$\\
	$p_{u,v}^w=\begin{cases}
	p^{\frac{n-3}{2}}-p^{-1}-p^{-2}\sum\limits_{i\in\mathbb{F}_p}S^{(h)}(i,w-1)S^{(t)}(u-1,i),&\text{if}\ v=p+1,\\
	2p^{\frac{n-3}{2}},&\text{if}\ v\ge p+2.
	\end{cases}$
	\item [$(\rm{\romannumeral5})$] $1\le w\le p$ and $p+1\le u\le v\le \frac{3p+1}{2}$, $p_{u,v}^w=0$.
	\item [$(\rm{\romannumeral6})$] $p+1\le w\le \frac{3p+1}{2}$ and $1\le u\le v\le p$\\
	$p_{u,v}^w=\begin{cases}
	p^{n-2}-p^{\frac{n-1}{2}},&\text{if}\ w=p+1\ \text{and}\ u=v,\\
	p^{n-2},&\text{if}\ w=p+1\ \text{and}\ u\ne v,\\
	p^{n-2}-p^{\frac{n-3}{2}},&\text{if}\ w\ge p+2.
	\end{cases}$
	\item [$(\rm{\romannumeral7})$] $p+1\le w\le \frac{3p+1}{2}$, $1\le u\le p$ and $p+1\le v\le  \frac{3p+1}{2}$, $p_{u,v}^w=0$.
	\item [$(\rm{\romannumeral8})$] $p+1\le w\le \frac{3p+1}{2}$ and $p+1\le u\le v\le \frac{3p+1}{2}$
	\begin{itemize}
		\item [$\bullet$] $w=p+1$\\
		$p_{u,v}^{p+1}=\begin{cases}
		p^{\frac{n-1}{2}}-2,&\text{if}\ u=v=p+1,\\
		0,&\text{if}\ u=p+1\ \text{and}\ v\ge p+2,\\
		2p^{\frac{n-1}{2}},&\text{if}\ u\ge p+2\ \text{and}\ u=v,\\
		0,&\text{if}\ u\ge p+2\ \text{and}\ u\ne v.
		\end{cases}$
		\item[$\bullet$] $p+2\le w\le \frac{3p+1}{2}$\\
		$p_{u,v}^w=\begin{cases}
		0,&\text{if}\ u=v=p+1,\\
		2(p^{\frac{n-3}{2}}-p^{-1})+(p^{\frac{n-5}{2}}-p^{-2})\times&\text{if}\ u=p+1\ \text{and}\\
		\sum\limits_{i\in NSQ}S^{(h)}(i,s_{w-p-1})S^{(t)}(s_{v-p-1},i),&\ v\ge p+2,\\
		4p^{\frac{n-3}{2}}+\sqrt{-1}p^{\frac{n-6}{2}}\sum\limits_{i\in NSQ}S^{(h)}(i,s_{w-p-1})\times&\\
		S^{(t)}(s_{u-p-1},i)S^{(t)}(s_{v-p-1},i),&\text{if}\ p+2\le u\le v.
		\end{cases}$
	\end{itemize}
\end{itemize}

Since $q_{u,v}^{w}=q_{v,u}^{w}$ for any $u,v,w\in\{0,1,\dots,\frac{3p+1}{2}\}$, we only give the values of  the Krein parameters $q_{u,v}^w$ for $u\le v$ in the following eight cases.	
\begin{itemize}
	\item [$(\rm{\romannumeral1})$] $w=0$ and $0\le u\le v\le \frac{3p+1}{2}$
	\begin{itemize}
		\item [$\bullet$] $u=v$\\
		$q_{0,0}^0=1$, $q_{1,1}^0=p^{\frac{n-1}{2}}-1$, $q_{u,u}^0=2p^{\frac{n-1}{2}}$ for $2\le u\le \frac{p+1}{2}$ and $q_{u,u}^0=p^{n-1}-p^{\frac{n-1}{2}}$ for $\frac{p+3}{2}\le u\le \frac{3p+1}{2}$.
		\item[$\bullet$] $u\ne v$, $q_{u,v}^0=0$.
	\end{itemize}
	\item [$(\rm{\romannumeral2})$] $u=0$, $1\le w\le \frac{3p+1}{2}$ and $0\le v\le \frac{3p+1}{2}$\\
	$q_{0,v}^w=\begin{cases}
	1,&\text{if}\ w=v,\\
	0,&\text{if}\ w\ne v.
	\end{cases}$
	\item [$(\rm{\romannumeral3})$] $1\le w\le \frac{p+1}{2}$ and $1\le u\le v\le \frac{p+1}{2}$
	\begin{itemize}
		\item [$\bullet$] $w=1$\\
		$q_{u,v}^1=\begin{cases}
		p^{\frac{n-1}{2}}-2,&\text{if}\ u=v=1,\\
		0,&\text{if}\ u=1\ \text{and}\ v\ge 2,\\
		2p^{\frac{n-1}{2}},&\text{if}\ u\ge 2\ \text{and}\ u=v,\\
		0,&\text{if}\ u\ge 2\ \text{and}\ u\ne v.
		\end{cases}$
		\item[$\bullet$] $2\le w\le \frac{p+1}{2}$\\
		$q_{u,v}^w=\begin{cases}
		0,&\text{if}\ u=v=1,\\
		2(p^{\frac{n-3}{2}}-p^{-1})+(p^{\frac{n-5}{2}}-p^{-2})\\
		\sum\limits_{i\in SQ}S^{(t)}(i,n_{w-1})S^{(h)}(n_{v-1},i)\times,&\text{if}\ u=1\ \text{and}\ v\ge 2,\\
		4p^{\frac{n-3}{2}}-\sqrt{-1}p^{\frac{n-6}{2}}\sum\limits_{i\in SQ}S^{(t)}(i,n_{w-1})\times\\
		S^{(h)}(n_{u-1},i)S^{(h)}(n_{v-1},i),&\text{if}\ 2\le u\le v.
		\end{cases}$
	\end{itemize}
	\item [$(\rm{\romannumeral4})$] $1\le w\le \frac{p+1}{2}$, $1\le u\le \frac{p+1}{2}$ and $\frac{p+3}{2}\le v\le \frac{3p+1}{2}$, $q_{u,v}^w=0$.
	\item [$(\rm{\romannumeral5})$] $1\le w\le \frac{p+1}{2}$ and $\frac{p+3}{2}\le u\le v\le \frac{3p+1}{2}$\\
	$q_{u,v}^w=\begin{cases}
	p^{n-2}-p^{\frac{n-1}{2}},&\text{if}\ w=1\ \text{and}\ u=v,\\
	p^{n-2},&\text{if}\ w=1\ \text{and}\ u\ne v,\\
	p^{n-2}-p^{\frac{n-3}{2}},&\text{if}\ w\ge 2.
	\end{cases}$
	\item [$(\rm{\romannumeral6})$] $\frac{p+3}{2}\le w\le \frac{3p+1}{2}$ and $1\le u\le v\le \frac{p+1}{2}$, $q_{u,v}^w=0$.
	\item [$(\rm{\romannumeral7})$] $\frac{p+3}{2}\le w\le \frac{3p+1}{2}$, $1\le u\le \frac{p+1}{2}$ and $\frac{p+3}{2}\le v\le \frac{3p+1}{2}$\\
	$q_{u,v}^w=\begin{cases}
	p^{\frac{n-3}{2}}-p^{-1}-p^{-2}\sum\limits_{i\in\mathbb{F}_p}S^{(t)}(i,w-\frac{p+3}{2})S^{(h)}(v-\frac{p+3}{2},i),&\text{if}\ u=1,\\
	2p^{\frac{n-3}{2}},&\text{if}\ u\ge 2.
	\end{cases}$
	
	\item [$(\rm{\romannumeral8})$] $\frac{p+3}{2}\le w\le \frac{3p+1}{2}$ and $\frac{p+3}{2}\le u\le v\le \frac{3p+1}{2}$\\
	$q_{u,v}^w=p^{n-2}-2p^{\frac{n-3}{2}}+\sqrt{-1}p^{\frac{n-6}{2}}\sum\limits_{i\in\mathbb{F}_p}S^{(t)}(i,w-\frac{p+3}{2})S^{(h)}(u-\frac{p+3}{2},i)S^{(h)}(v-\frac{p+3}{2},i)$.
	
\end{itemize}
16. The first and second eigenmatrices, the intersection numbers and the Krein parameters of the association scheme induced by  $U_{16}$.

The first and second eigenmatrices of the association scheme induced by $U_{16}$ are given in Tables 25 and 26.
\begin{table}[h]
	\vspace{0pt}
	\centering
	\caption{The first eigenmatrix of the association scheme induced by $U_{16}$}
	\renewcommand\arraystretch{1.5}	
	\resizebox{\textwidth}{!}{
		\begin{tabular}{|m{1.4cm}<{\centering}|c|m{1.6cm}<{\centering}|m{2.3cm}<{\centering}|m{1.05cm}<{\centering}|m{2.5cm}<{\centering}|m{2.55cm}<{\centering}|}
			\hline
			\diagbox{$i$}{$P_{16}(ij)$}{$j$}&1&2&$3\le j\le p+1$&$p+2$&$p+3$&$p+4\le j\le 2p+2$\\
			\hline
			1&1&$p^{n-1}-p^{r-1}$&$p^{n-1}-p^{r-1}$&$p^{n-r}-1$&$p^{r-1}-p^{n-r}$&$-{\small \eta(j-p-3)}p^{\frac{n-1}{2}}+p^{r-1}$\\
			\hline
			2&1&0&$\eta(j-2)p^{\frac{n-1}{2}}$&$-1$&0&0\\
			\hline
			$3\le i\le p+1$&1&$\eta(i-2)p^{\frac{n-1}{2}}$&$p^{\frac{n-2}{2}}S^{(t)}(j-2,i-2)$&$-1$&0&0\\
			\hline
			$p+2$&1&$-p^{r-1}$&$-p^{r-1}$&$p^{n-r}-1$&$p^{r-1}-p^{n-r}$&$-{\small \eta(j-p-3)}p^{\frac{n-1}{2}}+p^{r-1}$\\
			\hline
			$p+3$&1&0&0&$p^{n-r}-1$&$-p^{n-r}$&$-\eta(j-p-3)p^{\frac{n-1}{2}}$\\
			\hline
			$p+4\le i\le 2p+2$&1&0&0&$p^{n-r}-1$&${\small -\eta(i-p-3)}p^{\frac{n-1}{2}}-p^{n-r}$&$-p^{\frac{n-2}{2}}S^{(t)}(j-p-3,i-p-3)$\\
			\hline
	\end{tabular}}
	\vspace{0pt}
\end{table}
\begin{table}[h]
	\vspace{5pt}
	\centering
	\caption{The second eigenmatrix of the association scheme induced by $U_{16}$}
	\renewcommand\arraystretch{1.5}	
	\resizebox{\textwidth}{!}{
		\begin{tabular}{|m{1.4cm}<{\centering}|c|m{1.6cm}<{\centering}|m{2.3cm}<{\centering}|m{1.05cm}<{\centering}|m{2.5cm}<{\centering}|m{2.55cm}<{\centering}|}
			\hline
			\diagbox{$i$}{$Q_{16}(ij)$}{$j$}&1&2&$3\le j\le p+1$&$p+2$&$p+3$&$p+4\le j\le 2p+2$\\
			\hline
			1&1&$p^{n-1}-p^{r-1}$&$p^{n-1}-p^{r-1}$&$p^{n-r}-1$&$p^{r-1}-p^{n-r}$&$-{\small \eta(j-p-3)}p^{\frac{n-1}{2}}+p^{r-1}$\\
			\hline
			2&1&0&$\eta(j-2)p^{\frac{n-1}{2}}$&$-1$&0&0\\
			\hline
			$3\le i\le p+1$&1&$\eta(i-2)p^{\frac{n-1}{2}}$&$p^{\frac{n-2}{2}}S^{(h)}(j-2,i-2)$&$-1$&0&0\\
			\hline
			$p+2$&1&$-p^{r-1}$&$-p^{r-1}$&$p^{n-r}-1$&$p^{r-1}-p^{n-r}$&$-{\small \eta(j-p-3)}p^{\frac{n-1}{2}}+p^{r-1}$\\
			\hline
			$p+3$&1&0&0&$p^{n-r}-1$&$-p^{n-r}$&$-\eta(j-p-3)p^{\frac{n-1}{2}}$\\
			\hline
			$p+4\le i\le 2p+2$&1&0&0&$p^{n-r}-1$&${\small -\eta(i-p-3)}p^{\frac{n-1}{2}}-p^{n-r}$&$-p^{\frac{n-2}{2}}S^{(h)}(j-p-3,i-p-3)$\\
			\hline
	\end{tabular}}
	\vspace{-15pt}
\end{table}

Since $p_{u,v}^{w}=p_{v,u}^{w}$ for any $u,v,w\in\{0,1,\dots,2p+1\}$, we only give the values of the intersection numbers $p_{u,v}^w$ for $u\le v$ in the following eight cases. 
\begin{itemize}
	\item [$(\rm{\romannumeral1})$] $w=0$ and $0\le u\le v\le 2p+1$
	\begin{itemize}
		\item [$\bullet$] $u=v$\\
		$p_{0,0}^0=1$, $p_{u,u}^0=p^{n-1}-p^{r-1}$ for $1\le u\le p$, $p_{p+1,p+1}^0=p^{n-r}-1$, $p_{p+2,p+2}^0=p^{r-1}-p^{n-r}$ and $p_{u,u}^0=p^{r-1}-\eta(u-p-2)p^{\frac{n-1}{2}}$.
		\item[$\bullet$] $u\ne v$, $p_{u,v}^0=0$.
	\end{itemize}
\item [$(\rm{\romannumeral2})$] $u=0$, $1\le w\le 2p+1$ and $0\le v\le 2p+1$\\
$p_{0,v}^w=\begin{cases}
1,&\text{if}\ w=v,\\
0,&\text{if}\ w\ne v.
\end{cases}$
\item [$(\rm{\romannumeral3})$] $1\le w\le p$ and $1\le u\le v\le p$\\
$p_{u,v}^w=p^{n-2}-2p^{r-2}+p^{\frac{n-6}{2}}\sum\limits_{i\in\mathbb{F}_p}S^{(h)}(i,w-1)S^{(t)}(u-1,i)S^{(t)}(v-1,i)$.\\
\item [$(\rm{\romannumeral4})$] $1\le w\le p$, $1\le u\le p$ and $p+1\le v\le 2p+1$\\
$p_{u,v}^w=\begin{cases}
p^{n-r-1}-p^{-1}-p^{-2}\sum\limits_{i\in\mathbb{F}_p}S^{(h)}(i,w-1)S^{(t)}(u-1,i),&\text{if}\ v=p+1,\\
p^{r-2}-p^{n-r-1},&\text{if}\ v=p+2,\\
p^{r-2}-\eta(v-p-2)p^{\frac{n-3}{2}},&\text{if}\ v\ge p+3.
\end{cases}$
\item [$(\rm{\romannumeral5})$] $1\le w\le p$ and $p+1\le u\le v\le 2p+1$, $p_{u,v}^w=0$.
\item [$(\rm{\romannumeral6})$] $p+1\le w\le 2p+1$ and $1\le u\le v\le p$\\
$p_{u,v}^w=\begin{cases}
p^{n-2}-p^{r-1},&\text{if}\ w=p+1\ \text{and}\ u=v,\\
p^{n-2},&\text{if}\ w=p+1\ \text{and}\ u\ne v,\\
p^{n-2}-p^{r-2},&\text{if}\ w\ge p+2.
\end{cases}$
\item [$(\rm{\romannumeral7})$] $p+1\le w\le 2p+1$, $1\le u\le p$ and $p+1\le v\le 2p+1$, $p_{u,v}^w=0$.
\item [$(\rm{\romannumeral8})$] $p+1\le w\le 2p+1$ and $p+1\le u\le v\le 2p+1$
\begin{itemize}
	\item[$\bullet$] $w=p+1$\\
\newpage
	$p_{u,v}^{p+1}=\begin{cases}
	p^{n-r}-2,&\text{if}\ u=v=p+1,\\
	0,&\text{if}\ u=p+1\ \text{and}\ v\ge p+2,\\
	p^{r-1}-p^{n-r},&\text{if}\ u=v=p+2,\\
	p^{r-1}-\eta(u-p-2)p^{\frac{n-1}{2}},&\text{if}\ u\ge p+3\ \text{and}\ u=v,\\
	0,&\text{if}\ u\ge p+2\ \text{and}\ u\ne v.
	\end{cases}$
	\item[$\bullet$] $w=p+2$\\
	$p_{u,v}^{p+2}=\begin{cases}
	p^{n-r}-1,&\hspace{-2.9cm}\text{if}\ u=p+1\ \text{and}\\&\hspace{-2.9cm} v=p+2,\\
	0,&\hspace{-2.9cm}\text{if}\ u=p+1\ \text{and}\\&\hspace{-2.9cm} v\ne p+2,\\
	p^{r-2}-2p^{n-r},&\hspace{-2.9cm}\text{if}\ u=v=p+2,\\
	p^{r-2},&\hspace{-2.9cm}\text{if}\ u=p+2\ \text{and}\\&\hspace{-2.9cm} v\ge p+3,\\
 p^{r-2}-p^{\frac{n-5}{2}}\sum\limits_{i\in\mathbb{F}_p^*}\eta(i)S^{(t)}(u-p-2,i)S^{(t)}(v-\\p-2,i)-(\eta(u-p-2)+\eta(v-p-2))p^{\frac{n-3}{2}},
 	\text{if}\  p+3\le u\le v.
	\end{cases}$
	\item[$\bullet$] $p+3\le w\le 2p+1$\\
	$p_{u,v}^w=\begin{cases}
	p^{n-r}-1,&\text{if}\ u=p+1\ \text{and}\ w=v,\\
	0,&\text{if}\ u=p+1\ \text{and}\ w\ne v,\\
	p^{r-2}+\eta(w-p-2)p^{\frac{n-3}{2}},&\text{if}\ u=v=p+2,\\
	H_1,&\text{if}\ u=p+2\ \text{and}\ v\ge p+3,\\
	H_2,&\text{if}\ p+3\le u\le v,
	\end{cases}$
	
	where $H_1=p^{r-2}-p^{n-r-1}-\eta(v-p-2)p^{\frac{n-3}{2}}-\eta((w-p-2)(v-p-2))p^{n-r-1}-p^{-2}\sum\limits_{i\in\mathbb{F}_p^*}(\eta(i)p^{\frac{n-1}{2}}+p^{n-r})S^{(h)}(i,w-p-2)S^{(t)}(v-p-2,i)$ and $H_2=p^{r-2}-(\eta(u-p-2)+\eta(v-p-2))p^{\frac{n-3}{2}}-p^{\frac{n-6}{2}}\sum\limits_{i\in\mathbb{F}_p}S^{(h)}(i,w-p-2)S^{(t)}(u-p-2,i)S^{(t)}(v-p-2,i)$.
\end{itemize}
\end{itemize}

Since $q_{u,v}^{w}=q_{v,u}^{w}$ for any $u,v,w\in\{0,1,\dots,2p+1\}$, we only give the values of the Krein parameters $q_{u,v}^w$ for $u\le v$ in the following eight cases. 
\begin{itemize}
	\item [$(\rm{\romannumeral1})$] $w=0$ and $0\le u\le v\le 2p+1$
	\begin{itemize}
		\item [$\bullet$] $u=v$\\
		$q_{0,0}^0=1$, $q_{u,u}^0=p^{n-1}-p^{r-1}$ for $1\le u\le p$, $q_{p+1,p+1}^0=p^{n-r}-1$, $q_{p+2,p+2}^0=p^{r-1}-p^{n-r}$ and $q_{u,u}^0=p^{r-1}-\eta(u-p-2)p^{\frac{n-1}{2}}$.
		\item[$\bullet$] $u\ne v$, $q_{u,v}^0=0$.
	\end{itemize}
	\item [$(\rm{\romannumeral2})$] $u=0$, $1\le w\le 2p+1$ and $0\le v\le 2p+1$\\
	$q_{0,v}^w=\begin{cases}
	1,&\text{if}\ w=v,\\
	0,&\text{if}\ w\ne v.
	\end{cases}$
	\item [$(\rm{\romannumeral3})$] $1\le w\le p$ and $1\le u\le v\le p$\\
	$q_{u,v}^w=p^{n-2}-2p^{r-2}+p^{\frac{n-6}{2}}\sum\limits_{i\in\mathbb{F}_p}S^{(t)}(i,w-1)S^{(h)}(u-1,i)S^{(h)}(v-1,i)$.\\
	\item [$(\rm{\romannumeral4})$] $1\le w\le p$, $1\le u\le p$ and $p+1\le v\le 2p+1$\\
	$q_{u,v}^w=\begin{cases}
	p^{n-r-1}-p^{-1}-p^{-2}\sum\limits_{i\in\mathbb{F}_p}S^{(t)}(i,w-1)S^{(h)}(u-1,i),&\text{if}\ v=p+1,\\
	p^{r-2}-p^{n-r-1},&\text{if}\ v=p+2,\\
	p^{r-2}-\eta(v-p-2)p^{\frac{n-3}{2}},&\text{if}\ v\ge p+3.
	\end{cases}$
	\item [$(\rm{\romannumeral5})$] $1\le w\le p$ and $p+1\le u\le v\le 2p+1$, $q_{u,v}^w=0$.
	\item [$(\rm{\romannumeral6})$] $p+1\le w\le 2p+1$ and $1\le u\le v\le p$\\
	$q_{u,v}^w=\begin{cases}
	p^{n-2}-p^{r-1},&\text{if}\ w=p+1\ \text{and}\ u=v,\\
	p^{n-2},&\text{if}\ w=p+1\ \text{and}\ u\ne v,\\
	p^{n-2}-p^{r-2},&\text{if}\ w\ge p+2.
	\end{cases}$
	\item [$(\rm{\romannumeral7})$] $p+1\le w\le 2p+1$, $1\le u\le p$ and $p+1\le v\le 2p+1$, $q_{u,v}^w=0$.
	\item [$(\rm{\romannumeral8})$] $p+1\le w\le 2p+1$ and $p+1\le u\le v\le 2p+1$
	\begin{itemize}
		\item[$\bullet$] $w=p+1$\\
		$q_{u,v}^{p+1}=\begin{cases}
		p^{n-r}-2,&\text{if}\ u=v=p+1,\\
		0,&\text{if}\ u=p+1\ \text{and}\ v\ge p+2,\\
		p^{r-1}-p^{n-r},&\text{if}\ u=v=p+2,\\
		p^{r-1}-\eta(u-p-2)p^{\frac{n-1}{2}},&\text{if}\ u\ge p+3\ \text{and}\ u=v,\\
		0,&\text{if}\ u\ge p+2\ \text{and}\ u\ne v.
		\end{cases}$
		\item[$\bullet$] $w=p+2$\\
		$q_{u,v}^{p+2}=\begin{cases}
		p^{n-r}-1,&\hspace{-2.9cm}\text{if}\ u=p+1\ \text{and}\\&\hspace{-2.9cm} v=p+2,\\
		0,&\hspace{-2.9cm}\text{if}\ u=p+1\ \text{and}\\&\hspace{-2.9cm} v\ne p+2,\\
		p^{r-2}-2p^{n-r},&\hspace{-2.9cm}\text{if}\ u=v=p+2,\\
		p^{r-2},&\hspace{-2.9cm}\text{if}\ u=p+2\ \text{and}\\&\hspace{-2.9cm} v\ge p+3,\\
		 p^{r-2}-p^{\frac{n-5}{2}}\sum\limits_{i\in\mathbb{F}_p^*}\eta(i)S^{(h)}(u-p-2,i)S^{(h)}(v-\\p-2,i)-(\eta(u-p-2)+\eta(v-p-2))p^{\frac{n-3}{2}},
		\text{if}\  p+3\le u\le v.
		\end{cases}$
		\item[$\bullet$] $p+3\le w\le 2p+1$\\
		$q_{u,v}^w=\begin{cases}
		p^{n-r}-1,&\text{if}\ u=p+1\ \text{and}\ w=v,\\
		0,&\text{if}\ u=p+1\ \text{and}\ w\ne v,\\
		p^{r-2}+\eta(w-p-2)p^{\frac{n-3}{2}},&\text{if}\ u=v=p+2,\\
		H_1,&\text{if}\ u=p+2\ \text{and}\ v\ge p+3,\\
		H_2,&\text{if}\ p+3\le u\le v,
		\end{cases}$
		
		where $H_1=p^{r-2}-p^{n-r-1}-\eta(v-p-2)p^{\frac{n-3}{2}}-\eta((w-p-2)(v-p-2))p^{n-r-1}-p^{-2}\sum\limits_{i\in\mathbb{F}_p^*}(\eta(i)p^{\frac{n-1}{2}}+p^{n-r})S^{(t)}(i,w-p-2)S^{(h)}(v-p-2,i)$ and $H_2=p^{r-2}-(\eta(u-p-2)+\eta(v-p-2))p^{\frac{n-3}{2}}-p^{\frac{n-6}{2}}\sum\limits_{i\in\mathbb{F}_p}S^{(t)}(i,w-p-2)S^{(h)}(u-p-2,i)S^{(h)}(v-p-2,i)$.
	\end{itemize}
\end{itemize}
17. The first and second eigenmatrices, the intersection numbers and the Krein parameters of the association scheme induced by  $U_{17}$.

The first and second eigenmatrices of the association scheme induced by $U_{17}$ are given in Tables 27 and 28.
\begin{table}[h]
	\vspace{-15pt}
	\centering
	\caption{The first eigenmatrix of the association scheme induced by $U_{17}$}
	\renewcommand\arraystretch{1.5}	
	\resizebox{\textwidth}{!}{
		\begin{tabular}{|m{1.4cm}<{\centering}|c|m{2.3cm}<{\centering}|m{2.7cm}<{\centering}|c|m{2.3cm}<{\centering}|m{2.3cm}<{\centering}|}
			\hline
			\diagbox{$i$}{$P_{17}(ij)$}{$j$}&1&2&$3\le j\le p+1$&$p+2$&$p+3$&$p+4\le j\le 2p+2$\\
			\hline
			1&1&$p^{n-1}-p^{r-1}$&$p^{n-1}-p^{r-1}$&$p^{n-r}-1$&$p^{r-1}-p^{n-r}$&${\small \eta(j-p-3)}p^{\frac{n-1}{2}}+p^{r-1}$\\
			\hline
			2&1&$-p^{r-1}$&$-p^{r-1}$&$p^{n-r}-1$&$p^{r-1}-p^{n-r}$&${\small \eta(j-p-3)}p^{\frac{n-1}{2}}+p^{r-1}$\\
			\hline
			3&1&0&0&$p^{n-r}-1$&$-p^{n-r}$&$\eta(j-p-3)p^{\frac{n-1}{2}}$\\
			\hline
			$4\le i\le p+2$&1&0&0&$p^{n-r}-1$&$-\eta(i-3)p^{\frac{n-1}{2}}-p^{n-r}$&$\sqrt{-1}p^{\frac{n-2}{2}}S^{(t)}(j-p-3,i-3)$\\
			\hline
			$p+3$&1&0&$-\eta(j-2)p^{\frac{n-1}{2}}$&$-1$&0&0\\
			\hline
			$p+4\le i\le 2p+2$&1&$\eta(i-p-3)p^{\frac{n-1}{2}}$&$-\sqrt{-1}p^{\frac{n-2}{2}}S^{(t)}(j-2,i-p-3)$&$-1$&0&0\\
			\hline
	\end{tabular}}
	\vspace{-15pt}
\end{table}
\begin{table}[h]
	\vspace{-15pt}
	\centering
	\caption{The second eigenmatrix of the association scheme induced by $U_{17}$}
	\renewcommand\arraystretch{1.5}	
	\resizebox{\textwidth}{!}{
		\begin{tabular}{|m{1.4cm}<{\centering}|c|c|m{2.3cm}<{\centering}|m{2.7cm}<{\centering}|m{2.3cm}<{\centering}|m{2.3cm}<{\centering}|}
			\hline
			\diagbox{$i$}{$Q_{17}(ij)$}{$j$}&1&2&3&$4\le j\le p+2$&$p+3$&$p+4\le j\le 2p+2$\\
			\hline
			1&1&$p^{n-r}-1$&$p^{r-1}-p^{n-r}$&$-\eta(j-3)p^{\frac{n-1}{2}}+p^{r-1}$&$p^{n-1}-p^{r-1}$&$p^{n-1}-p^{r-1}$\\
			\hline
			2&1&$-1$&0&0&0&$\eta(j-p-3)p^{\frac{n-1}{2}}$\\
			\hline
			$3\le i\le p+1$&1&$-1$&0&0&$-\eta(i-2)p^{\frac{n-1}{2}}$&$\sqrt{-1}p^{\frac{n-2}{2}}S^{(h)}(j-p-3,i-2)$\\
			\hline
			$p+2$&1&$p^{n-r}-1$&$p^{r-1}-p^{n-r}$&$-\eta(j-3)p^{\frac{n-1}{2}}+p^{r-1}$&$-p^{r-1}$&$-p^{r-1}$\\
			\hline
			$p+3$&1&$p^{n-r}-1$&$-p^{n-r}$&$-\eta(j-3)p^{\frac{n-1}{2}}$&0&0\\
			\hline
			$p+4\le i\le 2p+2$&1&$p^{n-r}-1$&$-p^{n-r}+\eta(i-p-3)p^{\frac{n-1}{2}}$&$-\sqrt{-1}p^{\frac{n-2}{2}}S^{(h)}(j-3,i-p-3)$&0&0\\
			\hline
	\end{tabular}}
	\vspace{-15pt}
\end{table}

Since $p_{u,v}^{w}=p_{v,u}^{w}$ for any $u,v,w\in\{0,1,\dots,2p+1\}$, we only give the values of  the intersection parameters $p_{u,v}^w$ for $u\le v$ in the following eight cases.
\begin{itemize}
	\item [$(\rm{\romannumeral1})$] $w=0$ and $0\le u\le v\le 2p+1$
	\begin{itemize}
		\item [$\bullet$] $u=v$\\
		$p_{0,0}^0=1$, $p_{u,u}^0=p^{n-1}-p^{r-1}$ for $1\le u\le p$, $p_{p+1,p+1}^0=p^{n-r}-1$, $p_{p+2,p+2}^0=p^{r-1}-p^{n-r}$ and $p_{u,u}^0=p^{r-1}+\eta(u-p-2)p^{\frac{n-1}{2}}$ for $p+3\le u\le 2p+1$.
		\item[$\bullet$] $u\ne v$, $p_{u,v}^0=0$.
	\end{itemize}
	\item [$(\rm{\romannumeral2})$] $u=0$, $1\le w\le 2p+1$ and $0\le v\le 2p+1$\\
	$p_{0,v}^w=\begin{cases}
	1,&\text{if}\ w=v,\\
	0,&\text{if}\ w\ne v.
	\end{cases}$
	\item [$(\rm{\romannumeral3})$] $1\le w\le p$ and $1\le u\le v\le p$\\
	$p_{u,v}^w=p^{n-2}-2p^{r-2}-\sqrt{-1}p^{\frac{n-6}{2}}\sum\limits_{i\in\mathbb{F}_p}S^{(h)}(i,w-1)S^{(t)}(u-1,i)S^{(t)}(v-1,i)$.
	\item [$(\rm{\romannumeral4})$] $1\le w\le p$, $1\le u\le p$ and $p+1\le v\le 2p+1$\\
	$p_{u,v}^w=\begin{cases}
	p^{n-r-1}-p^{-1}-p^{-2}\sum\limits_{i\in\mathbb{F}_p}S^{(h)}(i,w-1)S^{(t)}(u-1,i),&\text{if}\ v=p+1,\\
	p^{r-2}-p^{n-r-1},&\text{if}\ v=p+2,\\
	p^{r-2}+\eta(v-p-2)p^{\frac{n-3}{2}},&\text{if}\ v\ge p+3.
	\end{cases}$
	\item [$(\rm{\romannumeral5})$] $1\le w\le p$ and $p+1\le u\le v\le 2p+1$, $p_{u,v}^w=0$.
	\item [$(\rm{\romannumeral6})$] $p+1\le w\le 2p+1$ and $1\le u\le v\le p$\\
	$p_{u,v}^w=\begin{cases}
	p^{n-2}-p^{r-1},&\text{if}\ w=p+1\ \text{and}\ u=v,\\
	p^{n-2},&\text{if}\ w=p+1\ \text{and}\ u\ne v,\\
	p^{n-2}-p^{r-2},&\text{if}\ w\ge p+2.
	\end{cases}$
	\item [$(\rm{\romannumeral7})$] $p+1\le w\le 2p+1$, $1\le u\le p$ and $p+1\le v\le 2p+1$, $p_{u,v}^w=0$.
	\item [$(\rm{\romannumeral8})$] $p+1\le w\le 2p+1$ and $p+1\le u\le v\le 2p+1$
	\begin{itemize}
		\item [$\bullet$] $w=p+1$\\
		$p_{u,v}^{p+1}=\begin{cases}
		p^{n-r}-2,&\text{if}\ u=v=p+1,\\
		0,&\text{if}\ u=p+1\ \text{and}\ v\ge p+2,\\
		p^{r-1}-p^{n-r},&\text{if}\ u=v=p+2,\\
		p^{r-1}+\eta(u-p-2)p^{\frac{n-1}{2}},&\text{if}\ u\ge p+3\ \text{and}\ u=v,\\
		0,&\text{if}\ u\ge p+2\ \text{and}\ u\ne v.
		\end{cases}$
		\item[$\bullet$] $w=p+2$\\
		$p_{u,v}^{p+2}=\begin{cases}
		p^{n-r}-1,&\hspace{-2.9cm}\text{if}\ u=p+1\\&\hspace{-2.9cm} \text{and}\ v=p+2,\\
		0,&\hspace{-2.9cm}\text{if}\ u=p+1\\&\hspace{-2.9cm} \text{and}\ v\ne p+2,\\
		p^{r-2}-2p^{n-r},&\hspace{-2.9cm}\text{if}\ {\small u=v=p+2},\\
		p^{r-2},&\hspace{-2.9cm}\text{if}\ u=p+2\\&\hspace{-2.9cm} \text{and}\ v\ge p+3,\\
		p^{r-2}+
		p^{\frac{n-5}{2}}\sum\limits_{i\in\mathbb{F}_p^*}\eta(i)S^{(t)}(u-p-2,i)S^{(t)}(v-\\p-2,i) +(\eta(u-p-2)+\eta(v-p-2))p^{\frac{n-3}{2}},\text{if}\  p+3\le u\le v.
		\end{cases}$
		\item[$\bullet$] $p+3\le w\le 2p+1$\\
		$p_{u,v}^w=\begin{cases}
		p^{n-r}-1,&\text{if}\ u=p+1\ \text{and}\ w=v,\\
		0,&\text{if}\ u=p+1\ \text{and}\ w\ne v,\\
		p^{r-2}-p^{\frac{n-3}{2}}\eta(w-p-2),&\text{if}\ u=v=p+2,\\
		H_1,&\text{if}\ u=p+2\ \text{and}\ v\ge p+3,\\
		H_2,&\text{if}\ p+3\le u\le v,
		\end{cases}$
		
		where $H_1=p^{r-2}+\eta(v-p-2)p^{\frac{n-3}{2}}-p^{n-r-1}-\eta((w-p-2)(v-p-2))p^{n-r-1}-p^{-2}\sum\limits_{i\in\mathbb{F}_p^*}(\eta(i)p^{\frac{n-1}{2}}+p^{n-r})S^{(h)}(i,w-p-2)S^{(t)}(v-p-2,i)$ and  $H_2=p^{r-2}+(\eta(u-p-2)+\eta(v-p-2))p^{\frac{n-3}{2}}+\sqrt{-1}p^{\frac{n-6}{2}}\sum\limits_{i\in\mathbb{F}_p}S^{(h)}(i,w-p-2)S^{(t)}(u-p-2,i)S^{(t)}(v-p-2,i)$.
	\end{itemize}
\end{itemize}

Since $q_{u,v}^{w}=q_{v,u}^{w}$ for any $u,v,w\in\{0,1,\dots,2p+1\}$, we only give the values of  the Krein parameters $q_{u,v}^w$ for $u\le v$ in the following eight cases.
\begin{itemize}
	\item [$(\rm{\romannumeral1})$] $w=0$ and $0\le u\le v\le2p+1$
	\begin{itemize}
		\item [$\bullet$] $u=v$\\
		$q_{0,0}^0=1$, $q_{1,1}^0=p^{n-r}-1$, $q_{2,2}^0=p^{r-1}-p^{n-r}$, $q_{u,u}^0=p^{r-1}-\eta(u-2)p^{\frac{n-1}{2}}$ for $3\le u\le p+1$ and $q_{u,u}^0=p^{n-1}-p^{r-1}$ for $p+2\le u\le 2p+1$.
		\item[$\bullet$] $u\ne v$, $q_{u,v}^0=0$.
	\end{itemize}
	\item [$(\rm{\romannumeral2})$] $u=0$, $1\le w\le 2p+1$ and $0\le v\le 2p+1$\\
	$q_{u,v}^w=\begin{cases}
	1,&\text{if } w=v,\\
	0,&\text{if } w\ne v.
	\end{cases}$
	\item [$(\rm{\romannumeral3})$] $1\le w\le p+1$ and $1\le u\le v\le p+1$
	\begin{itemize}
		\item [$\bullet$] $w=1$\\
		$q_{u,v}^1=\begin{cases}
		p^{n-r}-2,&\text{if } u=v=1,\\
		0,&\text{if } u=1\ \text{and}\ v\ge 2,\\
		p^{r-1}-p^{n-r},&\text{if}\ u=v=2,\\
		p^{r-1}-\eta(u-2)p^{\frac{n-1}{2}},&\text{if}\ u\ge 3\ \text{and}\ u=v,\\
		0,&\text{if}\ u\ge 2\ \text{and}\ u\ne v.
		\end{cases}$
		\item[$\bullet$] $w=2$\\
		$q_{u,v}^2=\begin{cases}
		p^{n-r}-1,&\text{if}\ u=1\ \text{and}\ v=2,\\
		0,&\text{if}\ u=1\ \text{and}\ v\ne 2,\\
		p^{r-2}-2p^{n-r},&\text{if}\ u=v=2,\\
		p^{r-2},&\text{if}\ u=2\ \text{and}\ v\ge 3,\\
		p^{r-2}-(\eta(u-2)+\eta(v-2))p^{\frac{n-3}{2}}-\\
		p^{\frac{n-5}{2}}\sum\limits_{i\in\mathbb{F}_p^*}\eta(i)S^{(h)}(u-2,i)S^{(h)}(v-2,i),&\text{if}\ 3\le u\le v.
		\end{cases}$
		\item[$\bullet$] $3\le w\le p+1$\\
		$q_{u,v}^w=\begin{cases}
		p^{n-r}-1,&\text{if}\ u=1\ \text{and}\\& w=v,\\
		0,&\text{if}\ u=1\ \text{and}\\& w\ne v,\\
		p^{r-2}+\eta(w-2)p^{\frac{n-3}{2}},&\text{if}\ u=v=2,\\
		H,&\text{if}\ u=2\ \text{and}\\& v\ge 3,\\
		p^{r-2}-(\eta(u-2)+\eta(v-2))p^{\frac{n-3}{2}}-\sqrt{-1}	p^{\frac{n-6}{2}}&\text{if}\ 3\le u\le v,\\
		\sum\limits_{i\in\mathbb{F}_p}S^{(t)}(i,w-2)S^{(h)}(u-2,i)S^{(h)}(v-2,i),
		\end{cases}$
		where $H=p^{r-2}-p^{n-r-1}-\eta(v-2)p^{\frac{n-3}{2}}-\eta((w-2)(v-2))p^{n-r-1}+p^{-2}\sum\limits_{i\in\mathbb{F}_p^*}(\eta(i)p^{\frac{n-1}{2}}-p^{n-r})S^{(t)}(i,w-2)S^{(h)}(v-2,i)$.
	\end{itemize}
	\item [$(\rm{\romannumeral4})$] $1\le w\le p+1$, $1\le u\le p+1$ and $p+2\le v\le 2p+1$, $q_{u,v}^w=0$.
	\item [$(\rm{\romannumeral5})$] $1\le w\le p+1$ and $p+2\le u\le v\le 2p+1$\\
	\newpage
	$q_{u,v}^w=\begin{cases}
	p^{n-2}-p^{r-1},&\text{if}\ w=1\ \text{and}\ u=v,\\
	p^{n-2},&\text{if}\ w=1\ \text{and}\ u\ne v,\\
	p^{n-2}-p^{r-2},&\text{if}\ w\ge 2.
	\end{cases}$
	\item [$(\rm{\romannumeral6})$] $p+2\le w\le 2p+1$ and $1\le u\le v\le p+1$, $q_{u,v}^w=0$.
	\item [$(\rm{\romannumeral7})$] $p+2\le w\le 2p+1$, $1\le u\le p+1$ and $p+2\le v\le 2p+1$\\
	$q_{u,v}^w=\begin{cases}
	p^{n-r-1}-p^{-1}-p^{-2}\sum\limits_{i\in\mathbb{F}_p}S^{(t)}(i,w-p-2)\times\\
	S^{(h)}(v-p-2,i),&\text{if}\ u=1,\\
	p^{r-2}-p^{n-r-1},&\text{if}\ u=2,\\
	p^{r-2}-\eta(u-2)p^{\frac{n-3}{2}},&\text{if}\ u\ge3.
	\end{cases}$
	\item [$(\rm{\romannumeral8})$] $p+2\le w\le 2p+1$ and $p+2\le u\le v\le 2p+1$\\
	$q_{u,v}^w=p^{n-2}-2p^{r-2}+\sqrt{-1}p^{\frac{n-6}{2}}\sum\limits_{i\in\mathbb{F}_p}S^{(t)}(i,w-p-2)S^{(h)}(u-p-2,i)S^{(h)}(v-p-2,i)$.
\end{itemize}
18. The first and second eigenmatrices, the intersection numbers and the Krein parameters of the association scheme induced by  $U_{18}$.

Note that the first and second eigenmatrices of the association scheme induced by $U_{18}$ are the same. The first (second) eigenmatrix of the association scheme induced by $U_{18}$ is given in Table 29.
\begin{table}[h]
	\vspace{-15pt}
	\centering
	\caption{The first (second) eigenmatrix of the association scheme induced by $U_{18}$}
	\renewcommand\arraystretch{2}	
	\resizebox{\textwidth}{!}{
		\begin{tabular}{|m{2.3cm}<{\centering}|c|c|c|c|c|c|
			}
			\hline
			\diagbox{$i$}{{\tiny $P_{18}(ij)\ (Q_{18}(ij))$}}{$j$}&1&2&3&4&5&6\\
			\hline
			1&1&$p^{\frac{n-1}{2}}-1$&$(p-1)p^{\frac{n-1}{2}}$&$p^{n-1}-p^{\frac{n-1}{2}}$&$\frac{(p-1)}{2}(p^{n-1}-p^{\frac{n-1}{2}})$&$\frac{(p-1)}{2}(p^{n-1}-p^{\frac{n-1}{2}})$\\
			\hline
			2&1&$p^{\frac{n-1}{2}}-1$&$(p-1)p^{\frac{n-1}{2}}$&$-p^{\frac{n-1}{2}}$&$-\frac{(p-1)}{2}p^{\frac{n-1}{2}}$&$-\frac{(p-1)}{2}p^{\frac{n-1}{2}}$\\
			\hline
			3&1&$p^{\frac{n-1}{2}}-1$&$-p^{\frac{n-1}{2}}$&0&0&0\\
			\hline
			4&1&$-1$&0&0&$-\frac{(p-1)}{2}p^{\frac{n-1}{2}}$&$\frac{(p-1)}{2}p^{\frac{n-1}{2}}$\\
			\hline
			5&1&$-1$&0&$-p^{\frac{n-1}{2}}$&$p^{\frac{n-1}{2}}$&0\\
			\hline
			6&1&$-1$&0&$p^{\frac{n-1}{2}}$&0&$-p^{\frac{n-1}{2}}$\\
			\hline
	\end{tabular}}
	\vspace{-15pt}
\end{table}

Since $p_{u,v}^{w}=p_{v,u}^{w}$ for any $u,v,w\in\{0,1,\dots,5\}$, we only give the values of the intersection numbers $p_{u,v}^w$ for $u\le v$ in the following six cases. The Krein parameter $q_{u,v}^w$ is the same as the intersection number $p_{u,v}^w$ for any $u,v,w\in\{0,1,\dots,5\}$.
\begin{itemize}
	\item [$(\rm{\romannumeral1})$] $w=0$
	\begin{itemize}
		\item [$\bullet$] $u=v$\\
		$p_{0,0}^0=1$, $p_{1,1}^0=p^{\frac{n-1}{2}}-1$, $p_{2,2}^0=(p-1)p^{\frac{n-1}{2}}$, $p_{3,3}^0=p^{n-1}-p^{\frac{n-1}{2}}$ and $p_{4,4}^0=p_{5,5}^0=\frac{(p-1)}{2}(p^{n-1}-p^{\frac{n-1}{2}})$.
		\item[$\bullet$] $u\ne v$, $p_{u,v}^0=0$ for $0\le u<v\le 5$.
	\end{itemize}
	\item [$(\rm{\romannumeral2})$] $w=1$
	\begin{itemize}
		\item [$\bullet$] $p_{0,1}^1=1$ and $p_{0,v}^1=0$ for $0\le v\le 5$ and $v\ne 1$.
		\item[$\bullet$] $p_{1,1}^1=p^{\frac{n-1}{2}}-2$ and $p_{1,v}^1=0$ for $2\le v\le 5$.
		\item[$\bullet$] $p_{2,2}^1=(p-1)p^{\frac{n-1}{2}}$ and $p_{2,v}^1=0$ for $3\le v\le 5$.
		\item[$\bullet$] $p_{3,3}^1=p^{n-2}-p^{\frac{n-1}{2}}$ and $p_{3,4}^1=p_{3,5}^1=\frac{(p-1)}{2}p^{n-2}$.
			\item[$\bullet$] $p_{4,4}^1=\frac{(p-1)}{4}(p^{n-1}-p^{n-2}-2p^{\frac{n-1}{2}})$ and $p_{4,5}^1=\frac{(p-1)^2}{4}p^{n-2}$.
			\item[$\bullet$] $p_{5,5}^1=\frac{(p-1)}{4}(p^{n-1}-p^{n-2}-2p^{\frac{n-1}{2}})$.
	\end{itemize}
	\item [$(\rm{\romannumeral3})$] $w=2$
	\begin{itemize}
		\item [$\bullet$] $p_{0,2}^2=1$ and $p_{0,v}^2=0$ for $0\le v\le 5$ and $v\ne 2$.
		\item[$\bullet$] $p_{1,2}^2=p^{\frac{n-1}{2}}-1$ and $p_{1,v}^2=0$ for $1\le v\le 5$ and $v\ne 2$.
		\item[$\bullet$] $p_{2,2}^2=(p-2)p^{\frac{n-1}{2}}$ and $p_{2,v}^2=0$ for $3\le v\le 5$.
		\item[$\bullet$] $p_{3,3}^2=p^{n-2}-p^{\frac{n-3}{2}}$ and $p_{3,4}^2=p_{3,5}^2=\frac{(p-1)}{2}(p^{n-2}-p^{\frac{n-3}{2}})$.
		\item[$\bullet$] $p_{4,4}^2=p_{4,5}^2=\frac{(p-1)^2}{4}(p^{n-2}-p^{\frac{n-3}{2}})$.
	\item[$\bullet$] $p_{5,5}^2=\frac{(p-1)^2}{4}(p^{n-2}-p^{\frac{n-3}{2}})$.
	\end{itemize}
\item [$(\rm{\romannumeral4})$] $w=3$
\begin{itemize}
	\item [$\bullet$] $p_{0,3}^3=1$ and $p_{0,v}^3=0$ for $0\le v\le 5$ and $v\ne 3$.
	\item[$\bullet$] $p_{1,1}^3=p_{1,2}^3=0$, $p_{1,3}^3=p^{\frac{n-3}{2}}-1$ and $p_{1,4}^3=p_{1,5}^3=\frac{(p-1)}{2}p^{\frac{n-3}{2}}$.
	\item[$\bullet$] $p_{2,2}^3=0$, $p_{2,3}^3=(p-1)p^{\frac{n-3}{2}}$ and $p_{2,4}^3=p_{2,5}^3=\frac{(p-1)^2}{2}p^{\frac{n-3}{2}}$.
	\item[$\bullet$] $p_{3,3}^3=p^{n-2}-2p^{\frac{n-3}{2}}$, $p_{3,4}^3=\frac{(p-1)}{2}(p^{n-2}-p^{\frac{n-3}{2}})$ and $p_{3,5}^3=\frac{(p-1)}{2}(p^{n-2}-3p^{\frac{n-3}{2}})$.
	\item[$\bullet$] $p_{4,4}^3=\frac{(p-1)}{4}(p^{n-1}-p^{n-2}-2p^{\frac{n-1}{2}})$ and $p_{4,5}^3=\frac{(p-1)^2}{4}(p^{n-2}-2p^{\frac{n-3}{2}})$.
	\item[$\bullet$] $p_{5,5}^3=\frac{(p-1)}{4}(p^{n-1}-p^{n-2}+4p^{\frac{n-3}{2}}-2p^{\frac{n-1}{2}})$.
\end{itemize}
\item [$(\rm{\romannumeral5})$] $w=4$
\begin{itemize}
	\item [$\bullet$] $p_{0,4}^4=1$ and $p_{0,v}^4=0$ for $0\le v\le 5$ and $v\ne 4$.
	\item[$\bullet$] $p_{1,1}^4=p_{1,2}^4=0$, $p_{1,3}^4=p^{\frac{n-3}{2}}$, $p_{1,4}^4=\frac{1}{2}(p^{\frac{n-1}{2}}-p^{\frac{n-3}{2}}-2)$ and $p_{1,5}^4=\frac{(p-1)}{2}p^{\frac{n-3}{2}}$.
	\item[$\bullet$] $p_{2,2}^4=0$, $p_{2,3}^4=(p-1)p^{\frac{n-3}{2}}$,  $p_{2,4}^4=\frac{(p-1)^2}{2}p^{\frac{n-3}{2}}$ and $p_{2,5}^4=\frac{(p-1)^2}{2}p^{\frac{n-3}{2}}$.
	\item[$\bullet$] $p_{3,3}^4=p^{n-2}-p^{\frac{n-3}{2}}$, $p_{3,4}^4=\frac{1}{2}(p^{n-1}-p^{n-2}-2p^{\frac{n-1}{2}})$ and $p_{3,5}^4=\frac{(p-1)}{2}(p^{n-2}-2p^{\frac{n-3}{2}})$.
	\item[$\bullet$] $p_{4,4}^4=\frac{1}{4}(p^{n-2}-2p^{n-1}+p^n+p^{\frac{n-3}{2}}+6p^{\frac{n-1}{2}}-3p^{\frac{n+1}{2}})$ and $p_{4,5}^4=\frac{(p-1)^2}{4}(p^{n-2}-p^{\frac{n-3}{2}})$.
	\item[$\bullet$] $p_{5,5}^4=\frac{(p-1)^2}{4}(p^{n-2}-3p^{\frac{n-3}{2}})$.
\end{itemize}
\item [$(\rm{\romannumeral6})$] $w=5$
\begin{itemize}
	\item [$\bullet$] $p_{0,5}^5=1$ and $p_{0,v}^5=0$ for $0\le v\le 4$.
	\item[$\bullet$] $p_{1,1}^5=p_{1,2}^5=0$, $p_{1,3}^5=p^{\frac{n-3}{2}}$, $p_{1,4}^5=\frac{(p-1)}{2}p^{\frac{n-3}{2}}$ and $p_{1,5}^5=\frac{1}{2}(p^{\frac{n-1}{2}}-p^{\frac{n-3}{2}}-2)$.
	\item[$\bullet$] $p_{2,2}^5=0$, $p_{2,3}^5=(p-1)p^{\frac{n-3}{2}}$ and $p_{2,4}^5=p_{2,5}^5=\frac{(p-1)^2}{2}p^{\frac{n-3}{2}}$.
		\item[$\bullet$] $p_{3,3}^5=p^{n-2}-3p^{\frac{n-3}{2}}$, $p_{3,4}^5=\frac{(p-1)}{2}(p^{n-2}-2p^{\frac{n-3}{2}})$ and $p_{3,5}^5=\frac{1}{2}(p^{n-1}-p^{n-2}+4p^{\frac{n-3}{2}}-2p^{\frac{n-1}{2}})$.
		\item[$\bullet$] $p_{4,4}^5=\frac{(p-1)^2}{4}(p^{n-2}-p^{\frac{n-3}{2}})$ and $p_{4,5}^5=\frac{(p-1)^2}{4}(p^{n-2}-3p^{\frac{n-3}{2}})$.
		\item[$\bullet$] $p_{5,5}^5=\frac{1}{4}(p^n+p^{n-2}+2p^{\frac{n-1}{2}}-2p^{n-1}-5p^{\frac{n-3}{2}}-p^{\frac{n+1}{2}})$.
	\end{itemize}
\end{itemize}
19. The first and second eigenmatrices, the intersection numbers and the  Krein parameters of the association scheme induced by $U_{19}$.

The first and second eigenmatrices of the association scheme induced by $U_{19}$ are given in Tables 30 and 31.

Since $p_{u,v}^{w}=p_{v,u}^{w}$ for any $u,v,w\in\{0,1,\dots,5\}$, we only give the values of  the intersection numbers $p_{u,v}^w$ for $u\le v$ in the following six cases.
\begin{table}[h]
	\vspace{-15pt}
	\centering
	\caption{The first eigenmatrix of the association scheme induced by $U_{19}$}
	\renewcommand\arraystretch{2}	
	\resizebox{\textwidth}{!}{
		\begin{tabular}{|m{2.3cm}<{\centering}|c|c|c|c|c|c|
			}
			\hline
			\diagbox{$i$}{ $P_{19}(ij)$}{$j$}&1&2&3&4&5&6\\
			\hline
			1&1&$p^{\frac{n-1}{2}}-1$&$(p-1)p^{\frac{n-1}{2}}$&$p^{n-1}-p^{\frac{n-1}{2}}$&$\frac{(p-1)}{2}(p^{n-1}-p^{\frac{n-1}{2}})$&$\frac{(p-1)}{2}(p^{n-1}-p^{\frac{n-1}{2}})$\\
			\hline
			2&1&$-1$&0&0&$\frac{(p-1)}{2}p^{\frac{n-1
				}{2}}$&$-\frac{(p-1)}{2}p^{\frac{n-1}{2}}$\\
			\hline
			3&1&$-1$&0&$-p^{\frac{n-1}{2}}$&0&$p^{\frac{n-1}{2}}$\\
			\hline
			4&1&$-1$&0&$p^{\frac{n-1}{2}}$&$-p^{\frac{n-1}{2}}$&0\\
			\hline
			5&1&$p^{\frac{n-1}{2}}-1$&$(p-1)p^{\frac{n-1}{2}}$&$-p^{\frac{n-1}{2}}$&$-\frac{(p-1)}{2}p^{\frac{n-1}{2}}$&$-\frac{(p-1)}{2}p^{\frac{n-1}{2}}$\\
			\hline
			6&1&$p^{\frac{n-1}{2}}-1$&$-p^{\frac{n-1}{2}}$&0&0&0\\
			\hline
	\end{tabular}}
\vspace{-15pt}
\end{table}

\begin{table}[h]
	\vspace{-15pt}
	\centering
	\caption{The second eigenmatrix of the association scheme induced by $U_{19}$}
	\renewcommand\arraystretch{2}	
	\resizebox{\textwidth}{!}{
		\begin{tabular}{|m{2.3cm}<{\centering}|c|c|c|c|c|c|
			}
			\hline
			\diagbox{$i$}{ $Q_{19}(ij)$}{$j$}&1&2&3&4&5&6\\
			\hline
			1&1&$p^{n-1}-p^{\frac{n-1}{2}}$&$\frac{(p-1)}{2}(p^{n-1}-p^{\frac{n-1}{2}})$&$\frac{(p-1)}{2}(p^{n-1}-p^{\frac{n-1}{2}})$&$p^{\frac{n-1}{2}}-1$&$(p-1)p^{\frac{n-1}{2}}$\\
			\hline
			2&1&$-p^{\frac{n-1}{2}}$&$-\frac{(p-1)}{2}p^{\frac{n-1}{2}}$&$-\frac{(p-1)}{2}p^{\frac{n-1}{2}}$&$p^{\frac{n-1}{2}}-1$&$(p-1)p^{\frac{n-1}{2}}$\\
			\hline
			3&1&0&0&0&$p^{\frac{n-1}{2}}-1$&$-p^{\frac{n-1}{2}}$\\
			\hline
			4&1&0&$-\frac{(p-1)}{2}p^{\frac{n-1}{2}}$&$\frac{(p-1)}{2}p^{\frac{n-1}{2}}$&$-1$&0\\
			\hline
			5&1&$p^{\frac{n-1}{2}}$&0&$-p^{\frac{n-1}{2}}$&$-1$&0\\
			\hline
			6&1&$-p^{\frac{n-1}{2}}$&$p^{\frac{n-1}{2}}$&0&$-1$&0\\
			\hline
	\end{tabular}}
	\vspace{-15pt}
\end{table}

	\begin{itemize}
		\item [$(\rm{\romannumeral1})$] $w=0$
\begin{itemize}
	\item [$\bullet$] $u=v$\\
$p_{0,0}^0=1$, $p_{1,1}^0=p^{\frac{n-1}{2}}-1$, $p_{2,2}^0=(p-1)p^{\frac{n-1}{2}}$, $p_{3,3}^0=p^{n-1}-p^{\frac{n-1}{2}}$ and $p_{4,4}^0=p_{5,5}^0=\frac{(p-1)}{2}(p^{n-1}-p^{\frac{n-1}{2}})$.
\item[$\bullet$] $u\ne v$, $p_{u,v}^0=0$ for $0\le u<v\le 5$.
\end{itemize}
	\item [$(\rm{\romannumeral2})$] $w=1$
	\begin{itemize}
		\item [$\bullet$] $p_{0,1}^1=1$ and $p_{0,v}^1=0$ for $0\le v\le 5$ and $v\ne 5$.
		\item[$\bullet$] $p_{1,1}^1=p^{\frac{n-1}{2}}-2$ and $p_{1,v}^1=0$ for $2\le v\le 5$.
		\item[$\bullet$] $p_{2,2}^1=(p-1)p^{\frac{n-1}{2}}$ and $p_{2,v}^1=0$ for $3\le v\le 5$.
		\item[$\bullet$] $p_{3,3}^1=p^{n-2}-p^{\frac{n-1}{2}}$ and $p_{3,4}^1=p_{3,5}^1=\frac{(p-1)}{2}p^{n-2}$.
		\item[$\bullet$] $p_{4,4}^1=\frac{(p-1)}{4}(p^{n-1}-p^{n-2}-2p^{\frac{n-1}{2}})$ and $p_{4,5}^1=\frac{(p-1)^2}{4}p^{n-2}$.
		\item[$\bullet$] $p_{5,5}^1=\frac{(p-1)}{4}(p^{n-1}-p^{n-2}-2p^{\frac{n-1}{2}})$.
				\end{itemize}
	\item [$(\rm{\romannumeral3})$] $w=2$ 
	\begin{itemize}
		\item [$\bullet$] $p_{0,2}^2=1$ and $p_{0,v}^2=0$ for $0\le v\le 5$ and $v\ne 2$.
		\item[$\bullet$] $p_{1,1}^2=0$, $p_{1,2}^2=p^{\frac{n-1}{2}}-1$ and $p_{1,v}^2=0$ for $3\le v\le 5$.
		\item[$\bullet$] $p_{2,2}^2=(p-2)p^{\frac{n-1}{2}}$ and $p_{2,v}^2=0$ for $3\le v\le 5$.
		\item[$\bullet$] $p_{3,3}^2=p^{n-2}-p^{\frac{n-3}{2}}$ and $p_{3,4}^2=p_{3,5}^2=\frac{(p-1)}{2}(p^{n-2}-p^{\frac{n-3}{2}})$.
			\item[$\bullet$] $p_{4,4}^2=p_{4,5}^2=\frac{(p-1)^2}{4}(p^{n-2}-p^{\frac{n-3}{2}})$.
			\item[$\bullet$] $p_{5,5}^2=\frac{(p-1)^2}{4}(p^{n-2}-p^{\frac{n-3}{2}})$.
	\end{itemize}
	\item [$(\rm{\romannumeral4})$] $w=3$
	\begin{itemize}
		\item [$\bullet$] $p_{0,3}^3=1$ and $p_{0,v}^3=0$ for $0\le v\le 5$ and $v\ne 3$.
		\item[$\bullet$] $p_{1,1}^3=p_{1,2}^3=0$, $p_{1,3}^3=p^{\frac{n-3}{2}}-1$ and $p_{1,4}^3=p_{1,5}^3=\frac{(p-1)}{2}p^{\frac{n-3}{2}}$.
		\item[$\bullet$] $p_{2,2}^3=0$, $p_{2,3}^3=(p-1)p^{\frac{n-3}{2}}$ and $p_{2,4}^3=p_{2,5}^3=\frac{(p-1)^2}{2}p^{\frac{n-3}{2}}$.
		\item[$\bullet$] $p_{3,3}^3=p^{n-2}-2p^{\frac{n-3}{2}}$, $p_{3,4}^3=\frac{(p-1)}{2}(p^{n-2}-3p^{\frac{n-3}{2}})$ and $p_{3,5}^3=\frac{(p-1)}{2}(p^{n-2}-p^{\frac{n-3}{2}})$.
		\item[$\bullet$] $p_{4,4}^3=\frac{(p-1)}{4}(p^{n-1}-p^{n-2}+4p^{\frac{n-3}{2}}-2p^{\frac{n-1}{2}})$ and $p_{4,5}^3=\frac{(p-1)^2}{4}(p^{n-2}-2p^{\frac{n-3}{2}})$.
		\item[$\bullet$] $p_{5,5}^3=\frac{(p-1)}{4}(p^{n-1}-p^{n-2}-2p^{\frac{n-1}{2}})$.
	\end{itemize}
	\item [$(\rm{\romannumeral5})$] $w=4$
	\begin{itemize}
		\item [$\bullet$] $p_{0,4}^4=1$ and $p_{0,v}^4=0$ for $0\le v\le 5$ and $v\ne 4$.
		\item[$\bullet$] $p_{1,1}^4=p_{1,2}^4=0$, $p_{1,3}^4=p^{\frac{n-3}{2}}$,  $p_{1,4}^4=\frac{1}{2}(p^{\frac{n-1}{2}}-p^{\frac{n-3}{2}}-2)$ and $p_{1,5}^4=\frac{(p-1)}{2}p^{\frac{n-3}{2}}$.
		\item[$\bullet$] $p_{2,2}^4=0$, $p_{2,3}^4=(p-1)p^{\frac{n-3}{2}}$ and $p_{2,4}^4=p_{2,5}^4=\frac{(p-1)^2}{2}p^{\frac{n-3}{2}}$.
		\item[$\bullet$] $p_{3,3}^4=p^{n-2}-3p^{\frac{n-3}{2}}$, $p_{3,4}^4=\frac{1}{2}(p^{n-1}-p^{n-2}+4p^{\frac{n-3}{2}}-2p^{\frac{n-1}{2}})$ and $p_{3,5}^4=\frac{(p-1)}{2}(p^{n-2}-2p^{\frac{n-3}{2}})$.
			\item[$\bullet$] $p_{4,4}^4=\frac{1}{4}(p^n+p^{n-2}+2p^{\frac{n-1}{2}}-2p^{n-1}-5p^{\frac{n-3}{2}}-p^{\frac{n+1}{2}})$ and $p_{4,5}^4=\frac{(p-1)^2}{4}(p^{n-2}-3p^{\frac{n-3}{2}})$.
			\item[$\bullet$] $p_{5,5}^4=\frac{(p-1)^2}{4}(p^{n-2}-p^{\frac{n-3}{2}})$.
	\end{itemize}
	\item [$(\rm{\romannumeral6})$] $w=5$
	\begin{itemize}
		\item [$\bullet$] $p_{0,5}^5=1$ and $p_{0,v}^5=0$ for $0\le v\le 4$.
		\item[$\bullet$] $p_{1,1}^5=p_{1,2}^5=0$, $p_{1,3}^5=p^{\frac{n-3}{2}}$, $p_{1,4}^5=\frac{(p-1)}{2}p^{\frac{n-3}{2}}$ and $p_{1,5}^5=\frac{1}{2}(p^{\frac{n-1}{2}}-p^{\frac{n-3}{2}}-2)$.
		\item[$\bullet$] $p_{2,2}^5=0$, $p_{2,3}^5=(p-1)p^{\frac{n-3}{2}}$ and $p_{2,4}^5=p_{2,5}^5=\frac{(p-1)^2}{2}p^{\frac{n-3}{2}}$.
		\item[$\bullet$] $p_{3,3}^5=p^{n-2}-p^{\frac{n-3}{2}}$, $p_{3,4}^5=\frac{(p-1)}{2}(p^{n-2}-2p^{\frac{n-3}{2}})$ and $p_{3,5}^5=\frac{1}{2}(p^{n-1}-p^{n-2}-2p^{\frac{n-1}{2}})$.
		\item[$\bullet$] $p_{4,4}^5=\frac{(p-1)^2}{4}(p^{n-2}-3p^{\frac{n-3}{2}})$ and $p_{4,5}^5=\frac{(p-1)^2}{4}(p^{n-2}-p^{\frac{n-3}{2}})$.
		\item[$\bullet$] $p_{5,5}^5=\frac{1}{4}(p^{n-2}-2p^{n-1}+p^n+p^{\frac{n-3}{2}}+6p^{\frac{n-1}{2}}-3p^{\frac{n+1}{2}})$.
		
			\end{itemize}
\end{itemize}

Since $q_{u,v}^{w}=q_{v,u}^{w}$ for any $u,v,w\in\{0,1,\dots,5\}$, we only give the values of  the Krein parameters $q_{u,v}^w$ for $u\le v$ in the following six cases.
\begin{itemize}
	\item [$(\rm{\romannumeral1})$] $w=0$
	\begin{itemize}
		\item [$\bullet$] $u=v$\\
		$q_{0,0}^0=1$, $q_{1,1}^0=p^{n-1}-p^{\frac{n-1}{2}}$, $q_{2,2}^0=q_{3,3}^0=\frac{(p-1)}{2}(p^{n-1}-p^{\frac{n-1}{2}})$, $q_{4,4}^0=p^{\frac{n-1}{2}}-1$ and $q_{5,5}^0=(p-1)p^{\frac{n-1}{2}}$.
		\item[$\bullet$] $u\ne v$, $q_{u,v}^0=0$  for $0\le u<v\le 5$.
	\end{itemize}
	\item [$(\rm{\romannumeral2})$] $w=1$
	\begin{itemize}
	\item[$\bullet$] $q_{0,1}^1=1$ and $q_{0,v}^1=0$ for $0\le v\le 5$ and $v\ne 1$.
	\item[$\bullet$] $q_{1,1}^1=p^{n-2}-2p^{\frac{n-3}{2}}$, $q_{1,2}^1=\frac{(p-1)}{2}(p^{n-2}-p^{\frac{n-3}{2}})$ and $q_{1,3}^1=\frac{(p-1)}{2}(p^{n-2}-3p^{\frac{n-3}{2}})$, $q_{1,4}^1=p^{\frac{n-3}{2}}-1$ and $q_{1,5}^1=(p-1)p^{\frac{n-3}{2}}$.
	\item[$\bullet$] $q_{2,2}^1=\frac{(p-1)}{4}(p^{n-1}-p^{n-2}-2p^{\frac{n-1}{2}})$, $q_{2,3}^1=\frac{(p-1)^2}{4}(p^{n-2}-2p^{\frac{n-3}{2}})$, $q_{2,4}^1=\frac{(p-1)}{2}p^{\frac{n-3}{2}}$ and $q_{2,5}^1=\frac{(p-1)^2}{2}p^{\frac{n-3}{2}}$.
	\item[$\bullet$] $q_{3,3}^1=\frac{(p-1)}{4}(p^{n-1}-p^{n-2}+4p^{\frac{n-3}{2}}-2p^{\frac{n-1}{2}})$, $q_{3,4}^1=\frac{(p-1)}{2}p^{\frac{n-3}{2}}$ and $q_{3,5}^1=\frac{(p-1)^2}{2}p^{\frac{n-3}{2}}$.
	\item[$\bullet$] $q_{4,4}^1=q_{4,5}^1=q_{5,5}^1=0$.
	\end{itemize}
\item [$(\rm{\romannumeral3})$] $w=2$
\begin{itemize}
	\item [$\bullet$] $q_{0,2}^2=1$ and $q_{0,v}^2=0$ for $0\le v\le 5$ and $v\ne 2$.
	\item[$\bullet$] $q_{1,1}^2=p^{n-2}-p^{\frac{n-3}{2}}$, $q_{1,2}^2=\frac{1}{2}(p^{n-1}-p^{n-2}-2p^{\frac{n-1}{2}})$, $q_{1,3}^2=\frac{(p-1)}{2}(p^{n-2}-2p^{\frac{n-3}{2}})$, $q_{1,4}^2=p^{\frac{n-3}{2}}$ and $q_{1,5}^2=(p-1)p^{\frac{n-3}{2}}$.
	\item[$\bullet$] $q_{2,2}^2=\frac{1}{4}(p^{n-2}-2p^{n-1}+p^n+p^{\frac{n-3}{2}}+6p^{\frac{n-1}{2}}-3p^{\frac{n+1}{2}})$, $q_{2,3}^2=\frac{(p-1)^2}{4}(p^{n-2}-p^{\frac{n-3}{2}})$, $q_{2,4}^2=\frac{1}{2}(p^{\frac{n-1}{2}}-p^{\frac{n-3}{2}}-2)$ and $q_{2,5}^2=\frac{(p-1)^2}{2}p^{\frac{n-3}{2}}$.
		\item[$\bullet$] $q_{3,3}^2=\frac{(p-1)^2}{4}(p^{n-2}-3p^{\frac{n-3}{2}})$, $q_{3,4}^2=\frac{(p-1)}{2}p^{\frac{n-3}{2}}$ and $q_{3,5}^2=\frac{(p-1)^2}{2}p^{\frac{n-3}{2}}$.
		\item[$\bullet$] $q_{4,4}^2=q_{4,5}^2=q_{5,5}^2=0$.
\end{itemize}
\item [$(\rm{\romannumeral4})$] $w=3$
\begin{itemize}
	\item [$\bullet$] $q_{0,3}^3=1$ and $q_{0,v}^3=0$ for $0\le v\le 5$ and $v\ne 3$.
	\item[$\bullet$] $q_{1,1}^3=p^{n-2}-3p^{\frac{n-3}{2}}$, $q_{1,2}^3=\frac{(p-1)}{2}(p^{n-2}-2p^{\frac{n-3}{2}})$, $q_{1,3}^3=\frac{1}{2}(p^{n-1}-p^{n-2}+4p^{\frac{n-3}{2}}-2p^{\frac{n-1}{2}})$, $q_{1,4}^3=p^{\frac{n-3}{2}}$ and $q_{1,5}^3=(p-1)p^{\frac{n-3}{2}}$.
	\item[$\bullet$] $q_{2,2}^3=\frac{(p-1)^2}{4}(p^{n-2}-p^{\frac{n-3}{2}})$, $q_{2,3}^3=\frac{(p-1)^2}{4}(p^{n-2}-3p^{\frac{n-3}{2}})$, $q_{2,4}^3=\frac{(p-1)}{2}p^{\frac{n-3}{2}}$ and $q_{2,5}^3=\frac{(p-1)^2}{2}p^{\frac{n-3}{2}}$.
	\item[$\bullet$] $q_{3,3}^3=\frac{1}{4}(p^{n-2}+p^n+2p^{\frac{n-1}{2}}-2p^{n-1}-5p^{\frac{n-3}{2}}-p^{\frac{n+1}{2}})$, $q_{3,4}^3=\frac{1}{2}(p^{\frac{n-1}{2}}-p^{\frac{n-3}{2}}-2)$ and $q_{3,5}^3=\frac{(p-1)^2}{2}p^{\frac{n-3}{2}}$.
	\item[$\bullet$] $q_{4,4}^3=q_{4,5}^3=q_{5,5}^3=0$.
\end{itemize}
\item [$(\rm{\romannumeral5})$] $w=4$
\begin{itemize}
	\item [$\bullet$] $q_{0,4}^4=1$ and $q_{0,v}^4=0$ for $0\le v\le 5$ and $v\ne 4$.
	\item[$\bullet$] $q_{1,1}^4=p^{n-2}-p^{\frac{n-1}{2}}$, $q_{1,2}^4=q_{1,3}^4=\frac{(p-1)}{2}p^{n-2}$ and $q_{1,4}^4=q_{1,5}^4=0$.
	\item[$\bullet$] $q_{2,2}^4=\frac{(p-1)}{4}(p^{n-1}-p^{n-2}-2p^{\frac{n-1}{2}})$, $q_{2,3}^4=\frac{(p-1)^2}{4}p^{n-2}$ and $q_{2,4}^4=q_{2,5}^4=0$.
	\item[$\bullet$] $q_{3,3}^4=\frac{(p-1)}{4}(p^{n-1}-p^{n-2}-2p^{\frac{n-1}{2}})$ and $q_{3,4}^4=q_{3,5}^4=0$.
	\item[$\bullet$] $q_{4,4}^4=p^{\frac{n-1}{2}}-2$ and $q_{4,5}^4=0$.
	\item[$\bullet$] $q_{5,5}^4=(p-1)p^{\frac{n-1}{2}}$.
	\end{itemize}
	\item [$(\rm{\romannumeral6})$] $w=5$
	\begin{itemize}
		\item [$\bullet$] $q_{0,5}^5=1$ and $q_{0,v}^5=0$ for $0\le v\le 4$.
		\item[$\bullet$] $q_{1,1}^5=p^{n-2}-p^{\frac{n-3}{2}}$, $q_{1,2}^5=q_{1,3}^5=\frac{(p-1)}{2}(p^{n-2}-p^{\frac{n-3}{2}})$ and $q_{1,4}^5=q_{1,5}^5=0$.
		\item[$\bullet$] $q_{2,2}^5=q_{2,3}^5=\frac{(p-1)^2}{4}(p^{n-2}-p^{\frac{n-3}{2}})$ and $q_{2,4}^5=q_{2,5}^5=0$.
		\item[$\bullet$] $q_{3,3}^5=\frac{(p-1)^2}{4}(p^{n-2}-p^{\frac{n-3}{2}})$ and $q_{3,4}^5=q_{3,5}^5=0$.
		\item[$\bullet$] $q_{4,4}^5=0$ and $q_{4,5}^5=p^{\frac{n-1}{2}}-1$.
		\item[$\bullet$] $q_{5,5}^5=(p-2)p^{\frac{n-1}{2}}$.
	\end{itemize}
\end{itemize}
	20. The first and second eigenmatrices, the intersection numbers and the Krein parameters of the association scheme induced by  $U_{20}$.

Note that the first and second eigenmatrices of the association scheme induced by $U_{20}$ are the same. The first (second) eigenmatrix of the association scheme induced by $U_{20}$ is given in Table 32.
\begin{table}[h]
	\vspace{-15pt}
	\centering
	\caption{The first (second) eigenmatrix of the association scheme induced by $U_{20}$}
	\renewcommand\arraystretch{1.25}	
	\resizebox{\textwidth}{!}{
		\begin{tabular}{|c|m{11cm}|}
			\hline
			$i$&\hspace{4.25cm} $P_{20}(ij)\ (Q_{20} (ij))$\\
			\hline
			1&$P_{20}(11)=1$, $P_{20}(12)=p^{n-r}-1$, $P_{20}(13)=p^{r-1}-p^{n-r}$, $P_{20}(14)=\frac{(p-1)}{2}(p^{r-1}+p^{\frac{n-1}{2}})$, $P_{20}(15)=\frac{(p-1)}{2}(p^{r-1}-p^{\frac{n-1}{2}})$, $P_{20}(16)=p^{n-1}-p^{r-1}$, $P_{20}(17)=P_{20}(18)=\frac{(p-1)}{2}(p^{n-1}-p^{r-1})$\\
			\hline
			2&$P_{20}(21)=1$, $P_{20}(22)=p^{n-r}-1$, $P_{20}(23)=p^{r-1}-p^{n-r}$, $P_{20}(24)=\frac{(p-1)}{2}(p^{r-1}+p^{\frac{n-1}{2}})$, $P_{20}(25)=\frac{(p-1)}{2}(p^{r-1}-p^{\frac{n-1}{2}})$, $P_{20}(26)=-p^{r-1}$, $P_{20}(27)=P_{20}(28)=-\frac{(p-1)}{2}p^{r-1}$\\
			\hline
			3&$P_{20}(31)=1$, $P_{20}(32)=p^{n-r}-1$, $P_{20}(33)=-p^{n-r}$, $P_{20}(34)=\frac{(p-1)}{2}p^{\frac{n-1}{2}}$, $P_{20}(35)=-\frac{(p-1)}{2}p^{\frac{n-1}{2}}$, $P_{20}(36)=P_{20}(37)=P_{20}(38)=0$\\
			\hline
			4&$P_{20}(41)=1$, $P_{20}(42)=p^{n-r}-1$, $P_{20}(43)=p^{\frac{n-1}{2}}-p^{n-r}$, $P_{20}(44)=-p^{\frac{n-1}{2}}$, $P_{20}(45)=P_{20}(46)=P_{20}(47)=P_{20}(48)=0$\\
			\hline
			5&$P_{20}(51)=1$, $P_{20}(52)=p^{n-r}-1$, $P_{20}(53)=-p^{\frac{n-1}{2}}-p^{n-r}$, $P_{20}(54)=0$, $P_{20}(55)=p^{\frac{n-1}{2}}$, $P_{20}(56)=P_{20}(57)=P_{20}(58)=0$\\
			\hline
			6&$P_{20}(61)=1$, $P_{20}(62)=-1$, $P_{20}(63)=P_{20}(64)=P_{20}(65)=P_{20}(66)=0$, $P_{20}(67)=-\frac{(p-1)}{2}p^{\frac{n-1}{2}}$, $P_{20}(68)=\frac{(p-1)}{2}p^{\frac{n-1}{2}}$\\
			\hline
			7&$P_{20}(71)=1$, $P_{20}(72)=-1$, $P_{20}(73)=P_{20}(74)=P_{20}(75)=0$, $P_{20}(76)=-p^{\frac{n-1}{2}}$, $P_{20}(77)=p^{\frac{n-1}{2}}$, $P_{20}(78)=0$\\
			\hline
			8&$P_{20}(81)=1$, $P_{20}(82)=-1$, $P_{20}(83)=P_{20}(84)=P_{20}(85)=0$, $P_{20}(86)=p^{\frac{n-1}{2}}$, $P_{20}(87)=0$, $P_{20}(88)=-p^{\frac{n-1}{2}}$\\
			\hline			
	\end{tabular}}
	\vspace{-15pt}
\end{table}
	
	Since $p_{u,v}^{w}=p_{v,u}^{w}$ for any $u,v,w\in\{0,1,\dots,7\}$, we only give the values of the intersection numbers $p_{u,v}^w$ for $u\le v$ in the following eight cases. The Krein parameter $q_{u,v}^w$ is the same as the intersection number $p_{u,v}^w$ for any $u,v,w\in\{0,1,\dots,7\}$.
	\begin{itemize}
		\item[$(\rm{\romannumeral1})$] $w=0$
		\begin{itemize}
			\item [$\bullet$] $u=v$\\
			$p_{0,0}^0=1$, $p_{1,1}^0=p^{n-r}-1$, $p_{2,2}^0=p^{r-1}-p^{n-r}$, $p_{3,3}^0=\frac{(p-1)}{2}(p^{r-1}+p^{\frac{n-1}{2}})$, $p_{4,4}^0=\frac{(p-1)}{2}(p^{r-1}-p^{\frac{n-1}{2}})$, $p_{5,5}^0=p^{n-1}-p^{r-1}$ and  $p_{6,6}^0=p_{7,7}^0=\frac{(p-1)}{2}(p^{n-1}-p^{r-1})$.
			\item[$\bullet$] $u\ne v$, $p_{u,v}^0=0$ for $0\le u<v\le 7$.
		\end{itemize}
			\item[$(\rm{\romannumeral2})$] $w=1$
		\begin{itemize}
			\item [$\bullet$] $p_{0,1}^1=1$ and $p_{0,v}^1=0$ for $0\le v\le 7$ and $v\ne 1$.
			\item[$\bullet$] $p_{1,1}^1=p^{n-r}-2$ and $p_{1,v}^1=0$ for $2\le v\le 7$.
			\item[$\bullet$] $p_{2,2}^1=p^{r-1}-p^{n-r}$ and $p_{2,v}^1=0$ for $3\le v\le 7$.
			\item[$\bullet$] $p_{3,3}^1=\frac{(p-1)}{2}(p^{r-1}+p^{\frac{n-1}{2}})$ and $p_{3,v}^1=0$ for $4\le v\le 7$.
			\item[$\bullet$] $p_{4,4}^1=\frac{(p-1)}{2}(p^{r-1}-p^{\frac{n-1}{2}})$ and $p_{4,v}^1=0$ for $5\le v\le 7$.
			\item[$\bullet$] $p_{5,5}^1=p^{n-2}-p^{r-1}$ and $p_{5,6}^1=p_{5,7}^1=\frac{(p-1)}{2}p^{n-2}$.
			\item[$\bullet$] $p_{6,6}^1=\frac{(p-1)}{4}(p^{n-1}-2p^{r-1}-p^{n-2})$ and $p_{6,7}^1=\frac{(p-1)^2}{4}p^{n-2}$.
			\item[$\bullet$] $p_{7,7}^1=\frac{(p-1)}{4}(p^{n-1}-p^{n-2}-2p^{r-1})$.
		\end{itemize}
			\item[$(\rm{\romannumeral3})$] $w=2$
			\begin{itemize}
				\item [$\bullet$] $p_{0,2}^2=1$ and $p_{0,v}^2=0$ for $0\le v\le 7$ and $v\ne 2$.
				\item[$\bullet$] $p_{1,2}^2=p^{n-r}-1$ and $p_{1,v}^2=0$ for $1\le v\le 7$ and $v\ne 2$.
				\item[$\bullet$] $p_{2,2}^2=p^{r-2}-2p^{n-r}$, $p_{2,3}^2=p_{2,4}^2=\frac{(p-1)}{2}p^{r-2}$ and $p_{2,v}^2=0$ for $5\le v\le 7$.
				\item[$\bullet$] $p_{3,3}^2=\frac{(p-1)}{4}(p^{r-1}-p^{r-2}+2p^{\frac{n-1}{2}})$, $p_{3,4}^2=\frac{(p-1)^2}{4}p^{r-2}$ and $p_{3,v}^2=0$ for $5\le v\le 7$.
				\item[$\bullet$] $p_{4,4}^2=\frac{(p-1)}{4}(p^{r-1}-p^{r-2}-2p^{\frac{n-1}{2}})$ and $p_{4,v}^2=0$ for $5\le v\le 7$.
				\item[$\bullet$] $p_{5,5}^2=p^{n-2}-p^{r-2}$ and $p_{5,6}^2=p_{5,7}^2=\frac{(p-1)}{2}(p^{n-2}-p^{r-2})$.
				\item[$\bullet$] $p_{6,6}^2=p_{6,7}^2=\frac{(p-1)^2}{4}(p^{n-2}-p^{r-2})$.
				\item[$\bullet$] $p_{7,7}^2=\frac{(p-1)^2}{4}(p^{n-2}-p^{r-2})$.
			\end{itemize}
			\item[$(\rm{\romannumeral4})$] $w=3$	
			\begin{itemize}
				\item [$\bullet$] $p_{0,3}^3=1$ and $p_{0,v}^3=0$ for $0\le v\le 7$ and $v\ne 3$.
				\item[$\bullet$] $p_{1,3}^3=p^{n-r}-1$ and $p_{1,v}^3=0$ for $1\le v\le 7$ and $v\ne 3$.
				\item[$\bullet$] $p_{2,2}^3=p^{r-2}-p^{\frac{n-3}{2}}$, $p_{2,3}^3=\frac{1}{2}(p^{r-1}+p^{\frac{n-3}{2}}+p^{\frac{n-1}{2}}-2p^{n-r}-p^{r-2})$, $p_{2,4}^3=\frac{(p-1)}{2}(p^{r-2}-p^{\frac{n-3}{2}})$ and $p_{2,v}^3=0$ for $5\le v\le 7$.
				\item[$\bullet$] $p_{3,3}^3=\frac{1}{4}(p^r+p^{r-2}+3p^{\frac{n+1}{2}}-2p^{r-1}-p^{\frac{n-3}{2}}-6p^{\frac{n-1}{2}})$, $p_{3,4}^3=\frac{(p-1)^2}{4}(p^{r-2}-p^{\frac{n-3}{2}})$ and $p_{3,v}^3=0$ for $5\le v\le 7$.
				\item[$\bullet$] $p_{4,4}^3=\frac{(p-1)^2}{4}(p^{r-2}-p^{\frac{n-3}{2}})$ and $p_{4,v}^3=0$ for $5\le v\le 7$.
				\item[$\bullet$] $p_{5,5}^3=p^{n-2}-p^{r-2}$ and $p_{5,6}^3=p_{5,7}^3=\frac{(p-1)}{2}(p^{n-2}-p^{r-2})$.
				\item[$\bullet$] $p_{6,6}^3=p_{6,7}^3=\frac{(p-1)^2}{4}(p^{n-2}-p^{r-2})$.
				\item[$\bullet$] $p_{7,7}^3=\frac{(p-1)^2}{4}
				(p^{n-2}-p^{r-2})$.
			\end{itemize}
			\item[$(\rm{\romannumeral5})$] $w=4$
			\begin{itemize}
				\item [$\bullet$] $p_{0,4}^4=1$ and $p_{0,v}^4=0$ for $0\le v\le 7$ and $v\ne 4$.
				\item[$\bullet$] $p_{1,4}^4=p^{n-r}-1$ and $p_{1,v}^4=0$ for $1\le v\le 7$ and $v\ne 4$.
				\item[$\bullet$] $p_{2,2}
				^4=p^{r-2}+p^{\frac{n-3}{2}}$, $p_{2,3}^4=\frac{(p-1)}{2}(p^{r-2}+p^{\frac{n-3}{2}})$, $p_{2,4}^4=\frac{1}{2}(p^{r-1}-p^{\frac{n-3}{2}}-p^{\frac{n-1}{2}}-2p^{n-r}-p^{r-2})$ and $p_{2,v}^4=0$ for $5\le v\le 7$.
			\item[$\bullet$] $p_{3,3}^4=p_{3,4}^4=\frac{(p-1)^2}{4}(p^{r-2}+p^{\frac{n-3}{2}})$ and $p_{3,v}^4=0$ for $5\le v\le 7$.
			\item[$\bullet$] $p_{4,4}^4=\frac{1}{4}(p^{r-2}-2p^{r-1}+p^r+p^{\frac{n-3}{2}}+6p^{\frac{n-1}{2}}-3p^{\frac{n+1}{2}})$ and $p_{4,v}^4=0$ for $5\le v\le 7$.
			\item[$\bullet$] $p_{5,5}^4=p^{n-2}-p^{r-2}$ and $p_{5,6}^4=p_{5,7}^4=\frac{(p-1)}{2}(p^{n-2}-p^{r-2})$.
			\item[$\bullet$] $p_{6,6}^4=p_{6,7}^4=\frac{(p-1)^2}{4}(p^{n-2}-p^{r-2})$.
			\item[$\bullet$] $p_{7,7}^4=\frac{(p-1)^2}{4}(p^{n-2}-p^{r-2})$.
	\end{itemize}
		\item[$(\rm{\romannumeral6})$] $w=5$
		\begin{itemize}
			\item [$\bullet$] $p_{0,5}^5=1$ and $p_{0,v}^5=0$ for $0\le v\le 7$ and $v\ne 5$.
			\item[$\bullet$] $p_{1,v}^5=0$ for $1\le v\le 4$, $p_{1,5}^5=p^{n-r-1}-1$ and  $p_{1,6}^5=p_{1,7}^5=\frac{(p-1)}{2}p^{n-r-1}$.
			\item[$\bullet$] $p_{2,v}^5=0$ for $2\le v\le 4$, $p_{2,5}^2=p^{r-2}-p^{n-r-1}$ and $p_{2,6}^5=p_{2,7}^5=\frac{(p-1)}{2}(p^{r-2}-p^{n-r-1})$.
			\item[$\bullet$] $p_{3,3}^5=p_{3,4}
		^5=0$, $p_{3,5}^5=\frac{(p-1)}{2}(p^{r-2}+p^{\frac{n-3}{2}})$ and $p_{3,6}^5=p_{3,7}^5=\frac{(p-1)^2}{4}(p^{r-2}+p^{\frac{n-3}{2}})$.
		\item[$\bullet$] $p_{4,4}^5=0$, $p_{4,5}
				^5=\frac{(p-1)}{2}(p^{r-2}-p^{\frac{n-3}{2}})$ and $p_{4,6}^5=p_{4,7}^5=\frac{(p-1)^2}{4}(p^{r-2}-p^{\frac{n-3}{2}})$.
				\item[$\bullet$] $p_{5,5}^5=p^{n-2}-2p^{r-2}$, $p_{5,6}^5=\frac{(p-1)}{2}(p^{n-2}-2p^{r-2}+p^{\frac{n-3}{2}})$ and $p_{5,7}^5=\frac{(p-1)}{2}(p^{n-2}-2p^{r-2}-p^{\frac{n-3}{2}})$.
				\item[$\bullet$] $p_{6,6}^5=\frac{(p-1)^2}{4}(p^{n-2}-2p^{r-2})-\frac{(p-1)}{2}p^{\frac{n-3}{2}}$ and $p_{6,7}^5=\frac{(p-1)^2}{4}(p^{n-2}-2p^{r-2})$.
				\item[$\bullet$] $p_{7,7}^5=\frac{(p-1)^2}{4}(p^{n-2}-2p^{r-2})+\frac{(p-1)}{2}p^{\frac{n-3}{2}}$.					
		\end{itemize}		
		\item[$(\rm{\romannumeral7})$] $w=6$	
		\begin{itemize}
			\item [$\bullet$] $p_{0,6}^6=1$ and $p_{0,v}^6=0$ for $0\le v\le 7$ and $v\ne 6$.
			\item[$\bullet$] $p_{1,v}^6=0$ for $1\le v\le 4$, $p_{1,5}^6=p^{n-r-1}$, $p_{1,6}^6=\frac{1}{2}(p^{n-r}-p^{n-r-1}-2)$ and $p_{1,7}^6=\frac{(p-1)}{2}p^{n-r-1}$.
			\item[$\bullet$] $p_{2,v}^6=0$ for $2\le v\le 4$, $p_{2,5}^6=p^{r-2}-p^{n-r-1}$ and  $p_{2,6}^6=p_{2,7}^6=\frac{(p-1)}{2}(p^{r-2}-p^{n-r-1})$.
			\item[$\bullet$] $p_{3,3}^6=p_{3,4}^6=0$, $p_{3,5}^6=\frac{(p-1)}{2}(p^{r-2}+p^{\frac{n-3}{2}})$ and $p_{3,6}^6=p_{3,7}^6=\frac{(p-1)^2}{4}(p^{r-2}+p^{\frac{n-3}{2}})$.
			\item[$\bullet$] $p_{4,4}^6=0$, $p_{4,5}^6=\frac{(p-1)}{2}(p^{r-2}-p^{\frac{n-3}{2}})$ and $p_{4,6}^6=p_{4,7}^6=\frac{(p-1)^2}{4}(p^{r-2}-p^{\frac{n-3}{2}})$.
			\item[$\bullet$] $p_{5,5}^6=p^{n-2}-2p^{r-2}+p^{\frac{n-3}{2}}$, $p_{5,6}^6=\frac{(p-1)}{2}(p^{n-2}-2p^{r-2})-p^{\frac{n-3}{2}}$ and $p_{5,7}^6=\frac{(p-1)}{2}(p^{n-2}-2p^{r-2})$.
			\item[$\bullet$] $p_{6,6}^6=\frac{(p-1)^2}{4}(p^{n-2}-2p^{r-2})-\frac{p^2-2p-3}{4}p^{\frac{n-3}{2}}$ and $p_{6,7}^6=\frac{(p-1)^2}{4}(p^{n-2}-2p^{r-2}+p^{\frac{n-3}{2}})$.
			\item[$\bullet$] $p_{7,7}^6=\frac{(p-1)^2}{4}(p^{n-2}-2p^{r-2}-p^{\frac{n-3}{2}})$.
		\end{itemize}	
	\item[$(\rm{\romannumeral8})$] $w=7$
	\begin{itemize}
		\item [$\bullet$] $p_{0,7}^7=1$ and $p_{0,v}^7=0$ for $0\le v\le 6$.
		\item[$\bullet$] $p_{1,v}^7=0$ for $1\le v\le 4$, $p_{1,5}^7=p^{n-r-1}$, $p_{1,6}^7=\frac{(p-1)}{2}p^{n-r-1}$ and $p_{1,7}^7=\frac{1}{2}(p^{n-r}-p^{n-r-1}-2)$.
		\item[$\bullet$] $p_{2,v}^7=0$ for $2\le v\le 4$, $p_{2,5}^7=p^{r-2}-p^{n-r-1}$ and $p_{2,6}^7=p_{2,7}^7=\frac{(p-1)}{2}(p^{r-2}-p^{n-r-1})$.
			\item[$\bullet$] $p_{3,3}^7=p_{3,4}^7=0$, $p_{3,5}^7=\frac{(p-1)}{2}(p^{r-2}+p^{\frac{n-3}{2}})$ and $p_{3,6}^7=p_{3,7}^7=\frac{(p-1)^2}{4}(p^{r-2}+p^{\frac{n-3}{2}})$.
			\item[$\bullet$] $p_{4,4}^7=0$, $p_{4,5}^7=\frac{(p-1)}{2}(p^{r-2}-p^{\frac{n-3}{2}})$ and $p_{4,6}^7=p_{4,7}^7=\frac{(p-1)^2}{4}(p^{r-2}-p^{\frac{n-3}{2}})$.
			\item[$\bullet$] $p_{5,5}^7=p^{n-2}-2p^{r-2}-p^{\frac{n-3}{2}}$, $p_{5,6}^7=\frac{(p-1)}{2}(p^{n-2}-2p^{r-2})$ and $p_{5,7}^7=p^{\frac{n-3}{2}}+\frac{(p-1)}{2}(p^{n-2}-2p^{r-2})$.
				\item[$\bullet$] $p_{6,6}^7=\frac{(p-1)^2}{4}(p^{n-2}-2p^{r-2}+p^{\frac{n-3}{2}})$, $p_{6,7}^7=\frac{(p-1)^2}{4}(p^{n-2}-2p^{r-2}-p^{\frac{n-3}{2}})$.
				\item[$\bullet$] $p_{7,7}^7=\frac{(p-1)^2}{4}(p^{n-2}-2p^{r-2})+\frac{p^2-2p-3}{4}p^{\frac{n-3}{2}}$.
						\end{itemize}
		\end{itemize}
21. The first and second eigenmatrices, the intersection numbers and the  Krein parameters of the association scheme induced by $U_{21}$.

The first and second eigenmatrices  of the association scheme induced by $U_{21}$ are given in Tables 33 and 34.
\begin{table}[h]
	\vspace{-15pt}
	\centering
	\caption{The first eigenmatrix of the association scheme induced by $U_{21}$}
	\renewcommand\arraystretch{1.25}	
	\resizebox{\textwidth}{!}{
		\begin{tabular}{|c|m{11cm}|}
			\hline
			$i$&\hspace{4.25cm} $P_{21}(ij)$\\
			\hline
			1&$P_{21}(11)=1$, $P_{21}(12)=p^{n-r}-1$, $P_{21}(13)=p^{r-1}-p^{n-r}$, $P_{21}(14)=\frac{(p-1)}{2}(p^{r-1}-p^{\frac{n-1}{2}})$, $P_{21}(15)=\frac{(p-1)}{2}(p^{r-1}+p^{\frac{n-1}{2}})$, $P_{21}(16)=p^{n-1}-p^{r-1}$, $P_{21}(17)=P_{21}(18)=\frac{(p-1)}{2}(p^{n-1}-p^{r-1})$\\
			\hline
			2&$P_{21}(21)=1$, $P_{21}(22)=-1$, $P_{21}(23)=P_{21}(24)=P_{21}(25)=P_{21}(26)=0$, $P_{21}(27)=\frac{(p-1)}{2}p^{\frac{n-1}{2}}$, $P_{21}(28)=-\frac{(p-1)}{2}p^{\frac{n-1}{2}}$\\
			\hline
			3&$P_{21}(31)=1$, $P_{21}(32)=-1$, $P_{21}(33)=P_{21}(34)=P_{21}(35)=0$, $P_{21}(36)=-p^{\frac{n-1}{2}}$, $P_{21}(37)=0$, $P_{21}(38)=p^{\frac{n-1}{2}}$\\
			\hline
			4&$P_{21}(41)=1$, $P_{21}(42)=-1$, $P_{21}(43)=P_{21}(44)=P_{21}(45)=0$, $P_{21}(46)=p^{\frac{n-1}{2}}$, $P_{21}(47)=-p^{\frac{n-1}{2}}$, $P_{21}(48)=0$\\
			\hline
			5&$P_{21}(51)=1$, $P_{21}(52)=p^{n-r}-1$, $P_{21}(53)=p^{r-1}-p^{n-r}$, $P_{21}(54)=\frac{(p-1)}{2}(p^{r-1}-p^{\frac{n-1}{2}})$, $P_{21}(55)=\frac{(p-1)}{2}(p^{r-1}+p^{\frac{n-1}{2}})$, $P_{21}(56)=-p^{r-1}$, $P_{21}(57)=P_{21}(58)=-\frac{(p-1)}{2}p^{r-1}$\\
			\hline
			6&$P_{21}(61)=1$, $P_{21}(62)=p^{n-r}-1$, $P_{21}(63)=-p^{n-r}$, $P_{21}(64)=-\frac{(p-1)}{2}p^{\frac{n-1}{2}}$, $P_{21}(65)=\frac{(p-1)}{2}p^{\frac{n-1}{2}}$, $P_{21}(66)=P_{21}(67)=P_{21}(68)=0$\\
			\hline
			7&$P_{21}(71)=1$, $P_{21}(72)=p^{n-r}-1$, $P_{21}(73)=p^{\frac{n-1}{2}}-p^{n-r}$, $P_{21}(74)=0$, $P_{21}(75)=-p^{\frac{n-1}{2}}$, $P_{21}(76)=P_{21}(77)=P_{21}(78)=0$\\
			\hline
			8&$P_{21}(81)=1$, $P_{21}(82)=p^{n-r}-1$, $P_{21}(83)=-p^{\frac{n-1}{2}}-p^{n-r}$, $P_{21}(84)=p^{\frac{n-1}{2}}$, $P_{21}(85)=P_{21}(86)=P_{21}(87)=P_{21}(88)=0$\\
			\hline
	\end{tabular}}
	\vspace{-15pt}
\end{table}
\begin{table}[h]
	\vspace{0pt}
	\centering
	\caption{The second eigenmatrix of the association scheme induced by $U_{21}$}
	\renewcommand\arraystretch{1.5}	
	\resizebox{\textwidth}{!}{
		\begin{tabular}{|c|m{11cm}|}
			\hline
			$i$&\hspace{4.25cm} $Q_{21}(ij)$\\
			\hline
			1&$Q_{21}(11)=1$, $Q_{21}(12)=p^{n-1}-p^{r-1}$, $Q_{21}(13)=Q_{21}(14)=\frac{(p-1)}{2}(p^{n-1}-p^{r-1})$, $Q_{21}(15)=p^{n-r}-1$, $Q_{21}(16)=p^{r-1}-p^{n-r}$, $Q_{21}(17)=\frac{(p-1)}{2}(p^{r-1}+p^{\frac{n-1}{2}})$, $Q_{21}(18)=\frac{(p-1)}{2}(p^{r-1}-p^{\frac{n-1}{2}})$\\
			\hline
			2&$Q_{21}(21)=1$, $Q_{21}(22)=-p^{r-1}$, $Q_{21}(23)=Q_{21}(24)=-\frac{(p-1)}{2}p^{r-1}$, $Q_{21}(25)=p^{n-r}-1$, $Q_{21}(26)=p^{r-1}-p^{n-r}$, $Q_{21}(27)=\frac{(p-1)}{2}(p^{r-1}+p^{\frac{n-1}{2}})$, $Q_{21}(28)=\frac{(p-1)}{2}(p^{r-1}-p^{\frac{n-1}{2}})$\\
			\hline
			3&$Q_{21}(31)=1$, $Q_{21}(32)=Q_{21}(33)=Q_{21}(34)=0$, $Q_{21}(35)=p^{n-r}-1$, $Q_{21}(36)=-p^{n-r}$, $Q_{21}(37)=\frac{(p-1)}{2}p^{\frac{n-1}{2}}$, $Q_{21}(38)=-\frac{(p-1)}{2}p^{\frac{n-1}{2}}$\\
			\hline
			4&$Q_{21}(41)=1$, $Q_{21}(42)=Q_{21}(43)=Q_{21}(44)=0$, $Q_{21}(45)=p^{n-r}-1$, $Q_{21}(46)=-p^{\frac{n-1}{2}}-p^{n-r}$, $Q_{21}(47)=0$, $Q_{21}(48)=p^{\frac{n-1}{2}}$\\
			\hline
			5&$Q_{21}(51)=1$, $Q_{21}(52)=Q_{21}(53)=Q_{21}(54)=0$, $Q_{21}(55)=p^{n-r}-1$, $Q_{21}(56)=p^{\frac{n-1}{2}}-p^{n-r}$, $Q_{21}(57)=-p^{\frac{n-1}{2}}$, $Q_{21}(58)=0$\\
			\hline
			6&$Q_{21}(61)=1$, $Q_{21}(62)=0$, $Q_{21}(63)=-\frac{(p-1)}{2}p^{\frac{n-1}{2}}$, $Q_{21}(64)=\frac{(p-1)}{2}p^{\frac{n-1}{2}}$, $Q_{21}(65)=-1$, $Q_{21}(66)=Q_{21}(67)=Q_{21}(68)=0$\\
			\hline
			7&$Q_{21}(71)=1$, $Q_{21}(72)=p^{\frac{n-1}{2}}$, $Q_{21}(73)=0$, $Q_{21}(74)=-p^{\frac{n-1}{2}}$, $Q_{21}(75)=-1$, $Q_{21}(76)=Q_{21}(77)=Q_{21}(78)=0$\\
			\hline
			8&$Q_{21}(81)=1$, $Q_{21}(82)=-p^{\frac{n-1}{2}}$, $Q_{21}(83)=p^{\frac{n-1}{2}}$, $Q_{21}(84)=0$, $Q_{21}(85)=-1$, $Q_{21}(86)=Q_{21}(87)=Q_{21}(88)=0$\\
			\hline	
		\end{tabular}
	}
	\vspace{-15pt}
\end{table}

Since $p_{u,v}^{w}=p_{v,u}^{w}$ for any $u,v,w\in\{0,1,\dots,7\}$, we only give the values of  the intersection numbers $p_{u,v}^w$ for $u\le v$ in the following eight cases.
\begin{itemize}
	\item [$(\rm{\romannumeral 1})$] $w=0$
	\begin{itemize}
		\item [$\bullet$] $u=v$\\
		$p_{0,0}^0=1$, $p_{1,1}^0=p^{n-r}-1$, $p_{2,2}^0=p^{r-1}-p^{n-r}$, $p_{3,3}^0=\frac{(p-1)}{2}(p^{r-1}-p^{\frac{n-1}{2}})$, $p_{4,4}^0=\frac{(p-1)}{2}(p^{r-1}+p^{\frac{n-1}{2}})$, $p_{5,5}^0=p^{n-1}-p^{r-1}$ and $p_{6,6}^0=p_{7,7}^0=\frac{(p-1)}{2}(p^{n-1}-p^{r-1})$.
		\item[$\bullet$] $u\ne v$, $p_{u,v}^0=0$ for $0\le u<v\le 7$.
	\end{itemize}
	\item [$(\rm{\romannumeral 2})$] $w=1$
	\begin{itemize}
		\item [$\bullet$] $p_{0,1}^1=1$ and $p_{0,v}^1=0$ for $0\le v\le 7$ and $v\ne 1$.
		\item[$\bullet$] $p_{1,1}^1=p^{n-r}-2$ and $p_{1,v}^1=0$ for $2\le v\le 7$.
		\item[$\bullet$] $p_{2,2}^1=p^{r-1}-p^{n-r}$ and $p_{2,v}^1=0$ for $3\le v\le 7$.
		\item[$\bullet$] $p_{3,3}^1=\frac{(p-1)}{2}(p^{r-1}-p^{\frac{n-1}{2}})$ and $p_{3,v}^1=0$ for $4\le v\le 7$.
		\item[$\bullet$] $p_{4,4}^1=\frac{(p-1)}{2}(p^{r-1}+p^{\frac{n-1}{2}})$ and $p_{4,v}^1=0$ for $5\le v\le 7$.
		\item[$\bullet$] $p_{5,5}^1=p^{n-2}-p^{r-1}$ and $p_{5,6}^1=p_{5,7}^1=\frac{(p-1)}{2}p^{n-2}$.
		\item[$\bullet$] $p_{6,6}^1=\frac{(p-1)}{4}(p^{n-1}-p^{n-2}-2p^{r-1})$ and $p_{6,7}^1=\frac{(p-1)^2}{4}p^{n-2}$.
		\item[$\bullet$] $p_{7,7}^1=\frac{(p-1)}{4}(p^{n-1}-p^{n-2}-2p^{r-1})$.	
	\end{itemize}
\item [$(\rm{\romannumeral 3})$] $w=2$
\begin{itemize}
	\item [$\bullet$] $p_{0,2}^2=1$ and $p_{0,v}^2=0$ for $0\le v\le 7$ and $v\ne 2$.
	\item[$\bullet$] $p_{1,2}^2=p^{n-r}-1$ and $p_{1,v}^2=0$ for $1\le v\le 7$ and $v\ne 2$.
	\item[$\bullet$] $p_{2,2}^2=p^{r-2}-2p^{n-r}$,  $p_{2,3}^2=p_{2,4}^2=\frac{(p-1)}{2}p^{r-2}$ and $p_{2,v}^2=0$ for $5\le v\le 7$.
	\item[$\bullet$] $p_{3,3}^2=\frac{(p-1)}{4}(p^{r-1}-p^{r-2}-2p^{\frac{n-1}{2}})$, $p_{3,4}^2=\frac{(p-1)^2}{4}p^{r-2}$ and $p_{3,v}^2=0$ for $5\le v\le 7$.
	\item[$\bullet$] $p_{4,4}^2=\frac{(p-1)}{4}(p^{r-1}-p^{r-2}+2p^{\frac{n-1}{2}})$ and $p_{4,v}^2=0$ for $5\le v\le 7$.
	\item[$\bullet$] $p_{5,5}^2=p^{n-2}-p^{r-2}$ and $p_{5,6}^2=p_{5,7}^2=\frac{(p-1)}{2}(p^{n-2}-p^{r-2})$.
	\item[$\bullet$] $p_{6,6}^2=p_{6,7}^2=\frac{(p-1)^2}{4}(p^{n-2}-p^{r-2})$.
	\item[$\bullet$] $p_{7,7}^2=\frac{(p-1)^2}{4}(p^{n-2}-p^{r-2})$.
\end{itemize}
	\item [$(\rm{\romannumeral 4})$] $w=3$
	\begin{itemize}
		\item [$\bullet$] $p_{0,3}^3=1$ and $p_{0,v}^3=0$ for $0\le v\le 7$ and $v\ne 3$.
		\item[$\bullet$] $p_{1,3}^3=p^{n-r}-1$ and $p_{1,v}^3=0$ for $1\le v\le 7$ and $v\ne 3$.
		\newpage
		\item[$\bullet$] $p_{2,2}^3=p^{r-2}+p^{\frac{n-3}{2}}$, $p_{2,3}^3=\frac{1}{2}(p^{r-1}-p^{\frac{n-3}{2}}-p^{\frac{n-1}{2}}-2p^{n-r}-p^{r-2})$, $p_{2,4}^3=\frac{(p-1)}{2}(p^{r-2}+p^{\frac{n-3}{2}})$ and $p_{2,v}^3=0$ for $5\le v\le 7$.
		\item[$\bullet$] $p_{3,3}^3=\frac{1}{4}(p^{r-2}-2p^{r-1}+p^{r}+p^{\frac{n-3}{2}}+6p^{\frac{n-1}{2}}-3p^{\frac{n+1}{2}})$, $p_{3,4}^3=\frac{(p-1)^2}{4}(p^{r-2}+p^{\frac{n-3}{2}})$ and $p_{3,v}^3=0$ for $5\le v\le 7$.
		\item[$\bullet$] $p_{4,4}^3=\frac{(p-1)^2}{4}(p^{r-2}+p^{\frac{n-3}{2}})$ and $p_{4,v}^3=0$ for $5\le v\le 7$.
		\item[$\bullet$] $p_{5,5}^3=p^{n-2}-p^{r-2}$ and $p_{5,6}^3=p_{5,7}^3=\frac{(p-1)}{2}(p^{n-2}-p^{r-2})$.
		\item[$\bullet$] $p_{6,6}^3=p_{6,7}^3=\frac{(p-1)^2}{4}(p^{n-2}-p^{r-2})$.
		\item[$\bullet$] $p_{7,7}^3=\frac{(p-1)^2}{4}(p^{n-2}-p^{r-2})$.
	\end{itemize}
	\item [$(\rm{\romannumeral 5})$] $w=4$
	\begin{itemize}
		\item [$\bullet$] $p_{0,4}^4=1$ and $p_{0,v}^4=0$ for $0\le v\le 7$ and $v\ne 4$.
		\item[$\bullet$] $p_{1,4}^4=p^{n-r}-1$ and $p_{1,v}^4=0$ for $1\le v\le 7$ and $v\ne 4$.
		\item[$\bullet$] $p_{2,2}^4=p^{r-2}-p^{\frac{n-3}{2}}$, $p_{2,3}^4=\frac{(p-1)}{2}(p^{r-2}-p^{\frac{n-3}{2}})$, $p_{2,4}^4=\frac{1}{2}(p^{r-1}+p^{\frac{n-3}{2}}+p^{\frac{n-1}{2}}-2p^{n-r}-p^{r-2})$ and $p_{2,v}^4=0$ for $5\le v\le 7$.
		\item[$\bullet$] $p_{3,3}^4=p_{3,4}^4=\frac{(p-1)^2}{4}(p^{r-2}-p^{\frac{n-3}{2}})$ and $p_{3,v}^4=0$ for $5\le v\le 7$.
		\item[$\bullet$] $p_{4,4}^4=\frac{1}{4}(p^{r-2}+p^r+3p^{\frac{n+1}{2}}-2p^{r-1}-p^{\frac{n-3}{2}}-6p^{\frac{n-1}{2}})$ and $p_{4,v}^4=0$ for $5\le v\le 7$.
		\item[$\bullet$] $p_{5,5}^4=p^{n-2}-p^{r-2}$ and $p_{5,6}^4=p_{5,7}^4=\frac{(p-1)}{2}(p^{n-2}-p^{r-2})$.
		\item[$\bullet$] $p_{6,6}^4=p_{6,7}^4=\frac{(p-1)^2}{4}(p^{n-2}-p^{r-2})$.
		\item[$\bullet$] $p_{7,7}^4=\frac{(p-1)^2}{4}(p^{n-2}-p^{r-2})$.
	\end{itemize}
\item [$(\rm{\romannumeral 6})$] $w=5$
\begin{itemize}
	\item [$\bullet$] $p_{0,5}^5=1$ and $p_{0,v}^5=0$ for $0\le v\le 7$ and $v\ne 5$.
	\item[$\bullet$] $p_{1,v}^5=0$ for $1\le v\le 4$, $p_{1,5}^5=p^{n-r-1}-1$ and $p_{1,6}^5=p_{1,7}^5=\frac{(p-1)}{2}p^{n-r-1}$.
	\item[$\bullet$] $p_{2,v}^5=0$ for $2\le v\le 4$, $p_{2,5}^5=p^{r-2}-p^{n-r-1}$ and $p_{2,6}^5=p_{2,7}^5=\frac{(p-1)}{2}(p^{r-2}-p^{n-r-1})$.
	\item[$\bullet$] $p_{3,3}^5=p_{3,4}^5=0$, $p_{3,5}^5=\frac{(p-1)}{2}(p^{r-2}-p^{\frac{n-3}{2}})$ and $p_{3,6}^5=p_{3,7}^5=\frac{(p-1)^2}{4}(p^{r-2}-p^{\frac{n-3}{2}})$.
	\item[$\bullet$] $p_{4,4}^5=0$, $p_{4,5}^5=\frac{(p-1)}{2}(p^{r-2}+p^{\frac{n-3}{2}})$ and $p_{4,6}^5=p_{4,7}^5=\frac{(p-1)^2}{4}(p^{r-2}+p^{\frac{n-3}{2}})$.
	\item[$\bullet$] $p_{5,5}^5=p^{n-2}-2p^{r-2}$, $p_{5,6}^5=\frac{(p-1)}{2}(p^{n-2}-2p^{r-2}-p^{\frac{n-3}{2}})$ and $p_{5,7}^5=\frac{(p-1)}{2}(p^{n-2}-2p^{r-2}+p^{\frac{n-3}{2}})$.
	\item[$\bullet$] $p_{6,6}^5=\frac{(p-1)}{2}p^{\frac{n-3}{2}}+\frac{(p-1)^2}{4}(p^{n-2}-2p^{r-2})$ and $p_{6,7}^5=\frac{(p-1)^2}{4}(p^{n-2}-2p^{r-2})$.
		\item[$\bullet$] $p_{7,7}^5=\frac{(p-1)^2}{4}(p^{n-2}-2p^{r-2})-\frac{(p-1)}{2}p^{\frac{n-3}{2}}$.
\end{itemize}
\item [$(\rm{\romannumeral 7})$] $w=6$
\begin{itemize}
	\item [$\bullet$] $p_{0,6}^6=1$ and $p_{0,v}^6=0$ for $0\le v\le 7$ and $v\ne 6$.
	\item[$\bullet$] $p_{1,v}^6=0$ for $1\le v\le 4$, $p_{1,5}^6=p^{n-r-1}$, $p_{1,6}^6=\frac{1}{2}(p^{n-r}-p^{n-r-1}-2)$ and $p_{1,7}^6=\frac{(p-1)}{2}p^{n-r-1}$.
	\item[$\bullet$] $p_{2,v}^6=0$ for $2\le v\le 4$, $p_{2,5}^6=p^{r-2}-p^{n-r-1}$ and $p_{2,6}^6=p_{2,7}^6=\frac{(p-1)}{2}(p^{r-2}-p^{n-r-1})$.
	\item[$\bullet$] $p_{3,3}^6=p_{3,4}^6=0$, $p_{3,5}^6=\frac{(p-1)}{2}(p^{r-2}-p^{\frac{n-3}{2}})$ and $p_{3,6}^6=p_{3,7}^6=\frac{(p-1)^2}{4}(p^{r-2}-p^{\frac{n-3}{2}})$.
	\item[$\bullet$] $p_{4,4}^6=0$, $p_{4,5}^6=\frac{(p-1)}{2}(p^{r-2}+p^{\frac{n-3}{2}})$ and $p_{4,6}^6=p_{4,7}^6=\frac{(p-1)^2}{4}(p^{r-2}+p^{\frac{n-3}{2}})$.
		\item[$\bullet$] $p_{5,5}^6=p^{n-2}-2p^{r-2}-p^{\frac{n-3}{2}}$, $p_{5,6}^6=p^{\frac{n-3}{2}}+\frac{(p-1)}{2}(p^{n-2}-2p^{r-2})$ and $p_{5,7}^6=\frac{(p-1)}{2}(p^{n-2}-2p^{r-2})$
		\item[$\bullet$] $p_{6,6}^6=\frac{p^2-2p-3}{4}p^{\frac{n-3}{2}}+\frac{(p-1)^2}{4}(p^{n-2}-2p^{r-2})$ and $p_{6,7}^6=\frac{(p-1)^2}{4}(p^{n-2}-2p^{r-2}-p^{\frac{n-3}{2}})$.
		\item[$\bullet$] $p_{7,7}^6=\frac{(p-1)^2}{4}(p^{n-2}-2p^{r-2}+p^{\frac{n-3}{2}})$.
\end{itemize}
\item [$(\rm{\romannumeral 8})$] $w=7$
\begin{itemize}
	\item [$\bullet$] $p_{0,7}^7=1$ and $p_{0,v}^7=0$ for $0\le v\le 6$.
	\item[$\bullet$] $p_{1,v}^7=0$ for $1\le v\le 4$, $p_{1,5}^7=p^{n-r-1}$, $p_{1,6}^7=\frac{(p-1)}{2}p^{n-r-1}$ and $p_{1,7}^7=\frac{1}{2}(p^{n-r}-p^{n-r-1}-2)$.
	\item[$\bullet$] $p_{2,v}^7=0$ for $2\le v\le 4$, $p_{2,5}^7=p^{r-2}-p^{n-r-1}$ and $p_{2,6}^7=p_{2,7}^7=\frac{(p-1)}{2}(p^{r-2}-p^{n-r-1})$.
	\item[$\bullet$] $p_{3,3}^7=p_{3,4}^7=0$, $p_{3,5}^7=\frac{(p-1)}{2}(p^{r-2}-p^{\frac{n-3}{2}})$ and $p_{3,6}^7=p_{3,7}^7=\frac{(p-1)^2}{4}(p^{r-2}-p^{\frac{n-3}{2}})$.
	\item[$\bullet$] $p_{4,4}^7=0$, $p_{4,5}^7=\frac{(p-1)}{2}(p^{r-2}+p^{\frac{n-3}{2}})$ and $p_{4,6}^7=p_{4,7}^7=\frac{(p-1)^2}{4}(p^{r-2}+p^{\frac{n-3}{2}})$.
	\item[$\bullet$] $p_{5,5}^7=p^{n-2}-2p^{r-2}+p^{\frac{n-3}{2}}$, $p_{5,6}^7=\frac{(p-1)}{2}(p^{n-2}-2p^{r-2})$ and $p_{5,7}^7=\frac{(p-1)}{2}(p^{n-2}-2p^{r-2})-p^{\frac{n-3}{2}}$.
	\item[$\bullet$] $p_{6,6}^7=\frac{(p-1)^2}{4}(p^{n-2}-2p^{r-2}-p^{\frac{n-3}{2}})$ and $p_{6,7}^7=\frac{(p-1)^2}{4}(p^{n-2}-2p^{r-2}+p^{\frac{n-3}{2}})$.
	\item[$\bullet$] $p_{7,7}^7=\frac{(p-1)^2}{4}(p^{n-2}-2p^{r-2})-\frac{p^2-2p-3}{4}p^{\frac{n-3}{2}}$.
	\end{itemize}
\end{itemize}

Since $q_{u,v}^{w}=q_{v,u}^{w}$ for any $u,v,w\in\{0,1,\dots,7\}$, we only give the values of  the intersection numbers $q_{u,v}^w$ for $u\le v$ in the following eight cases.
\begin{itemize}
	\item [$(\rm{\romannumeral 1})$] $w=0$
	\begin{itemize}
		\item [$\bullet$] $u=v$\\
		$q_{0,0}^0=1$, $q_{1,1}^0=p^{n-1}-p^{r-1}$, $q_{2,2}^0=q_{3,3}^0=\frac{(p-1)}{2}(p^{n-1}-p^{r-1})$, $q_{4,4}^0=p^{n-r}-1$, $q_{5,5}^0=p^{r-1}-p^{n-r}$, $q_{6,6}^0=\frac{(p-1)}{2}(p^{r-1}+p^{\frac{n-1}{2}})$ and $q_{7,7}^0=\frac{(p-1)}{2}(p^{r-1}-p^{\frac{n-1}{2}})$.
		\item[$\bullet$] $u\ne v$, $q_{u,v}^0=0$ for $0\le u<v\le 7$.
	\end{itemize}
	\item [$(\rm{\romannumeral 2})$] $w=1$
	\begin{itemize}
		\item [$\bullet$] $q_{0,1}^1=1$ and $q_{0,v}^1=0$ for $0\le v\le 7$ and $v\ne 1$.
		\item[$\bullet$] $q_{1,1}^1=p^{n-2}-2p^{r-2}$, $q_{1,2}^1=\frac{(p-1)}{2}(p^{n-2}-2p^{r-2}+p^{\frac{n-3}{2}})$, $q_{1,3}^1=\frac{(p-1)}{2}(p^{n-2}-2p^{r-2}-p^{\frac{n-3}{2}})$, $q_{1,4}^1=p^{n-r-1}-1$, $q_{1,5}^1=p^{r-2}-p^{n-r-1}$, $q_{1,6}^1=\frac{(p-1)}{2}(p^{r-2}+p^{\frac{n-3}{2}})$ and $q_{1,7}^1=\frac{(p-1)}{2}(p^{r-2}-p^{\frac{n-3}{2}})$.\\
		\newpage
		\item[$\bullet$] $q_{2,2}^1=\frac{(p-1)^2}{4}(p^{n-2}-2p^{r-2})-\frac{(p-1)}{2}p^{\frac{n-3}{2}}$, $q_{2,3}^1=\frac{(p-1)^2}{4}(p^{n-2}-2p^{r-2})$, $q_{2,4}^1=\frac{(p-1)}{2}p^{n-r-1}$, $q_{2,5}^1=\frac{(p-1)}{2}(p^{r-2}-p^{n-r-1})$, $q_{2,6}^1=\frac{(p-1)^2}{4}(p^{r-2}+p^{\frac{n-3}{2}})$ and $q_{2,7}^1=\frac{(p-1)^2}{4}(p^{r-2}-p^{\frac{n-3}{2}})$.
		\item[$\bullet$] $q_{3,3}^1=\frac{(p-1)}{2}p^{\frac{n-3}{2}}+\frac{(p-1)^2}{4}(p^{n-2}-2p^{r-2})$, $q_{3,4}^1=\frac{(p-1)}{2}p^{n-r-1}$, $q_{3,5}^1=\frac{(p-1)}{2}(p^{r-2}-p^{n-r-1})$, $q_{3,6}^1=\frac{(p-1)^2}{4}(p^{r-2}+p^{\frac{n-3}{2}})$ and $q_{3,7}^1=\frac{(p-1)^2}{4}(p^{r-2}-p^{\frac{n-3}{2}})$.
		\item[$\bullet$] $q_{u,v}^1=0$ for $4\le u\le v\le 7$.
	\end{itemize}
	\item [$(\rm{\romannumeral 3})$] $w=2$
	\begin{itemize}
		\item [$\bullet$] $q_{0,2}^2=1$ and $q_{0,v}^2=0$ for $0\le v\le 7$ and $v\ne 2$.
		\item[$\bullet$] $q_{1,1}^2=p^{n-2}-2p^{r-2}+p^{\frac{n-3}{2}}$, $q_{1,2}^2=\frac{(p-1)}{2}(p^{n-2}-2p^{r-2})-p^{\frac{n-3}{2}}$, $q_{1,3}^2=\frac{(p-1)}{2}(p^{n-2}-2p^{r-2})$, $q_{1,4}^2=p^{n-r-1}$, $q_{1,5}^2=p^{r-2}-p^{n-r-1}$, $q_{1,6}^2=\frac{(p-1)}{2}(p^{r-2}+p^{\frac{n-3}{2}})$ and $q_{1,7}^2=\frac{(p-1)}{2}(p^{r-2}-p^{\frac{n-3}{2}})$.
		\item[$\bullet$] 
		$q_{2,2}^2=\frac{(p-1)^2}{4}(p^{n-2}-2p^{r-2})-\frac{p^2-2p-3}{4}p^{\frac{n-3}{2}}$, $q_{2,3}^2=\frac{(p-1)^2}{4}(p^{n-2}-2p^{r-2}+p^{\frac{n-3}{2}})$, $q_{2,4}^2=\frac{1}{2}(p^{n-r}-p^{n-r-1}-2)$, $q_{2,5}^2=\frac{(p-1)}{2}(p^{r-2}-p^{n-r-1})$, $q_{2,6}^2=\frac{(p-1)^2}{4}(p^{r-2}+p^{\frac{n-3}{2}})$ and $q_{2,7}^2=\frac{(p-1)^2}{4}(p^{r-2}-p^{\frac{n-3}{2}})$.
	\item[$\bullet$] $q_{3,3}^2=\frac{(p-1)^2}{4}(p^{n-2}-2p^{r-2}-p^{\frac{n-3}{2}})$, $q_{3,4}^2=\frac{(p-1)}{2}p^{n-r-1}$, $q_{3,5}^2=\frac{(p-1)}{2}(p^{r-2}\\-p^{n-r-1})$, $q_{3,6}^2=\frac{(p-1)^2}{4}(p^{r-2}+p^{\frac{n-3}{2}})$, $q_{3,7}^2=\frac{(p-1)^2}{4}(p^{r-2}-p^{\frac{n-3}{2}})$.
	\item[$\bullet$] $q_{u,v}^2=0$ for $4\le u\le v\le 7$.
		\end{itemize}
		\item [$(\rm{\romannumeral 4})$] $w=3$
		\begin{itemize}
			\item [$\bullet$] $q_{0,3}^3=1$ and $q_{0,v}^3=0$ for $0\le v\le 7$ and $v\ne 3$.
			\item[$\bullet$] $q_{1,1}^3=p^{n-2}-2p^{r-2}-p^{\frac{n-3}{2}}$, $q_{1,2}^3=\frac{(p-1)}{2}(p^{n-2}-2p^{r-2})$, $q_{1,3}^3=p^{\frac{n-3}{2}}+\frac{(p-1)}{2}(p^{n-2}-2p^{r-2})$, $q_{1,4}^3=p^{n-r-1}$, $q_{1,5}^3=p^{r-2}-p^{n-r-1}$, $q_{1,6}^3=\frac{(p-1)}{2}(p^{r-2}+p^{\frac{n-3}{2}})$ and $q_{1,7}^3=\frac{(p-1)}{2}(p^{r-2}-p^{\frac{n-3}{2}})$.
			\item[$\bullet$] $q_{2,2}^3=\frac{(p-1)^2}{4}(p^{n-2}-2p^{r-2}+p^{\frac{n-3}{2}})$, $q_{2,3}^3=\frac{(p-1)^2}{4}(p^{n-2}-2p^{r-2}-p^{\frac{n-3}{2}})$, $q_{2,4}^3=\frac{(p-1)}{2}p^{n-r-1}$, $q_{2,5}^3=\frac{(p-1)}{2}(p^{r-2}-p^{n-r-1})$, $q_{2,6}^3=\frac{(p-1)^2}{4}(p^{r-2}+p^{\frac{n-3}{2}})$, $q_{2,7}^3=\frac{(p-1)^2}{4}(p^{r-2}-p^{\frac{n-3}{2}})$.
			\item[$\bullet$] $q_{3,3}^3=\frac{p^2-2p-3}{4}p^{\frac{n-3}{2}}+\frac{(p-1)^2}{4}(p^{n-2}-2p^{r-2})$, $q_{3,4}^3=\frac{1}{2}(p^{n-r}-p^{n-r-1}-2)$, $q_{3,5}^3=\frac{(p-1)}{2}(p^{r-2}-p^{n-r-1})$, $q_{3,6}^3=\frac{(p-1)^2}{4}(p^{r-2}+p^{\frac{n-3}{2}})$ and $q_{3,7}^3=\frac{(p-1)^2}{4}(p^{r-2}-p^{\frac{n-3}{2}})$.
			\item[$\bullet$] $q_{u,v}^3=0$ for $4\le u\le v\le 7$.
		\end{itemize}
		\item [$(\rm{\romannumeral 5})$] $w=4$
		\begin{itemize}
			\item [$\bullet$] $q_{0,4}^4=1$ and $q_{0,v}^4=0$ for $0\le v\le 7$ and $v\ne 4$.
			\item[$\bullet$] $q_{1,1}^4=p^{n-2}-p^{r-1}$, $q_{1,2}^4=q_{1,3}^4=\frac{(p-1)}{2}p^{n-2}$ and $q_{1,v}^4=0$ for $4\le v\le 7$.
			\item[$\bullet$] $q_{2,2}^4=\frac{(p-1)}{4}(p^{n-1}-2p^{r-1}-p^{n-2})$, $q_{2,3}^4=\frac{(p-1)^2}{4}p^{n-2}$ and $q_{2,v}^4=0$ for $4\le v\le 7$.
			\item[$\bullet$] $q_{3,3}^4=\frac{(p-1)}{4}(p^{n-1}-2p^{r-1}-p^{n-2})$ and $q_{3,v}^4=0$ for $4\le v\le 7$.
			\item[$\bullet$] $q_{4,4}^4=p^{n-r}-2$ and $q_{4,v}^4=0$ for $5\le v\le 7$.
			\item[$\bullet$] $q_{5,5}^4=p^{r-1}-p^{n-r}$ and $q_{5,6}^4=q_{5,7}^4=0$.
			\item[$\bullet$] $q_{6,6}^4=\frac{(p-1)}{2}(p^{r-1}+p^{\frac{n-1}{2}})$ and $q_{6,7}^4=0$.
			\item[$\bullet$] $q_{7,7}^4=\frac{(p-1)}{2}(p^{r-1}-p^{\frac{n-1}{2}})$.
		\end{itemize}
		\item [$(\rm{\romannumeral 6})$] $w=5$
		\begin{itemize}
			\item [$\bullet$] $q_{0,5}^5=1$ and $q_{0,v}^5=0$ for $0\le v\le 7$ and $v\ne 5$.
			\item[$\bullet$] $q_{1,1}^5=p^{n-2}-p^{r-2}$, $q_{1,2}^5=q_{1,3}^5=\frac{(p-1)}{2}(p^{n-2}-p^{r-2})$ and $q_{1,v}^5=0$ for $4\le v\le 7$.
			\item[$\bullet$] $q_{2,2}^5=q_{2,3}^5=\frac{(p-1)^2}{4}(p^{n-2}-p^{r-2})$ and $q_{2,v}^5=0$ for $4\le v\le 7$.
			\item[$\bullet$] $q_{3,3}^5=\frac{(p-1)^2}{4}(p^{n-2}-p^{r-2})$ and $q_{3,v}^5=0$ for $4\le v\le 7$.
			\item[$\bullet$] $q_{4,5}^5=p^{n-r}-1$ and $q_{4,v}^5=0$ for $4\le v\le 7$ and $v\ne 5$.
			\item[$\bullet$] $q_{5,5}^5=p^{r-2}-2p^{n-r}$ and $q_{5,6}^5=q_{5,7}^5=\frac{(p-1)}{2}p^{r-2}$.
			\item[$\bullet$] $q_{6,6}^5=\frac{(p-1)}{4}(p^{r-1}-p^{r-2}+2p^{\frac{n-1}{2}})$ and $q_{6,7}^5=\frac{(p-1)^2}{4}p^{r-2}$.
			\item[$\bullet$] $q_{7,7}^5=\frac{(p-1)}{4}(p^{r-1}-p^{r-2}-2p^{\frac{n-1}{2}})$.
		\end{itemize}
		\item [$(\rm{\romannumeral 7})$] $w=6$
		\begin{itemize}
			\item [$\bullet$] $q_{0,6}^6=1$ and $q_{0,v}^6=0$ for $0\le v\le 7$ and $v\ne 6$.
			\item[$\bullet$] $q_{1,1}^6=p^{n-2}-p^{r-2}$, $q_{1,2}^6=q_{1,3}^6=\frac{(p-1)}{2}(p^{n-2}-p^{r-2})$ and $q_{1,v}^6=0$ for $4\le v\le 7$.
			\item[$\bullet$] $q_{2,2}^6=q_{2,3}^6=\frac{(p-1)^2}{4}(p^{n-2}-p^{r-2})$ and $q_{2,v}^6=0$ for $4\le v\le 7$.
			\item[$\bullet$] $q_{3,3}^6=\frac{(p-1)^2}{4}(p^{n-2}-p^{r-2})$ and $q_{3,v}^6=0$ for $4\le v\le 7$.
			\item[$\bullet$] $q_{4,6}^6=p^{n-r}-1$ and $q_{4,v}^6=0$ for $4\le v\le 7$ and $v\ne 6$.
			\item[$\bullet$] $q_{5,5}^6=p^{r-2}-p^{\frac{n-3}{2}}$, $q_{5,6}^6=\frac{1}{2}(p^{r-1}+p^{\frac{n-3}{2}}+p^{\frac{n-1}{2}}-2p^{n-r}-p^{r-2})$ and $q_{5,7}^6=\frac{(p-1)}{2}(p^{r-2}-p^{\frac{n-3}{2}})$.
			\item[$\bullet$] $q_{6,6}^6=\frac{1}{4}(p^{r-2}+p^r+3p^{\frac{n+1}{2}}-2p^{r-1}-p^{\frac{n-3}{2}}-6p^{\frac{n-1}{2}})$ and $q_{6,7}^6=\frac{(p-1)^2}{4}(p^{r-2}-p^{\frac{n-3}{2}})$.
			\item[$\bullet$] $q_{7,7}^6=\frac{(p-1)^2}{4}(p^{r-2}-p^{\frac{n-3}{2}})$.
		\end{itemize}
		\item [$(\rm{\romannumeral 8})$] $w=7$
		\begin{itemize}
			\item [$\bullet$] $q_{0,7}^7=1$ and $q_{0,v}^7=0$ for $0\le v\le 6$.
			\item[$\bullet$] $q_{1,1}^7=p^{n-2}-p^{r-2}$, $q_{1,2}^7=q_{1,3}^7=\frac{(p-1)}{2}(p^{n-2}-p^{r-2})$ and $q_{1,v}^7=0$ for $4\le v\le 7$.
			\item[$\bullet$] $q_{2,2}^7=q_{2,3}^7=\frac{(p-1)^2}{4}(p^{n-2}-p^{r-2})$ and $q_{2,v}^7=0$ for $4\le v\le 7$.
			\item[$\bullet$] $q_{3,3}^7=\frac{(p-1)^2}{4}(p^{n-2}-p^{r-2})$ and $q_{3,v}^7=0$ for $4\le v\le 7$.
			\item[$\bullet$] $q_{4,v}^7=0$ for $4\le v\le 6$ and $q_{4,7}^7=p^{n-r}-1$.
			\item[$\bullet$] $q_{5,5}^7=p^{r-2}+p^{\frac{n-3}{2}}$, $q_{5,6}^7=\frac{(p-1)}{2}(p^{r-2}+p^{\frac{n-3}{2}})$ and $q_{5,7}^7=\frac{1}{2}(p^{r-1}-p^{\frac{n-3}{2}}-p^{\frac{n-1}{2}}-2p^{n-r}-p^{r-2})$.
			\item[$\bullet$] $q_{6,6}^7=q_{6,7}^7=\frac{(p-1)^2}{4}(p^{r-2}+p^{\frac{n-3}{2}})$.
			\item[$\bullet$] $q_{7,7}^7=\frac{1}{4}(p^{r-2}-2p^{r-1}+p^r+p^{\frac{n-3}{2}}+6p^{\frac{n-1}{2}}-3p^{\frac{n+1}{2}})$.		\end{itemize}
\end{itemize}
22. The first and second eigenmatrices, the intersection numbers and the Krein parameters of the association scheme induced by  $U_{22}$.

Note that the first and second eigenmatrices of the association scheme induced by $U_{22}$ are the same. The first (second) eigenmatrix of the association scheme induced by $U_{22}$ is given in Table 35.
\begin{table}[h]
\vspace{0pt}
\centering
\caption{The first (second) eigenmatrix of the association scheme induced by $U_{22}$}
\renewcommand\arraystretch{2}	
\resizebox{\textwidth}{!}{
	\begin{tabular}{|m{2.3cm}<{\centering}|c|c|c|c|c|c|
		}
		\hline
		\diagbox{$i$}{{\tiny $P_{22}(ij)\ (Q_{22}(ij))$}}{$j$}&1&2&3&4&5&6\\
		\hline
		1&1&$p^{n-1}-p^{\frac{n-1}{2}}$&$\frac{(p-1)}{2}(p^{n-1}-p^{\frac{n-1}{2}})$&$\frac{(p-1)}{2}(p^{n-1}-p^{\frac{n-1}{2}})$&$p^{\frac{n-1}{2}}-1$&$(p-1)p^{\frac{n-1}{2}}$\\
		\hline
		2&1&0&$\frac{(p-1)}{2}p^{\frac{n-1}{2}}$&$-\frac{(p-1)}{2}p^{\frac{n-1}{2}}$&$-1$&0\\
		\hline
		3&1&$p^{\frac{n-1}{2}}$&$-p^{\frac{n-1}{2}}$&0&$-1$&0\\
		\hline
		4&1&$-p^{\frac{n-1}{2}}$&0&$p^{\frac{n-1}{2}}$&$-1$&0\\
		\hline
		5&1&$-p^{\frac{n-1}{2}}$&$-\frac{(p-1)}{2}p^{\frac{n-1}{2}}$&$-\frac{(p-1)}{2}p^{\frac{n-1}{2}}$&$p^{\frac{n-1}{2}}-1$&$(p-1)p^{\frac{n-1}{2}}$\\
		\hline
		6&1&0&0&0&$p^{\frac{n-1}{2}}-1$&$-p^{\frac{n-1}{2}}$\\
		\hline
\end{tabular}}
\vspace{-15pt}
\end{table}

	Since $p_{u,v}^{w}=p_{v,u}^{w}$ for any $u,v,w\in\{0,1,\dots,5\}$, we only give the values of the intersection numbers $p_{u,v}^w$ for $u\le v$ in the following six cases. The Krein parameter $q_{u,v}^w$ is the same as the intersection number $p_{u,v}^w$ for any $u,v,w\in\{0,1,\dots,5\}$.
	\begin{itemize}
		\item [$(\rm{\romannumeral1})$] $w=0$
		\begin{itemize}
			\item [$\bullet$] $u=v$\\
			$p_{0,0}^0=1$, $p_{1,1}^0=p^{n-1}-p^{\frac{n-1}{2}}$, $p_{2,2}^0=p_{3,3}^0=\frac{(p-1)}{2}(p^{n-1}-p^{\frac{n-1}{2}})$, $p_{4,4}^0=p^{\frac{n-1}{2}}-1$ and $p_{5,5}^0=(p-1)p^{\frac{n-1}{2}}$.
			\item[$\bullet$] $u\ne v$, $p_{u,v}^0=0$ for $0\le u<v\le 5$.			
		\end{itemize}
\item [$(\rm{\romannumeral2})$] $w=1$
\begin{itemize}
	\item [$\bullet$] $p_{0,1}^1=1$ and $p_{0,v}^1=0$ for $0\le v\le 5$ and $v\ne 1$.
	\item[$\bullet$] $p_{1,1}^1=p^{n-2}-2p^{\frac{n-3}{2}}$, $p_{1,2}^1=\frac{(p-1)}{2}(p^{n-2}-3p^{\frac{n-3}{2}})$, $p_{1,3}^1=\frac{(p-1)}{2}(p^{n-2}-p^{\frac{n-3}{2}})$, $p_{1,4}^1=p^{\frac{n-3}{2}}-1$ and $p_{1,5}^1=(p-1)p^{\frac{n-3}{2}}$.
	\item[$\bullet$] $p_{2,2}^1=\frac{(p-1)}{4}(p^{n-1}-p^{n-2}+4p^{\frac{n-3}{2}}-2p^{\frac{n-1}{2}})$, $p_{2,3}^1=\frac{(p-1)^2}{4}(p^{n-2}-2p^{\frac{n-3}{2}})$, $p_{2,4}^1=\frac{(p-1)}{2}p^{\frac{n-3}{2}}$ and $p_{2,5}^1=\frac{(p-1)^2}{2}p^{\frac{n-3}{2}}$.
	\item[$\bullet$] $p_{3,3}^1=\frac{(p-1)}{4}(p^{n-1}-p^{n-2}-2p^{\frac{n-1}{2}})$, $p_{3,4}^1=\frac{(p-1)}{2}p^{\frac{n-3}{2}}$ and $p_{3,5}^1=\frac{(p-1)^2}{2}p^{\frac{n-3}{2}}$.
	\item[$\bullet$] $p_{4,4}^1=p_{4,5}^1=p_{5,5}^1=0$.
\end{itemize}
\item [$(\rm{\romannumeral3})$] $w=2$
\begin{itemize}
	\item [$\bullet$] $p_{0,2}^2=1$ and $p_{0,v}^2=0$ for $0\le v\le 5$ and $v\ne 2$.
	\item[$\bullet$] $p_{1,1}^2=p^{n-2}-3p^{\frac{n-3}{2}}$, $p_{1,2}^2=\frac{1}{2}(p^{n-1}-p^{n-2}+4p^{\frac{n-3}{2}}-2p^{\frac{n-1}{2}})$, $p_{1,3}^2=\frac{(p-1)}{2}(p^{n-2}-2p^{\frac{n-3}{2}})$, $p_{1,4}^2=p^{\frac{n-3}{2}}$ and $p_{1,5}^2=(p-1)p^{\frac{n-3}{2}}$.
	\item[$\bullet$] $p_{2,2}^2=\frac{1}{4}(p^{n-2}+p^n+2p^{\frac{n-1}{2}}-2p^{n-1}-5p^{\frac{n-3}{2}}-p^{\frac{n+1}{2}})$, $p_{2,3}^2=\frac{(p-1)^2}{4}(p^{n-2}-3p^{\frac{n-3}{2}})$, $p_{2,4}^2=\frac{1}{2}(p^{\frac{n-1}{2}}-p^{\frac{n-3}{2}}-2)$ and $p_{2,5}^2=\frac{(p-1)^2}{2}p^{\frac{n-3}{2}}$.
	\item[$\bullet$] $p_{3,3}^2=\frac{(p-1)^2}{4}(p^{n-2}-p^{\frac{n-3}{2}})$, $p_{3,4}^2=\frac{(p-1)}{2}p^{\frac{n-3}{2}}$ and $p_{3,5}^2=\frac{(p-1)^2}{2}p^{\frac{n-3}{2}}$.
	\item[$\bullet$] $p_{4,4}^2=p_{4,5}^2=p_{5,5}^2=0$.
\end{itemize}
\item [$(\rm{\romannumeral4})$] $w=3$
\begin{itemize}
	\item [$\bullet$] $p_{0,3}^3=1$ and $p_{0,v}^3=0$ for $0\le v\le 5$ and $v\ne 3$.
	\item[$\bullet$] $p_{1,1}^3=p^{n-2}-p^{\frac{n-3}{2}}$, $p_{1,2}^3=\frac{(p-1)}{2}(p^{n-2}-2p^{\frac{n-3}{2}})$, $p_{1,3}^3=\frac{1}{2}(p^{n-1}-p^{n-2}-2p^{\frac{n-1}{2}})$, $p_{1,4}^3=p^{\frac{n-3}{2}}$ and $p_{1,5}^3=(p-1)p^{\frac{n-3}{2}}$.
	\item[$\bullet$] $p_{2,2}^3=\frac{(p-1)^2}{4}(p^{n-2}-3p^{\frac{n-3}{2}})$, $p_{2,3}^3=\frac{(p-1)^2}{4}(p^{n-2}-p^{\frac{n-3}{2}})$, $p_{2,4}^3=\frac{(p-1)}{2}p^{\frac{n-3}{2}}$ and $p_{2,5}^3=\frac{(p-1)^2}{2}p^{\frac{n-3}{2}}$.
\item[$\bullet$] $p_{3,3}^3=\frac{1}{4}(p^{n-2}-2p^{n-1}+p^n+p^{\frac{n-3}{2}}+6p^{\frac{n-1}{2}}-3p^{\frac{n+1}{2}})$, $p_{3,4}^3=\frac{1}{2}(p^{\frac{n-1}{2}}-p^{\frac{n-3}{2}}-2)$ and $p_{3,5}^3=\frac{(p-1)^2}{2}p^{\frac{n-3}{2}}$.
		\item[$\bullet$] $p_{4,4}^3=p_{4,5}^3=p_{5,5}^3=0$.
	\end{itemize}
\item [$(\rm{\romannumeral5})$] $w=4$
\begin{itemize}
	\item [$\bullet$] $p_{0,4}^4=1$ and $p_{0,v}^4=0$ for $0\le v\le 5$ and $v\ne 4$.
	\item[$\bullet$] $p_{1,1}^4=p^{n-2}-p^{\frac{n-1}{2}}$, $p_{1,2}^4=p_{1,3}^4=\frac{(p-1)}{2}p^{n-2}$ and $p_{1,4}^4=p_{1,5}^4=0$.
	\item[$\bullet$] $p_{2,2}^4=\frac{(p-1)}{4}(p^{n-1}-p^{n-2}-2p^{\frac{n-1}{2}})$, $p_{2,3}^4=\frac{(p-1)^2}{4}p^{n-2}$ and $p_{2,4}^4=p_{2,5}^4=0$.
	\item[$\bullet$] $p_{3,3}^4=\frac{(p-1)}{4}(p^{n-1}-p^{n-2}-2p^{\frac{n-1}{2}})$ and $p_{3,4}^4=p_{3,5}^4=0$.
	\item[$\bullet$] $p_{4,4}^4=p^{\frac{n-1}{2}}-2$ and $p_{4,5}^4=0$.
	\item[$\bullet$] $p_{5,5}^4=(p-1)p^{\frac{n-1}{2}}$. 
\end{itemize}
\item [$(\rm{\romannumeral6})$] $w=5$
\begin{itemize}
	\item [$\bullet$] $p_{0,5}^5=1$ and $p_{0,v}^5=0$ for $0\le v\le 4$.
	\item[$\bullet$] $p_{1,1}^5=p^{n-2}-p^{\frac{n-3}{2}}$, $p_{1,2}^5=p_{1,3}^5=\frac{(p-1)}{2}(p^{n-2}-p^{\frac{n-3}{2}})$ and $p_{1,4}^5=p_{1,5}^5=0$.
	\item[$\bullet$] $p_{2,2}^5=p_{2,3}^5=\frac{(p-1)^2}{4}(p^{n-2}-p^{\frac{n-3}{2}})$ and $p_{2,4}^5=p_{2,5}^5=0$.
	\item[$\bullet$] $p_{3,3}^5=\frac{(p-1)^2}{4}(p^{n-2}-p^{\frac{n-3}{2}})$ and $p_{3,4}^5=p_{3,5}^5=0$.
	\item[$\bullet$] $p_{4,4}^5=0$ and $p_{4,5}^5=p^{\frac{n-1}{2}}-1$.
	\item[$\bullet$] $p_{5,5}^5=(p-2)p^{\frac{n-1}{2}}$.
\end{itemize}
	\end{itemize}
23. The first and second eigenmatrices, the intersection numbers and the Krein parameters of the association scheme induced by  $U_{23}$.

For fixed parameters $p$ and $n$, the first (respectively the second) eigenmatrix of association scheme induced by $U_{23}$ is the same as the second (respectively the first) eigenmatrix of the association scheme induced by $U_{19}$, and the intersection number $p_{u,v}^w$ (respectively the Krein parameter $q_{u,v}^w$) of the association scheme induced by $U_{23}$ is the same as the Krein parameter $q_{u,v}^w$ (respectively the intersection number $p_{u,v}^w$) of the association scheme induced by $U_{19}$.\\
24. The first and second eigenmatrices, the intersection numbers and the Krein parameters of the association scheme induced by  $U_{24}$.

Note that the first and second eigenmatrices of the association scheme induced by $U_{24}$ are the same. The first (second) eigenmatrix  of the association scheme induced by $U_{24}$ is given in Table 36.

	Since $p_{u,v}^{w}=p_{v,u}^{w}$ for any $u,v,w\in\{0,1,\dots,7\}$, we only give the values of the intersection numbers $p_{u,v}^w$ for $u\le v$ in the following eight cases. The Krein parameter $q_{u,v}^w$ is the same as the intersection number $p_{u,v}^w$ for any $u,v,w\in\{0,1,\dots,7\}$.	
\begin{table}[h]
	\vspace{0pt}
	\centering
	\caption{The first (second) eigenmatrix of the association scheme induced by $U_{24}$}
	\renewcommand\arraystretch{1.5}	
	\resizebox{\textwidth}{!}{
		\begin{tabular}{|c|m{11cm}|}
			\hline
			$i$&\hspace{4.25cm} $P_{24}(ij)\ (Q_{24}(ij))$\\
			\hline
			1&$P_{24}(11)=1$, $P_{24}(12)=p^{n-1}-p^{r-1}$, $P_{24}(13)=P_{24}(14)=\frac{(p-1)}{2}(p^{n-1}-p^{r-1})$, $P_{24}(15)=p^{n-r}-1$, $P_{24}(16)=p^{r-1}-p^{n-r}$, $P_{24}(17)=\frac{(p-1)}{2}(p^{r-1}-p^{\frac{n-1}{2}})$, $P_{24}(28)=\frac{(p-1)}{2}(p^{r-1}+p^{\frac{n-1}{2}})$\\
			\hline
			2&$P_{24}(21)=1$, $P_{24}(22)=0$, $P_{24}(23)=\frac{(p-1)}{2}p^{\frac{n-1}{2}}$, $P_{24}(24)=-\frac{(p-1)}{2}p^{\frac{n-1}{2}}$, $P_{24}(25)=-1$, $P_{24}(26)=P_{24}(27)=P_{24}(28)=0$\\
			\hline
			3&$P_{24}(31)=1$, $P_{24}(32)=p^{\frac{n-1}{2}}$, $P_{24}(33)=-p^{\frac{n-1}{2}}$, $P_{24}(34)=0$, $P_{24}(35)=-1$, $P_{24}(36)=P_{24}(37)=P_{24}(38)=0$\\
			\hline
			4&$P_{24}(41)=1$, $P_{24}(42)=-p^{\frac{n-1}{2}}$, $P_{24}(43)=0$, $P_{24}(44)=p^{\frac{n-1}{2}}$, $P_{24}(45)=-1$, $P_{24}(46)=P_{24}(47)=P_{24}(48)=0$\\
			\hline
			5&$P_{24}(51)=1$, $P_{24}(52)=-p^{r-1}$, $P_{24}(53)=-\frac{(p-1)}{2}p^{r-1}$, $P_{24}(54)=-\frac{(p-1)}{2}p^{r-1}$, $P_{24}(55)=p^{n-r}-1$, $P_{24}(56)=p^{r-1}-p^{n-r}$, $P_{24}(57)=\frac{(p-1)}{2}(p^{r-1}-p^{\frac{n-1}{2}})$, $P_{24}(58)=\frac{(p-1)}{2}(p^{r-1}+p^{\frac{n-1}{2}})$\\
			\hline
			6&$P_{24}(61)=1$, $P_{24}(62)=P_{24}(63)=P_{24}(64)=0$, $P_{24}(65)=p^{n-r}-1$, $P_{24}(66)=-p^{n-r}$, $P_{24}(67)=-\frac{(p-1)}{2}p^{\frac{n-1}{2}}$, $P_{24}(68)=\frac{(p-1)}{2}p^{\frac{n-1}{2}}$\\
			\hline
			7&$P_{24}(71)=1$, $P_{24}(72)=P_{24}(73)=P_{24}(74)=0$, $P_{24}(75)=p^{n-r}-1$, $P_{24}(76)=-p^{\frac{n-1}{2}}-p^{n-r}$, $P_{24}(77)=p^{\frac{n-1}{2}}$, $P_{24}(78)=0$\\
			\hline
			8&$P_{24}(81)=1$, $P_{24}(82)=P_{24}(83)=P_{24}(84)=0$, $P_{24}(85)=p^{n-r}-1$, $P_{24}(86)=p^{\frac{n-1}{2}}-p^{n-r}$, $P_{24}(87)=0$, $P_{24}(88)=-p^{\frac{n-1}{2}}$\\
			\hline
	\end{tabular}}
	\vspace{-15pt}			
\end{table}
\begin{itemize}
	\item [$(\rm{\romannumeral1})$] $w=0$
	\begin{itemize}
		\item [$\bullet$] $u=v$\\
		$p_{0,0}^0=1$, $p_{1,1}^0=p^{n-1}-p^{r-1}$, $p_{2,2}^0=p_{3,3}^0=\frac{(p-1)}{2}(p^{n-1}-p^{r-1})$, $p_{4,4}^0=p^{n-r}-1$, $p_{5,5}^0=p^{r-1}-p^{n-r}$, $p_{6,6}^0=\frac{(p-1)}{2}(p^{r-1}-p^{\frac{n-1}{2}})$ and $p_{7,7}^0=\frac{(p-1)}{2}(p^{r-1}+p^{\frac{n-1}{2}})$.
	\item[$\bullet$] $u\ne v$, $p_{u,v}^0=0$.
	\end{itemize}
\item [$(\rm{\romannumeral2})$] $w=1$
\begin{itemize}
	\item [$\bullet$] $p_{0,1}^1=1$ and $p_{0,v}^1=0$ for $0\le v\le 7$ and $v\ne 1$.
	\item[$\bullet$] $p_{1,1}^1=p^{n-2}-2p^{r-2}$, $p_{1,2}^1=\frac{(p-1)}{2}(p^{n-2}-2p^{r-2}-p^{\frac{n-3}{2}})$, $p_{1,3}^1=\frac{(p-1)}{2}
(p^{n-2}-2p^{r-2}+p^{\frac{n-3}{2}})$, $p_{1,4}^1=p^{n-r-1}-1$, $p_{1,5}^1=p^{r-2}-p^{n-r-1}$, $p_{1,6}^1=\frac{(p-1)}{2}(p^{r-2}-p^{\frac{n-3}{2}})$ and $p_{1,7}^1=\frac{(p-1)}{2}(p^{r-2}+p^{\frac{n-3}{2}})$.
\item[$\bullet$] $p_{2,2}^1=\frac{(p-1)}{2}p^{\frac{n-3}{2}}+(p^{n-2}-2p^{r-2})\frac{(p-1)^2}{4}$, $p_{2,3}^1=\frac{(p-1)^2}{4}(p^{n-2}-2p^{r-2})$, $p_{2,4}^1=\frac{(p-1)}{2}p^{n-r-1}$, $p_{2,5}^1=\frac{(p-1)}{2}(p^{r-2}-p^{n-r-1})$, $p_{2,6}^1=\frac{(p-1)^2}{4}(p^{r-2}-p^{\frac{n-3}{2}})$ and $p_{2,7}^1=\frac{(p-1)^2}{4}(p^{r-2}+p^{\frac{n-3}{2}})$.
\item[$\bullet$] $p_{3,3}^1=\frac{(p-1)^2}{4}(p^{n-2}-2p^{r-2})-\frac{(p-1)}{2}p^{\frac{n-3}{2}}$, $p_{3,4}^1=\frac{(p-1)}{2}p^{n-r-1}$, $p_{3,5}^1=\frac{(p-1)}{2}(p^{r-2}-p^{n-r-1})$, $p_{3,6}^1=\frac{(p-1)^2}{4}(p^{r-2}-p^{\frac{n-3}{2}})$ and $p_{3,7}^1=\frac{(p-1)^2}{4}(p^{r-2}\\+p^{\frac{n-3}{2}})$.
\item[$\bullet$] $p_{u,v}^1=0$ for $4\le u\le v\le 7$.
\end{itemize}
\item [$(\rm{\romannumeral3})$] $w=2$
\begin{itemize}
	\item [$\bullet$] $p_{0,2}^2=1$ and $p_{0,v}^2=0$ for $0\le v\le 7$ and $v\ne 2$.\\
	\newpage
	\item[$\bullet$] $p_{1,1}^2=p^{n-2}-2p^{r-2}-p^{\frac{n-3}{2}}$, $p_{1,2}^2=p^{\frac{n-3}{2}}+\frac{(p-1)}{2}(p^{n-2}-2p^{r-2})$, $p_{1,3}^2=\frac{(p-1)}{2}(p^{n-2}-2p^{r-2})$, $p_{1,4}^2=p^{n-r-1}$, $p_{1,5}^2=p^{r-2}-p^{n-r-1}$, $p_{1,6}^2=\frac{(p-1)}{2}(p^{r-2}-p^{\frac{n-3}{2}})$ and $p_{1,7}^2=\frac{(p-1)}{2}(p^{r-2}+p^{\frac{n-3}{2}})$.
	\item[$\bullet$] $p_{2,2}^2=\frac{p^2-2p-3}{4}p^{\frac{n-3}{2}}+\frac{(p-1)^2}{4}(p^{n-2}-2p^{r-2})$, $p_{2,3}^2=\frac{(p-1)^2}{4}(p^{n-2}-2p^{r-2}-p^{\frac{n-3}{2}})$, $p_{2,4}^2=\frac{1}{2}(p^{n-r}-p^{n-r-1}-2)$, $p_{2,5}^2=\frac{(p-1)}{2}(p^{r-2}-p^{n-r-1})$, $p_{2,6}^2=\frac{(p-1)^2}{4}(p^{r-2}-p^{\frac{n-3}{2}})$ and $p_{2,7}^2=\frac{(p-1)^2}{4}(p^{r-2}+p^{\frac{n-3}{2}})$.
	\item[$\bullet$] $p_{3,3}^2=\frac{(p-1)^2}{4}(p^{n-2}-2p^{r-2}+p^{\frac{n-3}{2}})$, $p_{3,4}^2=\frac{(p-1)}{2}p^{n-r-1}$, $p_{3,5}^2=\frac{(p-1)}{2}\\(p^{r-2}-p^{n-r-1})$, $p_{3,6}^2=\frac{(p-1)^2}{4}(p^{r-2}-p^{\frac{n-3}{2}})$ and $p_{3,7}^2=\frac{(p-1)^2}{4}(p^{r-2}+p^{\frac{n-3}{2}})$.
	\item[$\bullet$] $p_{u,v}^2=0$ for $4\le u\le v\le 7$.	
\end{itemize}
\item [$(\rm{\romannumeral4})$] $w=3$
\begin{itemize}
	\item [$\bullet$] $p_{0,3}^3=1$ and $p_{0,v}^3=0$ for $0\le v\le 7$ and $v\ne 3$.
	\item[$\bullet$] $p_{1,1}^3=p^{n-2}-2p^{r-2}+p^{\frac{n-3}{2}}$, $p_{1,2}^3=\frac{(p-1)}{2}(p^{n-2}-2p^{r-2})$, $p_{1,3}^3=\frac{(p-1)}{2}\\(p^{n-2}-2p^{r-2})-p^{\frac{n-3}{2}}$, $p_{1,4}^3=p^{n-r-1}$, $p_{1,5}^3=p^{r-2}-p^{n-r-1}$, $p_{1,6}^3=\frac{(p-1)}{2}(p^{r-2}-p^{\frac{n-3}{2}})$ and $p_{1,7}^3=\frac{(p-1)}{2}(p^{r-2}+p^{\frac{n-3}{2}})$.
	\item[$\bullet$] $p_{2,2}^3=\frac{(p-1)^2}{4}(p^{n-2}-2p^{r-2}-p^{\frac{n-3}{2}})$, $p_{2,3}^3=\frac{(p-1)^2}{4}(p^{n-2}-2p^{r-2}+p^{\frac{n-3}{2}})$, $p_{2,4}^3=\frac{(p-1)}{2}p^{n-r-1}$, $p_{2,5}^3=\frac{(p-1)}{2}(p^{r-2}-p^{n-r-1})$, $p_{2,6}^3=\frac{(p-1)^2}{4}(p^{r-2}-p^{\frac{n-3}{2}})$ and $p_{2,7}^3=\frac{(p-1)^2}{4}(p^{r-2}+p^{\frac{n-3}{2}})$.
	\item[$\bullet$] $p_{3,3}^3=\frac{(p-1)^2}{4}(p^{n-2}-2p^{r-2})-\frac{p^2-2p-3}{4}p^{\frac{n-3}{2}}$, $p_{3,4}^3=\frac{1}{2}(p^{n-r}-p^{n-r-1}-2)$, $p_{3,5}^3=\frac{(p-1)}{2}(p^{r-2}-p^{n-r-1})$, $p_{3,6}^3=\frac{(p-1)^2}{4}(p^{r-2}-p^{\frac{n-3}{2}})$ and $p_{3,7}^3=\frac{(p-1)^2}{4}(p^{r-2}+p^{\frac{n-3}{2}})$.
	\item[$\bullet$] $p_{u,v}^3=0$ for $4\le u\le v\le 7$.
\end{itemize}
	\item [$(\rm{\romannumeral5})$] $w=4$	
	\begin{itemize}
		\item [$\bullet$] $p_{0,4}^4=1$ and $p_{0,v}^4=0$ for $0\le v\le 7$ and $v\ne 4$.
		\item[$\bullet$] $p_{1,1}^4=p^{n-2}-p^{r-1}$, $p_{1,2}^4=p_{1,3}^4=\frac{(p-1)}{2}p^{n-2}$ and $p_{1,v}^4=0$ for $4\le v\le 7$.
		\item[$\bullet$] $p_{2,2}^4=\frac{(p-1)}{4}(p^{n-1}-p^{n-2}-2p^{r-1})$, $p_{2,3}^4=\frac{(p-1)^2}{4}p^{n-2}$ and $p_{2,v}^4=0$ for $4\le v\le 7$.
		\item[$\bullet$] $p_{3,3}^4=\frac{(p-1)}{4}(p^{n-1}-p^{n-2}-2p^{r-1})$ and $p_{3,v}^4=0$ for $4\le v\le 7$.
		\item [$\bullet$] $p_{4,4}^4=p^{n-r}-2$ and $p_{4,5}^4=p_{4,6}^4=p_{4,7}^4=0$.
		\item[$\bullet$] $p_{5,5}^4=p^{r-1}-p^{n-r}$ and $p_{5,6}^4=p_{5,7}^4=0$.
		\item[$\bullet$] $p_{6,6}^4=\frac{(p-1)}{2}(p^{r-1}-p^{\frac{n-1}{2}})$ and $p_{6,7}^4=0$.
		\item[$\bullet$] $p_{7,7}^4=\frac{(p-1)}{2}(p^{r-1}+p^{\frac{n-1}{2}})$.
	\end{itemize}
\item [$(\rm{\romannumeral6})$] $w=5$
\begin{itemize}
	\item [$\bullet$] $p_{0,5}^5=1$ and $p_{0,v}^5=0$ for $0\le v\le 7$ and $v\ne 5$.
	\item[$\bullet$] $p_{1,1}^5=p^{n-2}-p^{r-2}$, $p_{1,2}^5=p_{1,3}^5=\frac{(p-1)}{2}(p^{n-2}-p^{r-2})$ and $p_{1,v}^5=0$ for $4\le v\le 7$.
	\item[$\bullet$] $p_{2,2}^5=p_{2,3}^5=\frac{(p-1)^2}{4}(p^{n-2}-p^{r-2})$ and $p_{2,v}^5=0$ for $4\le v\le 7$.
	\item[$\bullet$] $p_{3,3}^5=\frac{(p-1)^2}{4}(p^{n-2}-p^{r-2})$ and $p_{3,v}^5=0$ for $4\le v\le 7$.
	\item[$\bullet$] $p_{4,5}^5=p^{n-r}-1$ and $p_{4,v}^5=0$ for $4\le v\le 7$ and $v\ne 5$.
	\item[$\bullet$] $p_{5,5}^5=p^{r-2}-2p^{n-r}$ and $p_{5,6}^5=p_{5,7}^5=\frac{(p-1)}{2}p^{r-2}$.
	\item[$\bullet$] $p_{6,6}^5=\frac{(p-1)}{4}(p^{r-1}-p^{r-2}-2p^{\frac{n-1}{2}})$ and $p_{6,7}^5=\frac{(p-1)^2}{4}p^{r-2}$.
	\item[$\bullet$] $p_{7,7}^5=\frac{(p-1)}{4}(p^{r-1}-p^{r-2}+2p^{\frac{n-1}{2}})$.	
	\end{itemize}
\item [$(\rm{\romannumeral7})$] $w=6$
\begin{itemize}
	\item [$\bullet$] $p_{0,6}^6=1$ and $p_{0,v}^6=0$ for $0\le v\le 7$ and $v\ne 6$.
	\item[$\bullet$] $p_{1,1}^6=p^{n-2}-p^{r-2}$, $p_{1,2}^6=p_{1,3}^6=\frac{(p-1)}{2}(p^{n-2}-p^{r-2})$ and $p_{1,v}^6=0$ for $4\le v\le 7$.
	\item[$\bullet$] $p_{2,2}^6=p_{2,3}^6=\frac{(p-1)^2}{4}(p^{n-2}-p^{r-2})$ and $p_{2,v}^6=0$ for $4\le v\le 7$.
	\item[$\bullet$] $p_{3,3}^6=\frac{(p-1)^2}{4}(p^{n-2}-p^{r-2})$ and $p_{3,v}^6=0$ for $4\le v\le 7$.
	\item[$\bullet$] $p_{4,6}^6=p^{n-r}-1$ and $p_{4,v}^6=0$ for $4\le v\le 7$ and $v\ne 6$.
	\item[$\bullet$] $p_{5,5}^6=p^{r-2}+p^{\frac{n-3}{2}}$, $p_{5,6}^6=\frac{1}{2}(p^{r-1}-p^{\frac{n-3}{2}}-p^{\frac{n-1}{2}}-2p^{n-r}-p^{r-2})$ and $p_{5,7}^6=\frac{(p-1)}{2}(p^{r-2}+p^{\frac{n-3}{2}})$.
	\item[$\bullet$] $p_{6,6}^6=\frac{1}{4}(p^{r-2}-2p^{r-1}+p^r+p^{\frac{n-3}{2}}+6p^{\frac{n-1}{2}}-3p^{\frac{n+1}{2}})$ and $p_{6,7}^6=\frac{(p-1)^2}{4}(p^{r-2}+p^{\frac{n-3}{2}})$.
	\item[$\bullet$] $p_{7,7}^6=\frac{(p-1)^2}{4}(p^{r-2}+p^{\frac{n-3}{2}})$.
\end{itemize}
\item [$(\rm{\romannumeral8})$] $w=7$
\begin{itemize}
	\item [$\bullet$] $p_{0,7}^7=1$ and $p_{0,v}^7=0$ for $0\le v\le 6$.
	\item[$\bullet$] $p_{1,1}^7=p^{n-2}-p^{r-2}$, $p_{1,2}^7=p_{1,3}^7=\frac{(p-1)}{2}(p^{n-2}-p^{r-2})$ and $p_{1,v}^7=0$ for $4\le v\le 7$.
	\item[$\bullet$] $p_{2,2}^7=p_{2,3}^7=\frac{(p-1)^2}{4}(p^{n-2}-p^{r-2})$ and $p_{2,v}^7=0$ for $4\le v\le 7$.
	\item[$\bullet$] $p_{3,3}^7=\frac{(p-1)^2}{4}(p^{n-2}-p^{r-2})$ and $p_{3,v}^7=0$ for $4\le v\le 7$.
	\item[$\bullet$] $p_{4,7}^7=p^{n-r}-1$ and $p_{4,v}^7=0$ for $4\le v\le 6$.
	\item[$\bullet$] $p_{5,5}^7=p^{r-2}-p^{\frac{n-3}{2}}$, $p_{5,6}^7=\frac{(p-1)}{2}(p^{r-2}-p^{\frac{n-3}{2}})$ and $p_{5,7}^7=\frac{1}{2}(p^{r-1}+p^{\frac{n-3}{2}}+p^{\frac{n-1}{2}}-2p^{n-r}-p^{r-2})$.
	\item[$\bullet$] $p_{6,6}^7=p_{6,7}^7=\frac{(p-1)^2}{4}(p^{r-2}-p^{\frac{n-3}{2}})$.
	\item[$\bullet$] $p_{7,7}^7=\frac{1}{4}(p^{r-2}+p^r+3p^{\frac{n+1}{2}}-2p^{r-1}-p^{\frac{n-3}{2}}-6p^{\frac{n-1}{2}})$.
\end{itemize}
\end{itemize}
25. The first and second eigenmatrices, the intersection numbers and the Krein parameters of the association scheme induced by  $U_{25}$.

For fixed parameters $p$, $n$ and $r$, the first (respectively the second) eigenmatrix of association scheme induced by $U_{25}$ is the same as the second (respectively the first) eigenmatrix of the association scheme induced by $U_{21}$, and the intersection number $p_{u,v}^w$ (the Krein parameter $q_{u,v}^w$) of the association scheme induced by $U_{25}$ is the same as the Krein parameter $q_{u,v}^w$ (the intersection number $p_{u,v}^w$) of the association scheme induced by $U_{21}$.

\end{document}